\def \R{\mathbb R}
\def \P{\mathbb P}
\def \E{\mathbb E}
\newcommand{\tZ}{\tilde{Z}}
\newtheorem{thm}{Theorem}[section]
\newtheorem{cor}[thm]{Corollary}
\newtheorem{lem}[thm]{Lemma}
\newtheorem{prop}[thm]{Proposition}
\newtheorem{defin}[thm]{Definition}
\newtheorem{rem}[thm]{Remark}
\newtheorem{lemma}[thm]{Lemma}
\newtheorem{proposition}[thm]{Proposition}
\newtheorem{assumption}{Assumption}
\newcommand{\Ind}{{\rm{\textbf{1}}}}
\renewcommand{\tilde}{\widetilde}
\newcommand{\cF}{{\ensuremath{\mathcal F}} }
\DeclareMathSymbol{\leqslant}{\mathalpha}{AMSa}{"36} % nicer `smaller or equal'
\DeclareMathSymbol{\geqslant}{\mathalpha}{AMSa}{"3E} % nicer `larger or equal'
\DeclareMathSymbol{\eset}{\mathalpha}{AMSb}{"3F}     % nicer `emptyset'
\renewcommand{\leq}{\;\leqslant\;}                   % redef. of < or =
\renewcommand{\geq}{\;\geqslant\;}                   % redef. of > or =
\def\captionfont@{\footnotesize}
\def\captionheadfont@{\scshape}
\long\def\@makecaption#1#2{%
  \vspace{2mm}
  \setbox\@tempboxa\vbox{\color@setgroup
    \advance\hsize-6pc\noindent
    \captionfont@\captionheadfont@#1\@xp\@ifnotempty\@xp
        {\@cdr#2\@nil}{.\captionfont@\upshape\enspace#2}%
    \unskip\kern-6pc\par
    \global\setbox\@ne\lastbox\color@endgroup}%
  \ifhbox\@ne % the normal case
    \setbox\@ne\hbox{\unhbox\@ne\unskip\unskip\unpenalty\unkern}%
  \fi
  \ifdim\wd\@tempboxa=\z@ % this means caption will fit on one line
    \setbox\@ne\hbox to\columnwidth{\hss\kern-6pc\box\@ne\hss}%
  \else % tempboxa contained more than one line
    \setbox\@ne\vbox{\unvbox\@tempboxa\parskip\z@skip
        \noindent\unhbox\@ne\advance\hsize-6pc\par}%
\fi
  \ifnum\@tempcnta<64 % if the float IS a figure...
    \addvspace\abovecaptionskip
    \moveright 3pc\box\@ne
  \else % if the float IS NOT a figure...
    \moveright 3pc\box\@ne
    \nobreak
    \vskip\belowcaptionskip
  \fi
\relax
}
\def\writefig#1 #2 #3 {\rlap{\kern #1 truecm
\raise #2 truecm \hbox{#3}}}
\numberwithin{equation}{section}
\title
[Self-excitatory singular particle systems]
{Particle systems with a singular mean-field self-excitation.
Application to neuronal networks.}
\author{F. Delarue, J. Inglis, S. Rubenthaler, E. Tanr{\'e}}
\address{Fran\c{c}ois Delarue, Sylvain Rubenthaler, Laboratoire J.-A. Dieudonn\'e, Universit\'e de Nice Sophia-Antipolis, Parc Valrose, 06108 Nice Cedex 02, France.}
\email{delarue@unice.fr,rubentha@unice.fr}
\address{James Inglis, Etienne Tanr\'e, Equipe Tosca, INRIA Sophia-Antipolis M\'editerran\'ee,
2004 route des lucioles, BP 93,  06902 Sophia Antipolis Cedex,  France} 
\email{James.Inglis@inria.fr,Etienne.Tanre@inria.fr}
\begin{document}

%\tableofcontents

\maketitle
\begin{abstract}
We discuss the construction and approximation of 
solutions to a nonlinear McKean-Vlasov equation driven by a singular self-excitatory interaction of 
the mean-field type. Such an equation is intended to describe  
an infinite population of neurons which interact with one another. Each time a proportion of neurons `spike', the whole network 
instantaneously receives an excitatory kick. The instantaneous nature of the excitation makes the system singular and prevents the application of standard results from the literature. Making use of the 
Skorohod M1 topology, we prove that, for the right notion of a `physical' solution, 
the nonlinear equation can be approximated either by a finite particle system or by a delayed equation. 
As a by-product, we obtain the existence of `synchronized' solutions, for which a macroscopic proportion of neurons may spike at the same time. 

\vspace{5pt}

\noindent {\sc Keywords:} McKean nonlinear diffusion process; 
counting process; propagation of chaos; integrate-and-fire network; Skorohod M1 topology; neuroscience.
\end{abstract}

\section{Introduction}

Recently several rigorous studies (\cite{CCP, caceres:perthame, carillo:gonzales:gualdani:schonbek, DIRT}) have been concerned with a mean-field equation modeling the behavior of a very large (infinite) network of interacting spiking neurons proposed in \cite{ostojic:brunel:hakim}
%where each individual neuron evolves according to the so-called noisy integrate-and-fire model.
(see also \cite{baladron-fasioli-touboul-faugeras, masi-galves-locherbach-presutti, faugeras-touboul-cessac, lucon-stannat} and references therein for other types of mean-field models motivated by neuroscience).  
As a nonlinear SDE in one-dimension the equation for the electrical potential $X_t$ across any typical neuron in the network at time $t$ takes the form
\begin{equation}
\label{nonlinear eq}
X_t = {X_0} + \int_0^tb(X_s)ds +\alpha\E(M_t) + W_t - M_t, \qquad t\geq0,
\end{equation}
where $X_0<1$ almost surely, $(W_t)_{t\geq0}$ is a standard Brownian motion
 and $b$ is a Lipschitz function of linear growth.  Here $\alpha$ is a parameter in $(0, 1)$ and the process $M = (M_t)_{t\geq0}$ counts the number of times that $X=(X_t)_{t\geq0}$ reaches $1$ before time $t$, so that it is integer-valued (see Section \ref{se:2} for a precise description).  The idea is that when $X$ reaches the threshold $1$, $M$ instantly increases by $1$ so that $X$ is reset to a value below the threshold, and we say that the neuron has \textit{spiked}.  Throughout the article we will write $e(t):=\E(M_t)$.

%From a PDE point of view, the study of equation \eqref{intro: nonlinear eq} is equivalent to the study of the Fokker-Planck equation for its density $p(t, y)dy = \mathbb{P}(X_t \in dy)$, given by
%\begin{equation}
%\label{intro: FP}
%\partial_t p(t, y) + \partial_y\left[\left(b(y) + \alpha e'(t)\right)p(t, y)\right] - \frac{1}{2}\partial^2_{yy}p(t, y) = {\color{red}\eta_0}(y)e'(t), \qquad y< 1,
%\end{equation}
%where $e(t) := \E(M_t)$, subject to boundary conditions $p(t, 1) = 0$, $p(t, -\infty) = 0$ and initial condition $p(0, y)dy = \mathbb{P}(X_0\in dy)$, and 
%where ${\color{red}\eta_0}$ is the Dirac measure at $0$.  Moreover, the condition that $p(t,y)$ must remain a probability density translates into the fact that 
%\[
%e'(t) = \frac{d}{dt}\E(M_t) = - \frac{1}{2}\partial_yp(t, 1), \qquad \forall t>0,
%\]
%which describes the nonlinearity of the problem.

Equation \eqref{nonlinear eq} is in fact nontrivial, since the form of the nonlinearity is not regular enough for the application of the standard  McKean-Vlasov theory (\cite{meleard,sznitman}).  Indeed, 
the problem is that, on the infinitesimal level,
the mean-field term in \eqref{nonlinear eq} reads as $e'(t) =
[d/dt]\E(M_t)$, which is by no means regular with respect to the law of $X_{t}$. In \cite{DIRT}, it is proven that
$e'(t) = - (1/2) \partial_y p(t, 1)$, where 
$p(t, y)dy = \mathbb{P}(X_t\in dy)$ is the marginal density of $X_{t}$, 
which shows how singular the dependence of $e'(t)$ upon the law of $X_{t}$ is.
As such, most of the previous work studying this equation has been focused on the existence of a solution and its properties, bringing to light some non-trivial mechanisms. 

The main point is that, for some choices of parameters 
($\alpha$ too big for fixed $X_0$ concentrated close to the boundary), any solution to 
\eqref{nonlinear eq} must exhibit what has been described as a `blow-up' in finite time.  More precisely this means that $e'(t)$ (which is the mean-firing rate of the network at time $t$) must become infinite for some finite $t$.  This was done in \cite{CCP} by means of a PDE method.  Interpreting \eqref{nonlinear eq} as a description of an infinite network of neurons, a blow-up is thus a time at which a proportion of all the neurons in the network spike at exactly the same time, which we refer to as a synchronization.  Despite the interest in this phenomena, up until now it has been unclear how to continue a solution after a blow-up.  On the other hand, in \cite{DIRT} it was shown by probabilistic arguments that for other choices of parameters ($\alpha$ small enough for fixed $X_0=x_0$), \eqref{nonlinear eq} has a unique solution for all time which does not exhibit the blow-up phenomenon.  These two complementary results are made precise in Theorems \ref{thm:CCP} and \ref{thm:DIRT} below.

The aim of the present work is to provide further insight into this nonlinear equation by providing two ways of 
approximating (and moreover constructing) a solution.  The first is via the natural particle system associated to \eqref{nonlinear eq}, which describes the behavior of the finite network of neurons.  In fact, the introduction of \eqref{nonlinear eq} in \cite{ostojic:brunel:hakim} is inspired from this finite dimensional system: it is there asserted that, when the size of the network becomes infinite, neurons become independent and evolve according to \eqref{nonlinear eq}.  However, the proof of this fact (which is a propagation of chaos result) is not given.  
The first of our main objectives is to fill this gap and to rigorously show that any weak limit of the particle system 
must be a solution to \eqref{nonlinear eq} (see Theorem \ref{weak existence thm}). In particular, we show that the particle system converges to
the solution of \eqref{nonlinear eq} whenever uniqueness holds, in which case propagation of chaos 
holds as well. Again, due to the irregularity and nature of the particle system, this result is in fact more difficult than it might appear. The second objective is to recover a similar result when approximating 
the self-interaction in \eqref{nonlinear eq} by delayed self-interactions (see
Theorem \ref{weak existence thm:delay}). 
The motivation for considering the delayed equation (which is still nonlinear) is that it never exhibits a blow-up phenomenon, even with $\alpha$ close to $1$, making it easier to handle (see Proposition \ref{prop: delayed existence}).  

In both cases, the strategy relies on two ingredients. First, we show that there exists a notion of `physical' solutions to 
equation \eqref{nonlinear eq} for which spikes occur physically, in a `sequential' way. 
The interesting feature of `physical'
solutions is that we allow the function $(e(t))_{t \geq 0}$ to be discontinuous, but
characterize the size of any jumps in a precise way.
% can be implicitly described in terms of 
%the cumulative distribution functions of $(X_{t-})_{t \geq 0}$, 
Second, we show that there is a particularly suitable topology on the space of `continus \`a droite avec limites \`a gauche' paths
(c\`adl\`ag paths in acronym)  for handling both approximations. The point is indeed to prove that the approximating families are tight for the so-called M1 Skorohod topology on the space of c\`adl\`ag paths, which is much less popular than the 
J1 topology, but which turns out to be very convenient for  handling non-decreasing c\`adl\`ag processes such as 
the counting process $(M_{t})_{t \geq 0}$.   

As a significant by-product, the paper 
shows the existence of `physical' solutions to \eqref{nonlinear eq} for which 
the function $(e(t))_{t \geq 0}$ may be discontinuous, but where we explicitly specify the size of any jump. 
This is a completely new fact in the literature, and is of real importance  
in neuroscience, as the size of the discontinuity of the function $e$ indicates the proportion of neurons that synchronize at any time.
%indicates
% that some macroscopic proportion of neurons spike at the same time, i.e. there has
 %been a synchronization. 
 The notion of `physical' solutions together with 
 the existence result thus permits the continuation of the solution after the synchronization
 and, therefore, allows the circumvention of the blow-up phenomenon experienced in 
 \cite{CCP} and \cite{DIRT}. In particular, this gives a rigorous framework for investigating 
the long time behavior of synchronization events, which is a fundamental question in neuroscience. 
This also raises the question of uniqueness of `physical' solutions that experience a synchronization. We feel that it must be true, but the question is 
left open. We refrain from addressing this problem in the paper as it would require additional materials, including a careful discussion about the shape of the solution after some synchronization has occurred. We plan to go back to this question in a future work. 
%
%
%
%Of course this raises several other important questions, that we are not able to solve here.  Is the solution unique for all $\alpha\in(0,1)$?  Is it possible to characterize the times at which $[d/dt] \E(M_t)$ explodes, and the size of the jump in $t\mapsto \E(M_t)$ at these times (we have some conjectures)? Does the particle system converge to this unique solution?}

We would finally like to remark that variations of the model we present here could well be of interest in other contexts. In particular in a financial setting, a similar system has indeed been used to model the default rate of a large portfolio (\cite{gisecke_2013, spiliopoulos}) where a default occurs when a particle reaches a threshold.  Our model is however more delicate than the one considered there, since the interactions we consider are more singular and produce the blow-up phenomenon that is not present in their setting.

The organization of the paper is as follows. In Section \ref{se:2}, we discuss the notion of 
`physical' solutions to \eqref{nonlinear eq}. The approximating systems are introduced in 
Section \ref{se:3}, in which we prove that both the associated particle system and the delayed equation 
are solvable. The main results are exposed in Section 
 \ref{se:results}, where we also give a rough presentation of the M1 topology.   Proofs are given in Section 
 \ref{se:proofs}.

%%%%%%%%%%%%%
%Section 2
%%%%%%%%%%%%%

\section{The nonlinear equation: background}
\label{se:2}
The central nonlinear equation considered in this article is equation \eqref{nonlinear eq}
where $X_0<1$ almost surely, $\alpha\in(0, 1)$ and 
$(W_t)_{t\geq0}$ is a standard Brownian motion, defined on some probability space $(\Omega,{\mathcal A},\P)$
   with respect to a filtration $(\cF_t)_{t\geq0}$ satisfying the usual conditions.  
 The number of spikes of the equation until time $t$ (inclusive) is given by
\begin{equation}
\label{M def}
M_t := \sum_{k\geq1}\Ind_{[0, t]}(\tau_k),
\end{equation}
where the sequence of stopping times $(\tau_k)_{k\geq0}$ is defined by $\tau_0=0$ and 
\begin{equation}
\label{tau def}
\tau_k = \inf\bigl\{t>  \tau_{k-1}: X_{t-} +  \alpha\Delta e(t) \geq1 \bigr\}, \quad k\geq1.
\end{equation}
We have here used the notation
$e(t) := \E(M_t)$, $t\geq0$,
and, for a given c\`adl\`ag function $f: [0, \infty) \rightarrow \R$,
$\Delta f(t) := f(t) - f(t-)$, $t\geq0$, which will be fixed throughout the article.
The pair \eqref{M def}--\eqref{tau def} is highly coupled as the definition of 
$(M_{t})_{t \geq 0}$ relies on its own expectation. This asks for a careful description of the notion of a solution.

\subsection{The right notion of a solution}
 As noted above, the process $M=(M_t)_{t\geq0}$ is intended to count the number of times $X=(X_t)_{t\geq0}$ spikes before time $t$.  At any time $t$ such that $X_{t-} \geq 1 - \alpha \Delta e(t)$,
the process $M$ registers a new spike (pay attention that the presence of the `$-$'
in the condition $X_{t-} \geq 1 - \alpha \Delta e(t) $ is crucial for ensuring the c\`adl\`ag property 
of the process).
In the case when the mapping $e$ is continuous at point $t$, 
the particle spikes if and only if $X_{t-}=1$. It is then reset to $0$ exactly after the spike, that is $X_t=0$.
Whenever $e$ jumps at time $t$, the jump $\Delta e(t) $ must be of positive size so that, because of the 
self-interaction, $X$ may spike
even if $X_{t-} <1$. Immediately after a spike occurs, i.e. when $X_{t-} \geq 1 -   \alpha\Delta e(t) $, 
$X_{t}$ is equal to $X_{t-}-1+\alpha \Delta e(t)$ and
may be strictly positive: it is as if, at time $t$, the particle is first reset to $0$ and then given a kick of magnitude 
$\alpha\Delta e(t)  - (1-X_{t-})$. Actually, such a description requires some precaution as the kick could force the particle to cross 
the barrier again at the same time $t$. This might happen if the kick 
$\alpha\Delta e(t) $ is greater than or equal to 1. Anyhow, such a phenomenon is expected to be `non-physical': under the condition $\alpha<1$, it does not make any sense to allow the system to spike twice (or more) at the same time. 
The argument for this is discussed at length below when making the connection with the 
finite particle system. In short, it says that physical spikes occur sequentially. 
%%if there were some scenarios $\omega$'s for which the system spiked twice  at a single time
%%$t$, then there would be, among them, some first $\omega$'s for which the second spike would occur 
%%first; 
%%but, 
%before one among all the scenarios $\omega$'s registers a second spike, the value of the kick in each of the scenario 
%cannot exceed $\alpha$,
%%as none could have registered more than one spike, 
%making impossible any new spike.}

The fact that the jumps of the 
process $(M_{t})_{t \geq 0}$ cannot exceed 1 provides some insight into the sequence of spiking times $(\tau_{k})_{k \geq 1}$.
First, given a solution satisfying $\P(\Delta M_{t}  \leq 1)=1$ for all $t \geq 0$, 
the sequence $(\tau_{k})_{k \geq 1}$ must be (strictly) increasing: there is no way for 
two spiking times to coincide if labelled by different indices. Moreover, the sequence $(\tau_{k})_{k \geq 1}$ cannot accumulate in finite time, as otherwise it would contradict the c\`adl\`ag nature of $(X_{t})_{t \geq 0}$.
Indeed, if $\tau_{\infty} := \lim_{k \rightarrow + \infty} \tau_{k} < + \infty$, then
$X_{\tau_{\infty}-}$ is equal to both
$\lim_{k \rightarrow + \infty} X_{\tau_{k}-}$ and
$\lim_{k \rightarrow + \infty} X_{\tau_{k}}$, which gives a contradiction since 
$X_{\tau_{k}}=X_{\tau_{k}-}-1+\alpha \Delta e(\tau_{k}) < X_{\tau_{k}-}-1+\alpha$. 

It also gives some insight into the jumps of the function $e$, summarized in the following proposition.

\begin{proposition}
Assume that the pair $(X_{t},M_{t})_{t \geq 0}$ of c\`adl\`ag processes is such that
\begin{enumerate}
\item $(M_{t})_{t \geq 0}$ has integrable marginal distributions;
\item for all $t \geq 0$, $\P(\Delta M_{t}  \leq 1)=1$;
\item $\P$-almost surely, \eqref{nonlinear eq}, \eqref{M def}
and 
\eqref{tau def} hold true.
\end{enumerate}
Then, for any time $t \geq 0$, the jump $\Delta e(t)$ satisfies
\begin{equation}
\label{eq:13:2:14:1}
\Delta e(t) = \P \bigl(X_{t-} + \alpha \Delta e(t) \geq 1 \bigr).
\end{equation}
%where, for a given c\`adl\`ag function $f: \R_{+} \rightarrow \R$ and for $0 \leq t$, $\Delta f(t)$ stands for 
%\begin{equation*}
%\Delta f(t) := f(t) - f(t-). 
%\end{equation*}
\end{proposition}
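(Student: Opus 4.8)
The plan is to prove the pointwise identity \eqref{eq:13:2:14:1} by showing that each side equals $\P(\Delta M_{t}\geq 1)$. Abbreviate $g(s):=X_{s-}+\alpha\Delta e(s)$, the threshold quantity appearing in \eqref{tau def}. For the left-hand side: since $(M_{t})_{t\geq0}$ is non-decreasing, c\`adl\`ag and has integrable marginals by (1), monotone convergence along $s\uparrow t$ gives $e(t-)=\E(M_{t-})$ (and dominated convergence along $s\downarrow t$ gives right-continuity, so $e$ is c\`adl\`ag), hence $\Delta e(t)=\E(M_{t})-\E(M_{t-})=\E(\Delta M_{t})$. By \eqref{M def}, $M_{t}$ takes values in $\{0,1,2,\ldots\}$, is finite a.s. by (1), and is non-decreasing in $t$, so $\Delta M_{t}\in\{0,1,2,\ldots\}$; assumption (2) then forces $\Delta M_{t}\in\{0,1\}$ a.s., whence $\E(\Delta M_{t})=\P(\Delta M_{t}\geq 1)$. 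It therefore remains to establish $\P(\Delta M_{t}\geq 1)=\P(g(t)\geq 1)$, and for this it is enough to prove that, almost surely, $\{\Delta M_{t}\geq 1\}=\{g(t)\geq 1\}$.

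First, from \eqref{M def} one has the pointwise identity $\Delta M_{t}=\#\{k\geq1:\tau_{k}=t\}$, so $\{\Delta M_{t}\geq 1\}=\bigcup_{k\geq1}\{\tau_{k}=t\}$. For the inclusion $\{g(t)\geq1\}\subseteq\bigcup_{k}\{\tau_{k}=t\}$ I would take $t>0$ (the boundary case $t=0$ is immediate from $X_{0}<1$ a.s.) and set $N:=M_{t-}$, which is finite a.s. by (1); then exactly $N$ of the (increasing, non-accumulating) spiking times lie strictly below $t$, so $\tau_{N}<t\leq\tau_{N+1}$ with the convention $\tau_{0}=0$. On $\{g(t)\geq1\}$ the time $t$ satisfies $t>\tau_{N}$ and $g(t)\geq1$, so $t$ belongs to the set whose infimum defines $\tau_{N+1}$; hence $\tau_{N+1}\leq t$, and therefore $\tau_{N+1}=t$.

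The reverse inclusion will be the delicate point, and I would argue by contradiction. Fix $k$ and work on $\{\tau_{k}=t\}$, which forces $\tau_{k}=t<\infty$. The index $k$ contributes to the sum \eqref{M def} at time $t$, so $\Delta M_{t}\geq1$, and by (2) in fact $\Delta M_{t}=1$; hence, subtracting \eqref{nonlinear eq} at $t$ and at $t-$, $X_{t}=X_{t-}+\alpha\Delta e(t)-1=g(t)-1$. Suppose $g(t)<1$. Then $t\notin\{s>\tau_{k-1}:g(s)\geq1\}$, and since $t=\tau_{k}$ is the infimum of this set there exist $s_{n}\downarrow t$ with $s_{n}>t$ and $g(s_{n})\geq1$. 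By right-continuity of $X$ one gets $X_{s_{n}-}\to X_{t}$, while $\Delta e(s_{n})\to0$ since $e$ is c\`adl\`ag; thus $g(s_{n})=X_{s_{n}-}+\alpha\Delta e(s_{n})\to X_{t}=g(t)-1<0$, contradicting $g(s_{n})\geq1$. Hence $g(t)\geq1$ on $\{\tau_{k}=t\}$, which gives $\bigcup_{k}\{\tau_{k}=t\}\subseteq\{g(t)\geq1\}$ and completes the argument.

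I expect this last step to be the main obstacle: it is precisely the statement that at a genuine spiking time the threshold quantity $g$ actually attains the level $1$ (the infimum in \eqref{tau def} being ``effectively attained''), and it uses both the c\`adl\`ag regularity of $X$ and the jump bound (2) --- the latter being what pins down the reset value $X_{t}=g(t)-1$ and thereby produces the contradiction; without it one could not exclude $g(t)\geq2$ behaviour and the reset relation would be different.
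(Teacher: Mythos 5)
Your proof is correct and essentially fleshes out the paper's one-line argument, which relates both sides of the identity to $\P(\Delta M_t=1)$ and treats the equivalence $\{\Delta M_t\geq 1\}=\{X_{t-}+\alpha\Delta e(t)\geq 1\}$ as immediate from the definition \eqref{tau def}. Your careful justification of the reverse inclusion — showing, via right-continuity of $X$, the vanishing of $\Delta e(s)$ as $s\downarrow t$, and the reset relation pinned down by assumption (2), that on $\{\tau_k=t\}$ the threshold quantity really does satisfy $X_{t-}+\alpha\Delta e(t)\geq 1$ — is exactly the detail the paper glosses over, and your argument for it is sound.
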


\begin{proof}
Given some time $t \geq 0$, a necessary and sufficient condition for registering a spike (that is to have 
$M_{t} - M_{t-}=1$), is 
$X_{t-} + \alpha \Delta e(t) \geq 1$. Therefore, the probability of observing a spike is $\P(X_{t-} + \alpha \Delta e(t) \geq 1)$, which proves that 
$\Delta e(t) = \P ( \Delta M_{t} = 1 ) = \P (X_{t-} + \alpha \Delta e(t) \geq 1 ). 
$
\end{proof}

Unfortunately, equation \eqref{eq:13:2:14:1} is not sufficient to characterize the size of the jumps. Indeed one can guess simple examples
of distributions for the law of $X_{t-}$ such that the equation (in $\eta$) 
\begin{equation}
\label{eq:13:2:14:2}
\eta = \P \bigl(X_{t-} + \alpha \eta \geq 1 \bigr) 
\end{equation}
has several solutions. For instance, if $X_{t-}$ has a uniform distribution on $[1-\alpha,1]$, then
the equation is satisfied for every $\eta \in [0,1]$.
In order to determine which solution to \eqref{eq:13:2:14:2} characterizes the size of the jump, we refer again to what 
a physical solution to \eqref{nonlinear eq} must be. In \eqref{eq:13:2:14:2}, 
$\alpha \eta$ is intended to stand for the magnitude of the kick felt by the particle. The idea behind this is that 
we consider all the $\omega\in\Omega$ for which the kick is large enough to make the particle cross the barrier. 
To put it differently there must be enough mass near 1 
in the distribution of $X_{t-}$ to `absorb' the particle from $1-\alpha \eta$ to $1$. 
Implicitly, this requires that there is no gap in the mass. If, for some $\eta' < \eta$, 
the probability $\P(X_{t-} + \alpha \eta' \geq 1)$ is (strictly) less than $\eta'$, then 
the kick is not strong enough to absorb the particle when at distance $\alpha \eta'$ from $1$. 
This suggests that, physically, the magnitude of the kick 
must be given as the largest magnitude for which `absorption' can occur. Therefore, a reasonable characterization 
for $\Delta e(t)$ is 
\begin{equation*}
%\label{eq:characterization:spike}
\begin{split}
\Delta e(t) &= \sup \bigl\{ \eta \geq 0 : \forall \eta' \leq \eta,
  \P \bigl(X_{t-} + \alpha \eta' \geq 1 \bigr) \geq \eta' \bigr\}
  \\
  &= \inf \bigl\{ \eta \geq 0 : 
\P \bigl(X_{t-} + \alpha \eta \geq 1 \bigr) < \eta \bigr\}.
 \end{split} 
\end{equation*}
At this stage of the paper, we will keep this characterization as a necessary condition for  
a `physical' solution 
to \eqref{nonlinear eq}. Again, we will justify this choice in a more detailed way below.
% when discussing the construction of 
%the particle system associated with \eqref{nonlinear eq}. 
With this in mind, we thus make the following precise definition.
\begin{defin}
\label{def:nonlineareq}
We call a (physical) solution to \eqref{nonlinear eq} a pair $(X_{t},M_{t})_{t \geq 0}$ 
of c\`adl\`ag adapted processes such that
\begin{enumerate}
\item $(M_{t})_{t \geq 0}$ has integrable marginal distributions;
\item for all $t \geq 0$, $\P(\Delta M_{t} \leq 1)=1$;
\item $\P$-almost surely, \eqref{nonlinear eq}, \eqref{M def}
and 
\eqref{tau def} hold true;
\item the
discontinuity points of the
 function $e: [0,+\infty) \ni t \mapsto \E(M_{t})$ satisfy
\begin{equation*}
\Delta e(t) = \inf \bigl\{ \eta \geq 0 :
\P \bigl(X_{t-} + \alpha \eta \geq 1 \bigr)  < \eta \bigr\}. 
\end{equation*}
\end{enumerate}
\end{defin}

We underline that a physical solution satisfies \eqref{eq:13:2:14:1}, but that we need (4) to characterize the size of the jump $\Delta e(t)$ (and hence avoid non-physical phenomenon as discussed in the above examples -- see also the paragraph `Non-physical solutions' on page \pageref{Non-physical solutions para}).  A sufficient condition for a physical solution is given in Proposition \ref{prop:assumption:physical:criterion} below.

\subsection{Standing assumptions and related literature}

We will make the following two assumptions throughout the article.
\begin{assumption}[Globally Lipschitz drift]\label{assumption 1} 
The drift $b:(-\infty, 1] \to \R$ is Lipschitz continuous such that
$|b(x) - b(y)| \leq K|x-y|$, for all $x, y \in (-\infty, 1]$.
\end{assumption}

\begin{assumption}[Initial condition]\label{assumption 2} 
The initial condition $X_{0} \in (-\infty, 1 - \varepsilon_0]$ almost surely for some $\varepsilon_0>0$ and
\(X_{0}\in L^p(\Omega)\) for any  $p \geq 1$. 
\end{assumption}

The assumption that the distribution of the initial condition has support in $(-\infty, 1-\varepsilon_0]$, rather than in $(-\infty, 1)$, is a slight simplification. It is motivated by technical reasons that will be specified in the core of the proofs.  
%We believe the result to be true in the more general case but refrain from investigating this here, to keep the proof at a %reasonable level of complexity.}

As mentioned in the Introduction, the existence and uniqueness of a solution to \eqref{nonlinear eq} is a nontrivial problem.  It is addressed in \cite{CCP} and \cite{DIRT}, as well as \cite{carillo:gonzales:gualdani:schonbek}, but in the smaller class of pairs $(X_{t},M_{t})_{t \geq 0}$ for which the mapping $e : [0,+\infty) \ni t \mapsto {\mathbb E}(M_{t})$ is continuous (which renders the conditions 
(2) and (4) in Definition \ref{def:nonlineareq} useless). The following two theorems summarize the results in \cite{CCP} and \cite{DIRT} relevant for the present study.

\begin{thm}[\cite{CCP}]
\label{thm:CCP}
For every $\alpha\in(0,1)$, it is possible to find an initial condition $X_0$, such that 
there is no global solution (global meaning defined on the entire interval $[0,+\infty)$)
where the mapping $e$ is continuously differentiable.
Equivalently, it is possible to find an initial condition 
$X_{0}$ such that any solution 
to \eqref{nonlinear eq} experiences a blow-up, in the sense that $e'(t) = + \infty$
for some $t \geq 0$. 
\end{thm}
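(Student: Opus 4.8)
The plan is to prove the (stronger, non-existence) first formulation: for a well-chosen initial condition there is no pair $(X_{t},M_{t})_{t\ge0}$ solving \eqref{nonlinear eq} on all of $[0,+\infty)$ with $e$ of class $C^{1}$. Fix $\alpha\in(0,1)$, let $\mu>0$ be a large parameter to be chosen (depending only on $\alpha$ and on $b$ through $K$ and $|b(1)|$), and consider the exponential moment
\[
Z_{t}\ :=\ \E\bigl[\exp\bigl(\mu(X_{t}-1)\bigr)\bigr],\qquad t\ge 0 .
\]
The whole argument rests on the elementary a priori bound $X_{t}\le 1$ almost surely — between spikes $X<1$, and at a spiking time the regularity $e\in C^{1}$ forces $\Delta e(t)=0$, so $X$ is reset from $1$ to $0$ — whence $Z_{t}\in(0,1]$ for every $t$. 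I will show that, for the chosen $X_{0}$, $Z_{t}$ is instead forced to grow exponentially, a contradiction.

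First I would apply Itô's formula to $\exp(\mu(X_{t}-1))$ along \eqref{nonlinear eq}. Since $e\in C^{1}$, the term $\alpha\,de(t)=\alpha e'(t)\,dt$ has no jump part, and the only jumps of $X$ come from $-M_{t}$, at the spiking times $\tau_{k}$, where $X$ jumps from $X_{\tau_{k}-}=1$ to $X_{\tau_{k}}=0$. Taking expectations (the local martingale part has zero mean because $\varphi'(y)=\mu e^{\mu(y-1)}$ is bounded on $(-\infty,1]$) and differentiating in $t$ — legitimate because $\varphi'(y)b(y)$ is bounded on $(-\infty,1]$ by the linear growth of $b$, and $\E[M_{t}]<\infty$ — one obtains
\[
Z_{t}'\ =\ \frac{\mu^{2}}{2}Z_{t}\ +\ \mu\,\E\bigl[b(X_{t})\,e^{\mu(X_{t}-1)}\bigr]\ +\ e'(t)\Bigl(\alpha\mu Z_{t}-\bigl(1-e^{-\mu}\bigr)\Bigr),
\]
where the last parenthesis collects the self-excitation drift ($+\alpha\mu e'(t)Z_{t}$) and the reset jumps ($e^{\mu(0-1)}-e^{\mu(1-1)}=e^{-\mu}-1$ per spike, i.e.\ $(e^{-\mu}-1)e'(t)$ after taking expectations). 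Using $|b(y)|\le|b(1)|+K(1+|y|)$ together with $\sup_{y\le1}e^{\mu(y-1)}(1+|y|)<\infty$, the middle term is bounded below by $-C_{1}$ with $C_{1}\le\mu(|b(1)|+2K)$; recalling $e'(t)\ge0$ this yields
\[
Z_{t}'\ \ge\ \frac{\mu^{2}}{2}Z_{t}-C_{1}\ +\ e'(t)\Bigl(\alpha\mu Z_{t}-\bigl(1-e^{-\mu}\bigr)\Bigr).
\]

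Now choose the data: take $X_{0}\equiv 1-\delta$ deterministically with $\delta=1/\mu$, so that $Z_{0}=e^{-1}$, and pick $\mu$ large enough that simultaneously $\alpha\mu Z_{0}>1-e^{-\mu}$ and $\tfrac{\mu^{2}}{2}Z_{0}>2C_{1}$ (both hold for $\mu$ large since the right-hand sides grow only like $\mu$ and like a constant, respectively). A standard bootstrap shows $Z_{t}\ge Z_{0}$ for all $t\ge0$: on the maximal interval on which $Z\ge Z_{0}$, the bracket multiplying $e'(t)$ is $\ge0$ and the drift part is $\ge\tfrac{\mu^{2}}{2}Z_{0}-C_{1}>0$, so $Z$ is strictly increasing there and the interval cannot be bounded. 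Consequently $Z_{t}'\ge\tfrac{\mu^{2}}{2}Z_{t}-C_{1}\ge\tfrac{\mu^{2}}{4}Z_{t}$ (because $Z_{t}\ge Z_{0}=e^{-1}$ makes $C_{1}\le\tfrac{\mu^{2}}{4}Z_{t}$ for $\mu$ large), and Grönwall's lemma gives $Z_{t}\ge Z_{0}\,e^{\mu^{2}t/4}\to\infty$, contradicting $Z_{t}\le1$. This proves the first statement. For the equivalent ``blow-up'' formulation I would invoke the local-in-time well-posedness and the continuation criterion for the continuous-$e$ class established in \cite{CCP, DIRT}: the solution is defined up to a maximal time $t^{\ast}$, necessarily finite by what precedes, and non-continuability forces $\limsup_{t\uparrow t^{\ast}}e'(t)=+\infty$, i.e.\ the announced blow-up of the firing rate.

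The main obstacle is conceptual rather than computational: it is the interplay in the coefficient of $e'(t)$. The self-excitation pushes $Z_{t}$ up by an amount proportional to $e'(t)Z_{t}$, while each unit of firing removes only the bounded amount $e'(t)(1-e^{-\mu})$ through the reset, so once $Z$ is large the feedback is purely amplifying; this is exactly why blow-up is unavoidable however small $\alpha>0$ is, provided enough mass of $X_{0}$ is placed near the threshold. Making this rigorous requires (i) justifying the Itô computation and the differentiation of $t\mapsto Z_{t}$, which needs the uniform moment bounds $\sup_{t\le T}\E|X_{t}|^{p}<\infty$ (these follow from the linear growth of $b$ and $\E[M_{t}]<\infty$ by a Grönwall estimate), and (ii) the genuinely nontrivial continuation criterion underpinning the second formulation, which is the content of \cite{CCP}. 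The original proof there runs the same moment computation directly on the Fokker--Planck equation for the density $p(t,\cdot)$, using the identity $e'(t)=-\tfrac12\partial_{y}p(t,1)$ recalled in the introduction; the probabilistic version sketched above is just an alternative packaging of the same mechanism.
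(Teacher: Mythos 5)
Your argument is correct, and it is essentially the probabilistic rewriting of the proof in \cite{CCP}: the exponential moment $Z_t=\E[e^{\mu(X_t-1)}]$ is exactly the generalized moment $M_\mu(t)=\int e^{\mu v}p(t,v)\,dv$ that Caceres--Carrillo--Perthame differentiate against the Fokker--Planck equation, and your Itô computation reproduces their differential inequality verbatim (diffusion term $\frac{\mu^2}{2}Z$, drift remainder $-C_1$, and the $e'(t)(\alpha\mu Z-(1-e^{-\mu}))$ competition between excitatory kick and reset, with blow-up once $\alpha\mu Z_0>1-e^{-\mu}$). Note that the paper itself does not reprove this theorem -- it simply cites \cite{CCP} -- and, as you already flag, the passage from the first formulation to the second (that the maximal existence time is a genuine blow-up time with $e'\to+\infty$) relies on the local well-posedness and continuation theory of \cite{CCP,DIRT}, which your sketch correctly defers to.
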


\begin{thm}[\cite{DIRT}]
\label{thm:DIRT}
For all initial conditions $X_0 = x_0 <1$, it is possible to find an $\alpha_0(x_0)\in(0, 1)$ such that, whenever $\alpha\in(0, \alpha_0)$, equation \eqref{nonlinear eq} possesses a unique (pathwise and thus in law) global solution 
such that the mapping $e$ is continuously differentiable.
\end{thm}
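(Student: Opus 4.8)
The plan is to treat \eqref{nonlinear eq}, restricted to solutions for which $e$ is continuous, as a fixed-point problem for the mean firing rate, and to run a contraction argument that closes precisely when $\alpha$ is small --- which also explains why no blow-up occurs in this regime. First, given a nondecreasing $c \in C^{1}([0,\infty))$ with $c(0)=0$, I would solve the ``frozen'' scalar equation obtained from \eqref{nonlinear eq} by replacing $\alpha e$ with $\alpha c$: a diffusion $X^{c}$ with bounded drift $b(\cdot)+\alpha c'(\cdot)$ which, each time it reaches $1$, is reset to $0$ (the continuity of $c$ making the reset exactly to $0$). With $M^{c}_{t}=\sum_{k\geq1}\Ind_{[0,t]}(\tau^{c}_{k})$ counting the resets, set $\Gamma(c)(t):=\E[M^{c}_{t}]$. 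Well-posedness of the frozen equation --- strict monotonicity and non-accumulation of $(\tau^{c}_{k})_{k}$, all moments for $M^{c}_{t}$ --- is routine: the reset level $0$ lies at distance $1$ from the threshold, $W$ is non-degenerate and $b$ has linear growth, so after each reset the next crossing time has a non-degenerate positive law and the number of crossings on a compact interval has an exponential tail. By the heat-kernel smoothing, $\Gamma(c)$ is $C^{1}$ with derivative the probability flux of $X^{c}$ through $1$, and a pair $(X,M)$ with $e\in C^{1}$ solves \eqref{nonlinear eq} if and only if $e=\Gamma(e)$ and $(X,M)=(X^{e},M^{e})$.

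Next, and this is the crux, I would establish an a priori bound on the firing rate: the rate for the $\alpha=0$ equation is bounded by a finite $\Lambda_{0}=\Lambda_{0}(1-x_{0})$, and more generally, if $\|c'\|_{\infty}\leq\Lambda$ then $\|\Gamma(c)'\|_{L^{\infty}([0,T])}\leq\Lambda_{\ast}(\alpha\Lambda,1-x_{0})$ with $\Lambda_{\ast}(0,\cdot)=\Lambda_{0}$ and $\Lambda_{\ast}$ continuous at $0$. The mechanism is the renewal structure $\Gamma(c)(t)=\sum_{k\geq1}\P(\tau^{c}_{k}\leq t)$: the increments $\tau^{c}_{k}-\tau^{c}_{k-1}$ are crossing times of $1$ started from $0$ for diffusions with drift at most $B:=|b(0)|+K+\alpha\Lambda$ on $[0,1]$; Girsanov together with the reflection principle bound the law of such a crossing time by a fixed bounded, integrable kernel $g_{B}$, and the Volterra inequality
\begin{equation*}
\Gamma(c)'(t)\;\leq\;g_{0}(t)+\int_{0}^{t}\Gamma(c)'(s)\,g_{B}(t-s)\,ds
\end{equation*}
then yields the bound via a Gronwall estimate for Volterra kernels, the inhomogeneity $g_{0}$ --- the first crossing, from $x_{0}$ --- carrying the dependence on $1-x_{0}$. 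Hence there are $\alpha_{0}(x_{0})\in(0,1)$ and $\Lambda:=2\Lambda_{0}$ such that, for $\alpha<\alpha_{0}$, $\Gamma$ maps $\mathcal{C}_{\Lambda}:=\{c\in C^{1}([0,\infty)):c(0)=0,\ 0\leq c'\leq\Lambda\}$ into itself; a short parabolic bootstrap (bounded firing rate $\Rightarrow$ $e$ Lipschitz $\Rightarrow$ interior regularity of the density of $X^{c}$ $\Rightarrow$ continuity of the boundary flux) keeps $\Gamma(c)$ in $C^{1}$.

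For the contraction I would couple $X^{c_{1}}$ and $X^{c_{2}}$, $c_{1},c_{2}\in\mathcal{C}_{\Lambda}$, through the same $W$: between common resets the paths differ by at most $\alpha\|c_{1}-c_{2}\|_{\infty,[0,T]}+K\int_{0}^{t}|X^{c_{1}}_{s}-X^{c_{2}}_{s}|\,ds$, while the mismatch in the counts is controlled, via a flux comparison at the boundary, by the expected occupation time of $X^{c_{i}}$ in an $O(\alpha\|c_{1}-c_{2}\|)$-neighbourhood of $1$, which is itself $O(\alpha\|c_{1}-c_{2}\|)$ by the boundedness of the densities; this gives
\begin{equation*}
\bigl\|\Gamma(c_{1})-\Gamma(c_{2})\bigr\|_{\infty,[0,T]}\;\leq\;\alpha\,C(T)\,\|c_{1}-c_{2}\|_{\infty,[0,T]}.
\end{equation*}
Choosing a window $[0,h]$ with $\alpha\,C(h)<1$ gives a unique fixed point of $\Gamma$ on $[0,h]$, hence a unique solution with $e\in C^1$ there; since after the first spike $X$ always restarts from the \emph{fixed} state $0$, the same $h$ works on every subsequent window and concatenation yields a global solution. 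Uniqueness in the class of solutions with $e\in C^1$ follows once one checks --- via the same renewal/PDE estimate applied to any such solution (using that $e\in C^{1}$ makes $e'$ locally bounded, so one may localise until the firing rate exits $[0,\Lambda]$, which the estimate prevents for $\alpha<\alpha_{0}$) --- that every such solution lies in $\mathcal{C}_{\Lambda}$ and is therefore a fixed point of $\Gamma$; pathwise uniqueness then gives uniqueness in law.

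The hard part is the a priori firing-rate bound: one needs it uniform enough in $T$ to survive the concatenation and continuous in $\alpha$ at $0$ with a value depending only on $1-x_{0}$, and --- for the uniqueness statement --- valid for an arbitrary solution with $e\in C^1$ without assuming the bound beforehand. This is precisely where smallness of $\alpha$ is genuinely used, and it is what separates this regime from the blow-up of Theorem~\ref{thm:CCP}; by comparison, well-posedness of the frozen equation, the flux-comparison contraction estimate, and the parabolic bootstrap are routine once the bound is in hand.
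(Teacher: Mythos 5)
This theorem is cited from \cite{DIRT} and not proved in the present paper, so the comparison has to be against the argument in that reference. Your overall strategy --- freeze the firing rate, solve the resulting killed diffusion, set up a map $\Gamma$ on nondecreasing $C^{1}$ rate functions, and close via an a priori bound plus a contraction for small $\alpha$ --- is the strategy of \cite{DIRT}. In particular, the renewal identity you write down, $\Gamma(c)'(t)=g_{0}(t)+\int_{0}^{t}\Gamma(c)'(s)\,g(t-s\mid s)\,ds$, is precisely the identity the present paper quotes from [DIRT, Prop.~4.5] in \eqref{IFR delta} (with $g(u\mid s)=-\tfrac12\partial_{y}p_{0}(s,s+u;0,1)$), and the a priori firing-rate bound by dominating the time-inhomogeneous kernel and invoking a renewal/Gronwall estimate is the right mechanism and is essentially what makes the small-$\alpha$ regime work.

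Where your sketch diverges from \cite{DIRT} and where the gap lies is the contraction. You propose coupling $X^{c_{1}}$ and $X^{c_{2}}$ through the same Brownian motion and controlling $\lvert\E M^{c_{1}}_{t}-\E M^{c_{2}}_{t}\rvert$ by an occupation-time estimate near $1$. But the moment one coupled path spikes and the other does not, the two processes differ by almost $1$, so the Gronwall inequality no longer keeps $\lvert X^{c_{1}}-X^{c_{2}}\rvert=O(\alpha\|c_{1}-c_{2}\|)$; your occupation-time argument controls the \emph{first} decoupling event, not the accumulated count mismatch over $[0,T]$. The argument in \cite{DIRT} avoids this altogether by working at the level of the renewal/Duhamel representation of the boundary flux $-\tfrac12\partial_{y}p(t,1)$ and estimating its Lipschitz dependence on the frozen rate $c$ via parametrix/heat-kernel bounds; alternatively one can couple the ordered $Z$-processes $Z^{c}=X^{c}+M^{c}$ (which remain close across spikes) and then pay for the floor in $M=\lfloor(\sup Z)_{+}\rfloor$ by a density estimate --- but the raw $X$-coupling you describe does not close. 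A second, smaller issue is the globalisation: you justify concatenating windows of length $h$ by ``$X$ restarts from the fixed state $0$,'' but after time $h$ the relevant initial law is that of $X_{h}$, which is supported up to $1$ and not a Dirac, so the explicit dependence of $g_{0}$ on $1-x_{0}$ is lost; the correct route (which your renewal bound already contains) is that the majorising kernel $g_{B}$ does not depend on the initial law once a reset has occurred, so the Volterra/Gronwall bound is directly global in $T$ and no concatenation is required.
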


%It must be emphasized that the solutions we are dealing with are `strong' solutions, according to the standard terminology used 
%for stochastic differential equations. 
%Following the original argument by Yamada and Watanabe for standard SDEs, 
%it can be proved that 
%pathwise uniqueness implies uniqueness in law. The reader might worry about the dependence of the coefficients upon the expectation of the solution. Actually, this does not impact the Yamada and Watanabe argument since the whole proof consists in pushing forward two solutions on two possibly different probability spaces onto the same canonical space. This works with an interaction of the McKean-Vlasov type as well.

So far existence and uniqueness within the framework of Definition \ref{def:nonlineareq}
are completely open problems. As mentioned in the Introduction, the purpose of this paper is to provide a general compactness method for approximating solutions to \eqref{nonlinear eq}, and as a by-product prove the existence of a solution according to Definition \ref{def:nonlineareq}, for which the map $e$ may be discontinuous.
%The purpose of the paper is 
%to discuss a general compactness method for proving the existence of a solution .}
% It is also the first one to discuss the approximation by a finite particle system. 
Inspired by the earlier paper \cite{DIRT}, we will make use of the following reformulation of equation \eqref{nonlinear eq}.

\begin{rem}[Reformulation]
\label{rem: reformulation limit}
It will be very convenient throughout the article to sometimes work instead with a reformulated version of \eqref{nonlinear eq}, which describes the evolution of the process $Z=(Z_t)_{t\geq0}$, defined simply by
\[
Z_t := X_t + M_t, \quad t\geq0.
\]
It is then plain to see that $M_t$ can be completely expressed in terms of $(Z_s)_{0\leq s\leq t}$ as
\begin{equation}
\label{M def Z}
M_t = \bigl\lfloor \bigl( \sup_{0 \leq s \leq t} Z_{s} \bigr)_{+} \bigr\rfloor = \sup_{0 \leq s \leq t} \left\lfloor (Z_{s})_{+} \right\rfloor, \quad t\geq0,
\end{equation}
where $\lfloor x \rfloor$ and $(x)_+$ indicate the integer part of $x$ and $\max\{x, 0\}$ respectively, for any $x\in\R$.
Indeed, as $X_{t}<1$ and $(M_{s})_{s \geq 0}$ is non-descreasing, it is clear that $M_{t} \geq
\lfloor ( \sup_{0 \leq s \leq t} Z_{s} )_{+} \rfloor$. Conversely, for a given $k \geq 0$ such that 
$\tau_{k} \leq t < \tau_{k+1}$,
$X_{\tau_{k}} \geq 0$, so that
 $M_{\tau_{k}}=k
\leq Z_{\tau_{k}}$, which completes the proof of the equality.
 %Note that the interchange of the supremum and integer/positive part is valid whenever $Z$ is a c\`adl\`ag process. 
  The reformulated version of \eqref{nonlinear eq} is then given by
\begin{equation}
\label{nonlinear eq Z}
Z_t = Z_{0} + \int_0^tb(Z_s - M_s)ds +\alpha\E(M_t) + W_t, \qquad t\geq0,
\end{equation}
where $Z_0 (= X_0)<1$, and $(M_t)_{t\geq0}$ is defined by \eqref{M def Z}.  One big advantage of any solution $Z=(Z_t)_{t\geq0}$ to \eqref{nonlinear eq Z} over a solution $X=(X_t)_{t\geq 0}$ to \eqref{nonlinear eq} is that discontinuity points of $Z$ are 
dictated by those of the deterministic mapping $e: [0,+\infty) \ni t \mapsto {\mathbb E}(M_{t})$ only. 

Conversely, given a solution $(Z_{t}',M_{t}')_{t \geq 0}$ to \eqref{nonlinear eq Z} and \eqref{M def Z}, we  recover a (possibly non-physical) solution 
to the original equation \eqref{nonlinear eq} by setting $X_{t}' = Z_{t}' - M_{t}'$.
%with $M_{t}'= \sup_{s \in [0,t]} \lfloor (Z_{s}')_{+} \rfloor$,
%we recover a (possibly non-physical) solution 
%to the original equation \eqref{nonlinear eq} by setting $X_{t}' = Z_{t}' - M_{t}'$, so that the two formulations are completely equivalent. 
%This holds true whatever the notion of solution is (physical or non-physical).
\end{rem}

\subsection{A criterion for a physical solution}
The following lemma is an adaptation of \cite[Prop. 3.1]{DIRT}. The proof is left to the reader. It
relies on Gronwall's lemma and
\eqref{M def Z}--\eqref{nonlinear eq Z}. 
\begin{lem}
\label{lem:stab}
Consider a pair $(X_{t},M_{t})_{t \geq 0}$ 
of c\`adl\`ag adapted processes such that
(1) and (3) hold in Definition \ref{def:nonlineareq}. 
%\begin{enumerate}
%\item $(M_{t})_{t \geq 0}$ has integrable marginal distributions; 
%\item $\P$-almost surely, \eqref{nonlinear eq}, \eqref{M def}
%and 
%\eqref{tau def} hold true.
%\end{enumerate}
Then %(under Assumptions 1 and 2)
it holds that ${\mathbb E}[\sup_{t \in [0,T]} \vert Z_{t}\vert^p] <  + \infty$ for any $p\geq1, T>0$.
\end{lem}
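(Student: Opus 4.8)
The plan is to bound $\E[\sup_{t\in[0,T]}|Z_t|^p]$ by a Gronwall argument applied to the reformulated equation \eqref{nonlinear eq Z}. First I would fix $p\geq 1$ and $T>0$, and rewrite \eqref{nonlinear eq Z} as
\[
Z_t = Z_0 + \int_0^t b(Z_s - M_s)\,ds + \alpha e(t) + W_t, \qquad 0\leq t\leq T.
\]
Since $X_s = Z_s - M_s < 1$ almost surely (in fact $X_s \leq 1-\varepsilon_0$ initially, but the key point is $X_s<1$), Assumption \ref{assumption 1} gives $|b(Z_s - M_s)| = |b(X_s)| \leq |b(1)| + K|X_s - 1| = |b(1)| + K(1-X_s)$, and $1-X_s \leq 1$ on $[0,\tau_1)$ but more generally we only know $X_s<1$, so I would instead use linear growth of $b$ on $(-\infty,1]$: $|b(x)| \leq C(1+|x|)$ for $x\leq 1$, with $C$ depending only on $K$ and $b(1)$. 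Thus $|b(Z_s - M_s)| \leq C(1+|Z_s| + M_s)$.

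The next step is to control $M_s$ in terms of $\sup_{u\leq s}|Z_u|$ using the identity \eqref{M def Z}: $M_s = \lfloor (\sup_{0\leq u\leq s} Z_u)_+\rfloor \leq \sup_{0\leq u\leq s}|Z_u|$. Hence $|b(Z_s-M_s)| \leq C(1 + 2\sup_{0\leq u\leq s}|Z_u|)$. Writing $Z_s^* := \sup_{0\leq u\leq s}|Z_u|$, we get for all $t\in[0,T]$
\[
Z_t^* \leq |Z_0| + \int_0^t C\bigl(1 + 2 Z_s^*\bigr)\,ds + \alpha\, e(T) + \sup_{0\leq u\leq T}|W_u|,
\]
using that $e$ is non-decreasing so $e(t)\leq e(T)$. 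Here $e(T) = \E(M_T)$ is a deterministic finite constant by assumption (1) in Definition \ref{def:nonlineareq}, i.e. $(M_t)_{t\geq 0}$ has integrable marginals. Now raise both sides to the power $p$ (using $(a_1+a_2+a_3+a_4+a_5)^p \leq 5^{p-1}(a_1^p+\cdots+a_5^p)$ and Jensen on the time integral), take expectations, and invoke: $Z_0 = X_0 \in L^p$ by Assumption \ref{assumption 2}; $\E[\sup_{u\leq T}|W_u|^p] < \infty$ by Doob's maximal inequality (or the reflection principle); and $e(T)^p < \infty$. This yields
\[
\E\bigl[(Z_t^*)^p\bigr] \leq C_{p,T}\Bigl(1 + \int_0^t \E\bigl[(Z_s^*)^p\bigr]\,ds\Bigr),
\]
for some constant $C_{p,T}$ independent of $t\in[0,T]$, provided $\E[(Z_t^*)^p]$ is a priori finite. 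Gronwall's lemma then gives $\E[(Z_T^*)^p] \leq C_{p,T}\,e^{C_{p,T} T} < \infty$, which is the claim.

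The main obstacle, and the reason one must be slightly careful, is the a priori finiteness of $\E[(Z_t^*)^p]$ needed before Gronwall can be applied: $Z$ has jumps, and one must ensure the running supremum has finite $p$-th moment in the first place so that the integral inequality is not vacuous. This is handled by a standard localization: introduce the stopping times $\rho_N := \inf\{t\geq 0 : |Z_t| \geq N\}$ (which tend to $+\infty$ since $Z$ is càdlàg, hence locally bounded), run the entire argument with $t$ replaced by $t\wedge\rho_N$ — noting $|Z_{(t\wedge\rho_N)^-}|$ may exceed $N$ only by a jump of size at most $1+\alpha\Delta e(t) \leq 2$, so $\sup_{u\leq t\wedge\rho_N}|Z_u| \leq N+2$ is bounded — apply Gronwall to get a bound uniform in $N$, and then let $N\to\infty$ via monotone convergence. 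The only other point requiring attention is justifying $|b(x)| \leq C(1+|x|)$ on $(-\infty,1]$ from the Lipschitz assumption, which is immediate since $|b(x)| \leq |b(1)| + K|x-1| \leq (|b(1)|+K) + K|x|$.
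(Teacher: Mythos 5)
Your proof matches the route the paper sketches: Gronwall applied to the $Z$-reformulation \eqref{nonlinear eq Z}, using $M_s=\lfloor(\sup_{u\leq s}Z_u)_+\rfloor\leq\sup_{u\leq s}|Z_u|$ from \eqref{M def Z} and the linear growth of $b$ on $(-\infty,1]$, together with a localization to guarantee a priori finiteness before Gronwall is invoked. One detail needs repair. In the localization you bound the overshoot at $\rho_N$ by ``a jump of size at most $1+\alpha\Delta e(t)\leq 2$,'' but the lemma only assumes conditions (1) and (3) of Definition \ref{def:nonlineareq}, not (2), so no a priori bound $\Delta e(t)\leq 1$ is available. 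The correct observation is that the jumps of $Z$ come entirely from the deterministic term $\alpha e(\cdot)$: from \eqref{nonlinear eq}, $\Delta X_t=\alpha\Delta e(t)-\Delta M_t$, hence $\Delta Z_t=\Delta X_t+\Delta M_t=\alpha\Delta e(t)$. This jump need not be $\leq 1$, but by (1) and monotonicity of $e$ it is bounded by $\alpha e(T)<\infty$ for $t\leq T$, so $\sup_{u\leq t\wedge\rho_N}|Z_u|\leq N+\alpha e(T)$, which is the finite bound the localization actually requires. With that replacement, the rest of your argument goes through unchanged.
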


Next we present a useful application.  The reader may skip the proof on a first reading.
\begin{proposition}
\label{prop:assumption:physical:criterion}
Assume that the pair $(X_{t},M_{t})_{t \geq 0}$ of c\`adl\`ag processes is such that
(1), (2) and (3) hold in Definition \ref{def:nonlineareq}.
%\begin{enumerate}
%\item $(M_{t})_{t \geq 0}$ has integrable marginal distributions; 
%\item for all $t \geq 0$, $\P(\Delta M_{t} \leq 1)=1$;
%\item $\P$-almost surely, \eqref{nonlinear eq}, \eqref{M def}
%and 
%\eqref{tau def} hold true.
%\end{enumerate}
Assume also that, at any discontinuity time $t \geq 0$ of the mapping
$e : [0,+\infty) \ni s \mapsto {\mathbb E}(M_{s})$, it holds that
\begin{equation}
\label{eq:assumption:physical:criterion}
\forall \eta \leq \Delta e(t), \quad \P \bigl( X_{t-} \geq 1 - \alpha \eta\bigr) \geq \eta.
\end{equation}
Then the pair $(X_{t},M_{t})_{t \geq 0}$ is a physical solution.
\end{proposition}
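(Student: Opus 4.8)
The plan is to fix an arbitrary discontinuity time $t\geq 0$ of $e$ and to verify condition~(4) of Definition~\ref{def:nonlineareq} at $t$; since the pair already satisfies (1)--(3) by assumption, this is all that is missing. Put $\varphi(\eta):=\P(X_{t-}+\alpha\eta\geq 1)=\P(X_{t-}\geq 1-\alpha\eta)$ for $\eta\geq 0$, which is non-decreasing with values in $[0,1]$, so that $\bar\eta:=\inf\{\eta\geq 0:\varphi(\eta)<\eta\}\leq 1$ is well defined; the goal is to show $\Delta e(t)=\bar\eta$. Since (1)--(3) hold, \eqref{eq:13:2:14:1} applies and gives the fixed-point relation $\Delta e(t)=\varphi(\Delta e(t))$. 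The inequality $\Delta e(t)\leq\bar\eta$ is then immediate: hypothesis~\eqref{eq:assumption:physical:criterion} states exactly that $\varphi(\eta)\geq\eta$ for all $\eta\leq\Delta e(t)$, so no such $\eta$ belongs to $\{\eta:\varphi(\eta)<\eta\}$, whence $\bar\eta\geq\Delta e(t)$.

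For the reverse inequality $\Delta e(t)\geq\bar\eta$ --- the substantive point, which amounts to excluding a ``premature'' fixed point of $\varphi$ below $\bar\eta$ --- I would argue by contradiction. Assume $\Delta e(t)<\bar\eta$. By the definition of $\bar\eta$ one has $\varphi(\eta)\geq\eta$ on all of $[\Delta e(t),\bar\eta)$, and combined with $\varphi(\Delta e(t))=\Delta e(t)$ this gives, for every $\delta\in(0,\bar\eta-\Delta e(t))$,
\[
\P\bigl(1-\alpha(\Delta e(t)+\delta)\leq X_{t-}<1-\alpha\Delta e(t)\bigr)=\varphi(\Delta e(t)+\delta)-\varphi(\Delta e(t))\geq\delta .
\]
The particles in this slice lie strictly below the firing threshold $1-\alpha\Delta e(t)$, hence do not spike at $t$; right after the instantaneous kick they therefore satisfy $X_t=X_{t-}+\alpha\Delta e(t)\in[1-\alpha\delta,1)$, producing a ``wall of mass'' against the barrier: $\P(X_t\geq 1-\alpha\delta)\geq\delta$ for all small $\delta>0$.

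It remains to turn this wall of mass into a genuine contradiction with $(X_s,M_s)_{s\geq 0}$ being a solution of \eqref{nonlinear eq} on all of $[0,+\infty)$ that carries at $t$ a jump of $e$ of size exactly $\Delta e(t)$; this is the step I expect to be the main obstacle. The strategy is to analyse the dynamics on a short interval $[t,t+h]$ through the reformulation \eqref{nonlinear eq Z}--\eqref{M def Z}, using the $L^p$-bounds of Lemma~\ref{lem:stab} to control the drift and excitation terms uniformly on $[t,t+h]$ and a Brownian first-passage estimate for the noise, and exploiting that $\alpha<1$ forbids any ``refinement'' of a jump: the sequential, one-kick-at-a-time absorption mechanism described in Section~\ref{se:2}, together with the impossibility of a double spike ($\Delta M_s\leq 1$), forces this slice of mass to be swallowed at $t$ itself, enlarging the jump and contradicting both $\Delta e(t)=\varphi(\Delta e(t))$ and $\Delta e(t)<\bar\eta$. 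Making this last implication rigorous, rather than the soft estimates above, is where the real content of the argument lies.
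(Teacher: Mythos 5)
Your reduction to condition~(4) and the easy direction $\bar\eta\geq\Delta e(t)$ are correct, and you arrive at the same wall-of-mass observation as the paper (your $\P(X_t\geq 1-\alpha\delta)\geq\delta$ is exactly their \eqref{eq:19:12:1}). But the crucial step --- turning that wall of mass into a contradiction --- is left as a sketch, and the sketch points in the wrong direction. You say the wall of mass should be ``swallowed at $t$ itself, enlarging the jump,'' but that cannot happen: the particles in the slice satisfy $X_{t-}<1-\alpha\Delta e(t)$, so by hypothesis~(3) and \eqref{tau def} they \emph{definitively do not} spike at $t$ under the given solution; the jump $\Delta e(t)$ is fixed and there is nothing to enlarge. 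The contradiction has to come from what happens strictly \emph{after} $t$, namely from the right-continuity of $e$: once a mass $\geq\delta$ is parked at distance $\leq\alpha\delta$ below the barrier at time $t$, the Brownian noise forces a macroscopic firing immediately afterwards, yielding $\liminf_{h\downarrow 0}[e(t+h)-e(t)]>0$, which is incompatible with $e$ being c\`adl\`ag.

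Moreover the ingredient you gesture at --- a single Brownian first-passage estimate --- would not suffice, because the excitation term on $(t,t+h]$ depends on $e(\cdot)-e(t)$ itself, creating a feedback loop. The paper resolves this with a genuine bootstrap: it proves self-improving lower bounds $\alpha(e(r)-e(t))\geq c_n\sqrt{r-t}$ on $[t,t+h]$ via the comparison process $Y$, identifies the limiting constant $c^*$ as a fixed point of an explicit map $f$ involving $\E[\sup_{s\leq 1}(c\sqrt s+W_s)]$, and then shows by Brownian scaling and time reversal that $c^*\sqrt{h}$ is bounded below by a universal $\beta>0$ as $h\downarrow 0$. Your appeal to $\alpha<1$ and to Lemma~\ref{lem:stab} as the controlling mechanisms is also misplaced; $\alpha<1$ plays no special role in this iteration, and the $L^p$ bounds serve only to localize $Y$. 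In short, the architecture of your argument is right through the wall-of-mass step, but the key technical content --- the fixed-point bootstrap against right-continuity --- is both missing and misdescribed.
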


\begin{proof}
In order to prove that $(X_{t},M_{t})_{t \geq 0}$ is a physical solution, we must check that, for any $t\geq0$, there exists 
a decreasing sequence $(\eta_{n})_{n \geq 1}$, with $\Delta e(t)$ as its limit, such that 
\begin{equation*}
\P \bigl( X_{t-} + \alpha \eta_{n} \geq 1 \bigr) < \eta_{n}, \quad n\geq1.
\end{equation*}
Together with \eqref{eq:assumption:physical:criterion}, this indeed implies (4) in 
Definition \ref{def:nonlineareq}.

We argue by contradiction. Fix $t\geq0$ and assume that there exists $\eta_{0} > \Delta e(t)$ such that 
\begin{equation*}
\forall \eta \in (\Delta e(t), \eta_{0}], \quad \P \bigl( X_{t-} \geq 1 - \alpha \eta
\bigr) \geq \eta.   
\end{equation*}
Then, recalling from 
\eqref{eq:13:2:14:1}
that $\Delta e(t) = \P (X_{t-} + \alpha \Delta e(t) \geq 1)$, we deduce that
\begin{equation*}
\begin{split}
\forall \eta \in (0, \eta_{0}], \quad &\P \bigl( 1 - \alpha \eta \leq  X_{t-} < 1 - \alpha \Delta e(t)
\bigr)
\\
&= 
\P \bigl( X_{t-} \geq 1 - \alpha \eta \bigr)
- \P  \bigl(X_{t-} + \alpha \Delta e(t) \geq 1\bigr)
 \geq \eta - \Delta e(t).
\end{split}
\end{equation*}
Notice that, on the event $\{ 1 - \alpha \eta\leq X_{t-} < 1 - \alpha \Delta e(t) \}$, 
$\Delta M_{t}=0$, so that $X_{t} = X_{t-} + \alpha \Delta e(t)$. Therefore,
with $\eta'=\eta - \Delta e(t)$, we obtain, $\forall \eta' \in (0, \eta_{0}-\Delta e(t)]$,
\begin{align}
\label{eq:19:12:1}
  \P \bigl( X_{t} \geq 1 - \alpha \eta' \bigr) &\geq \P \bigl( 1>X_{t} \geq 1 - \alpha \eta', X_{t-}+\alpha \Delta e(t) =X_t \bigr) \nonumber\\
&= \P \bigl( 1-\alpha \eta \leq X_{t-} < 1-\alpha \Delta e(t), X_{t-}+\alpha \Delta e(t) =X_t \bigr) \nonumber\\
& = \P \bigl(1-\alpha \eta \leq X_{t-} < 1-\alpha \Delta e (t) \bigr) \geq \eta'.
\end{align}
To simplify, we let $\eta_{0}' := \eta_{0} - \Delta e(t) >0$.

The strategy is then to prove that $\liminf_{h \downarrow 0}[e(t+h)-e(t)] >0$, 
which will contradict the right-continuity of $e$. To do so, we use a stochastic comparison argument. 
For some small $h \in (0,1)$, we indeed have
\begin{equation*}
e(t+h) - e(t) \geq \P \bigl( \exists s \in (t,t+h] : Y_{s-} \geq 1 \bigr),
\end{equation*}
where $(Y_{s})_{s \in [t,t+h]}$ solves the equation 
\begin{equation*}
Y_{s} = X_{t} + \int_{t}^s b(Y_{u}) du +  \alpha (e(s)-e(t)) + W_{s} - W_{t}, \quad 
t \leq s \leq t+h.  
\end{equation*}
Indeed, as long as $(X_{s})_{s \in [t,t+h]}$ does not spike, it coincides with 
$(Y_{s})_{s \in [t,t+h]}$. In particular, if $M_{s-} - M_{t} =0$ and $Y_{s-} \geq 1$, then $X_{s-} \geq 1$ and thus
$M_{s} - M_{t}=1$. Therefore, $\{\exists s \in (t,t+h] : Y_{s-} \geq 1\} \subset
\{M_{t+h}-M_{t} \geq 1\}$.
We then get, for some constant $C$ (the value of which is allowed to increase from line to line, but will remain independent of $h$ and $\alpha$),
\begin{equation}
\label{eq:22:02:14:1}
e(t+h) - e(t)
\geq \P \Bigl( X_{t} - Ch \bigl( 1 +\sup_{s \in [t,t+h]} \vert Y_{s} \vert
\bigr) + \sup_{s \in [t,t+h]} \Bigl[  \alpha (e(s)-e(t)) + W_{s} - W_{t} \Bigr] \geq 1
\Bigr).
\end{equation}
By a standard application of Gronwall's lemma (recalling that $h$ can be chosen small enough so that 
$e(s)-e(t) \leq 1$ for all $s\in[t, t+h]$ as $e$ is right continuous),
\begin{equation*}
\vert Y_{s}  \vert \leq C \bigl( 1 + \vert X_{t} \vert  + \sup_{s \in [t,t+h]} \vert 
W_{s} - W_{t} \vert \bigr), \quad 
t \leq s \leq t+h,
\end{equation*}
so that $\P( \sup_{s \in [t,t+h]} 
\vert Y_{s} \vert  \geq 3 C, \vert X_{t} \vert \leq 1) \leq Ch$.
Since $X_{t} \geq 0$ implies $\vert X_{t} \vert \leq 1$, we obtain 
$\P( \sup_{s \in [t,t+h]} 
\vert Y_{s} \vert  \geq 3 C,  X_{t} \geq 0) \leq Ch$, so that, 
by \eqref{eq:22:02:14:1},
\begin{equation*}
\begin{split}
&e(t+h) - e(t) 
\\
&\geq \P \Bigl( X_{t} -  Ch(1+3C)  + \sup_{s \in [t,t+h]} \Bigl[  \alpha(e(s) - e(t)) + W_{s} - W_{t} \Bigr] \geq 1, 
\ \sup_{s \in [t,t+h]} 
\vert Y_{s}  \vert  \leq 3 C
\Bigr)
\\
&\geq \P \Bigl( X_{t} - Ch  + \sup_{s \in [t,t+h]} \Bigl[  \alpha(e(s) - e(t)) + W_{s} - W_{t} \Bigr] \geq 1, 
\ X_{t} \geq 0
\Bigr) - Ch,
\end{split}
\end{equation*}
where we have adjusted $C$.

Assume now that there exists $c \geq 0$ such that
$\alpha (e(r) - e(t)) \geq c \sqrt{r-t}$ for all $r \in [t,t+h]$, 
which is (at least) true with $c=0$. Then, by the above bound, we get
\begin{equation}
\label{eq:19:02:14:2}
e(t+h) - e(t) \geq 
\int_{0}^{+\infty}
\P \bigl( X_{t}  - Ch
 +  u \geq 1, \ X_{t} \geq 0
\bigr) d \nu(u) - Ch, 
\end{equation}
where $\nu$ denotes the law of the supremum of $ c \sqrt{s} +  W_{s}$ over
$s\in [0,h]$. Notice that $u \leq 1$ and 
$X_{t}  - Ch
 +  u \geq 1$ implies $X_{t} \geq 0$. Assuming without any loss of generality that 
 $\alpha \eta_{0}' = \alpha (\eta_{0} - \Delta e(t)) \leq 1$, we deduce from
%We get 
%\begin{equation}
%\label{eq:19:02:14:3}
%\Delta e(t,t+h) \geq 
%\int_{h^{1/2-\epsilon}}^{+\infty}
%\P \bigl( X_{t}  +   \sqrt{h} u - h^{1-\epsilon} \geq 1
%\bigr) d \nu(u)- C h.  
%\end{equation}
%Note that $h^{1-\epsilon} +  \sqrt{h} u \leq \alpha \eta_{0}'
%\Leftrightarrow u \leq  \alpha h^{-1/2} [\eta_{0}' - h^{1-\epsilon}]$. 
%As $e$ is right-continuous, we can assume that $h$ is so that 
%$\Delta e(t,t+h) < \eta_{0}'$. 
%Then, by 
\eqref{eq:19:12:1} that
\begin{equation}
\label{eq:19:02:14:3}
%\begin{split}
e(t+h) - e(t)\geq  
\int_{Ch}^{\alpha \eta_{0}'}
\frac{u - Ch}{\alpha}
d \nu(u) - Ch
\geq \frac{1}{\alpha}  \int_{Ch}^{\alpha \eta_{0}'}
u d \nu(u)- 2\frac{Ch}{\alpha}, 
\end{equation}
the constant $C$ being independent of $c$. 
Recall now that $c \sqrt{h} \leq \alpha (e(t+h) - e(t)) \leq 
\alpha \eta_{0}'/2$ for $h$ small enough as $e$ is right continuous.
Therefore, using the fact that the tail of $\sup_{s \in [0,h]} W_{s}$ is Gaussian, 
we obtain
\begin{equation*}
\begin{split}
\int_{\alpha \eta_{0}'}^{+\infty}
u d \nu(u) 
&={\mathbb E} \biggl[ \sup_{s \in [0,h]} \bigl( c \sqrt{s} + W_{s} \bigr) 
{\mathbf 1}_{\{ 
\sup_{s \in [0,h]} ( c \sqrt{s} + W_{s} )
\geq \alpha \eta_{0}'\}}
\biggr]
\\
&\leq {\mathbb E} \biggl[ \sup_{s \in [0,h]} \left( \frac{\alpha \eta_{0}'}{2} + W_{s} \right) 
{\mathbf 1}_{\{ 
\sup_{s \in [0,h]}  W_{s} 
\geq \alpha \eta_{0}'/2\}}
\biggr] \leq C h.
\end{split}
\end{equation*}
Moreover, quite obviously, $\int_{0}^{Ch} u d\nu(u) \leq Ch$. Finally, by
\eqref{eq:19:02:14:3},
with $C$ independent of $c$,
\begin{equation*}
%\label{eq:19:02:14:3}
\begin{split}
\alpha (e(t+h) - e(t)) \geq  
\int_{0}^{+\infty}
u d \nu(u)- Ch 
&=  {\mathbb E} 
\biggl[ \sup_{s \in [0,h]} \bigl( c \sqrt{s} + W_{s} \bigr) \biggr] - C h
\\
&= h^{1/2} \biggl(   
{\mathbb E} 
\biggl[ \sup_{s \in [0,1]} \bigl( c \sqrt{s} + W_{s} \bigr) \biggr] - C h^{1/2} \biggr),
\end{split}
\end{equation*}
the last equality following from Brownian scaling. 
A similar inequality can be proved for any $r \in [t,t+h]$, that is
$\alpha (e(r) - e(t)) \geq f(c) \sqrt{r-t} $, where 
$$f(c)  =   
 {\mathbb E} 
\biggr[ \sup_{s \in [0,1]} \bigl( c \sqrt{s} + W_{s} \bigr) \biggr] - C \sqrt{h}.$$
We deduce that, if the inequality 
$\alpha (e(r)-e(t)) \geq c \sqrt{r-t}$ holds for all $r \in [t,t+h]$, 
then $\alpha (e(r)-e(t)) \geq f(c) \sqrt{r-t}$ for all $r \in [t,t+h]$. 
Letting $c_{0}=0$ and $c_{n+1}=f(c_{n})$ for all $n \geq 0$, we deduce that 
$\alpha (e(r)-e(t)) \geq c_{n} \sqrt{r-t}$
for all $r \in [t,t+h]$ and all $n \geq 0$.

Clearly, we can choose $h$ small enough so that $c_{1} >0=c_{0}$. Since 
$f$ is non-decreasing, we deduce that the sequence $(c_{n})_{n \geq 0}$
is non-decreasing. As $e$ is locally bounded, the sequence has a finite limit $c^*$. Then, 
as $f$ is obviously Lipschitz continuous, we have $c^*=f(c^*)$, that is, 
\[
\begin{split}
c^* = \mathbb{E} \biggl[ \sup_{s \in [0,1]} 
(c^* \sqrt{s} + W_{s})  \biggr] - C \sqrt{h}
 = \mathbb{E} \biggl[ \sup_{s \in [0,1]} 
(c^* \sqrt{s} + W_{s}) - (c^*+W_{1}) \biggr] + c^* - C \sqrt{h}.
\end{split}
\]
Therefore, by time reversal,
\begin{equation*}
\begin{split}
C \sqrt{h} &= {\mathbb E} \biggl[ \sup_{s \in [0,1]} 
(c^* \sqrt{s} + W_{s}) - (c^*+W_{1}) \biggr]
\\
&\geq {\mathbb E} \biggl[ \sup_{s \in [0,1]} 
\bigl( c^* (s-1) + W_{s} - W_{1} \bigr) \biggr]
\geq  
{\mathbb E} \biggl[ \sup_{s \in [0,1]} 
(- c^* s + W_{s})  \biggr],
\end{split}
\end{equation*}
which says that $c^*$ must be large when $h$ is small. In particular, we can assume $h$ small enough so that
$c^* \geq 1$. Then,
\begin{equation*}
C\sqrt{h}
\geq  {\mathbb E} \biggl[ \sup_{s \in [0,(c^*)^{-2}]} 
(- c^* s + W_{s})  \biggr] = \frac{1}{c^*} 
{\mathbb E} \biggl[ \sup_{s \in [0,1]} 
(- s + W_{s})  \biggr],
\end{equation*}
which proves that, for $h$ small enough,
$c^* \sqrt{h} \geq \beta$, for some constant $\beta >0$. This implies 
$\liminf_{h \downarrow 0} [e(t+h)-e(t)] \geq \beta/\alpha$, which is a contradiction.
\end{proof}

\section{Two candidates for approximate solutions}
\label{se:3}
In this section we present two alternative systems, which are candidates to be approximations of the nonlinear equation \eqref{nonlinear eq}.

\subsection{The particle system approximation}
\label{pres: particle system}

As noted above, one of the main motivations for studying \eqref{nonlinear eq} is the idea that it describes the behavior of a very large number of interacting spiking neurons in a fully connected network, each evolving according to the classical noisy integrate-and-fire model.  More precisely, this idea translates into the fact that we would like \eqref{nonlinear eq} to describe the behavior of the particle system
\begin{equation}
\label{particle system}
\left\{
\begin{split}
X_{t}^{i,N} &= X^{i, N}_{0} + \int_{0}^t b\bigl(X_{s}^{i, N}\bigr) ds + \frac{\alpha}{N} \sum_{j = 1}^N M_{t}^{j, N}  + W_{t}^i - M^{i, N}_t\\
X^{i, N}_0 &\stackrel{d}{=} X_0\ \mathrm{independent\ and\ identically\ distributed,}
\end{split}
\right.
\end{equation}
for $i\in\{1, \dots, N\}$ and $ t\geq0$ when $N$ is large. Here $(X^{i,N}_t)_{t\geq0}$ represents the electrical potential of the $i$th neuron, $X_0$ satisfies standing Assumption \ref{assumption 2}, $(W^{i}_t)_{t\geq0}$ are independent standard Brownian motions, and now $(M^{i, N}_t)_{t\geq0}$ is the process that counts the number of times the $i$th neuron has `spiked' up until time $t$.  Precisely, we define for $i\in\{1, \dots, N\}$ and $ t\geq0$
\[
M^{i, N}_t := \sum_{k\geq 1} \Ind_{[0,t]}(\tau_k^{i, N}),
\]
where $\tau_0^{i,N} = 0$ and
\begin{equation}
\label{eq: particle system spike times}
\tau_{k}^{i,N} := \inf \biggl\{ t > \tau_{k-1}^{i,N} : X_{t-}^{i,N}
+ \frac{\alpha}{N} \sum_{j =1}^N \bigl(M^{j,N}_{t} - M^{j,N}_{t-}\bigr)
 \geq 1 \biggr\},\quad k\geq1,
\end{equation}
which should be compared with \eqref{tau def} 
(and which is as involved as \eqref{tau def} since the definitions of 
$\tau_{k}^{i,N}$ and $M^{i,N}_{t}$ are fully coupled).
The idea is that the system spikes if
one of the particles reaches the threshold $1$, but this can cause other particles 
to instantaneously spike through the empirical mean type interaction.  
However, exactly as in the previous section, where we defined a solution to \eqref{nonlinear eq},
we must be careful about what we mean by a `physical' solution to the particle system \eqref{particle system}.  This is because there may in fact exist multiple solutions to \eqref{particle system} and \eqref{eq: particle system spike times} (see section on `non-physical' solutions below).  The `physical' solution we will identify is in fact the one in which we require the 
instantaneous spikes 
induced at a spike time to
be ordered in a
natural way, the first spike occuring
when one of the particles hits the barrier. See below for a precise description.
At time $t=\tau_{k}^{i,N}$,  $X_{t}^{i,N} = 
 X_{t-}^{i,N} - 1
+ (\alpha/N) \sum_{j =1}^N (M^{j,N}_{t} - M^{j,N}_{t-})$.
Again, it should also be noted that the presence of the `$-$' in $X_{t-}^{i,N}$ in \eqref{eq: particle system spike times} 
ensures that $M^{i,N}$ and $X^{i,N}$ are c\`adl\`ag.  %This is an important technicality needed when it comes to defining the right underlying topological space on which to work.

Anyway, the point is that the system \eqref{particle system} is mathematically equivalent to the one used by Ostojic, Brunel and Hakim in \cite{ostojic:brunel:hakim} to describe the behavior of a finite network of neurons, and that \eqref{nonlinear eq} is a good guess as to what happens in the limit as $N\to\infty$.  Indeed, the extremely well developed theory of mean-field/McKean-Vlasov equations provides many rigorous results about when an individual particle in a system that interacts through an empirical mean becomes independent 
in the limit as $N\to\infty$, and then behaves according to a distribution dependent (McKean-Vlasov) limit equation.
However, despite the use of such a result in \cite{ostojic:brunel:hakim}, we argue that the current situation is quite different to any that has been previously studied 
in the literature due to the nature of the nonlinearity.
%and the presence of jumps in the particle system. F --> J : I am afraid that the referee says that there are some results for systems with jumps. I thus remove.  
Thus, one of the aims of this paper is to provide a complete rigorous proof of this convergence. 
% which involves showing that a propagation of chaos result holds for the particle system (see Theorem \ref{convergence thm} below). F --> J : I don't completely agree as Theorem 4.4 doesn't rely a propagation of chaos.

\begin{rem}
The reader may argue that it makes more sense physically to replace the interaction term $N^{-1} \sum_{j = 1}^N M_{t}^{j, N}$ in \eqref{particle system} by $(N-1)^{-1}\sum_{j \neq i} M_{t}^{j, N}$, so that if a single neuron spikes at time $t$, it is reset from the threshold $1$ to $0$ (rather than to $\alpha/N$).  However, we choose to keep the stated interaction term since it renders the analysis notationally simpler, while remaining mathematically 
equivalent %to the more realistic case F --> J,E : I save some space 
in the limit $N\to\infty$.
\end{rem}

\begin{rem}[Reformulation] 
%In a similar way to the reformulation of the nonlinear equation \eqref{nonlinear eq} described in 
%F --> J,E : I save some space
Following Remark \ref{rem: reformulation limit}, it will be convenient to reformulate the particle system \eqref{particle system} in terms of the processes $(Z^{i, N}_t)_{t\geq0}$, defined by
\[
Z^{i, N}_t := X^{i, N}_t + M^{i, N}_t, \qquad t\geq 0.
\]
Then, similarly to \eqref{nonlinear eq Z}, the reformulated system is given by 
\begin{equation}
\label{particle system Z}
\begin{split}
&Z_{t}^{i, N} = Z^{i, N}_{0} + \int_{0}^t b\left(Z_{s}^{i, N}- M_{s}^{i, N} \right) ds + \frac{\alpha}{N} \sum_{j = 1}^N M_{t}^{j, N}  + W_{t}^i, \\
&M_{t}^{i, N} = \bigl \lfloor \bigl(\sup_{s \in [0,t]} Z_{s}^{i, N} \bigr)_{+} \bigr\rfloor =\sup_{s \in [0,t]}  \bigl \lfloor \bigl(Z_{s}^{i, N} \bigr)_{+} \bigr\rfloor,
\end{split}
\end{equation}
for all $t \geq 0$ and $i\in \{1, \dots, N\}$, where $Z^{i, N}_{0} = X^{i, N}_{0} \stackrel{d}{=} X_0$ are i.i.d and $X_0$ satisfies Assumption \ref{assumption 2}.  
We will refer to \eqref{particle system Z} as the $Z$-particle system (and the original system \eqref{particle system} as the $X$-particle system).
\end{rem}

\noindent {\bf Notation.} In the sequel, we will use the convenient notation
\begin{equation}\label{eq:def_en}
\bar{e}^N(t) = \frac{1}{N} \sum_{i=1}^N M_{t}^{i,N}, \quad t \geq 0.
\end{equation}
We will also often omit the superscript $N$ in the notations
$X^{i,N}$, $Z^{i,N}$, $M^{i,N}$ and $\tau^{i,N}_{k}$ for simplicity. When no confusion is possible, we thus write 
$X^{i}$, $Z^{i}$, $M^{i}$ and $\tau^i_{k}$ instead.

\subsubsection{\textbf{Non-physical solutions}}
\label{Non-physical solutions para}
As mentioned already, the particle system defined above is not well-posed, as it may admit a large number of solutions when $\alpha$ is close to $1$.

Actually, uniqueness may fail for several reasons. A first way for constructing different solutions is 
to allow one particle to admit several spikes at the same time. Indeed, consider the $Z$-system \eqref{particle system Z} with $b \equiv 0$ and suppose that $\alpha$ has the form $\alpha = 1 - 1/(2m)$, for some integer $m \geq 1$.  Suppose moreover that, at some time $t$, it holds that (the system being initialized at $Z_{0}^{i} = 0$, $i \in\{1,\dots, N\}$)
\begin{equation*}
\forall i\in\{1,\dots,N\}, \quad Z_{t-}^{i} = 1 - \delta_{i}, \ M_{t-}^{i}=0, 
\end{equation*}
with $\delta_{1} =0$ and $\delta_{i} \in ((i-2)/(4N),(i-1)/(4N))$ for $i = 2,\dots,N$,
which, by the support theorem for Brownian motion, happens with positive probability.
Then, the system is to spike at time $t$ since the first particle reaches the barrier, but the 
spike procedure may be arbitrarily chosen. Indeed, setting arbitrarily 
$M_{t}^{i} = \ell \geq 1$,
for all $i \in\{1, \dots, N\}$, the equation for $Z^{i}$ gives
\begin{equation*}
Z_{t}^{i} = 1 - \delta_{i} +  \frac{\alpha}{N} \sum_{i=1}^N 
\ell = \ell + 1 - \frac{\ell}{2m}- \delta_{i} 
\in \left[\ell + \frac{3}{4} - \frac{\ell}{2m}, \ell +1 \right),
\end{equation*}   
for all $i\in\{1, \dots, N\}$. Then, if $\ell/m \leq 1$,
$\lfloor ( \sup_{0 \leq s \leq t} Z_{s}^{i})_+ \rfloor = \ell = M_{t}^{i}$,
so that the second relationship in \eqref{particle system Z} is satisfied.  Since $\ell\leq m$ is arbitrary, the system \eqref{particle system Z} clearly does not possess a unique solution.
According to the discussion below, cases where $\ell \geq 2$ will be considered 
as \textit{non-physical}.

We give here a second example where uniqueness fails 
even if the property \(\mathbb{P}(\Delta M^i_t \leq 1) = 1\)
is fulfilled. 
We present it with \(N=3\) particles 
but it could be generalized in an obvious way.
Suppose that at some (random) time \(t\), \(M^1_{t-} = M^2_{t-} = M^3_{t-} = 0\),
\(Z^1_{t-} = 1\), \(Z^2_{t-}, Z^3_{t-} \in (1-2\alpha/3,1-\alpha/3)\). We can then make explicit two solutions to \eqref{particle system Z}: a first one where only particle 1 spikes, that is
\(
M^1_t = 1 \) and \( M^2_t = M^3_t = 0
\),
and a second one where all the particles spike at time \(t\), 
that is \( M^1_t = M^2_t = M^3_t = 1\). 
In this example, the second case will be said \textit{non-physical}.
Intuitively, particle 1 is indeed intended to spike `first'.  
After particle 1 has spiked, particles 2 and 3 are both strictly below $1$, which 
should prevent them from spiking immediately.

%This example proves non uniqueness, even if \(\Delta M_t \leq 1\) a.s.

%Although such a counterexample shows that uniqueness may fail, we claim that the underlying solutions are non-physical when $\ell \geq 2$ (and hence of little interest).
%Indeed, the jumps in system \eqref{particle system Z} are defined globally, whereas they are expected to be ordered from a physical point of view. For instance, in the previous setting, particle number 1 is the first one to cross the barrier. When it crosses the barrier, all the other particles feel it and get $\alpha/(N-1)$ as a reward. Therefore, all the particles $i \in \{2,\dots,N\}$, with ${\color{red}(i-1)/4 \leq \alpha}$, are to cross the barrier as well. 
%Actually, the closest particle to the barrier is particle number 2. The second closest is particle number 3 and, according to the value of $\alpha$, particle number 4 may also cross the barrier. It then sounds reasonable to assume that there is some ordering in the procedure: the first particle to cross the barrier after particle number 1 should be particle 2. Then, all the others get a reward... With this procedure, the top particle is always particle number 1, once particle number 2 has crossed the barrier. At most, it gets $N-1$ rewards from the others particles and thus is at most at position $1+ \alpha$ when all the others have crossed the barrier. Since $\alpha <1$, it cannot reach the next integer $2$, thus excluding the previous example as a possible solution. 

\subsubsection{\textbf{Physical solutions}} In view of the above discussion, the problem is that the ordering of the 
spike cascade is not determined i.e. how spiking neurons instantaneously cause others to spike.  We now argue that in fact there is a natural way of ordering this cascade, which then leads to unique `physical' solutions.

To this end, consider the $X$-particle system \eqref{particle system} and define the set
\[
\Gamma_0 := \{i\in\{1, \dots, N\} : X^{i}_{t-} =1\}.
\]
We say $t$ is a spike time when $\Gamma_0\neq\emptyset$.  At a spike time $t$, it is certain that all the neurons in $\Gamma_0$ spike. It then makes sense to introduce a second \textit{time axis}, called the \textit{cascade time axis at spike time $t$}, and to say that, along this axis, neurons in $\Gamma_{0}$ are the first ones to spike.

\label{page cascade}Then it is natural to determine exactly which other neurons spike \textit{given that those in $\Gamma_0$ have already spiked}.  Since the system says that all the other neurons should feel the effect of the ones in $\Gamma_0$ spiking by receiving a kick to their potential of size $\alpha|\Gamma_0|/N$, this in turn means that all the neurons in the set
\[
\Gamma_{1} :=  \left\{i\in\{1, \dots, N\}\backslash \Gamma_0:  X^{i}_{t-} + \alpha\frac{|\Gamma_0|}{N}\geq 1\right\},
\]
now have potentials that are instantaneously above the threshold, and so should also spike.  Thus we are now sure that all the neurons in $\Gamma_0\cup\Gamma_1$ spike at $t$.
Along the \textit{cascade time axis at time $t$}, the neurons in $\Gamma_{1}$ are 
said to spike after the neurons in $\Gamma_{0}$. Similarly, it is then natural to determine which other neurons spike, \textit{given that those in $\Gamma_0\cup\Gamma_1$ have already spiked}.  According to the definition of the system, this is exactly those in the set
\[
\Gamma_{2} :=  \left\{i\in\{1, \dots, N\}\backslash \Gamma_0\cup\Gamma_{1}:  X^{i}_{t-} +\alpha\frac{ |\Gamma_0\cup\Gamma_{1}|}{N}\geq 1\right\}.
\] 
By defining sequentially for general $k\in\mathbb{N}_0$
\begin{equation}
\label{eq:gamma:k+1}
\Gamma_{k+1} :=  \left\{i\in\{1, \dots, N\}\backslash \Gamma_0\cup\dots \cup\Gamma_k :  X^{i}_{t-} + \alpha\frac{|\Gamma_0\cup\dots \cup\Gamma_k|}{N}\geq 1\right\},
\end{equation}
the natural cascade is continued in this way until $\Gamma_{l} = \emptyset$ for some 
$l\in\{1, \dots, N\}$.  Note that this must happen, since by definition $\Gamma_{N} = \emptyset$
(if $\Gamma_{N} \not = \emptyset$, all the sets $\Gamma_{0},\dots,\Gamma_{N-1}$ contain
at least one element; since all of them are disjoint, we obtain a contradiction). 
%since if not then $|\bigcup_{k\geq 0} \Gamma_k| = \infty$, which is a contradiction since $\bigcup_{k\geq 0} \Gamma_k \subset \{1, \dots, N\}$.
Along the cascade time axis at time $t$, neurons in $\Gamma_{k+1}$ ($k+1< \ell$) spike
after neurons in $\Gamma_{0} \cup \dots \cup \Gamma_{k}$.
We can then define $\Gamma := \bigcup_{0\leq k \leq N-1} \Gamma_k$, which is exactly the set of all neurons that spike at time $t$, according the natural ordering of the spike cascade (see also  \cite{catsigeras_guiraud_2013}).
Having determined this, it is then straightforward to perform the final update of all the neurons in the network by setting
\begin{equation}
\label{eq:3:2:14:1}
X^{i}_{t} = 
%\begin{cases}
%\displaystyle
X^{i}_{t-} + \frac{\alpha |\Gamma|}{N} \ \mathrm{if}\ i\not\in\Gamma, \quad
X^{i}_t =
X^{i}_{t-} + \frac{\alpha |\Gamma|}{N} - 1  \ \mathrm{if}\ i\in\Gamma.
\end{equation}
Note that now $X^{i}_t <1$ for all $i\in\{1, \dots, N\}$.  Indeed, if $i\not\in\Gamma$, then $i$ must be such that 
\[
X^{i}_{t-} + \frac{\alpha|\Gamma|}{N}<1 \quad \Rightarrow\quad  X^{i}_{t} <1.
\]
On the other hand, if $i\in\Gamma$ then, since $|\Gamma|\leq N$ and $\alpha <1$,
\[
X^{i}_{t} = X^{i}_{t-} + \frac{\alpha|\Gamma|}{N} - 1 \leq X^{i}_{t-} + \alpha - 1 <  
X^{i}_{t-} \leq 1.
\]
The above idea is completed by the following lemma.
\begin{lem}
\label{lem: existence and uniqueness particle system}
There exists a unique solution to the particle system \eqref{particle system} such that,
whenever $t$ is a spike time, the entire system jumps according to 
\[
%\displaystyle
X^{i}_t =X^{i}_{t-} + \frac{\alpha |\Gamma|}{N}  \ \mathrm{if}\ i\not\in\Gamma,\quad 
X^{i}_t =X^{i}_{t-} + \frac{\alpha|\Gamma|}{N} - 1  \ \mathrm{if}\ i\in\Gamma.
\]
where $\Gamma \subset \{1, \dots, N\}$ is as above. %To save space!!!
Such a solution will be known as a `physical' solution.
\end{lem}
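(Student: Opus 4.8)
The plan is to construct the physical solution inductively along its spike times and then to show, as a separate and harder step, that these times cannot accumulate in finite time; uniqueness will follow quite cheaply, because between spikes the particles solve a decoupled system of Lipschitz SDEs and at a spike time the cascade $\Gamma_0,\Gamma_1,\dots$ from \eqref{eq:gamma:k+1} together with the update \eqref{eq:3:2:14:1} is a \emph{deterministic} function of the left limits $(X^i_{t-})_{1\leq i\leq N}$. Concretely, I would set $\tau_{(0)}=0$ and, assuming the paths $(X^i)_i$ and the counters $(M^i)_i$ already built on $[0,\tau_{(n)}]$ with every particle reset below $1$, solve on $[\tau_{(n)},+\infty)$ the decoupled equations $dX^i_t=b(X^i_t)\,dt+dW^i_t$ — extending $b$ past $1$ by any Lipschitz rule, which is harmless since one stops before $X^i$ can exceed $1$ — while freezing $M^i$, up to $\tau_{(n+1)}:=\inf\{t>\tau_{(n)}:\max_iX^i_t=1\}$. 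If $\tau_{(n+1)}=+\infty$ the construction is complete; otherwise, by continuity, $\Gamma_0=\{i:X^i_{\tau_{(n+1)}-}=1\}\neq\emptyset$, one forms $\Gamma=\bigcup_{0\leq k\leq N-1}\Gamma_k$ (non-empty and well defined since $\Gamma_N=\emptyset$, as explained above), updates the particles via \eqref{eq:3:2:14:1}, and sets $M^i_{\tau_{(n+1)}}=M^i_{\tau_{(n+1)}-}+\mathbf 1_{\{i\in\Gamma\}}$; the computation preceding the statement of the lemma then gives $X^i_{\tau_{(n+1)}}<1$ for all $i$, so the induction can be continued.

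Verifying that this process is indeed a solution of \eqref{particle system} is routine. Between spike times the equation holds because each $X^i$ solves the corresponding diffusion and $\bar e^N$ is constant; at a spike time $t$ one checks, exactly as in the discussion around \eqref{eq:gamma:k+1}, that the set of indices $i$ with $X^i_{t-}+\alpha|\Gamma|/N\geq 1$ is precisely $\Gamma$ — for $i\in\Gamma_k$, $X^i_{t-}\geq 1-\alpha|\Gamma_0\cup\dots\cup\Gamma_{k-1}|/N\geq 1-\alpha|\Gamma|/N$, while for $i\notin\Gamma$, $X^i_{t-}+\alpha|\Gamma|/N<1$ — so the spike times coincide with the $\tau^i_k$ of \eqref{eq: particle system spike times}, $\Delta M^i_t=\mathbf 1_{\{i\in\Gamma\}}\in\{0,1\}$, and both sides of \eqref{particle system} jump by $\alpha|\Gamma|/N-\mathbf 1_{\{i\in\Gamma\}}$; adaptedness, the c\`adl\`ag property, and the prescribed cascade rule hold by construction.

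The heart of the proof — and the step I expect to be the main obstacle — is the claim that $\tau_{(n)}\to+\infty$ almost surely. Fix $T>0$ and work on $[0,T\wedge\tau_{(n)}]$, with all bounds uniform in $n$. The first ingredient is an a priori estimate on the particles that does not see the number of spikes: since between two consecutive spikes of a given particle $i$ the jumps of $X^i$ (caused only by the other particles spiking) are non-negative, and since each reset sends a spiking particle into $(0,\alpha]$, one has $X^i_t\geq -|X^i_0|-\int_0^t|b(X^i_s)|\,ds-2\sup_{s\leq T}|W^i_s|$; combining this with $X^i_t<1$ and the linear growth of $b$ (Assumption \ref{assumption 1}), a Gronwall argument applied to $\Phi_t:=\sum_{i=1}^N|X^i_t|$ yields
\[
\sup_{t\leq T\wedge\tau_{(n)}}\Phi_t \leq C_T\Bigl(1+\sum_{i=1}^N|X^i_0|+\sum_{i=1}^N\sup_{s\leq T}|W^i_s|\Bigr)<+\infty\quad\text{a.s.},
\]
with $C_T$ depending only on $N,T$ and the Lipschitz constant of $b$. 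The second ingredient is to sum \eqref{particle system} over $i$ and isolate the total spike count $\mathcal N:=\sum_iM^i_{T\wedge\tau_{(n)}}$, which dominates the number of spike times in $[0,T\wedge\tau_{(n)}]$: this gives $(1-\alpha)\mathcal N=\sum_iX^i_0-\sum_iX^i_{T\wedge\tau_{(n)}}+\int_0^{T\wedge\tau_{(n)}}\sum_ib(X^i_s)\,ds+\sum_iW^i_{T\wedge\tau_{(n)}}$, and since $\alpha<1$ while the right-hand side is, by the previous display and linear growth, bounded by a finite random variable $F_T$ independent of $n$, one obtains $\mathcal N\leq F_T/(1-\alpha)<+\infty$ a.s. Hence $\tau_{(n)}$ cannot converge to a limit $\leq T$, and letting $T\uparrow+\infty$ gives $\tau_{(n)}\to+\infty$. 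Note that $\alpha<1$ is used precisely here — as it must be, in view of the non-physical examples above — because it is what makes the reset term beat the self-interaction term in the summed equation.

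Finally, for uniqueness I would argue by induction on the spike times. If $(X^i,M^i)_i$ and $(\widetilde X^i,\widetilde M^i)_i$ are two solutions obeying the cascade rule, then before the first spike both solve the same decoupled Lipschitz system with the same initial data and Brownian motions, hence coincide on $[0,\tau_{(1)})$ with a common $\tau_{(1)}$; at $\tau_{(1)}$ the update \eqref{eq:gamma:k+1}--\eqref{eq:3:2:14:1} depends only on the (common) left limits, so the two solutions still agree; iterating, and using that the a priori estimate above applies verbatim to any solution and forces its spike times to escape to $+\infty$, one concludes $X^i\equiv\widetilde X^i$ and $M^i\equiv\widetilde M^i$ on $[0,+\infty)$.
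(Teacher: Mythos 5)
Your proof is correct and follows the same overall strategy as the paper's (inductive construction between spike times, reduction of non-accumulation to an a priori bound via Gronwall and the assumption $\alpha<1$, cheap uniqueness from the deterministic cascade rule). The one genuinely different ingredient is how you rule out accumulation of spike times. You work in the $X$-variables and sum \eqref{particle system} over $i$, which makes the combinatorial constant $(1-\alpha)$ appear in front of $\sum_i M^i_{T\wedge\tau_{(n)}}$; together with the a priori bound on $\Phi_t=\sum_i|X^i_t|$, this bounds the \emph{total} number of spikes in $[0,T]$ directly, hence the number of spike times. The paper instead passes to the $Z=X+M$ reformulation, derives the pathwise moment bound \eqref{eq:12:5:2} uniformly over $\tau^k\wedge T$, and then applies a pigeonhole argument: after $Nk$ system spikes some particle has spiked at least $k$ times, so $\sup_iM^i_{\tau^{Nk}}\geq k$, contradicting the bound if $\tau^{Nk}\leq T$ for all $k$. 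Both arguments hinge on $\alpha<1$ in the same way (the paper's $\frac{1-\alpha}{N}\sum_i\sup|Z^i|\leq\dots$ is essentially your summed identity in disguise), but yours is somewhat more direct, whereas the paper's detour through \eqref{eq:12:5:2} produces exactly the uniform moment estimate later reused as Lemma \ref{lem:particle:5}. Two cosmetic quibbles: after a reset the value lies in $[0,\alpha)$ rather than $(0,\alpha]$, and the Gronwall constant in your $\Phi_t$-bound should carry a factor $N$ (from $\sum_i 1$); neither affects the argument since $N$ is fixed throughout.
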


\begin{proof}
It is clear that in between spike times of the system there is no problem of uniqueness (since the particles only interact at spike times).  Therefore, since we have specified a unique jumping procedure, any solution must be unique.

The proof of existence is more challenging. The issue is to prove that spike times of the system do not accumulate. We feel it is more convenient to give it at this stage of the paper, but the reader may skip ahead on a first reading. 

We in fact prove the existence of a solution to the associated $Z$-system \eqref{particle system Z} (this is completely equivalent to the existence of a solution to the original system \eqref{particle system}) with the given 
spike cascade. For any $1 \leq i \leq N$, we define $(Y^{1,i}_{t})_{t \geq 0}$ as the solution of the SDE
\begin{equation*}
Y_{t}^{1,i} = Z_{0}^i + \int_{0}^t b\bigl(Y_{s}^{1,i}\bigr) ds +  W_{t}^i, \quad t \geq 0. 
\end{equation*}
We set
$\tau^{1,i} = \inf \{ t \geq 0 : Y_{t}^{1,i} \geq 1\}$, $1 \leq i \leq N$.
Clearly, we have 
$0 < \tau^{1,i} < \infty$ (a.s.), so that $0 < \inf_{1 \leq i \leq N}(\tau^{1,i}) < \infty$ (a.s.). 
For $t \in [0,\tau^{1})$, with $\tau^1 = \inf_{1 \leq i \leq N}(\tau^{1,i})$, we set
\begin{equation*}
Z_{t}^i = Y_{t}^{1,i}, 
 \quad M_{t}^i = 0, \quad 0 \leq t < \tau^1, \quad 1 \leq i \leq N.
\end{equation*}
At time $\tau^1$, there exists $i^1 \in \{1,\dots,N\}$ such that $\tau^{1} = \tau^{1,i^1}$.
We then denote by $\Gamma^{(1)}$ the set of particles that spike at $\tau^1$
according to the physical procedure summarized in \eqref{eq:3:2:14:1}
(pay attention that $\Gamma^{(1)}$ stands for the $\Gamma$ in \eqref{eq:3:2:14:1} and
not for $\Gamma_{1}$: the positions of the indices are different). Then, according to the cascade, we know that the kick that 
 the particle $Z^i$ receives at time $\tau^1$
is $\alpha \vert \Gamma^{(1)}\vert/N$, so that  
%
% Clearly, for all 
%$i \not = i^1$, $\tau^{1,i} > \tau^{1,i^1}$ (up to a null event). Actually, the particles are even strictly ordered 
%at $\tau^1$.  We then set 
%\begin{equation*}
%\bigl( (Z_{\tau^1}^i)_{1 \leq i \leq N},(M_{\tau^1}^i)_{1 \leq i \leq N} \bigr)
%= \Psi \bigl( (Z_{\tau^1-}^i)_{1 \leq i \leq N},(M_{\tau^1-}^i)_{1 \leq i \leq N} \bigr)
%\bigr).
%\end{equation*}
%By Lemma \ref{lem:3:0}, we know that 
\begin{equation*}
Z_{\tau^1}^i  = Y_{\tau^1}^{1,i} + \alpha \frac{\vert \Gamma^{(1)} \vert}{N}
%- \frac{\alpha}{N} \Ind_{\Gamma^{(1)}}(i)
, \quad 1 \leq i \leq N.
\end{equation*}
For a coordinate $i \in \Gamma^{(1)}$, it holds $M_{\tau^1}^i = 1$. Since $Z_{\tau^1-}^i \leq 1$ and 
the kick received by $i$ is less than $\alpha$, it holds $Z_{\tau^1}^i \leq 1+ \alpha < M_{\tau^1}^i + 1$. Moreover, we must also have $Z_{\tau^1}^i \geq 1$ so that 
$M_{\tau^1}^i \leq Z_{\tau^1}^i < M_{\tau^1}^i +1$, that is 
$\lfloor Z_{\tau^1}^i \rfloor = M_{\tau^1}^i$. Since 
$Z_{\tau^1}^i = \sup_{s \in [0,\tau^1]} Z_{s}^i
= ( \sup_{s \in [0,\tau^1]} Z_{s}^i)_{+}$, we deduce
$M_{\tau^1}^i  = \lfloor ( \sup_{s \in [0,\tau^1]}
Z_{s}^i)_{+} \rfloor$. On the other hand, for a coordinate 
$i \not \in \Gamma^{(1)}$, it holds that
$M_{\tau^1}^i = M_{\tau^1-}^i = 0$, and
$\sup_{s \in [0,\tau^1]} Z_{s}^i <1$, so that 
$M_{\tau^1}^i  = \lfloor ( \sup_{s \in [0,\tau^1]}
Z_{s}^i)_{+} \rfloor$ as well.
 
For any $1 \leq i \leq N$, we then define $(Y^{2,i}_{t})_{t \geq 0}$ as the solution of the SDE
%\begin{equation*}
%Y_{t}^{2,i} = 
%\int_{0}^t b\bigl(Y_{s}^{2,i} - M_{s \wedge \tau^1}^{j}\bigr) ds +
%\frac{\alpha}{N} \sum_{j = 1}^N M_{t \wedge \tau^1}^j +
%  W_{t}^i, \quad t \geq 0, 
%\end{equation*}
%that is 
\begin{equation*}
\begin{split}
Y_{t}^{2,i} 
%&= Y_{\tau^1}^{1,i} +
%\int_{\tau^1}^t b\bigl(Y_{s}^{2,i} - M_{\tau^1}^{i}\bigr) ds +
%\frac{\alpha}{N} \sum_{j = 1}^N M_{\tau^1}^j +
%  \bigl( W_{t}^i - W_{\tau^1}^i \bigr)
% \\
 &= Z_{\tau^1}^{i} +
\int_{\tau^1}^t b\bigl(Y_{s}^{2,i} - M_{\tau^1}^{i}\bigr) ds  +
  \bigl( W_{t}^i - W_{\tau^1}^i \bigr), \quad t \geq \tau^1.
 \end{split}
\end{equation*}
\color{black}
Define then $\tau^{2,i} = \inf \{ t \geq \tau^1 : Y^{2,i}_{t} \geq M_{\tau^1}^i  + 1\}, \ 1 \leq i \leq N$. 
Since $Z^i_{\tau^1} < M^i_{\tau^1} + 1$, we have 
$\tau^{2,i} > \tau^1$.  
Then, with $\tau^2 = \inf_{1 \leq i \leq N} (\tau^{2,i})$, we set 
\begin{equation*}
Z_{t}^i = Y_{t}^{2,i}, \quad \tau^1 < t < \tau^2.
\end{equation*}
The spike procedure at time $\tau^2$ is defined according to the process summarized 
in \eqref{eq:3:2:14:1}, the set of particles jumping at $\tau^2$ being denoted by
 $\Gamma^{(2)}$. By iteration, we build an increasing sequence of stopping times $(\tau^k)_{k \geq 0}$ (with $\tau^0=0$) 
such that
\begin{equation}
\label{eq:12:5:3}
Z_{t}^i = Z_{0}^i + \int_{0}^t b \bigl( Z_{s}^i - M_{\tau^k}^i \bigr) ds
 + \frac{\alpha}{N} \sum_{j=1}^N M_{\tau^k}^j  +  W_{t}^i,
\end{equation}
for $1 \leq i \leq N$ and $\tau^{k} < t < \tau^{k+1}$, $k \geq 0$, 
%and 
%\begin{equation*}
%\bigl( (Z_{\tau^k}^i)_{1 \leq i \leq N},(M_{\tau^k}^i)_{1 \leq i \leq N} \bigr)
%= \Psi \bigl( (Z_{\tau^k-}^i)_{1 \leq i \leq N},(M_{\tau^k-}^i)_{1 \leq i \leq N} \bigr)
%\bigr), 
%\end{equation*}
%for $k \geq 1$, 
with $\tau^{k+1} =   \inf_{1 \leq i \leq N} \tau^{k+1,i}$, where 
$\tau^{k+1,i} = \inf \{ t > \tau^k : Z_{t-}^i \geq M_{\tau^k}^i  + 1\}$, $1 \leq i \leq N$. The set of particles that jump at $\tau^{k}$ is then denoted by $\Gamma^{(k)}$. 
With such a construction, we notice that
\begin{equation}
\label{eq:12:5:1}
M_{t}^i = \bigl \lfloor \bigl( \sup_{s \in [0,t]}  Z_{s}^i \bigr)_{+} \bigr \rfloor,
\end{equation} 
for 
$1 \leq i \leq N$ and $t \leq \tau^k$, for any $k \geq 1$. Indeed, at time $\tau^k$, the proof is the same as at time $\tau^1$. At any time $t \in (\tau^k,\tau^{k+1})$, the equality follows from the fact that $Z_{t}^i < M_{\tau^k}^i + 1$.

To finish with the proof of existence, we prove that $\tau^k \rightarrow + \infty$ as $k \rightarrow + \infty$. 
Noting from \eqref{eq:12:5:1} that the drift part in \eqref{eq:12:5:3}
can be bounded by
$\vert b(Z_{s}^i - M_{s}^i) \vert \leq C ( 1 + \sup_{0 \leq r \leq s} \vert Z_{r}^i \vert)$,
for some constant $C \geq 0$, and taking the empirical mean over $i \in \{1,\dots,N\}$, we deduce that, for $t \leq \tau^k$, for $k \geq 1$,
\begin{equation*}
\begin{split}
\frac{1-\alpha}{N} \sum_{i=1}^N \sup_{s \in [0,t]} \vert Z_{s}^i \vert &\leq 
\frac{1}{N} \sum_{i=1}^N \bigl( \vert X_{0}^i \vert
+ \sup_{s \in [0,t]} \vert W_{s}^i \vert \bigr)
+ C \int_{0}^t 
\bigl( 1+ 
\frac{1}{N} \sum_{i=1}^N \sup_{r \in [0,s]} \vert Z_{r}^i \vert
\bigr) ds. 
%+ \frac{1}{N}
%\sum_{i=1}^N 
%\\
%&\hspace{15pt} + \frac{\alpha}{N}\sum_{i=1}^N \sup_{0 \leq s \leq t} \vert Z_{s}^i \vert
%+ \frac{1}{N}
%\sum_{i=1}^N \sup_{0 \leq s \leq t} \vert W_{s}^i \vert.
\end{split}
\end{equation*}
We deduce from Gronwall's lemma that, for any $1 \leq i \leq N$ and $t \leq \tau^k$, for $k \geq 1$,
\begin{equation*}
\frac{1}{N} \sum_{i=1}^N \sup_{s \in [0,t]} \vert Z_{s}^i \vert
\leq C \exp(Ct) \biggl[ t +  \frac{1}{N} \sum_{i=1}^N \bigl( \vert X_{0}^i \vert + 
\sup_{s \in [0,t]} \vert W_{s}^i \vert \bigr) \biggr],
\end{equation*}
for a possibly new value of $C$. 
By \eqref{eq:12:5:1}, the same bound holds for $N^{-1} \sum_{i=1}^N M_{t}^i$. 
Going back to \eqref{eq:12:5:3} and using Gronwall's lemma again, we deduce, that for any $T>0$,
there exists a constant $C_{T} \geq 0$ such that, for any $1 \leq i \leq N$ and $t \leq \tau^k \wedge T$, for $k \geq 1$,
\begin{equation}
\label{eq:12:5:2}
\sup_{s \in [0,t]} \vert Z_{s}^i \vert \leq 
C_{T} \biggl[ 1+ \vert X_{0}^i \vert + \sup_{s \in [0,t]} \vert W_{s}^i \vert + 
\frac{1}{N} \sum_{j=1}^N  \bigl( \vert X_{0}^j \vert 
+ \sup_{s \in [0,t]} \vert W_{s}^j \vert \bigr) \biggr].  
\end{equation}
Again, by \eqref{eq:12:5:1}, the same bound holds
for $M_{t}^i$. In particular, if $\tau^k \leq T$, 
\begin{equation*}
\sup_{i \in \{1,\dots,N\}} M_{\tau^k}^{i} \leq C_{T} 
\Bigl( 1 + \sup_{i \in \{1,\dots,N\}}
\vert X_{0}^i \vert +
\sup_{i \in \{1,\dots,N\}}
\sup_{s \in [0,T]} \vert W_{s}^i \vert \Bigr).
\end{equation*}
Applying the above inequality with $Nk$ instead of $k$, we notice that 
$\sup_{i \in \{1,\dots,N\}} M_{\tau^{Nk}}^{i}$ is larger than $k$ (as, at time $\tau^{Nk}$, there have been $Nk$ spikes in the system, so that at least one of the particles has spiked at least $k$ times). Therefore, if $\tau^{Nk} \leq T$, then
\begin{equation*}
k \leq  C_{T} 
\Bigl( 1 + \sup_{i \in \{1,\dots,N\}}
\vert X_{0}^i \vert +
\sup_{i \in \{1,\dots,N\}}
\sup_{s \in [0,T]} \vert W_{s}^i \vert \Bigr).
\end{equation*}
In particular, the sequence $(\tau^k)_{k \geq 1}$ cannot have a finite limit $T$, as otherwise, passing to the limit, we would get 
$ \sup_{i \in \{1,\dots,N\}}
\vert X_{0}^i \vert + \sup_{i \in \{1,\dots,N\}} 
\sup_{s \in [0,T]} \vert W_{s}^i \vert = + \infty.$ 
\end{proof}

Physical solutions to the particle system satisfy a discrete version of
(4) in Definition \ref{def:nonlineareq}, which motivates the 
notion of physical solutions to the original equation 
\eqref{nonlinear eq}:
\begin{prop}
\label{prop:16:2:14:1}
For a physical solution as in Lemma \ref{lem: existence and uniqueness particle system}, 
it holds that,
 for all $t \geq 0$, 
%it holds that
\begin{equation}
\label{inf condition}
 N \Delta \bar{e}^N(t) = 
\sum_{i=1}^N \bigl( M^{i}_t - M^{i}_{t-} \bigr) = \inf\biggl\{k\in\{0,\dots,N\}: 
\sum_{i=1}^N{\mathbf 1}_{\{X^{i}_{t-} \geq 1 - \frac{\alpha k}{N}\}} \leq k \biggr\}.
\end{equation}
\end{prop}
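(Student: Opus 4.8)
The plan is to reduce the identity \eqref{inf condition} to an elementary fixed-point statement for the non-decreasing deterministic function $\phi : \{0,1,\dots,N\} \to \{0,1,\dots,N\}$, $\phi(k) := \sum_{i=1}^N \mathbf{1}_{\{X^{i}_{t-} \geq 1 - \alpha k/N\}}$. First I would note that for a physical solution in the sense of Lemma \ref{lem: existence and uniqueness particle system} each particle spikes at most once at a spike time $t$ and, by \eqref{eq:3:2:14:1}, the set of particles that spike is precisely $\Gamma = \bigcup_{0 \leq k \leq N-1} \Gamma_{k}$, with $(\Gamma_{k})$ given by \eqref{eq:gamma:k+1}. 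Hence $\sum_{i=1}^N (M^{i}_{t} - M^{i}_{t-}) = |\Gamma|$, and it remains only to identify $|\Gamma|$ with $k^{*} := \inf\{k \in \{0,\dots,N\} : \phi(k) \leq k\}$, which is well defined because $\phi(N) \leq N$.

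The heart of the argument is a description of the partial unions of the cascade. Writing $G_{k} := \Gamma_{0} \cup \dots \cup \Gamma_{k}$, $g_{k} := |G_{k}|$ for $k \geq 0$, and $g_{-1} := 0$, I would show by induction on $k \geq 0$ that $G_{k} = \{i : X^{i}_{t-} \geq 1 - \alpha g_{k-1}/N\}$. For $k=0$ this is immediate since $\Gamma_{0} = \{i : X^{i}_{t-} = 1\}$ and $X^{i}_{t-} \leq 1$ for every $i$ (recall that $X^{i}_{s} < 1$ for all $s$ for a physical solution). For the step, \eqref{eq:gamma:k+1} reads $\Gamma_{k} = \{i \notin G_{k-1} : X^{i}_{t-} \geq 1 - \alpha g_{k-1}/N\}$; since the sets $G_{\bullet}$ increase we have $g_{k-2} \leq g_{k-1}$, so the induction hypothesis gives $G_{k-1} = \{i : X^{i}_{t-} \geq 1 - \alpha g_{k-2}/N\} \subseteq \{i : X^{i}_{t-} \geq 1 - \alpha g_{k-1}/N\}$, and therefore $G_{k} = G_{k-1} \cup \Gamma_{k} = \{i : X^{i}_{t-} \geq 1 - \alpha g_{k-1}/N\}$. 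Taking cardinalities yields the recursion $g_{k} = \phi(g_{k-1})$ with $g_{-1} = 0$.

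It then follows, since $\phi$ is non-decreasing and $g_{-1} = 0 \leq \phi(0) = g_{0}$, that $(g_{k})_{k \geq -1}$ is non-decreasing, integer-valued and bounded by $N$; hence it stabilises at a value $h^{*} = \phi(h^{*}) \leq N$ (the chain $G_{-1} \subseteq G_{0} \subseteq \cdots$ of subsets of an $N$-set strictly increases at most $N$ times), and $|\Gamma| = g_{N-1} = h^{*}$. Finally I would check $h^{*} = k^{*}$: since $\phi(h^{*}) = h^{*} \leq h^{*}$ we get $k^{*} \leq h^{*}$; and a one-line induction, using that $\phi$ is non-decreasing together with $\phi(k^{*}) \leq k^{*}$, gives $g_{k} \leq k^{*}$ for all $k$, hence $h^{*} \leq k^{*}$. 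Combining, $N \Delta \bar{e}^{N}(t) = \sum_{i=1}^N (M^{i}_{t} - M^{i}_{t-}) = |\Gamma| = h^{*} = k^{*}$, which is exactly \eqref{inf condition}.

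I do not expect a genuine obstacle here: the only delicate point is the inductive identity $G_{k} = \{i : X^{i}_{t-} \geq 1 - \alpha g_{k-1}/N\}$, and within it the inclusion $G_{k-1} \subseteq \{i : X^{i}_{t-} \geq 1 - \alpha g_{k-1}/N\}$, which is what makes the cascade fill in exactly the threshold set at each stage and relies on the monotonicity $g_{k-2} \leq g_{k-1}$ of the partial cascade sizes. Once this is in place, the conclusion is just the standard fact that iterating a monotone self-map of $\{0,\dots,N\}$ starting from $0$ reaches its least fixed point, here $k^{*}$.
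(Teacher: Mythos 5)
Your proof is correct, and it rests on the same core identity as the paper's, namely $G_{k} = \{i : X^{i}_{t-} \geq 1 - \alpha g_{k-1}/N\}$, equivalently $\phi(g_{k-1}) = g_{k}$; in the paper this appears (somewhat tersely attributed to \eqref{eq:gamma:k+1} alone) as the equality $\sum_{i=1}^N \mathbf{1}_{\{X^i_{t-}\geq 1-\alpha\vert\Gamma_0\cup\cdots\cup\Gamma_{j-1}\vert/N\}}=\sum_{\ell=0}^j\vert\Gamma_\ell\vert$. Where you diverge is in the packaging: you turn the cascade into an explicit recursion $g_k=\phi(g_{k-1})$, $g_{-1}=0$, and then invoke the general fact that iterating a non-decreasing self-map of $\{0,\dots,N\}$ from $0$ reaches its least fixed point $k^*$. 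The paper instead verifies the two inequalities defining the infimum directly: $\phi(k)>k$ for all $k<\vert\Gamma\vert$ (by partitioning $\{0,\dots,\vert\Gamma\vert-1\}$ into the blocks $\{g_{j-1},\dots,g_j-1\}$ and using monotonicity of $\phi$ together with $\phi(g_{j-1})=g_j$), and $\phi(\vert\Gamma\vert)\leq\vert\Gamma\vert$ (because no particle outside $\Gamma$ reaches the threshold $1-\alpha\vert\Gamma\vert/N$). The two routes are logically equivalent; yours is a bit more structured and isolates the monotone-iteration abstraction, which also makes explicit the inclusion $G_{k-1}\subseteq\{i:X^i_{t-}\geq 1-\alpha g_{k-1}/N\}$ (driven by $g_{k-2}\leq g_{k-1}$) that the paper leaves implicit in its claimed equality. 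The paper's version is shorter because it does not need the intermediate fixed-point $h^*$ and gets the $\leq$ inequality from the single observation about particles outside $\Gamma$.
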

\begin{proof} 
We use the description of a physical solution.
We know that the left-hand side in \eqref{inf condition} is 
equal to $\vert \Gamma\vert$ by definition of $\Gamma$ in page 
\pageref{eq:3:2:14:1}.
%\textcolor{red}{by \eqref{eq:3:2:14:1}}. 
Clearly $\vert \Gamma \vert = \sum_{k=0}^{l} \vert \Gamma_{k} \vert$
where $l$ is the largest integer such that $\Gamma_{l} \not = \emptyset$. 
Then, for $k \in \{ \vert \Gamma_{0} \cup \Gamma_{1} \cup \dots \cup \Gamma_{j-1} \vert,\dots,
 \vert \Gamma_{0} \cup \Gamma_{1} \cup \dots \cup \Gamma_{j}\vert-1 \}$
 and $j \in \{1,\dots,l\}$ (or $k\in\{0, \dots, |\Gamma_0| -1\}$ if $j =0$), %we know that
\begin{equation*}
\sum_{i=1}^N{\mathbf 1}_{\{X^{i}_{t-} \geq 1 - \frac{\alpha k}{N}\}} 
\geq\sum_{i=1}^N{\mathbf 1}_{\{X^{i}_{t-} \geq 1 - \frac{\alpha \vert \Gamma_{0} \cup \Gamma_{1} \cup \dots \cup \Gamma_{j-1} \vert}{N}\}} 
= \sum_{\ell=0}^{j } \vert \Gamma_{\ell} \vert >  k,
\end{equation*}
%\begin{equation*}
%\sum_{i=1}^N{\mathbf 1}_{\{X^{i}_{t-} \geq 1 - \frac{\alpha k}{N}\}} 
%\geq \sum_{j=0}^{\ell } \sum_{i \in \Gamma_{j}} {\mathbf 1}_{\{X^{i}_{t-} \geq 1 - \frac{\alpha k}{N}\}}
%\geq \sum_{j=0}^{\ell } \sum_{i \in \Gamma_{j}} {\mathbf 1}_{\{X^{i}_{t-} \geq 1 - \frac{\alpha}{N} \vert 
%\Gamma_{0}\cup \dots \cup \Gamma_{j-1} \vert\}}.
%\end{equation*}
%By construction of the $\Gamma_{0},\dots,\Gamma_{L}$, we deduce that 
%\begin{equation*}
%\sum_{i=1}^N{\mathbf 1}_{\{X^{i}_{t-} \geq 1 - \frac{\alpha k}{N}\}} 
%\geq \sum_{j=0}^{\ell } \vert \Gamma_{j} \vert >  k,
%\end{equation*}
the equality following from \eqref{eq:gamma:k+1},
proving that the right-hand side in \eqref{inf condition} is greater than or equal to $\vert \Gamma\vert$.
On the other hand, by construction,
$\sum_{i=1}^N{\mathbf 1}_{\{X^{i}_{t-} \geq 1 - \frac{\alpha}{N}\vert \Gamma \vert\}}
\leq \vert \Gamma \vert + \sum_{i \not \in \Gamma}
{\mathbf 1}_{\{X^{i}_{t-} \geq 1 - \frac{\alpha}{N}\vert \Gamma \vert \}}$,
but the last term in the right hand-side is zero. 
This shows that the right-hand side 
in \eqref{inf condition} is less than or equal to $\vert \Gamma \vert$.
\end{proof}
\color{black}

\subsection{The system with delays}
\label{subse:delay}
In this section we introduce a second approximation of the nonlinear system \eqref{nonlinear eq}, by introducing delays.  As we will see below (Proposition \ref{prop: delayed existence}), the advantage of doing this is that the resulting system has a global in time solution, for which the mean-firing rate $e'$ remains finite for any value of the parameter $\alpha$ and initial condition (recall that this is in contrast to the system without delays \eqref{nonlinear eq} which may `blow-up' in finite time for some parameter values: see Theorem \ref{thm:CCP}).  

The point is that the introduction of a delay prevents a macroscopic proportion of the neurons all spiking at the same time.  Intuitively, this is because, even if other neurons are close enough to the threshold to be induced to spike as a result of the first neuron spiking, this will occur only after a positive amount of time.

Given that the delayed system does not experience a blow-up and has a global solution (see below), part of our work is dedicated to the 
analysis of the solutions when the delay converges to zero (see 
Subsection \ref{results: existence of weak solutions}). However,
% In particular we will in fact show in Section \ref{results: existence of weak solutions} below that in this way we can give sense to a solution to the non-delayed equation for any time, even when the parameters are such that the system blows up at some finite time.
the purpose of this current subsection is simply to introduce the system with delays and to check well-posedness. \color{black} To this end, let $\delta >0$ and consider the equation
\begin{equation}
\label{delayed equation}
X^\delta_{t}= X_{0} + \int_{0}^{t}b(X^\delta_{s})ds+ \alpha e_{\delta}(t)+W_{t}-M^\delta_{t}, \quad t\geq0.
\end{equation}
Here, similarly to above, $M^\delta_t$ counts the number of time $(X^\delta_s)_{s \in [0,t]}$ reaches the threshold.  Precisely, we write

\begin{equation}
\label{delayed M}
\begin{split}
M^\delta_{t}&=\sum_{k\geq1}\Ind_{[0,t]}(\tau^\delta_{k}),
\end{split}
\end{equation}
where $\tau_0^{\delta} = 0$ and, for $k \geq 1$,
\begin{equation}
\label{e_delta}
\tau^\delta_{k} = \inf\left\{t>\tau^\delta_{k-1} : X^\delta_{t-} + \alpha \Delta e_{\delta}(t) \geq 1\right\}, 
\quad
 \textrm{with} \
e_\delta(t) := 
\begin{cases}
0&\mathrm{if\ } t\leq\delta\\
\E(M^\delta_{t-\delta})&\mathrm{if\ } t>\delta.
\end{cases}
\end{equation}
\color{black}

We write the equation in this way, even though, as the following proposition shows, the delay guarantees 
that there is a unique solution to \eqref{delayed equation} such that $e_\delta$ is always continuously differentiable (so that $\Delta e_{\delta} \equiv 0$).  This makes any notion  of `physical' solutions irrelevant for the delayed equation.
%As explained in Section \ref{se:2}, this formulation makes sense as long as 
%the jumps of $e_{\delta}$ are less than $1/\alpha$. As we will see below, 
%the delay guarantees that this is indeed always the case, as $e_\delta$ is proved to be continuous.
As in Section \ref{se:2}, we take
$\alpha\in(0,1)$ and $(W_{t})_{t \geq 0}$ a standard real-valued Brownian motion,  
and we assume that Assumptions \ref{assumption 1} and \ref{assumption 2} are in force.

\begin{prop}
\label{prop: delayed existence}
Let $T>0$ and  $\alpha\in(0,1)$.  Then 
there exists a unique %strong 
c\`adl\`ag process $(X^\delta_t, M^\delta_t)_{t \in [0,T]}$,
such that
$(M_{t}^{\delta})_{t \geq 0}$ has integrable marginal distributions,
satisfying
 \eqref{delayed equation} and \eqref{delayed M}.
%  such that %  Moreover the map $[0, T] \ni t\mapsto \E(M^\delta_t)$ is continuously differentiable.
%the sequence of jumps $(\tau_{k})_{k \geq 0}$ is (strictly) increasing.
The resulting map $e_{\delta}$ is continuously differentiable. 
\end{prop}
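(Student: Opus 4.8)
The plan is to construct the solution on $[0,T]$ by induction over successive time intervals of length $\delta$, exploiting the fact that on each such interval the delayed self-interaction term $\alpha e_{\delta}(\cdot)$ is a \emph{given} deterministic function depending only on the solution already built on the previous interval. First I would treat the interval $[0,\delta]$: there $e_{\delta}\equiv 0$, so \eqref{delayed equation} reduces to the classical one-dimensional SDE with Lipschitz drift and instantaneous reset at the threshold $1$. Equivalently, passing to the reformulation $Z^{\delta}_t := X^{\delta}_t + M^{\delta}_t$ as in Remark \ref{rem: reformulation limit}, on $[0,\delta]$ one solves
\begin{equation*}
Z^{\delta}_t = X_{0} + \int_{0}^t b\bigl(Z^{\delta}_s - M^{\delta}_s\bigr) ds + W_t,
\qquad M^{\delta}_t = \bigl\lfloor \bigl(\sup_{0\leq s\leq t} Z^{\delta}_s\bigr)_{+}\bigr\rfloor,
\end{equation*}
which admits a unique strong solution by a standard fixed-point/Picard argument (the map $s\mapsto M^{\delta}_s$ being piecewise constant and the spike times being the successive hitting times of integer levels by a continuous process, hence non-accumulating almost surely). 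Uniqueness on $[0,\delta]$ is pathwise. The marginals of $M^{\delta}$ are integrable on $[0,\delta]$ by the same Gronwall-type bound as in Lemma \ref{lem:stab} (the drift has linear growth, $W$ has moments of all orders, and $M^{\delta}_t \leq (\sup_{s\leq t} Z^{\delta}_s)_+ $).

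Next, assuming the solution $(X^{\delta}_t,M^{\delta}_t)_{t\in[0,(n-1)\delta]}$ has been built and is unique, with $t\mapsto \E(M^{\delta}_t)$ well-defined and finite on $[0,(n-1)\delta]$, I would extend it to $[(n-1)\delta, n\delta]$. On this interval $e_{\delta}(t) = \E(M^{\delta}_{t-\delta})$ depends only on the already-constructed piece, so $t\mapsto \alpha e_{\delta}(t)$ is a fixed bounded non-decreasing function on $[(n-1)\delta, n\delta]$; the point to check is that it has \emph{no jumps}. This is where the key structural fact comes in: one shows inductively that $t\mapsto \E(M^{\delta}_t)$ is continuous (indeed $C^1$) on each completed interval. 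Granting continuity of $e_{\delta}$ on $[(n-1)\delta,n\delta]$, we have $\Delta e_{\delta}\equiv 0$ there, so the threshold condition in \eqref{e_delta} is simply $X^{\delta}_{t-}\geq 1$, i.e. the particle spikes exactly when it hits $1$; the equation \eqref{delayed equation} on $[(n-1)\delta,n\delta]$ is then a classical integrate-and-fire SDE with an \emph{extra smooth deterministic drift} $\alpha e_{\delta}(t)$, which again has a unique strong solution with non-accumulating spike times (use the $Z$-reformulation and the argument of Lemma \ref{lem: existence and uniqueness particle system}, or directly Gronwall as in Lemma \ref{lem:stab} to bound $\sup_{s\leq n\delta}|Z^{\delta}_s|$ in $L^p$ and hence control the number of spikes). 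This yields existence and pathwise uniqueness on $[0,n\delta]$, and integrability of the marginals of $M^{\delta}$ on $[0,n\delta]$; iterating finitely many times covers $[0,T]$.

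It remains to propagate the regularity of $e_{\delta}$. Here the argument is: on $[(n-1)\delta, n\delta]$, the law of $X^{\delta}_t$ solves (in the weak/PDE sense) a Fokker--Planck equation on $(-\infty,1)$ with absorption at $1$, reinjection at $0$, and the additional smooth transport term coming from $b(x)+\alpha e_{\delta}'(t)$; since $e_{\delta}$ is $C^1$ on this interval by the induction hypothesis applied at level $n-1$ (its derivative being the value, delayed by $\delta$, of the firing rate on the previous interval), the firing rate $e_{\delta}'(t+\delta) = \frac{d}{dt}\E(M^{\delta}_t)$ on $[(n-1)\delta,n\delta]$ is given by a boundary flux $-\tfrac12 \partial_x p(t,1)$ of a solution to a uniformly parabolic equation with a regular drift, hence is continuous — this is exactly the mechanism identified in \cite{DIRT} and quoted in the Introduction ($e'(t)=-\tfrac12\partial_y p(t,1)$). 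Alternatively, and more elementarily for this purpose, continuity of $t\mapsto \E(M^{\delta}_t)$ can be obtained probabilistically: $\Delta \E(M^{\delta}_t) = \P(X^{\delta}_{t-}\geq 1)$ by the Proposition preceding Definition \ref{def:nonlineareq}, and since on this interval there are no induced cascades, $X^{\delta}_{t-}=1$ can be excluded from having positive probability by a support/Girsanov argument showing $X^{\delta}_{t-}$ has no atom at $1$ (the process behaves locally like a Brownian motion with bounded drift near the barrier). The main obstacle — the only genuinely delicate point — is precisely this: rigorously ruling out that a positive mass of paths accumulates exactly at the barrier at a single time, i.e. establishing $\P(X^{\delta}_{t-}=1)=0$ (equivalently $\Delta e_{\delta}\equiv 0$) so that the delayed system never synchronizes; once that is in hand, everything else is a routine interval-by-interval Picard iteration combined with the linear-growth Gronwall estimates already used in Lemmas \ref{lem:stab} and \ref{lem: existence and uniqueness particle system}.
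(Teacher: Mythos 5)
Your proposal follows the same induction-over-$\delta$-intervals strategy as the paper's proof, and invokes the same key ingredient: the DIRT boundary-flux representation of the firing rate (the formula $e'(t) = -\tfrac12 \partial_y p(t,1)$) to propagate continuous differentiability of $t\mapsto \E(M^\delta_t)$ from one interval to the next, since on each interval the self-interaction $\alpha e_\delta(\cdot)$ is a fixed, already-constructed smooth drift. Two points deserve a comment, however. First, the paper is explicit about the $C^1$ \emph{join} at the gluing times: on $[0,\delta]$ one has $e_\delta\equiv0$, hence $e_\delta'(\delta^-)=0$, and one must verify $e_\delta'(\delta^+)=\lim_{t\downarrow 0}\frac{d}{dt}\E(M^\delta_t)=0$, which the paper obtains from $\lim_{t\downarrow 0}\partial_y p^\delta_{\chi_0}(t,1)=0$ (using Assumption \ref{assumption 2}); your argument leaves this matching implicit, and without it $e_\delta$ would only be piecewise $C^1$. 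Second, the ``more elementary'' probabilistic alternative you sketch — ruling out an atom of $X^\delta_{t-}$ at the barrier — only yields \emph{continuity} of $e_\delta$, not continuous differentiability; to get the full claim of the Proposition you do need the parabolic/boundary-flux mechanism of \cite{DIRT}, so the two routes are not interchangeable for this statement.
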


%\textcolor{red}{Uniqueness implies} that, for any solution, the map $e_{\delta}$ is continuous. 
%This guarantees that, for any solution, there is no way for the system to spike several times at a single $t \geq 0$, making useless any notion of `physical' solutions. 

\begin{proof}%The proof is in several steps.
%\vspace{0.2cm}
\noindent\textit{Step 1: Solution on $[0, \delta]$.}  For $t\leq \delta$ \eqref{delayed equation} reads
\begin{equation}
\label{equation delta}
X^\delta_{t}=X_0 + \int_{0}^{t}b(X^\delta_{s})ds + W_{t}-M^\delta_{t}.
\end{equation}
Clearly, this has a unique strong solution for $t\leq \delta$ (there is no difficult nonlinear term).
Moreover, by \cite[Proposition 4.5]{DIRT}, we have that $[0, \delta] \ni t\mapsto \E(M^\delta_t)$ is continuously differentiable, and moreover 
\begin{equation}
\label{IFR delta}
\frac{d}{dt}\E(M_{t}^\delta) = - \frac{1}{2}\int_0^t\partial_y p_0^{\delta}(t-s, 1)\frac{d}{ds}\E(M_s^\delta)ds - \frac{1}{2}\partial_y p_{X_0}^{\delta}(t, 1), \qquad t\in[0, \delta],
\end{equation}
where, for a random variable $\chi_{0}$, $p^{\delta}_{\chi_{0}}$ represents the density of the process $X^\delta$ killed at 1 with $X^{\delta}_0= \chi_0$, namely 
$p^{\delta}_{\chi_{0}}(t,y) dy := \P(X_{t}^{\delta} \in dy, \sup_{s \in [0,t]} X_{s}^{\delta} < 1 \vert X^{\delta}_{0}= \chi_{0})$.  Note that the shift by $s$ that is required in \cite[Proposition 4.5]{DIRT} is not necessary here, as $\eqref{equation delta}$ is time homogeneous.
By continuous differentiability,
%\begin{equation}
%\label{sup deriv delta}
$0 \leq \sup_{t \in [0,\delta]}
(d/dt)\E(M_{t}^\delta) <+\infty$.
%\end{equation}
%\color{black}

\textit{Step 2: Solution on $[0, 2\delta]$.} For $t\leq 2\delta$ \eqref{delayed equation} reads
\begin{equation}
\label{equation 2delta}
X^\delta_{t}=X_0 + \int_{0}^{t}b(X^\delta_{s})ds + \alpha e_\delta(t) + W_{t}-M^\delta_{t},
\end{equation}
where $e_\delta(t) = 0$ on $[0, \delta]$ and $\E(M^\delta_{t-\delta})$ on $[\delta, 2\delta]$.  

We now claim that $[0, 2\delta]\ni t \mapsto e_\delta(t)$ is continuously differentiable.  This is clearly the case on $[0, \delta]$, and on $[\delta, 2\delta]$ by Step 1. It remains to check that it is also true at $t=\delta$, i.e. 
$\lim_{t\downarrow \delta}  e_{\delta}'(t)  = \lim_{t\downarrow 0}[d/dt]\E(M^\delta_{t}) =0$.  This follows from \eqref{IFR delta}, since $\lim_{t \downarrow 0} \partial_{y}p^{\delta}_{\chi_0}(t,1) =0$
for $\chi_{0}$ satisfying Assumption \ref{assumption 2}, see \cite[Lemma 4.2]{DIRT}. 
In particular, $0 \leq \sup_{t \in [0,2\delta]}
(d/dt)\E(M_{t}^\delta) <+\infty$.
By \cite[Section 3]{DIRT}, equation
\eqref{equation 2delta} has a unique strong solution.

Moreover, since $[0, 2\delta]\ni t \mapsto e_\delta(t)$ is continuously differentiable, we can apply  
\cite[Proposition 4.5]{DIRT} on this new interval to see that $[0, 2\delta] \ni t\mapsto \E(M^\delta_t)$ is continuously differentiable.

\textit{Step 3: Solution on $[0, 3\delta]$:}  We replicate Step 2 by proving that $[0, 3\delta]\ni t \mapsto e_\delta(t)$ is continuously differentiable. Indeed, $[\delta, 3\delta] \ni t \mapsto  e_\delta(t)$ is continuously differentiable by Step 2, and $ e_\delta(t)$ is equal to $0$ on $[0, \delta]$, but, as already noted, 
$[d/dt]|_{t=\delta^+}e_{\delta}(t) = 0$, so that the `join' is continuously differentiable.

\textit{Conclusion:}  Let $T>0$.  One may iterate this procedure up until Step  $\lceil T/\delta\rceil$.  This will yield the fact that there exists a unique strong solution to the system given by \eqref{delayed equation} and \eqref{delayed M} up until time $T$, such that $[0, T]\ni t \mapsto \E(M_{t}^\delta)$ is continuously differentiable.  
\end{proof}

\section{Results}
\label{se:results}
Given the setup described in the previous sections, 
we are now in a position to present our main results.   The objective is 
to pass to the limit as $N \rightarrow \infty$ in the particle system described in Subsection \ref{pres: particle system}  
and as $\delta \rightarrow 0$ in the delayed equation described in Subsection \ref{subse:delay}, deriving
as a by-product a new global in time solvability result for the 
original model \eqref{nonlinear eq} including solutions that blow up. 

With this in mind, the thrust of the paper is to identify a very convenient topology for tackling both problems. 
Basically, the strategy is to make use of the so-called M1 Skorohod topology, which is different from the more famous 
J1 topology and which turns out to be much more adapted to the problem at hand.  
The reason is that relative compactness for the M1 topology is indeed easily checked for sets of monotone c\`adl\`ag 
functions, which exactly fits the nature of the process $(M_{t})_{t \geq 0}$ in \eqref{nonlinear eq}.

\subsection{The M1 topology}
\label{sec:M1top}
We first supply the reader with some reminders about the M1 topology. For a complete overview, we refer to the original paper by 
Skorohod \cite{skorohod} and to the monograph by Whitt \cite{whitt:monograph}. 
We denote by $\hat{\mathcal D}\left([0,T], \R\right)$ the space of c\`adl\`ag functions from $[0, T]$ to $\R$ that are left-continuous at time $T$ \footnote{The condition forcing elements in $\hat{\mathcal D}([0,T],\R)$ 
to be left-continuous at the terminal time is implicitly done in 
Whitt \cite{whitt:monograph}: in Theorem 12.2.2 therein, the piecewise constant functions used for approximating c\`adl\`ag functions on $[0,T]$ are precisely assumed to be continuous at terminal time $T$.\label{footnote:whitt}}.  For a function $f\in\hat{\mathcal D}\left([0,T], \R\right)$, we denote by $\mathcal{G}_f$ the completed graph of $f$ i.e.
\[
\mathcal{G}_f := \left\{(x, t)\in\R\times[0, T]: x\in[f(t-), f(t)]\right\},
\]
where $[f(t-),f(t)]$ stands for the non-ordered segment between $f(t-)$ and $f(t)$ ($f(t-)$ could be bigger than $f(t)$). We define an order on $\mathcal{G}_f$ in the following way: for $(x_1, t_1), (x_2, t_2) \in \mathcal{G}_f$, we say that $(x_1, t_1)\leq(x_2, t_2)$ if either $t_1<t_2$, or $t_1= t_2$ and $|f(t_1-) - x_1|\leq |f(t_1-) - x_2|$.  In other words this is the natural order when the graph $\mathcal{G}_f$ is traced out from left to right. We then define a \textit{parametric representation} of $\mathcal{G}_f$ as being a continuous  function $(u, r)$ that maps $[0,T]$ onto $\mathcal{G}_f$ that is non-decreasing with respect to the order on $\mathcal{G}_{f}$ defined above i.e.
\[
(u,r): [0,T] \ni t\mapsto (u(t), r(t)) \in \mathcal{G}_f,
\]
where $u\in\mathcal{C}([0, T], \mathbb{R}), r\in\mathcal{C}([0, T], \R)$.  We define $\mathcal{R}_f$ as the set of all parametric representations of $\mathcal{G}_f$.  A parametric representation of $\mathcal{G}_f$ is thus a way of tracing it out 
`without going back on oneself' with respect to the natural order of the graph.

For $f_1, f_2 \in \hat{\mathcal D}\left([0,T], \R\right)$ we finally define the M1 distance between them as 
\[
d_{M_1}(f_1, f_2) := \inf_{\substack{(u_j, r_j)\in\mathcal{R}_{f_j}\\j=1,2}}\left\{\|u_1 - u_2\| \vee \|r_1 - r_2\|\right\},
\]
where $\|\cdot\|$ is the usual supremum norm on $\mathcal{C}([0, T], \mathbb{R})$.
%The main advantage for us is the fact that for monotone functions the criteria for convergence and compactness simplify, as we now describe. 
In order to characterize the convergence in M1, we define for $f\in \hat{\mathcal{D}}\left([0,T], \R\right), t\in[0, T]$ and $\delta >0$,
\begin{equation}
\label{M1 wT}
w_{T}(f, t, \delta) := \sup_{0\vee(t-\delta)\leq t_1<t_2<t_3\leq T\wedge(t+\delta)}\Big\|f(t_2) - [f(t_1), f(t_3)]\Big\|
\end{equation}
where 
\[
\Big\|f(t_2) - [f(t_1), f(t_3)]\Big\| = \inf_{\theta\in[0,1]}\Big|\theta f(t_1) + (1-\theta)f(t_3) - f(t_2)\Big|
\] 
is the distance between $f(t_2)$ and the set $[f(t_1), f(t_3)]$.
In particular, if a function $f\in \hat{\mathcal{D}}\left([0,T], \R\right)$ is monotone (non-increasing or non-decreasing), then $w_T(f, t, \delta) = 0$. %for all $t\in[0, T]$ and $\delta>0$.  

From \cite[Theorems 12.5.1, 12.4.1, 12.12.2]{whitt:monograph}, 
we have:
\begin{thm}
\label{thm:M1:1}
A sequence of functions $(f_n)_{n\geq1}\subset\hat{\mathcal D}\left([0,T], \R\right)$ converges to some $f\in\hat{\mathcal D}\left([0,T], \R\right)$ in the M1 topology if and only if $f_n(t) \to f(t)$ for each $t$ in a dense subset of full Lebesgue measure of $[0, T]$ that includes $0$ and $T$, and 
\[
\lim_{\delta \rightarrow 0} \limsup_{n\to\infty}\sup_{t\in[0, T]} w_{T}(f_n, t, \delta) = 0.
\]
\end{thm}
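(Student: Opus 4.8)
The statement is, essentially verbatim, a repackaging of the M1-convergence criteria in Whitt's monograph, so the proof I would write consists mainly in matching our notation and conventions to his and then invoking the relevant results; I describe the mechanism. The central device is the parametric representation. From the definition of $d_{M_1}$, the compactness of the completed graph $\mathcal{G}_f$ and the completeness of $\mathcal{C}([0,T],\R)^2$, one first shows that $f_n\to f$ in the M1 topology if and only if there exist $(u_n,r_n)\in\mathcal{R}_{f_n}$ and $(u,r)\in\mathcal{R}_f$ with $\|u_n-u\|\vee\|r_n-r\|\to 0$. This turns every assertion about M1 convergence into an assertion about approximating the graph $\mathcal{G}_f$, traversed monotonically, by the graphs $\mathcal{G}_{f_n}$, traversed monotonically, and is the form used in \cite[Theorems 12.4.1, 12.5.1]{whitt:monograph}.

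For the \emph{necessity} direction, suppose $\|u_n-u\|\vee\|r_n-r\|\to 0$. Since $u,r$ are continuous on the compact interval $[0,T]$ they are uniformly continuous; fixing $t$ and three points $t_1<t_2<t_3$ in a $\delta$-window and choosing parameters $s_1\le s_2\le s_3$ with $r(s_i)=t_i$, $u(s_i)=f(t_i)$ (up to the usual care at jump points), the point $u(s_2)=f(t_2)$ lies within the modulus of continuity of $u$ of the segment $[u(s_1),u(s_3)]=[f(t_1),f(t_3)]$, because $\mathcal{G}_f$ is traced without going back on itself; hence $w_T(f,t,\delta)$ from \eqref{M1 wT} is bounded by that modulus, uniformly in $t$. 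Transferring this to $f_n$ via $\|u_n-u\|\vee\|r_n-r\|\le\varepsilon_n\to 0$ (a short chaining argument, since $r_n$ need not be strictly monotone) gives $\limsup_n\sup_t w_T(f_n,t,\delta)\to 0$ as $\delta\to 0$. Pointwise convergence $f_n(t)\to f(t)$ at every continuity point $t$ of $f$ is immediate from $u_n\to u$, $r_n\to r$; and, the jump set of a c\`adl\`ag function being at most countable, these $t$ form a dense set of full Lebesgue measure, which may be taken to contain $0$ and $T$ by the left-continuity-at-$T$ convention built into $\hat{\mathcal D}([0,T],\R)$ (cf.\ footnote~\ref{footnote:whitt}).

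For the \emph{sufficiency} direction --- the main obstacle --- one must build, from the hypotheses, parametric representations $(u_n,r_n)\in\mathcal{R}_{f_n}$ converging uniformly to some $(u,r)\in\mathcal{R}_f$, where $f$ is the prescribed limit. The oscillation hypothesis $\lim_{\delta\to 0}\limsup_n\sup_t w_T(f_n,t,\delta)=0$ is exactly what keeps the graphs $\mathcal{G}_{f_n}$ equicontinuous in the graph sense: on each window of width $\delta$ the values of $f_n$ stay within an $\varepsilon$-neighbourhood of the segment joining the window endpoint values, which yields, for a suitable choice of parametrizations of the $\mathcal{G}_{f_n}$, a common modulus of continuity and hence, via an Arzel\`a--Ascoli argument, a subsequential uniform limit $(u,r)$. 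The pointwise hypothesis on a dense set, after perturbing the window endpoints into that set, forces $(u,r)$ to trace out $\mathcal{G}_f$, so that $(u,r)\in\mathcal{R}_f$; passing from the subsequence to the full sequence is standard and gives $d_{M_1}(f_n,f)\to 0$. The careful bookkeeping --- perturbing partition points into the dense convergence set, matching parametrizations across the jumps of $f$, and treating the two endpoints $0$ and $T$ by hand (which is why they are singled out, together with the left-continuity at $T$) --- is precisely what occupies \cite[Theorems 12.4.1, 12.5.1]{whitt:monograph}, while \cite[Theorem 12.12.2]{whitt:monograph} supplies the identification of the functional $w_T$ in \eqref{M1 wT} as an admissible M1 oscillation function. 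In the write-up I would simply cite these three results, after verifying that our $w_T$ and our space $\hat{\mathcal D}([0,T],\R)$ coincide with Whitt's conventions (cf.\ footnote~\ref{footnote:whitt}).
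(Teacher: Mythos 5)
Your proposal is correct and takes essentially the same approach as the paper: the paper states this theorem without proof, introducing it with the phrase ``From \cite[Theorems 12.5.1, 12.4.1, 12.12.2]{whitt:monograph}, we have:'', and you likewise conclude that one should simply cite those results after checking that $w_T$ and the left-continuity-at-$T$ convention for $\hat{\mathcal D}([0,T],\mathbb{R})$ match Whitt's. The mechanism you sketch (parametric representations, equicontinuity via the oscillation functional, Arzel\`a--Ascoli) is a faithful pr\'ecis of Whitt's argument, but is additional material the paper itself does not supply.
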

\begin{thm}
\label{thm:M1:2}
Suppose that the sequence $(f_n)_{n\geq1}\subset\hat{\mathcal D}\left([0,T], \R\right)$ converges to some $f\in\hat{\mathcal D}\left([0,T], \R\right)$ in the M1 topology.  Then for all points $t\in[0, T]$ at which $f$ is continuous
it holds that
\[
\lim_{\delta \rightarrow 0} 
\limsup_{n\to\infty} \sup_{s\in[0\vee(t-\delta), T\wedge(t+\delta)]}|f_n(s) - f(s)| = 0.
\]
In particular, if $f$ is continuous on the entire interval $[0,T]$, then the convergence of $(f_{n})_{n \geq 1}$ to $f$ is
uniform. Moreover, if $f_n$ is monotone for each $n$, then $f_n\to f$ in M1 if and only if $f_n(t) \to f(t)$ for all $t$ in a dense subset of full Lebesgue measure of $[0, T]$ including $0$ and $T$.
\end{thm}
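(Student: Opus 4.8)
\emph{Plan.} The idea is to deduce all three assertions from Theorem~\ref{thm:M1:1} together with the elementary observation, recorded just above it, that the oscillation functional satisfies $w_T(g,\cdot,\cdot)\equiv 0$ whenever $g$ is monotone; no parametric representations are needed. Once and for all, fix a dense, full Lebesgue-measure subset $D$ of $[0,T]$ containing $0$ and $T$ on which $f_n\to f$ pointwise (its existence is part of Theorem~\ref{thm:M1:1}).

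\emph{Localization at a continuity point.} Let $t$ be a point of continuity of $f$ and fix $\varepsilon>0$. Continuity of $f$ at $t$ (for $f\in\hat{\mathcal D}([0,T],\R)$ this means $f(t-)=f(t)$, which is automatic at $t=0$ and at $t=T$) yields $\delta_\varepsilon>0$ with $|f(s)-f(t)|\le\varepsilon$ for all $s\in[0,T]$ with $|s-t|\le\delta_\varepsilon$. By the oscillation condition in Theorem~\ref{thm:M1:1} I can pick $\delta_1\in(0,\delta_\varepsilon)$ with $\limsup_n\sup_\tau w_T(f_n,\tau,\delta_1)\le\varepsilon$, and by density of $D$ I can pick $a,b\in D$ with $a\le t\le b$, $a<b$, $t-\delta_1<a$ and $b<t+\delta_1$ (allowing $a=t$ when $t=0$ and $b=t$ when $t=T$). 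For any $s\in(a,b)$ the triple $a<s<b$ lies in $[0\vee(t-\delta_1),T\wedge(t+\delta_1)]$, so by the very definition of $w_T$ there is $\theta\in[0,1]$ with $|f_n(s)-\theta f_n(a)-(1-\theta)f_n(b)|=\|f_n(s)-[f_n(a),f_n(b)]\|\le w_T(f_n,t,\delta_1)$, whence the $s$-uniform bound
\[
\sup_{s\in[a,b]}|f_n(s)-f(t)|\le w_T(f_n,t,\delta_1)+\max\bigl(|f_n(a)-f(t)|,\,|f_n(b)-f(t)|\bigr),
\]
the endpoints $s=a,b$ being controlled by the same right-hand side. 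Letting $n\to\infty$ and using $f_n(a)\to f(a)$, $f_n(b)\to f(b)$ and $|f(a)-f(t)|,|f(b)-f(t)|\le\varepsilon$ gives $\limsup_n\sup_{s\in[a,b]}|f_n(s)-f(t)|\le 2\varepsilon$, and then $\limsup_n\sup_{s\in[a,b]}|f_n(s)-f(s)|\le 3\varepsilon$ since $|f(s)-f(t)|\le\varepsilon$ on $[a,b]$. Choosing $\delta>0$ so small that $[0\vee(t-\delta),T\wedge(t+\delta)]\subseteq[a,b]$ and noting that the target quantity is non-increasing in $\delta$, letting $\varepsilon\downarrow 0$ proves the first assertion.

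\emph{Uniform convergence and the monotone case.} If $f$ is continuous on all of $[0,T]$, then for a given $\varepsilon$ the localization step provides, for each $t$, an interval $(t-\delta_t,t+\delta_t)$ on which $\limsup_n\sup_{s\in[t-\delta_t,t+\delta_t]\cap[0,T]}|f_n(s)-f(s)|\le\varepsilon$; extracting a finite subcover of the compact set $[0,T]$ and using that the $\limsup$ of a finite maximum is the maximum of the $\limsup$s yields $\limsup_n\sup_{[0,T]}|f_n-f|\le\varepsilon$, hence uniform convergence as $\varepsilon\downarrow 0$. For the last assertion the forward implication is already contained in Theorem~\ref{thm:M1:1}; for the converse, each $f_n$ being monotone forces $w_T(f_n,\tau,\delta)=0$ for all $\tau,\delta$, so the oscillation condition in Theorem~\ref{thm:M1:1} holds automatically, and combined with the assumed pointwise convergence on a dense full-measure set containing $0$ and $T$ this gives M1 convergence.

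\emph{Main obstacle.} The only genuinely delicate step is the localization argument: one must build the window $[a,b]$ with $a,b\in D$, respect the strict inequalities $t_1<t_2<t_3$ in the definition of $w_T$, order the choices correctly ($\varepsilon$, then $\delta_1<\delta_\varepsilon$, then $a,b\in D$, then $\delta$), and treat the boundary cases $t=0$ and $t=T$. The uniform-convergence statement and the monotone case are then short corollaries of this step and of Theorem~\ref{thm:M1:1}.
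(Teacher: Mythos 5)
The paper gives no proof of Theorem~\ref{thm:M1:2}; it simply cites \cite[Theorems 12.5.1, 12.4.1, 12.12.2]{whitt:monograph}. Your argument is therefore genuinely independent: a correct, self-contained derivation of all three assertions from Theorem~\ref{thm:M1:1} alone, using only the definition of $w_T$ and the remark recorded just before Theorem~\ref{thm:M1:1} that $w_T(g,\cdot,\cdot)\equiv 0$ for monotone $g$. The localization estimate is the right tool: for $a,b\in D$ with $t-\delta_1<a\le t\le b<t+\delta_1$, the convex-combination form of $w_T$ yields
\[
\sup_{s\in[a,b]}|f_n(s)-f(t)|\le w_T(f_n,t,\delta_1)+\max\bigl(|f_n(a)-f(t)|,\,|f_n(b)-f(t)|\bigr),
\]
with the strict inequalities $t_1<t_2<t_3$ in the definition of $w_T$ respected for interior $s$ and the endpoints controlled trivially, and with the boundary cases $t=0$, $t=T$ handled correctly thanks to $0,T\in D$ and the automatic continuity of $f$ there. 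Sending $n\to\infty$ along $D$, trading $f(t)$ for $f(s)$ via the $\delta_\varepsilon$-continuity window, and using monotonicity of the quantity in $\delta$ gives the first assertion; the finite-subcover argument (and the fact that $\limsup$ of a finite maximum equals the maximum of the $\limsup$s) and the vanishing of $w_T$ on monotone paths dispose of the other two. This is arguably the natural way to present these facts in the context of the paper, since the oscillation criterion and the monotone-implies-$w_T=0$ observation are exactly the ingredients the rest of the paper relies on, whereas the cited results in Whitt are established by quite different means (parametric representations of completed graphs).
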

\begin{thm}
\label{thm:M1:3}
A subset $A$ of $\hat{\mathcal D}\left([0, T], \R\right)$ has compact closure in the M1 topology if and only if
$\sup_{f\in A}\|f\| < \infty$ and 
\[
\lim_{\delta\to0}\sup_{f\in A}\Bigl\{\Bigl(\sup_{t\in[0,T]}w_{T}(f, t, \delta)\Bigr)
\vee v_{T}(f, 0, \delta) \vee v_{T}(f, T, \delta)\Bigr\} =0
\]
where $\displaystyle 
v_{T}(f, t, \delta) :=\sup_{0\vee(t-\delta)\leq t_1 \leq t_2\leq T\wedge(t+\delta)}\bigl|f(t_1) - f(t_2)\bigr|.$
\end{thm}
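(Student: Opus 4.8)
The plan is to recognise this statement as the Arzel\`a--Ascoli theorem for the M1 topology and to derive it from the convergence characterisations already recorded in Theorems \ref{thm:M1:1} and \ref{thm:M1:2}, rather than arguing directly from the definition of $d_{M_1}$. Since the M1 topology on $\hat{\mathcal D}([0,T],\R)$ is induced by a complete separable metric, relative compactness coincides with sequential relative compactness, so it suffices to show that the two displayed conditions are equivalent to the property that every sequence in $A$ has an M1-convergent subsequence. Throughout, write $\bar w(f,\delta)$ for the bracketed quantity $(\sup_t w_T(f,t,\delta))\vee v_T(f,0,\delta)\vee v_T(f,T,\delta)$, and note that $w_T(f,\cdot,\delta)$ and $v_T(f,\cdot,\delta)$ are nondecreasing in $\delta$.

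\emph{Necessity.} First I would check that $f\mapsto\|f\|$ is M1-continuous: if $f_n\to f$ in M1, then for each $n$ one can pick $(u_n,r_n)\in\mathcal R_{f_n}$ and $(\tilde u_n,\tilde r_n)\in\mathcal R_f$ with $\|u_n-\tilde u_n\|\to0$, and since the $\R$-component of a parametric representation ranges over exactly $\bigcup_t[f(t-),f(t)]$ one has $\|u_n\|=\|f_n\|$ and $\|\tilde u_n\|=\|f\|$, whence $\|f_n\|\to\|f\|$. Thus a relatively compact $A$ is bounded. If the oscillation condition failed there would be $\epsilon>0$, $\delta_k\downarrow0$ and $f_k\in A$ with $\bar w(f_k,\delta_k)>\epsilon$; passing to an M1-convergent subsequence $f_k\to f$, Theorem \ref{thm:M1:1} gives $\lim_{\delta\to0}\limsup_k\sup_tw_T(f_k,t,\delta)=0$, while Theorem \ref{thm:M1:2} applied at the points $0$ and $T$ --- which are genuine continuity points of $f$, since membership in $\hat{\mathcal D}([0,T],\R)$ forces left-continuity at $T$ and c\`adl\`ag forces right-continuity at $0$ --- together with $v_T(f,0,\delta),v_T(f,T,\delta)\to0$ yields $\limsup_k v_T(f_k,0,\delta)\to0$ and $\limsup_k v_T(f_k,T,\delta)\to0$ as $\delta\to0$. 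Hence $\lim_{\delta\to0}\limsup_k\bar w(f_k,\delta)=0$; but for fixed $\delta$ and $k$ large, $\delta_k\leq\delta$ gives $\bar w(f_k,\delta)\geq\bar w(f_k,\delta_k)>\epsilon$, a contradiction.

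\emph{Sufficiency.} Assume the two conditions and let $(f_n)\subset A$. Fix a countable dense $D\subset[0,T]$ with $0,T\in D$; by boundedness and a diagonal extraction pass to a subsequence with $f_n(t)\to\ell(t)$ for every $t\in D$. Set $\epsilon_\delta:=\limsup_n\sup_tw_T(f_n,t,\delta)\to0$. Then $\ell|_D$ has one-sided limits at every point: if two sequences in $D$ decreasing to $t$ produced distinct limits $a\neq b$, interleaving them and applying the $w_T$-bound to admissible triples $t_1<t_2<t_3$ within a $\delta$-window of $t$ and passing to the limit would give $|a-b|\leq\epsilon_\delta$ for all $\delta$, impossible; similarly from the left. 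Define $f(t):=\lim_{D\ni s\downarrow t}\ell(s)$ for $t<T$ and $f(T):=\ell(T)$; a routine $\epsilon$-argument shows $f$ is c\`adl\`ag, and $\limsup_n v_T(f_n,T,\delta)\to0$ forces left-continuity at $T$, so $f\in\hat{\mathcal D}([0,T],\R)$. A further use of $w_T$ shows $\ell(t)=f(t)$ at every continuity point $t\in D$ of $f$ (as $s_1\uparrow t$, $s_3\downarrow t$ in $D$ the interval $[\ell(s_1),\ell(s_3)]$ shrinks to $\{f(t)\}$), while $\limsup_n v_T(f_n,0,\delta)\to0$ and $\limsup_n v_T(f_n,T,\delta)\to0$ give $\ell(0)=f(0)$, $\ell(T)=f(T)$. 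Therefore $f_n(t)\to f(t)$ on $D$ minus the countable jump set of $f$, a dense set of full Lebesgue measure containing $0$ and $T$; combined with $\epsilon_\delta\to0$, Theorem \ref{thm:M1:1} yields $f_n\to f$ in M1.

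\emph{Main obstacle.} The delicate part is the sufficiency direction: extracting a bona fide element of $\hat{\mathcal D}([0,T],\R)$ from the pointwise-on-$D$ limit using only the moduli $w_T$ and $v_T$, together with the endpoint bookkeeping. Indeed it is precisely the appearance of $v_T(\cdot,0,\delta)$ and $v_T(\cdot,T,\delta)$, rather than an interior or one-sided modulus, that forces the left-continuity-at-$T$ convention defining $\hat{\mathcal D}([0,T],\R)$: on the full space $\mathcal D([0,T],\R)$ the modulus would have to accommodate a possible jump at $T$, and the clean statement above would break. The only external input one must take on faith here is that the M1 topology on $\hat{\mathcal D}([0,T],\R)$ is completely metrisable and separable, so that sequential relative compactness coincides with relative compactness; this is part of the general theory in \cite{whitt:monograph}.
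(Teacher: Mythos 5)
The paper does not actually supply a proof of this statement: Theorem \ref{thm:M1:3} is quoted directly from \cite[Theorem 12.12.2]{whitt:monograph} alongside Theorems \ref{thm:M1:1} and \ref{thm:M1:2}, so there is no internal argument to compare against. Your plan of reconstructing the compactness criterion from the two convergence characterizations is a sensible and essentially self-contained way to do it, and most of the steps are sound: the M1-continuity of $f\mapsto\|f\|$ via parametric representations, the contradiction argument for necessity of the oscillation bound (using that $0$ and $T$ are genuine continuity points of any element of $\hat{\mathcal D}([0,T],\R)$, precisely because of the left-continuity-at-$T$ convention), and the construction of one-sided limits of $\ell$ from the $w_T$-bound.

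There is, however, a concrete error in the final step of the sufficiency argument. You conclude that ``$f_n(t)\to f(t)$ on $D$ minus the countable jump set of $f$, a dense set of full Lebesgue measure.'' But $D$ was chosen to be \emph{countable}, so $D\setminus\mathrm{Disc}(f)$ has Lebesgue measure zero, not full measure, and the hypothesis of Theorem \ref{thm:M1:1} as stated in the paper (``a dense subset of full Lebesgue measure of $[0,T]$ that includes $0$ and $T$'') is not met. To close the gap you must upgrade the pointwise convergence from $D$ to all of $[0,T]\setminus\mathrm{Disc}(f)$, which does have full measure. This can be done with one more pass through the $w_T$-modulus: for a continuity point $t\notin D$, pick $s_1<t<s_3$ in $D\setminus\mathrm{Disc}(f)$ within $\delta$ of $t$; then
\[
\bigl|f_n(t)-f(t)\bigr|\;\leq\;\bigl\|f_n(t)-[f_n(s_1),f_n(s_3)]\bigr\|\;+\;\max\bigl(|f_n(s_1)-f(t)|,\,|f_n(s_3)-f(t)|\bigr),
\]
and taking $\limsup_n$ and then $\delta\to0$ (using $\ell(s_i)=f(s_i)\to f(t)$ by continuity of $f$ at $t$) gives $f_n(t)\to f(t)$. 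With this addition, the set of pointwise convergence contains all continuity points of $f$ together with $0$ and $T$, and Theorem \ref{thm:M1:1} applies as stated. The rest of your argument then goes through.
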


%We note that the $v$ terms in the above result are necessary.  Indeed, if we take $f_n\in\mathcal{D}\left([0, T], \R\right)$ defined %by $f_n(0) = 1$, $f_n(n^{-1}) = f_n(T)=0$ and linear elsewhere, it is clear that 
%\[
%\sup_{t\in[0,T]}w(f, t, \delta) = 0
%\]
%for all $\delta$ ($f_n$ is monotone).  However, $f_n$ does not converge in M1.  Indeed, the only possible limit candidate would be $f%(t) = I_{\{0\}}(t)$ by Corollary \ref{cor:continuous/monotone}, but $f\not\in\mathcal{D}\left([0, T], \R\right)$.  Hence $\{f_n:n%\geq1\}$ is not compact.

Finally, we mention that $\hat{\mathcal D}([0, T], \R)$, endowed with M1, is Polish, and that the Borel $\sigma$-field coincides
with the $\sigma$-field generated by the evaluation mappings (see  
\cite[Page 8]{whitt:paper}). 
This guarantees that the law of a process over $\hat{\mathcal D}([0, T], \R)$, endowed with M1, is characterized by its finite-dimensional distributions. The Polish property renders the Skorohod representation theorem licit, both on 
$\hat{\mathcal D}([0, T], \R)$ and on ${\mathcal P}(\hat{\mathcal D}([0, T], \R))$,
which is defined as the set of probability measures on $\hat{\mathcal D}([0, T], \R)$ (see
\cite[Theorem 6.7]{billingsley} and
\cite[Chapter III, Theorem 1.7]{ethier:kurtz}). 
It also renders the Prohorov theorem licit (see \cite[Chapter 1, Section 5]{billingsley}):
we let the reader derive the tightness criterion from Theorem \ref{thm:M1:3} (see \cite[Theorem 12.12.3]{whitt:monograph}).

%\begin{cor}
%A sequence $(P_n)_{n\geq 1}$ of probability measures on $\mathcal{D}([0,T], \R)$ with the M1 topology is tight if and only if
%\begin{itemize}
%\item[{\rm (i)}] for a each $\varepsilon>0$ there exists $c$ such that
%\[
%P_n \left( \left\{f: \|f\| > K  \right\}\right)<\varepsilon, \quad \forall n\geq1;
%\]
%\item[{\rm (ii)}] for a each $\varepsilon, \eta>0$ there exists $\delta>0$ such that
%\[
%P_n\left(\left\{f: \left(\sup_{t\in[0,T]}w(f, t, \delta)\right)\vee v(f, 0, \delta) \vee v(f, T, \delta)\geq \eta \right\}\right)<\varepsilon,
%\]
%for all $n\geq1$.
%\end{itemize}
%\end{cor}

\subsection{Existence of weak solutions with simultaneous spikes}
\label{results: existence of weak solutions}
The purpose of this Section is to state two results showing that the existence of a solution to \eqref{nonlinear eq}
can be deduced by extracting weakly convergent subsequences either along the distributions of the particle
systems (as the number of particles 
$N$ tends to $+ \infty$), or along the distributions of the delayed systems (as the delay tends 
to $0$). 

\begin{thm}
\label{weak existence thm}
Given $T>0$ and the (physical) solution $((Z^{i, N}_t)_{t\in[0, T]})_{i=1, \dots, N}$ to the particle system \eqref{particle system Z}, consider the extended system
\begin{equation}
\label{eq:Z-tilde1}
\tilde{Z}^{i, N}_t :=
\begin{cases}
Z^{i, N}_t, &\mbox{if } t \leq T,
\\
W_{t}^{i} - W_{T}^{i} + Z^{i, N}_{T}, &\mbox{if } t\in (T, T+1],
\end{cases}
\end{equation}
for $i\in\{i, \dots, N\}$.
Define the empirical measure
$\bar{\mu}_N := \frac{1}{N}\sum_{i=1}^N \emph{Dirac}(\tilde{Z}^{i, N})$, which reads as a random variable with values in 
$\mathcal{P}(\hat{\mathcal D}([0,T+1], \R))$.
Then, denoting by $\Pi_N$ the law of $\bar{\mu}_N$, the family $(\Pi_{N})_{N \geq 1}$ 
is tight in $\mathcal{P}(\mathcal{P}(\hat{\mathcal D}([0,T+1],\R)))$ endowed with the 
topology of weak convergence inherited from the M1 topology. 

Moreover,
for any weak limit $\Pi_{\infty}$, 
for $\Pi_{\infty}$-almost every measure $\mu \in \mathcal{P}(\hat{\mathcal D}([0,T+1],\R))$, the canonical process
$(z_{t})_{t \in [0,T+1]}$ on 
$\hat{\mathcal D}([0,T+1],\R)$ generates, under $\mu$, a physical solution to \eqref{nonlinear eq Z}, and hence to \eqref{nonlinear eq}, up until time $T$ i.e.
\begin{enumerate}
\item under $\mu$, $z_{0}$ is distributed according to the law of $X_{0}$;
\item under $\mu$, 
$( z_t - z_{0} - \int_0^t b(z_s - m_s)ds - \alpha \langle \mu,m_{t} \rangle)_{t \in [0,T)}$
is a Brownian motion, where $m_{t} = \lfloor ( \sup_{0 \leq s \leq t} z_{s})_{+} \rfloor$
and $\langle \mu,m_{t} \rangle$ denotes the expectation of $m_{t}$ under $\mu$
($\langle \cdot, \cdot \rangle$ is the duality bracket between a probability measure and a measurable function); 
\item under $\mu$, (1), (2) and (4) in 
Definition \ref{def:nonlineareq} are fulfilled. 
\end{enumerate}
\end{thm}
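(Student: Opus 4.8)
The plan is to establish tightness first and then identify the limit. For tightness of $(\Pi_N)_{N\geq1}$ in $\mathcal P(\mathcal P(\hat{\mathcal D}([0,T+1],\R)))$, by a standard argument (see e.g. Sznitman) it suffices to show tightness of the laws of the $\tilde Z^{1,N}$ in $\mathcal P(\hat{\mathcal D}([0,T+1],\R))$ together with tightness of the laws of the one-particle intensity $\bar e^N$. Write $\tilde Z^{i,N}_t = \tilde Z^{i,N}_0 + \int_0^t b(Z^{i,N}_s - M^{i,N}_s)\,ds + \alpha\,\bar e^N(t) + W^i_t$ (extended past $T$). The drift term is, by Lemma~\ref{lem:stab} and the a priori bound \eqref{eq:12:5:2}, uniformly Lipschitz-in-time with an $L^p$-integrable modulus; the Brownian part is tight in the uniform topology; and $\bar e^N$ is non-decreasing and, again by \eqref{eq:12:5:2}, has $L^p$-bounded terminal value. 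Since a family of non-decreasing c\`adl\`ag functions with uniformly bounded total variation has $w_T(f,t,\delta)\equiv0$, Theorem~\ref{thm:M1:3} gives compactness criteria that are automatically satisfied; the only thing to check is the uniform control of the size of the range, which follows from \eqref{eq:12:5:2} and a maximal inequality. Combining, the sum $\tilde Z^{1,N}$ is tight for M1. This yields tightness of $(\Pi_N)$ and hence the existence of weak limits $\Pi_\infty$.

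Next, fix a weak limit $\Pi_\infty$. Using the Skorohod representation theorem (licit since $\hat{\mathcal D}([0,T+1],\R)$ with M1 is Polish) I realise $\bar\mu_N \to \mu$ almost surely in $\mathcal P(\hat{\mathcal D}([0,T+1],\R))$, for $\Pi_\infty$-a.e. $\mu$. Property (1) is immediate from convergence of one-dimensional marginals at $t=0$ (which is a continuity point, and $0$ is always retained by M1 convergence). For the martingale characterisation (2), the key technical point is that $\langle\bar\mu_N,m_t\rangle = \bar e^N(t)$ converges to $\langle\mu,m_t\rangle$ at continuity points of the limiting deterministic map $e$; since $m_t = \lfloor(\sup_{s\leq t}z_s)_+\rfloor$ is a measurable but discontinuous functional of the path, care is needed — here M1 convergence of paths together with the fact that the running supremum is M1-continuous at paths whose sup is not attained at a jump lets one pass to the limit for a.e. $t$. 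Then I test against bounded continuous functionals of the path up to time $s<t$ and the Brownian martingale problem for the prelimit (a consequence of the particle SDE and independence of the $W^i$) passes to the limit, giving that $(z_t - z_0 - \int_0^t b(z_s-m_s)ds - \alpha\langle\mu,m_t\rangle)_{t\in[0,T)}$ is a continuous $\mu$-martingale with the right quadratic variation, hence a Brownian motion; the extension of the paths past $T$ by an independent Brownian increment is precisely what makes $t=T$ an interior continuity point and avoids boundary pathologies.

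For property (3), conditions (1) and (2) of Definition~\ref{def:nonlineareq} for $\mu$ follow from their discrete analogues: integrability of $m_t$ from the uniform $L^p$ bounds and lower semicontinuity, and $\P(\Delta M_t\leq1)=1$ — equivalently $\Delta m_t \leq 1$ $\mu$-a.s. — from the fact that each $\tilde Z^{i,N}$ has jumps of size exactly equal to the deterministic jump of $e$ (all discontinuities of $Z^{i,N}$ are dictated by $\bar e^N$, which is a jump of the empirical mean and hence at most... ) — more precisely, one uses \eqref{inf condition} of Proposition~\ref{prop:16:2:14:1} to control the jump sizes and pass to the limit. The real obstacle is condition (4): showing that the limiting jumps of $e$ satisfy $\Delta e(t) = \inf\{\eta\geq0: \P(z_{t-}+\alpha\eta\geq1)<\eta\}$. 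Here I would use Proposition~\ref{prop:assumption:physical:criterion}: it suffices to prove the inequality \eqref{eq:assumption:physical:criterion}, i.e. $\forall\eta\leq\Delta e(t)$, $\P(z_{t-}\geq1-\alpha\eta)\geq\eta$. This is the discrete cascade condition \eqref{inf condition} passed to the limit — each partial cascade set $\Gamma_0\cup\dots\cup\Gamma_k$ in the particle system has size at least the number of particles above $1-\alpha|\Gamma_0\cup\dots\cup\Gamma_k|/N$, and letting $N\to\infty$ along the Skorohod realisation, using convergence of the empirical measures of $Z^{i,N}_{t-}$ (which requires a careful argument since $t-$ is a left limit and M1 convergence controls left limits only weakly), one recovers \eqref{eq:assumption:physical:criterion}. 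I expect the delicate step to be making this last passage rigorous, because it mixes the discontinuity structure of the limit path, the need to evaluate empirical measures at left-limit times $t-$, and the fact that M1 convergence does not control the paths uniformly near jumps of the limit; one likely has to work with a countable dense set of continuity points approaching $t$ from the left and pass to the limit in $N$ and then in the approximating time simultaneously.
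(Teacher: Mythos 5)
Your overall arc—tightness in M1, Skorohod realisation of the random measures, identification of the SDE via a convergent empirical functional, then physicality via Proposition~\ref{prop:assumption:physical:criterion}—matches the paper's strategy, and you correctly flag condition~(4) of Definition~\ref{def:nonlineareq} as the delicate point. However there are two concrete gaps.

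First, the tightness argument has a hole at the left boundary. You invoke Theorem~\ref{thm:M1:3}, note that $w_{T+1}(f,t,\delta)\equiv 0$ for monotone $f$, and then assert that the only remaining check is a uniform bound on the range via~\eqref{eq:12:5:2}. But Theorem~\ref{thm:M1:3} also requires uniform control of the boundary oscillation $v_{T+1}(f,0,\delta)$, and for non-decreasing functions this is precisely the obstruction: a sequence of step functions with a unit jump at time $1/n$ has uniformly bounded range and $w\equiv 0$, yet is not M1-relatively compact because $v(f_n,0,\delta)\not\to 0$ uniformly in $n$. For the empirical counting process $\bar e^N$, ruling out such accumulation of early jumps is non-trivial and is exactly what Lemma~\ref{cor:particle:1} does: it shows, uniformly in $N$, that $\bar e^N(t)\lesssim t^{1/4}$ near $0$ with high probability, and the proof crucially uses the $\varepsilon_0$-gap in Assumption~\ref{assumption 2}. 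A maximal inequality on the paths does not supply this.

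Second, the physicality argument needs a stronger input than you give. You propose to pass the fixed-time cascade identity~\eqref{inf condition} of Proposition~\ref{prop:16:2:14:1} to the limit, but M1 convergence gives you no control at a single possibly-discontinuous time $t$; you can only control the empirical measures over \emph{intervals} of continuity around $t$. The paper resolves this with the new quantitative Proposition~\ref{prop:20:02:14:1}, a smeared version of the cascade bound valid over windows $[t-,t+h]$ with an error $O(h^{1/16})$ both in the jump-size estimate $\bar e^N(t+h)-\bar e^N(t-)\leq 1+Ch^{1/16}$ (which gives condition~(2)) and in the cascade inequality (which gives condition~(4)). This interval version is passed to the limit at points $t,t+h\notin J$ and then $h\downarrow 0$. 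Your sketch hints at approximating $t$ from the left along continuity points, which is in the right spirit, but the genuinely new ingredient—the quantitative $h$-smearing of the discrete cascade—is not identified. Relatedly, you assert that $m_t=\lfloor(\sup_{s\leq t}z_s)_+\rfloor$ passes to the M1 limit because the running sup is ``not attained at a jump,'' but you never establish that the limiting paths have this crossing property. The paper proves this (Lemma~\ref{lem:2}) by enriching the empirical measure to include the Brownian drivers, $\bar\theta_N=\frac1N\sum\textrm{Dirac}(\tilde Z^{i,N},W^i)$, passing the pathwise lower bound $z_t-z_s\geq w_t-w_s-c(\cdot)(t-s)$ to the limit, and applying the strong Markov property of the limiting Brownian component. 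Without some version of this argument, the convergence $\langle\bar\mu_N,m_t\rangle\to\langle\mu,m_t\rangle$ at continuity points of $e$ is not justified.
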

\color{black}

\begin{rem}
In the usual terminology of SDEs, the solution 
to \eqref{nonlinear eq Z} as given by  Theorem \ref{weak existence thm}
is weak as the Brownian motion is part of the solution. 

The extension of the $Z$-processes
in \eqref{eq:Z-tilde1} to the interval $(T,T+1]$ permits to get for free 
uniform bounds on the modulus of continuity of the particles at the final time $T+1$, which 
is a requirement for tightness for the M1 topology (see Lemma \ref{lem:tightness1} below).
As recalled in Footnote 
\ref{footnote:whitt} on Page \pageref{footnote:whitt}, 
elements of $\hat{\mathcal D}([0,S],\R)$, for $S>0$, are assumed to be left-continuous
at $S$, which requires bounds on the modulus of continuity at $S$ when addressing 
questions of convergence or compactness. With $S=T$, we see that $\tilde Z^{i,N}$ (or equivalently ${Z}^{i,N}$)
may not be continuous at $S$, but with $S=T+1$, 
$\tilde{Z}^{i,N}$ is obviously continuous at $S$, with the modulus of continuity 
at $S$ being controlled by the Brownian part only.  
Although rather arbitrary, the reason why we include Brownian oscillations in the definition 
of $\tilde{Z}^{i,N}$ on $[T,T+1]$ will be made clear in Lemma \ref{lem:13:7:2} below. 
Basically, noise is needed to guarantee that the counting process 
$[0,+\infty) \ni t \mapsto M_{t}$
in \eqref{nonlinear eq Z} is stable in law under perturbation of the dynamics.  Moreover, this avoids introducing any distinction between 
the dynamics on 
$[0,T]$ and on $[T,T+1]$ in the application of the stability property.% we thus add noise to the dynamics of $\tilde{Z}^{i,N}$ on $[T,T+1]$ as well. 

By Cantor's diagonal argument, it is possible to construct a solution 
on a sequence of intervals $([0,T_{n}))_{n \geq 1}$, with $T_{n} \rightarrow + \infty$, and thus to prove existence 
in
infinite time.
\end{rem}

We have a similar result for the delayed equation:
\begin{thm}
\label{weak existence thm:delay}
Given $T>0$ and the family of solutions $((X_t^\delta)_{t\in[0, T]})_{\delta\in(0,1)}$ to the delayed equation \eqref{delayed equation}, with $X^\delta_{0}=X_{0}$ satisfying Assumption \ref{assumption 2}, consider the family of extended paths
\begin{equation}
\label{eq:Z-tilde2}
\tilde{Z}^{\delta}_t :=
\begin{cases}
X^{\delta}_{t} + M^{\delta}_{t}, &\mbox{if } t \leq T,
\\
W_{t} - W_{T} + \tilde{Z}^{\delta}_{T}, &\mbox{if } t\in (T, T+1].
\end{cases} 
\end{equation}
Define by $\mu^{\delta}$ the law of $(\tilde{Z}^{\delta}_{t})_{t \in [0,T+1]}$ on 
$\hat{\mathcal D}([0,T+1], \R)$. Then, the family $(\mu^{\delta})_{\delta \in (0,1)}$ 
is tight in $\mathcal{P}(\hat{\mathcal D}([0,T+1],\R))$ endowed with the 
topology of weak convergence inherited from the M1 topology. Moreover,
under any weak limit $\mu$ as $\delta$ tends to $0$, the canonical process
$(z_{t})_{t \in [0,T+1]}$ on 
$\hat{\mathcal D}([0,T+1],\R)$ generates a physical solution to \eqref{nonlinear eq Z}, and hence to \eqref{nonlinear eq}, until time $T$, 
in the sense that (1), (2) and (3) in 
Theorem \ref{weak existence thm} hold true. 
\end{thm}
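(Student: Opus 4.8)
The proof follows the same scheme as Theorem~\ref{weak existence thm}, with two simplifications: since the delayed equation is already of McKean--Vlasov type, $\mu^{\delta}$ is directly the law of a single path and there is no ``measure on measures'' layer; and since $e_{\delta}$ is continuously differentiable by Proposition~\ref{prop: delayed existence}, each $\tilde Z^{\delta}$ is a \emph{continuous} path, the possible discontinuities appearing only in the limit. For tightness, write on $[0,T]$ the decomposition $\tilde Z^{\delta}_{t}=A^{\delta}_{t}+B^{\delta}_{t}$ with $A^{\delta}_{t}:=X_{0}+\int_{0}^{t}b(X^{\delta}_{s})\,ds+W_{t}$ and $B^{\delta}_{t}:=\alpha e_{\delta}(t)$, extended to $(T,T+1]$ by $A^{\delta}_{t}:=A^{\delta}_{T}+W_{t}-W_{T}$ and $B^{\delta}_{t}:=B^{\delta}_{T}$, so that $A^{\delta}$ is continuous and $B^{\delta}$ is continuous and non-decreasing. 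A Gronwall argument in the spirit of Lemma~\ref{lem:stab}, using $e_{\delta}(t)\le\E[M^{\delta}_{t}]\le\E[\sup_{s\le T}|Z^{\delta}_{s}|]$, yields $\sup_{\delta\in(0,1)}\E[\sup_{t\le T+1}|\tilde Z^{\delta}_{t}|^{p}]<\infty$ for every $p\ge1$, giving the uniform norm bound of Theorem~\ref{thm:M1:3}. For the oscillation functional, one uses that for a sum of a continuous function and a monotone one, $w_{T+1}(\tilde Z^{\delta},t,\eta)$ is bounded by the oscillation of $A^{\delta}$ over $[t-\eta,t+\eta]$ (choose the convex weight matching the monotone part), which is controlled uniformly in $\delta$ by the moment bound on the drift and the modulus of continuity of $W$; the endpoint term at $T+1$ involves only Brownian oscillations, while the one at $0$ is handled using that the mean firing rate of the delayed system is small near $t=0$, uniformly in $\delta$ --- here one uses that $X^{\delta}$ solves the $\delta$-independent equation \eqref{equation delta} on $[0,\delta]$ together with \cite[Lemma 4.2]{DIRT}. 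This gives tightness of $(\mu^{\delta})_{\delta\in(0,1)}$.

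Let $\mu$ be a weak limit along $\delta_{n}\downarrow0$. By the Skorohod representation theorem (licit since $\hat{\mathcal D}([0,T+1],\R)$ with M1 is Polish) we may assume $\tilde Z^{\delta_{n}}\to z$ almost surely in M1, with $\mathrm{Law}(z)=\mu$. Since $0$ lies in the dense set of convergence, $z_{0}\sim X_{0}$, which is (1). Put $m_{t}:=\lfloor(\sup_{s\le t}z_{s})_{+}\rfloor$ and $e(t):=\langle\mu,m_{t}\rangle$, and rewrite \eqref{delayed equation} as $\tilde Z^{\delta_{n}}_{t}-\tilde Z^{\delta_{n}}_{0}-\int_{0}^{t}b(\tilde Z^{\delta_{n}}_{s}-M^{\delta_{n}}_{s})\,ds-\alpha e_{\delta_{n}}(t)=W_{t}$ on $[0,T]$. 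Granting the stability of the running-max-floor map (see below), $M^{\delta_{n}}_{s}\to m_{s}$ for a.e.\ $s$, almost surely, and a monotonicity--sandwich argument gives $M^{\delta_{n}}_{t-\delta_{n}}\to m_{t}$ for a.e.\ $t$; combined with the uniform $L^{p}$ bounds (hence uniform integrability) this yields $\alpha e_{\delta_{n}}(t)=\alpha\E[M^{\delta_{n}}_{t-\delta_{n}}]\to\alpha e(t)$ at every continuity point of $e$, and the drift term converges by dominated convergence and continuity of $b$. Passing to the limit and identifying the resulting continuous process, whose finite-dimensional distributions are now determined, as a Brownian motion via Lévy's characterisation gives (2).

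Two points are delicate. First, $f\mapsto\lfloor(\sup_{[0,\cdot]}f)_{+}\rfloor$ is not M1-continuous, since $\lfloor\cdot\rfloor$ jumps when the running max crosses an integer; the fix is that the non-degenerate Brownian component makes $\sup_{s\le t}z_{s}$ atomless, hence not integer-valued almost surely for (a.e.)\ $t$, so that $\lfloor\cdot\rfloor$ is a.s.\ continuous there --- this is the role of the noise injected on $[T,T+1]$ and is the content of Lemma~\ref{lem:13:7:2}. Second --- and this I expect to be the main obstacle --- one must verify the physical jump condition (4) of Definition~\ref{def:nonlineareq} for the limit. By the sufficient criterion of Proposition~\ref{prop:assumption:physical:criterion}, it suffices to show that at any discontinuity time $t$ of $e$, $\P(X_{t-}\ge1-\alpha\eta)\ge\eta$ for every $\eta\le\Delta e(t)$, where $X_{t-}=z_{t-}-m_{t-}$. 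Here one exploits the ``cascade in time'' structure of the delayed system: a jump of $e$ at $t$ arises as the $\delta\to0$ limit of a burst of firings spread over a window $[t,t+C\delta]$ and organised into generations spaced by $\delta$ (the delay preventing a firing from influencing other neurons before a time lag $\delta$), the $(j{+}1)$-th generation being made of neurons whose potential at $t-$ lies in the slab just below $1$ opened by the cumulative kick $\alpha\eta_{j}$ of the first $j$ generations, with $\eta_{j+1}=\P(X_{t-}\ge1-\alpha\eta_{j})$. Freezing the law of $X^{\delta}$ over the vanishing window --- the delicate quantitative step, in which the moment and modulus-of-continuity bounds are used to control the drift and noise contributions of size $O(\sqrt\delta)$ --- and passing to the limit gives $\P(X_{t-}\ge1-\alpha\eta_{j})=\eta_{j+1}\ge\eta_{j}$ with $\eta_{j}\uparrow\Delta e(t)$, whence, by monotonicity of $\eta\mapsto\P(X_{t-}\ge1-\alpha\eta)$, the claimed inequality for all $\eta\le\Delta e(t)$.

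It remains to record the other items of (3). Property (1) of Definition~\ref{def:nonlineareq} is the moment bound; property (2), $\P(\Delta M_{t}\le1)=1$, follows from $\Delta z_{t}=\alpha\Delta e(t)$ together with $\Delta e(t)\le1$ (from \eqref{eq:13:2:14:1}) and $\alpha<1$, which force the running max to jump by strictly less than $1$; property (4) was just established. Finally, a Cantor diagonal argument over a sequence $T_{n}\to+\infty$ extends the solution to the whole of $[0,+\infty)$.
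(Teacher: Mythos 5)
Your overall scheme matches the paper's: tightness in M1 via the decomposition of $\tilde Z^\delta$ into a continuous process plus the non-decreasing kick $\alpha e_\delta$, Skorohod representation, Lemma~\ref{lem:13:7:2} for the stability of $z\mapsto\lfloor(\sup_{[0,\cdot]}z)_+\rfloor$, identification of the Brownian part, and finally the physical jump condition as ``the main obstacle.'' The paper does indeed treat this last point as the hard part; it resolves it via Lemma~\ref{lem:last one}, the counterpart of Proposition~\ref{prop:20:02:14:1}.

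There are, however, two genuine gaps. First, your derivation of property~(2) of Definition~\ref{def:nonlineareq} is circular: you invoke $\Delta e(t)\le1$ ``from \eqref{eq:13:2:14:1},'' but that identity is itself derived under the hypothesis $\P(\Delta M_t\le1)=1$, i.e.\ under~(2). Likewise, your route to (4) goes through Proposition~\ref{prop:assumption:physical:criterion}, which \emph{assumes} (2). The non-circular route is the one the paper takes: the approximating family satisfies, for $\delta\in(0,1)$ and $h\in(0,1)$, the uniform bound $e_\delta(t+h)-e_\delta(t)\le 1+Ch^{1/8}$ (the first part of Lemma~\ref{lem:last one}), and one passes to the limit in this inequality to obtain $\langle\mu,m_t-m_{t-}\rangle\le1$ directly, before and independently of (4).

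Second, the key quantitative estimate is only described heuristically. Your ``generations spaced by $\delta$'' picture of the spike cascade is morally correct as an explanation of why the delayed system selects the physical jump, but it is not carried out, and you flag it yourself as ``the delicate quantitative step.'' The paper's Lemma~\ref{lem:last one} is that step, and its proof proceeds quite differently from the generations picture: it introduces the stopping times of the first and second spikes after $t$, exploits the continuous differentiability of $e_\delta$ (so those stopping times have $C^1$ distribution functions), and runs a bootstrap/contradiction argument on $[0,h_0)$ with $h_0=(1-\alpha)/(2C)$ to show $\P(\tau_t^\delta\le t+h)\le Ch^{1/4}$, from which both statements of the lemma follow. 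It also produces a slightly different inequality than the one you aim at -- namely $\P(X^\delta_{t-}\ge1-\alpha\lambda-Ch^{1/8})\ge\lambda+e_\delta(t)-e_\delta(t+\delta)-Ch^{1/8}$, with the extra term $e_\delta(t)-e_\delta(t+\delta)$ that vanishes only after one takes a second limit $\delta'\downarrow0$ -- a subtlety that does not surface in the informal generations argument. Without a rigorous version of this step, the physical-jump verification (4) remains unproved.

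A minor point: for the oscillation near $t=0$ you cite the result $\lim_{t\downarrow0}\partial_y p_{\chi_0}^\delta(t,1)=0$ of \cite[Lemma~4.2]{DIRT}, but what is actually needed uniformly in $\delta$ is the bound $e_\delta(t)\le Ct^{1/2}$ with $C$ independent of $\delta$, which the paper gets by following \cite[Lemma~5.2]{DIRT}. Observing that $X^\delta$ solves a $\delta$-independent equation on $[0,\delta]$ does not by itself give a bound that is uniform over $t$ small but possibly $t>\delta$.
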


\subsection{Convergence of the particle system and propagation of chaos}
\label{results: particle system}
An important corollary to Theorem \ref{weak existence thm} is that when we have uniqueness for equation \eqref{nonlinear eq}, we also have propagation of chaos for the particle system $((Z^{i, N}_t, M^{i,N}_t)_{t\in[0, T]})_{i=1, \dots, N}$ given by \eqref{particle system Z}:

\begin{thm}
\label{convergence thm}
Assume that there exists a unique physical solution $(X_t, M_t)_{t\geq 0}$ 
to \eqref{nonlinear eq} and denote by
$(Z_{t},M_{t})_{t \geq 0}$ the reformulated solution (as defined in Remark \ref{rem: reformulation limit}). 
Denote also
by $J$ the (at most countable) set of discontinuity points of the function 
$[0,+\infty) \ni t \mapsto {\mathbb E}(M_{t})$. 
Then, for any $S \in [0,+\infty) \setminus J$ and any $k \in \{1, \dots, N\}$,
\begin{equation}
 \label{eq:21:02:14:1}
 \bigl( (\hat{Z}^{1,N}_{s},\hat{M}^{1,N}_{s}),\dots,(\hat{Z}^{k,N}_{s},\hat{M}^{k,N}_{s}) \bigr)_{s \in 
 [0,S]} \Rightarrow {\P_{(Z_{s},\hat{M}_{s})_{s \in [0,S]}}^{\otimes k}} \quad \textrm{as} \ N \rightarrow + \infty,
 \end{equation}
on the space $[\hat{\mathcal D}([0,S],\R) \times \hat{\mathcal D}([0,S],\R)]^{k}$ equipped with the product 
topology induced by the M1 topology, where 
  \begin{equation*}
\bigl( \hat{Z}_{s}^{i,N}, \hat{M}_{s}^{i,N} \bigr) = 
 \left\{
 \begin{array}{ll}
 \bigl( Z_{s}^{i,N},M_{s}^{i,N} \bigr) &{\rm if} \ s <S,
  \\
 \bigl( Z_{S-}^{i,N}, M_{S-}^{i,N} \bigr) &{\rm if} \ s=S.
 \end{array}
 \right.
\end{equation*} 
Here, for a random variable $X$, $\P_X$ stands for the law of $X$, and $\Rightarrow$ indicates weak convergence.
%with a similar definition for $(\hat{M}_{s})_{0 \leq s \leq S}$. 
Moreover, as $N \rightarrow + \infty$, on $\hat{\mathcal D}([0,S],\R)$ equipped with the M1 topology,
\begin{equation}
\label{eq:21:02:14:2}
\biggl( 
\frac1N \sum_{i=1}^N \hat{M}_{s}^{i,N}
\biggr)_{s \in [0,S]} \rightarrow \bigl( {\mathbb E}(M_{s}) \bigr)_{s \in [0,S]}
\quad \textrm{in \ probability}. 
\end{equation}
\end{thm}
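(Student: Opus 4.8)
The plan is to combine the weak‑convergence result of Theorem~\ref{weak existence thm} with the uniqueness hypothesis in the classical fashion of Sznitman, upgrading it to convergence \emph{in probability} of the empirical measure, and then to transfer this to the finite marginals and to the empirical mean of the counting processes. Fix $S\in[0,+\infty)\setminus J$ and apply Theorem~\ref{weak existence thm} with an arbitrary terminal time $T>S$, keeping its notation ($\tilde Z^{i,N}$ on $[0,T+1]$, $\bar\mu_N=\frac1N\sum_{i=1}^N\mathrm{Dirac}(\tilde Z^{i,N})$, $\Pi_N=\mathrm{Law}(\bar\mu_N)$). \emph{Step 1: convergence in probability of the empirical measure.} By Theorem~\ref{weak existence thm} the family $(\Pi_N)_{N\geq1}$ is tight, and for any subsequential weak limit $\Pi_\infty$, $\Pi_\infty$‑almost every $\mu$ is the law of a physical solution to \eqref{nonlinear eq Z} up to time $T$; by the uniqueness hypothesis this forces the restriction of any such $\mu$ to $[0,S]$ to equal $\mathbb P_{(Z_s)_{s\in[0,S]}}$. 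Thus, writing $r_S$ for the Borel map restricting a path on $[0,T+1]$ to $[0,S]$ with value $z_{S-}$ at $S$, the image of $\Pi_\infty$ under $\nu\mapsto(r_S)_\ast\nu$ is the Dirac mass at $\mathbb P_{(Z_s)_{s\in[0,S]}}$, independently of the subsequence. Since $S\notin J$, Remark~\ref{rem: reformulation limit} shows $Z$ is (surely) continuous at $S$, so $r_S$ is continuous at $\mathbb P_{(Z_s)_{s\in[0,S]}}$‑a.e. path; a Skorohod‑representation argument applied to $\bar\mu_N\Rightarrow\Pi_\infty$ then yields, for $\bar\mu_N^S:=\frac1N\sum_{i=1}^N\mathrm{Dirac}\bigl((\hat Z^{i,N}_s)_{s\in[0,S]}\bigr)$, the convergence $\bar\mu_N^S\to\mathbb P_{(Z_s)_{s\in[0,S]}}$ in probability in $\mathcal P(\hat{\mathcal D}([0,S],\R))$.

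\emph{Step 2: propagation of chaos and the counting coordinate.} By exchangeability of the particles and the standard equivalence between convergence in probability of the empirical measure to a deterministic limit and chaoticity (\cite{sznitman}), Step~1 gives, for each fixed $k$, $\mathrm{Law}\bigl((\hat Z^{1,N},\dots,\hat Z^{k,N})\bigr)\Rightarrow\mathbb P_{(Z_s)_{s\in[0,S]}}^{\otimes k}$ for the product M1 topology. To recover the $M$‑coordinate, write $\hat M^{i,N}=\Phi(\hat Z^{i,N})$ with $\Phi\colon z\mapsto\bigl(\lfloor(\sup_{0\leq u\leq t}z_u)_+\rfloor\bigr)_{t}$ (stopped at $S-$), as in \eqref{M def Z}. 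The running‑supremum functional is M1‑continuous, and its composition with $z\mapsto\lfloor(z)_+\rfloor$ is M1‑continuous at every path $z$ for which $t\mapsto(\sup_{0\leq u\leq t}z_u)_+$ takes each positive integer value only on a set of times of empty interior (never on a plateau). By the structure of the spiking mechanism this holds $\mathbb P_Z$‑almost surely: $\sup_{0\leq u\leq\cdot}Z_u$ equals a given integer $k$ only at the single time $\tau_k$, since right after a spike the Brownian fluctuations push $Z$ strictly above $k$ immediately. Together with the a.s. continuity of $Z$ at $S$, this makes $z\mapsto(z,\Phi(z))$ $\mathbb P_Z$‑a.s. continuous, and the continuous mapping theorem applied to the displayed $k$‑marginal convergence delivers \eqref{eq:21:02:14:1}, the limit being $\mathbb P_{(Z_s,\hat M_s)_{s\in[0,S]}}^{\otimes k}$ (with $Z_{S-}=Z_S$ and $M_{S-}=M_S$ almost surely, as $\mathbb P(\tau_k=S)=0$ for every $k$).

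\emph{Step 3: the empirical mean of the counting processes.} Inequality \eqref{eq:12:5:2}, obtained in the proof of Lemma~\ref{lem: existence and uniqueness particle system} (the particle analogue of Lemma~\ref{lem:stab}), together with Assumption~\ref{assumption 2}, gives $\sup_N\sup_{1\leq i\leq N}\mathbb E\bigl[\sup_{s\leq T}|Z^{i,N}_s|^q\bigr]<\infty$ for every $q\geq1$, whence $\sup_N\sup_i\mathbb E[(\hat M^{i,N}_s)^q]<\infty$. Fix $s\in[0,S]$ with $\mathbb P(\tau_k=s)=0$ for all $k$ (all but countably many $s$); evaluating \eqref{eq:21:02:14:1} with $k=2$ at time $s$ (evaluation being a.s. continuous at such $s$) gives $\mathrm{Law}(\hat M^{1,N}_s,\hat M^{2,N}_s)\Rightarrow\mathbb P_{M_s}^{\otimes2}$, and combined with the uniform moment bounds (hence uniform integrability of $\hat M^{1,N}_s\hat M^{2,N}_s$ and of $\hat M^{i,N}_s$) this yields $\mathbb E[\hat M^{i,N}_s]\to\mathbb E(M_s)$ and $\mathrm{Cov}(\hat M^{1,N}_s,\hat M^{2,N}_s)\to0$. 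Consequently $\mathbb E\bigl[\bigl(\frac1N\sum_{i=1}^N\hat M^{i,N}_s-\mathbb E(M_s)\bigr)^2\bigr]=\frac1N\mathrm{Var}(\hat M^{1,N}_s)+\frac{N-1}{N}\mathrm{Cov}(\hat M^{1,N}_s,\hat M^{2,N}_s)+\bigl(\mathbb E[\hat M^{1,N}_s]-\mathbb E(M_s)\bigr)^2\to0$, so $\frac1N\sum_i\hat M^{i,N}_s\to\mathbb E(M_s)$ in $L^2$, hence in probability, for each such $s$. Since $s\mapsto\frac1N\sum_i\hat M^{i,N}_s$ and $s\mapsto\mathbb E(M_s)$ are non‑decreasing, extracting from any subsequence a further one along which this convergence holds almost surely, simultaneously over a countable dense set containing $0$ and $S$, Theorem~\ref{thm:M1:2} (pointwise convergence on such a set forces M1 convergence, for monotone functions) upgrades it, along every subsequence and hence in probability, to the M1 convergence \eqref{eq:21:02:14:2}.

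\emph{Main obstacle.} The crux is Step~2: reconciling the \emph{discontinuous} functional $z\mapsto\bigl\lfloor(\sup z)_+\bigr\rfloor$ and the stopping at $S$ with the M1 topology, i.e. verifying that the relevant maps are $\mathbb P_Z$‑a.s. M1‑continuous. This is precisely where the hypothesis $S\notin J$ (so the limit is continuous at $S$) and the fine structure of the spike cascade (isolated spike times, $\sup Z$ never plateauing at an integer) are used; the ubiquitous monotonicity of $\sup Z$ and of the $M$‑processes is what keeps M1 well behaved throughout, and is the reason the whole scheme is run in M1 rather than J1.
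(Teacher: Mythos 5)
Your proof is correct, and Steps 1 and 2 track the paper's argument closely: the paper likewise pushes the empirical measure forward by the `stop at $S-$' map $\gamma_S$, checks that this map is a.e.\ continuous because $\mu_S\{z_{S-}=z_S\}=1$ when $S\notin J$, invokes Sznitman's exchangeability criterion to obtain the $k$-marginal convergence of the $\hat Z$'s, and then transfers it to the $\hat M$'s via the M1-continuity of the counting functional at paths with the crossing property (its Lemma~\ref{lem:13:7:2}), handling the endpoint $t=S$ exactly as you do via continuity of $\E(M_\cdot)$ at $S$. The one place where you argue a little more heuristically is the verification that the crossing property holds $\P_Z$-a.s.; the paper does not rederive it for the identified limit but carries it over from Lemma~\ref{lem:2}, which was established in the course of Theorem~\ref{weak existence thm}. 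Your direct justification (immediate upward oscillation of the Brownian part at $\tau_k$) is sound, but if one wants this airtight one should note that the drift and the non-decreasing $\alpha e(\cdot)$ contribute $O(s-\tau_k)$ while the Brownian increment dominates by the law of the iterated logarithm.

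Step~3 is a genuinely different route from the paper's. The paper reuses its Lemma~\ref{lem:2}/Proposition~\ref{prop:13:7:1} machinery: after a Skorohod coupling turning $\bar\mu_N^S\to\mu_S$ into a.s.\ convergence, the crossing property of $\mu_S$ delivers $\langle\bar\mu_N^S,m_s\rangle\to\langle\mu_S,m_s\rangle=\E(M_s)$ at every continuity time $s$, and monotonicity plus Theorem~\ref{thm:M1:2} upgrades this to M1. You instead evaluate the already-proved propagation of chaos at a fixed time $s$, extract convergence of first and second moments from uniform integrability (supplied by Lemma~\ref{lem:particle:5}), and conclude by the textbook variance decomposition $\E[(\frac1N\sum\hat M^{i,N}_s-\E(M_s))^2]=\frac1N\var+\frac{N-1}{N}\cov+\text{bias}^2\to0$, followed by the same monotonicity upgrade to M1. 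Your route is more elementary and self-contained at this stage (no need to reinvoke the crossing-property machinery once \eqref{eq:21:02:14:1} is in hand), at the cost of passing through second moments; the paper's route is more economical because Lemma~\ref{lem:2} is already available and applies uniformly without any moment bookkeeping. Either is perfectly acceptable.
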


\begin{rem} \emph{(i)}
In the case when the unique solution $(Z_{t},M_{t})_{t \geq 0}$ has a continuous firing function 
$e : [0,+\infty) \ni t \mapsto {\mathbb E}(M_{t})$, then 
the process $(Z_{t})_{t \geq 0}$ has continuous paths. Such a situation is guaranteed for some initial conditions and values of $\alpha$ by Theorem \ref{thm:DIRT}. Then, by 
Theorem \ref{thm:M1:2}, the weak convergence of the law of the particles $\hat{Z}^{1,N},\dots,\hat{Z}^{k,N}$ in \eqref{eq:21:02:14:1} holds on the space $[\hat{\mathcal D}([0,S],\R)]^{k}$ equipped with the product uniform topology.
Similarly, in such a case, the convergence in \eqref{eq:21:02:14:2} holds on $\hat{\mathcal D}([0,S],\R)$ equipped with the uniform topology.

\emph{(ii)} In \eqref{eq:21:02:14:1}, we could replace 
$(Z_{s})_{s \in [0,S]}$ 
by 
$(\hat Z_{s})_{s \in [0,S]}$ 
but this would be useless as $Z$ is continuous at point $S$
for any realization of the randomness: Since $S \in J$, 
the (deterministic) jump function $[0,+\infty) \ni t \mapsto \E (M_{t})$
is continuous at point $S$. 
Similarly, we could replace $(\hat{M}_{s})_{s \in [0,S]}$
by $(M_{s})_{s \in [0,S]}$, by noticing that, with probability 
$1$ under $\P$, $M_{S}=M_{S-}$, but this 
would be slightly abusive as the paths 
of $(M_{s})_{s \in [0,S]}$ are
in $\hat{\mathcal D}([0,S],\R)$ with probability $1$ only (and not 
for all realizations of the randomness).

\emph{(iii)} Convergence of the $X$-particles in \eqref{eq:21:02:14:1}
follows from the relationship 
$X_{t}^{i,N} = Z_{t}^{i,N} - M_{t}^{i,N}$, for $i \in \{1,\dots,N\}$.
However, since addition may not be continuous for the M1 topology (see Chapter 12 in \cite{whitt:monograph}), we cannot deduce the convergence 
of the $X$-particles on $\hat{\mathcal{D}}([0,S],\R)$. 
By Theorem
\ref{thm:M1:2}, the best we can say is that, for any $k\in\{1, \dots, N\}$, any 
$\ell \geq 1$ 
and any $t_{1},\dots,t_{\ell} \not \in J$, the law of the
 random vector 
$((X^{i,N}_{t_{j}},M_{t_{j}}^{i,N})_{i \in \{1,\dots,k\}})_{j \in \{1,\dots,\ell\}}$
converges towards the finite-dimensional marginals, at times $t_{1},\dots,t_{\ell}$, of $k$ independent copies of $(X_t,M_{t})_{t \geq 0}$.

\emph{(iv)} Finally, we emphasize that \eqref{eq:21:02:14:2} is the keystone to switch from the finite system of particles to the dynamics of the McKean-Vlasov type.
\end{rem}

\subsection{Convergence of the delayed system}
\label{results: delays system}
Here is the analogue of the previous result for the delayed system $((Z^\delta_t, M^\delta_t)_{t\geq 0})_{\delta\in(0, 1)}$, where $Z^\delta_t := X^\delta_t + M^\delta_t$ and $(X^\delta_t, M^\delta_t)_{t\geq 0}$ is a solution to \eqref{delayed equation}.

\begin{thm}
\label{convergence thm:delay}
Assume that there exists a unique physical solution $(X_t, M_t)_{t\geq 0}$ 
to \eqref{nonlinear eq} and denote by
$(Z_{t},M_{t})_{t \geq 0}$ the reformulated solution (as defined in Remark \ref{rem: reformulation limit}). 
Denote also
by $J$ the (at most countable) set of discontinuity points of the function 
$[0,+\infty) \ni t \mapsto {\mathbb E}(M_{t})$. 
Then, for any $S \in [0,+\infty) \setminus J$,
\begin{equation}
 \label{eq:21:02:14:5}
 \bigl(Z^{\delta}_{s},\hat{M}^{\delta}_{s}\bigr)_{s \in [0,S]} 
 \Rightarrow \P_{(Z_{s},\hat{M}_{s})_{s \in [0,S]}}  \quad \textrm{as} \ \delta \rightarrow 0,
 \end{equation}
on the space $\hat{\mathcal D}([0,S],\R) \times \hat{\mathcal D}([0,S],\R)$ equipped with the product 
topology induced by the M1 topology, where 
$  \hat{M}_{s}^{\delta} = 
{M}_{s}^{\delta}$ if $s <S$ and $\hat{M}_{s}^{\delta} = 
{M}_{S-}^{\delta}$ if $s=S$.
%with a similar notation for $(\hat{M}_{s})_{0 \leq s \leq S}$. 
Moreover,
\begin{equation}
\label{eq:21:02:14:6}
\bigl( 
{\mathbb E} \bigl( \hat{M}_{s}^{\delta} \bigr)
\bigr)_{s \in [0,S]} \rightarrow \bigl( {\mathbb E}(M_{s}) \bigr)_{s \in [0, S]}
\end{equation}
as $\delta \rightarrow 0$, on $\hat{\mathcal D}([0,S],\R)$ equipped with the M1 topology. 
\end{thm}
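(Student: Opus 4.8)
The plan is to mirror the proof of Theorem \ref{convergence thm}, combining the tightness and limit identification already obtained in Theorem \ref{weak existence thm:delay} with the assumed uniqueness and a continuous mapping argument carried out in the M1 topology. Fix $S\in[0,+\infty)\setminus J$ and choose once and for all some $T\in(S,+\infty)\setminus J$, and apply Theorem \ref{weak existence thm:delay} on the interval $[0,T+1]$: this gives that $(\mu^{\delta})_{\delta\in(0,1)}$ is tight for M1 and that, for any weak limit $\mu$, the canonical process restricted to $[0,T]$ generates a physical solution to \eqref{nonlinear eq Z}, hence to \eqref{nonlinear eq}. The first step is to invoke the assumed uniqueness: the pushforward of any such $\mu$ under the restriction $\hat{\mathcal D}([0,T+1],\R)\to\hat{\mathcal D}([0,T],\R)$ must be the law of $(Z_{t})_{t\in[0,T]}$. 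Since $S\notin J$ and $T\notin J$, the limit process is continuous at $S$ and at $T$ for every realization (its jumps are dictated by those of the deterministic map $e$, as recalled in Remark \ref{rem: reformulation limit}), so the restriction maps $\hat{\mathcal D}([0,T+1],\R)\to\hat{\mathcal D}([0,T],\R)\to\hat{\mathcal D}([0,S],\R)$ are M1-continuous at the relevant limit paths; hence every weak limit of $(\mu^{\delta})$ has the same restriction to $\hat{\mathcal D}([0,S],\R)$, namely the law of $(Z_{t})_{t\in[0,S]}$, and by tightness $\mu^{\delta}$ restricted to $[0,S]$ converges weakly to that law.

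Next I would upgrade this to the joint convergence \eqref{eq:21:02:14:5}. Because the delay forces $\Delta e_{\delta}\equiv0$ (Proposition \ref{prop: delayed existence}), every spike of $X^{\delta}$ resets it to $0$ and $X^{\delta}_{t}<1$ for all $t$, so exactly as in Remark \ref{rem: reformulation limit} one has $M^{\delta}_{t}=\lfloor(\sup_{0\leq s\leq t}Z^{\delta}_{s})_{+}\rfloor$, and the same identity holds for the limit by item (2) of Theorem \ref{weak existence thm} (imported through Theorem \ref{weak existence thm:delay}). I would then introduce the map $\Phi:\hat{\mathcal D}([0,S],\R)\to\hat{\mathcal D}([0,S],\R)\times\hat{\mathcal D}([0,S],\R)$ sending $z$ to $(z,m)$ with $m_{t}=\lfloor(\sup_{0\leq s\leq t}z_{s})_{+}\rfloor$ for $t<S$ and $m_{S}=\lfloor(\sup_{0\leq s<S}z_{s})_{+}\rfloor$, so that $\Phi(Z^{\delta})=(Z^{\delta},\hat M^{\delta})$, and apply the continuous mapping theorem. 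The running-supremum functional and pointwise composition with $x\mapsto (x)_{+}$ are M1-continuous, so $g^{\delta}:=(\sup_{0\leq s\leq\cdot}Z^{\delta}_{s})_{+}\to g:=(\sup_{0\leq s\leq\cdot}Z_{s})_{+}$ in M1; since $g^{\delta}$ and $g$ are non-decreasing this is pointwise convergence on a dense set of full Lebesgue measure (Theorem \ref{thm:M1:2}), and $\lfloor g^{\delta}(t)\rfloor\to\lfloor g(t)\rfloor$ at every such $t$ with $g(t)\notin\{1,2,\dots\}$. The point to establish is that, $\P$-almost surely, the running maximum of the limit $Z$ attains any integer $k\geq1$ only at the isolated instant $\tau_{k}$ (or never): since $Z$ carries a Brownian component, $\sup_{\tau_{k}\leq s\leq\tau_{k}+h}Z_{s}>k$ for every $h>0$ almost surely, the Brownian oscillation dominating the $O(h)$ drift contribution. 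It follows that $\{t\in[0,S]:g(t)\in\{1,2,\dots\}\}$ is almost surely contained in the countable set $\{\tau_{k}:k\geq1\}\cup J$, whence $\lfloor g^{\delta}\rfloor\to\lfloor g\rfloor$ in M1 and $\Phi$ is continuous at $Z|_{[0,S]}$ for $\P$-almost every realization, which yields \eqref{eq:21:02:14:5}.

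For the convergence of the firing curves \eqref{eq:21:02:14:6}, I would first recall the a priori estimates for the delayed system, in which the constant $1-\alpha>0$ appearing in the Gronwall bound is uniform in $\delta$; this gives $\sup_{\delta\in(0,1)}\E[\sup_{t\in[0,T]}|Z^{\delta}_{t}|^{2}]<\infty$, and since $0\leq\hat M^{\delta}_{s}\leq\sup_{t\in[0,T]}|Z^{\delta}_{t}|$ the family $\{\hat M^{\delta}_{s}:\delta\in(0,1),\,s\in[0,S]\}$ is uniformly integrable. By \eqref{eq:13:2:14:1}, valid since the limit is physical, for every $s\notin J$ one has $\P(X_{s-}\geq1)=\Delta e(s)=0$, i.e.\ the limit counting process $\hat M$ has no fixed discontinuity at $s$; hence \eqref{eq:21:02:14:5} gives $\hat M^{\delta}_{s}\Rightarrow\hat M_{s}$, and uniform integrability upgrades this to $\E(\hat M^{\delta}_{s})\to\E(\hat M_{s})=\E(M_{s})$ for every $s\in[0,S]\setminus J$. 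Since $s\mapsto\E(\hat M^{\delta}_{s})$ and $s\mapsto\E(M_{s})$ are non-decreasing and $[0,S]\setminus J$ is dense, of full Lebesgue measure and contains $0$ and $S$, Theorem \ref{thm:M1:2} turns this pointwise convergence into M1 convergence, which is \eqref{eq:21:02:14:6}.

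The hard part will be the continuity of $\Phi$ at the limit path, i.e.\ ruling out the running maximum of $Z$ lingering at integer levels: this is where the Brownian component provided by Theorem \ref{weak existence thm} and the monotone-function characterization of M1 convergence (Theorem \ref{thm:M1:2}) are both indispensable. Everything else — tightness, identification of the limit via uniqueness, the continuous mapping theorem, and uniform integrability — should be routine.
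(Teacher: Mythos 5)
Your proposal is correct and follows essentially the same route as the paper, which simply transposes the proof of Theorem \ref{convergence thm} to the delayed setting: tightness and limit identification via Theorem \ref{weak existence thm:delay}, uniqueness to pin down the limit law, M1-continuity of the restriction map $\gamma_{S}$ at paths continuous at $S$, and the crossing property (Lemma \ref{lem:13:7:2}/Lemma \ref{lem:2}) to upgrade to joint convergence of $(Z^{\delta},\hat M^{\delta})$ and of the firing curve. Your treatment of \eqref{eq:21:02:14:6} via uniform $L^{2}$-bounds and uniform integrability is a mild repackaging of the paper's Proposition \ref{prop:13:7:1}/Lemma \ref{lem:2} argument, but it is sound and rests on the same ingredients (pointwise marginal convergence at continuity points plus monotonicity and Theorem \ref{thm:M1:2}).
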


\section{Proofs}
\label{se:proofs}

\subsection{Preliminary estimates for the particle system}

This first subsection is devoted to the proof of two preliminary
technical lemmas.  The first one will be used for establishing suitable tightness properties of the 
particle system, while the second one will be needed to show that in the limit the solution does indeed satisfy the 
required properties to be physical in the sense defined above.

Throughout the section $((Z_{t}^{i},M_{t}^{i})_{t\geq0})_{i=1,\dots,N}$
will denote the physical solution to \eqref{particle system Z}. We
start with a moment estimate: \begin{lem} \label{lem:particle:5}
For any $p\geq1$ and $T\geq0$, there exists $C_{T}^{(p)} \geq 0$,
independent of $N$, such that 
\[
\forall i\in\{1,\dots,N\},\quad{\mathbb{E}}\bigl[\sup_{t \in [0,T]}\vert Z_{t}^{i}\vert^{p}+\bigl(M_{T}^{i}\bigr)^{p}\bigr]\leq C_{T}^{(p)}.
\]
 \end{lem}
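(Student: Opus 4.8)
The plan is to combine the pathwise bound already established \emph{within} the proof of Lemma \ref{lem: existence and uniqueness particle system} with standard moment estimates for Brownian motion and the i.i.d.\ initial data, so no new mechanism is needed. The starting point is inequality \eqref{eq:12:5:2}, which states that there is a constant $C_{T}\geq 0$, independent of $N$, with
\[
\sup_{s \in [0,t]}\vert Z_{s}^i \vert \leq C_{T}\Bigl[ 1 + \vert X_{0}^i \vert + \sup_{s \in [0,t]}\vert W_{s}^i \vert + \frac1N \sum_{j=1}^N \bigl( \vert X_{0}^j \vert + \sup_{s \in [0,t]}\vert W_{s}^j \vert \bigr)\Bigr]
\]
for every $i$ and every $t \leq \tau^k \wedge T$, $k \geq 1$. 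Since it was shown in that same proof that $\tau^k \to +\infty$ almost surely, one lets $k \to \infty$ and takes $t=T$, so this bound holds almost surely for $\sup_{s\in[0,T]}\vert Z_s^i\vert$.

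Next I would raise the displayed inequality to the power $p$, using the convexity estimate $(a_1+\dots+a_m)^p \leq m^{p-1}\sum_{\ell} a_\ell^p$ and Jensen's inequality $\bigl(\tfrac1N\sum_j a_j\bigr)^p \leq \tfrac1N \sum_j a_j^p$ to pull the power inside the empirical average. Taking expectations and using that the pairs $(X_0^j,W^j)$ are identically distributed (so that each empirical-average term collapses to the corresponding single-particle expectation), one gets
\[
{\mathbb E}\Bigl[\sup_{s\in[0,T]}\vert Z_s^i\vert^p\Bigr] \leq C\Bigl(1 + {\mathbb E}\bigl[\vert X_0^1\vert^p\bigr] + {\mathbb E}\bigl[\sup_{s\in[0,T]}\vert W_s^1\vert^p\bigr]\Bigr),
\]
with $C=C(T,p)$ independent of $N$. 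The right-hand side is finite: the moment ${\mathbb E}[\vert X_0^1\vert^p]$ is finite by Assumption \ref{assumption 2}, and ${\mathbb E}[\sup_{s\in[0,T]}\vert W_s^1\vert^p]$ is finite by a standard maximal estimate (Doob's $L^p$ inequality for $p>1$, or the Burkholder–Davis–Gundy inequality for any $p\geq 1$, giving a bound of order $T^{p/2}$).

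Finally, the bound on $(M_T^i)^p$ is immediate from the reformulation \eqref{particle system Z}: since $M_T^i = \bigl\lfloor (\sup_{s\in[0,T]}Z_s^i)_+\bigr\rfloor \leq \sup_{s\in[0,T]}\vert Z_s^i\vert$, one has $(M_T^i)^p \leq \sup_{s\in[0,T]}\vert Z_s^i\vert^p$, and the estimate just obtained applies verbatim; summing the two contributions gives the claim with $C_T^{(p)}$ equal to twice the constant above. I do not expect any genuine obstacle here: the substantive work — propagating the Gronwall bound through the full spike cascade and ruling out accumulation of the times $(\tau^k)_{k\geq 1}$ — was already carried out in Lemma \ref{lem: existence and uniqueness particle system}, and what remains is a routine moment computation whose only point of care is checking that the constants genuinely do not depend on $N$ (which holds precisely because the empirical averages are controlled, after Jensen, by single-particle moments).
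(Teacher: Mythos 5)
Your proposal is correct and is precisely the argument the paper intends: the paper's proof is a one-line reference to \eqref{eq:12:5:1}, \eqref{eq:12:5:2} and Assumptions \ref{assumption 1}--\ref{assumption 2}, and you have simply written out the routine steps (letting $k \to \infty$ using non-accumulation of the $\tau^k$, raising to the $p$th power with convexity and Jensen, invoking exchangeability and the standard maximal inequality for Brownian motion, and then bounding $M_T^i$ via \eqref{eq:12:5:1}). No gaps.
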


\begin{proof} The proof is a consequence of \eqref{eq:12:5:1},
\eqref{eq:12:5:2} and Assumptions \ref{assumption 1} and \ref{assumption 2}. \end{proof}

%As a corollary, we get the following estimate:
 \begin{lem} \label{cor:particle:1}
For all $\eta>0$ there exists a constant $\lambda(\eta)>0$ that is
independent of $N$ (but depends on the constant $\varepsilon_{0}$
in Assumption~\ref{assumption 2}), such that 
\[
\P\bigl\{\forall t\in[0,\lambda(\eta)],\quad\bar{e}^{N}(t)\geq\bigl(\lambda(\eta)\bigr)^{-1}t^{1/4}\bigr\}\leq\eta,
\]
 where \ensuremath{\bar{e}^{N}(t)}
 is defined by \eqref{eq:def_en}. \end{lem}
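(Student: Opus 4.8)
The statement asserts that, with probability at least $1-\eta$, the empirical firing rate $\bar e^N$ cannot stay above the curve $t \mapsto \lambda(\eta)^{-1} t^{1/4}$ on the whole interval $[0,\lambda(\eta)]$; equivalently, there must be some time in $[0,\lambda(\eta)]$ at which $\bar e^N$ drops below $\lambda(\eta)^{-1} t^{1/4}$. The point is that a large firing rate near $t=0$ would force the particles to have crossed the barrier $1$ many times in a very short time, which is impossible because each particle starts at distance at least $\varepsilon_0$ from the barrier (Assumption~\ref{assumption 2}) and the drift and Brownian oscillations are small on short intervals. The plan is to suppose, for contradiction on the event in question, that $\bar e^N(t) \geq \lambda^{-1} t^{1/4}$ for all $t \in [0,\lambda]$ (abbreviating $\lambda=\lambda(\eta)$), and to derive from this a lower bound on how far a typical particle must have travelled, which cannot be met for $\lambda$ small.

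First I would fix notation: on the event $\mathcal E_\lambda := \{\forall t \in [0,\lambda],\ \bar e^N(t) \geq \lambda^{-1}t^{1/4}\}$, we have in particular $\bar e^N(\lambda) \geq \lambda^{-1}\lambda^{1/4} = \lambda^{-3/4}$, so that $N^{-1}\sum_{i=1}^N M_\lambda^i \geq \lambda^{-3/4}$. Next, from the $Z$-formulation \eqref{particle system Z}, for each $i$ we have, using $M_t^i = \lfloor(\sup_{s\le t}Z_s^i)_+\rfloor \le (\sup_{s\le t}Z_s^i)_+$ and $Z_0^i = X_0^i \le 1-\varepsilon_0$,
\[
M_\lambda^i \leq \Bigl(\sup_{s\in[0,\lambda]} Z_s^i\Bigr)_+ \leq 1-\varepsilon_0 + \int_0^\lambda |b(Z_s^i - M_s^i)|\,ds + \alpha\,\bar e^N(\lambda) + \sup_{s\in[0,\lambda]}|W_s^i|.
\]
Taking the empirical mean over $i$, using the linear-growth bound on $b$ together with the moment estimate \eqref{eq:12:5:2} (or Lemma~\ref{lem:particle:5}) to control $N^{-1}\sum_i \int_0^\lambda |b(Z_s^i-M_s^i)|\,ds$ by $\lambda\,\Xi_N$ where $\Xi_N := C(1 + N^{-1}\sum_j(|X_0^j| + \sup_{s\le\lambda}|W_s^j|))$ has moments bounded uniformly in $N$, we get on $\mathcal E_\lambda$
\[
\lambda^{-3/4} \leq \bar e^N(\lambda) \leq 1 - \varepsilon_0 + \lambda\,\Xi_N + \alpha\,\bar e^N(\lambda) + \frac1N\sum_{i=1}^N \sup_{s\in[0,\lambda]}|W_s^i|,
\]
whence, since $\alpha<1$,
\[
(1-\alpha)\lambda^{-3/4} \leq 1 - \varepsilon_0 + \lambda\,\Xi_N + \frac1N\sum_{i=1}^N \sup_{s\in[0,\lambda]}|W_s^i|.
\]
The left side blows up as $\lambda\downarrow 0$, while on the right side $1-\varepsilon_0$ is a fixed constant, $\lambda\,\Xi_N \to 0$, and $N^{-1}\sum_i \sup_{s\le\lambda}|W_s^i|$ is small with high probability for $\lambda$ small (its expectation is $O(\sqrt\lambda)$ by Brownian scaling, and concentration over $i$ is not even needed — Markov on the mean suffices). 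So for $\lambda$ small enough, the inequality can hold only on an event of probability $\leq\eta$: precisely, $\P(\mathcal E_\lambda) \leq \P\bigl(\lambda\,\Xi_N + N^{-1}\sum_i\sup_{s\le\lambda}|W_s^i| \geq (1-\alpha)\lambda^{-3/4} - (1-\varepsilon_0)\bigr)$, and for $\lambda$ small the right-hand threshold is at least $\tfrac12(1-\alpha)\lambda^{-3/4}$, so by Markov's inequality this probability is bounded by $\frac{2}{(1-\alpha)\lambda^{-3/4}}\,\E[\lambda\,\Xi_N + N^{-1}\sum_i\sup_{s\le\lambda}|W_s^i|] \leq C\lambda^{3/4}(\lambda + \sqrt\lambda) \le C'\lambda^{5/4}$, uniformly in $N$ by the uniform moment bounds. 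Choosing $\lambda(\eta)$ small enough that $C'\lambda(\eta)^{5/4} \leq \eta$ (and also small enough for the threshold estimate above to be valid) gives the claim.

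The main obstacle — though a mild one — is making the bound on the drift contribution $N^{-1}\sum_i\int_0^\lambda|b(Z_s^i-M_s^i)|\,ds$ genuinely uniform in $N$ and integrable, since $b$ has linear growth in $Z_s^i - M_s^i$ and $Z^i$ itself depends on the empirical mean $\bar e^N$; this is precisely what \eqref{eq:12:5:1}–\eqref{eq:12:5:2} and Lemma~\ref{lem:particle:5} are designed to handle, giving $\E[\Xi_N]$ bounded independently of $N$ (and of $\lambda\le 1$). Once that is in place, the rest is the elementary observation that a fixed head-start of size $\varepsilon_0$ plus small ($O(\sqrt\lambda)$) fluctuations cannot produce a firing rate of order $\lambda^{-3/4}$. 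The exponent $1/4$ in the statement is not sharp and plays no essential role: any positive power strictly less than $1$ would do, since all that is used is that $\lambda^{-1}t^{1/4}\big|_{t=\lambda} = \lambda^{-3/4} \to\infty$.
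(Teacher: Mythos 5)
Your argument is internally correct, but it proves the wrong statement, because the lemma as printed contains a typo that you took literally. The event inside the probability should be ``$\exists\, t\in[0,\lambda(\eta)]$'' rather than ``$\forall\, t\in[0,\lambda(\eta)]$''. Two pieces of evidence: the paper's own proof concludes with the bound
$\P\bigl( \exists\, t\in[0,T_{k_{0}}]:\bar{e}^{N}(t)\geq\lambda^{-1/4}t^{1/4}\bigr) \leq\eta(k_{0})$,
which is the $\exists$ version; and the application inside the proof of Lemma~\ref{lem:tightness1} needs precisely
$\P\bigl(\forall\,\delta\in(0,\lambda),\ \bar{e}^N(\delta)\leq \lambda^{-1}\delta^{1/4}\bigr)\geq 1-\eta$,
i.e.\ the complement of ``$\exists\, t$ with $\bar e^N(t) > \lambda^{-1}t^{1/4}$'' has high probability, which does \emph{not} follow from the $\forall$ version as stated.

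This distinction is decisive for your approach. Under your event $\mathcal E_\lambda$ you only use the single value $t=\lambda$, where the threshold is $\lambda^{-1}\cdot\lambda^{1/4}=\lambda^{-3/4}\to\infty$. So you are ruling out $\bar e^N(\lambda)$ being of \emph{diverging} size, which a one-line Markov bound handles easily. The intended statement requires ruling out $\bar e^N(t)$ exceeding $\lambda^{-1}t^{1/4}$, a \emph{vanishing} threshold, uniformly over small $t$. That is a genuinely finer control: for instance, one must show $\P(\bar e^N(T)\geq T^{1/4})\leq C'T$ (not $T^{-3/4}$), and then combine this over a dyadic mesh with monotonicity of $\bar e^N$. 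The crude bound
\[
(1-\alpha)\bar e^N(\lambda)\leq 1-\varepsilon_0+\lambda\,\Xi_N+\tfrac{1}{N}\textstyle\sum_i\sup_{s\leq\lambda}|W_s^i|
\]
that you use is blind to the $T^{1/4}$ scale, since its right-hand side is of size $1-\varepsilon_0+O(\sqrt\lambda)$, which does not vanish. To see the small threshold one needs, as in the paper, to introduce the stopping time $\tau$ (the first time a fraction $T$ of the particles have spiked), apply Cauchy--Schwarz to bound $\bar e^N(t)$ on $\{t<\tau\}$ by $T^{1/2}(N^{-1}\sum_j(M^j_T)^2)^{1/2}$, and run a contradiction argument using that each $X_0^i\leq 1-\varepsilon_0$: on the high-probability events $A,A'$, the number of particles that can reach within $\varepsilon_0/2$ of the barrier before $\tau$ is small, so the spike cascade at time $\tau$ cannot produce the required fraction $T$ of spiking particles. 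Your remark that ``the exponent $1/4$ plays no essential role'' is in fact a symptom of the issue: in your argument it plays no role because you only see the trivially diverging threshold at $t=\lambda$, whereas in the paper the exponent is tuned against the $T^{1/2}$ coming from Cauchy--Schwarz.

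In short: you proved the lemma as literally typed, which is a vacuously weak statement; the intended (and used, and proved) statement has $\exists$ in place of $\forall$, and requires the paper's stopping-time/cascade analysis, not a Gronwall--Markov bound.
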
 \begin{proof} Given
$T\in(0,1)$, define $\tau$ by 
\[
\tau=\inf\biggl\{ t\geq0:\frac{1}{N}\sum_{i=1}^{N}{\mathbf{1}}_{\{M_{t}^{i}\geq1\}}\geq T\biggr\}\quad(\inf\emptyset=+\infty),
\]
 which is the first time the proportion of particles that have spiked
at least once is bigger than $T$. For $t<\tau\wedge T$, the Cauchy-Schwarz
inequality yields 
\begin{equation}
\begin{split}\frac{1}{N}\sum_{i=1}^{N}M_{t}^{i} & \leq\biggl(\frac{1}{N}\sum_{i=1}^{N}{\mathbf{1}}_{\{M_{t}^{i}\geq1\}}\biggr)^{1/2}\biggl(\frac{1}{N}\sum_{i=1}^{N}\bigl(M_{t}^{i}\bigr)^{2}\biggr)^{1/2}\leq T^{1/2}\biggl(\frac{1}{N}\sum_{i=1}^{N}\bigl(M_{T}^{i}\bigr)^{2}\biggr)^{1/2}.\end{split}
\label{eq:particle:40}
\end{equation}
 %By Lemma \ref{lem:particle:5}, we know that the right-hand side is at most of order $T^{1/2}$ with large probability. 
The point is now to investigate $\P\{\tau\leq T\}$. To this end,
we define the events 
\begin{equation}
\label{eq:events}
\begin{split}
&A=\biggl\{\frac{1}{N}\sum_{i=1}^{N}\bigl(M_{T}^{i}\bigr)^{2}\leq T^{-1/2}\biggr\},
\quad A'=\biggl\{ \frac1N \sum_{i=1}^{N}{\mathbf{1}}_{A^{i}}\leq T^{2}\biggr\}, \ \ \mathrm{where}
\\
&A^{i}=\biggl\{ C\int_{0}^{T}(1+\sup_{r \in [0,s]}\vert Z_{r}^{i}\vert)ds+\alpha T^{1/4}+\sup_{s \in [0,T]}\vert W_{s}^{i}\vert\geq\varepsilon_{0}/2\biggr\},
\end{split}
\end{equation}
and $C \geq 0$ (independent of $N$ and $T$) is chosen to be the constant such that
on $A$, for $t<\tau\wedge T$ and any $1\leq i\leq N$,
\[
\sup_{s \in [0,t]}\bigl(Z_{s}^{i}\bigr)_{+}\leq\bigl(Z_{0}^{i}\bigr)_{+}+C\int_{0}^{t}\bigl(1+\sup_{r \in [0,s]}\vert Z_{r}^{i}\vert\bigr)ds+\alpha T^{1/4}+\sup_{s \in [0,t]}\vert W_{s}^{i}\vert.
\]
The existence of such a constant follows from the definition
of the reformulated particle system \eqref{particle system Z} and \eqref{eq:particle:40}. 
As $(Z_{0}^i)_{+} \leq 1-\varepsilon_{0}$ by Assumption \ref{assumption 2}, we have, 
on $A$, for $t<\tau\wedge T$, 
\[
\begin{split}
&\sup_{s \in [0,t]}\bigl(Z_{s}^{i}\bigr)_{+}\geq1-\frac{\varepsilon_{0}}{2} 
%& \Rightarrow\bigl(Z_{0}^{i}\bigr)_{+}+C\int_{0}^{t}\bigl(1+\sup_{0\leq r\leq s}\vert Z_{r}^{i}\vert\bigr)ds+\alpha T^{1/4}+\sup_{0\leq s\leq t}\vert W_{s}^{i}\vert\geq1-\frac{\varepsilon_{0}}{2}
%\\
  \Rightarrow C\int_{0}^{T}\bigl(1+\sup_{r \in [0,s]}\vert Z_{r}^{i}\vert\bigr)ds+\alpha T^{1/4}+\sup_{s \in [0,T]}\vert W_{s}^{i}\vert\geq\frac{\varepsilon_{0}}{2}.
\end{split}
\]
 If $\tau\leq T$, the above is true on $A$ for all $t<\tau$. We
deduce that, on $A\cap\{\tau\leq T\}$  
\[
\begin{split} & \frac{1}{N}\sum_{i=1}^{N}{\mathbf{1}}_{\{\sup_{s \in [0,\tau)}(Z_{s}^{i})_{+}\geq1-\varepsilon_{0}/2\}}\leq\frac{1}{N}\sum_{i=1}^{N}{\mathbf{1}}_{A^{i}},
\end{split}
\]
with $(A^{i})_{i=1,\dots,N}$ as in \eqref{eq:events}.
On $A\cap A'\cap\{\tau\leq T\}$, it thus holds that 
\[
\frac{1}{N}\sum_{i=1}^{N}{\mathbf{1}}_{\{\sup_{s \in [0,\tau)}(Z_{s}^{i})_{+}\geq1-\varepsilon_{0}/2\}}\leq T^{2}.
\]
 Assume that $T^{2}\leq\varepsilon_{0}/4$. Then the number of particles
such that $\sup_{s \in [0,\tau)}(Z_{s}^{i})_{+}\geq1-\varepsilon_{0}/2$
is at most $N\varepsilon_{0}/4$. The other particles cannot cross
1 at time $\tau$, since the size of the kick they receive due to
 those such that $\sup_{s \in [0,\tau)}(Z_{s}^{i})_{+}\geq1-\varepsilon_{0}/2$ is bounded by $\varepsilon_{0}/4$.
Therefore, the number that have crossed 1 up to and including $\tau$
must also be less than $NT^{2}$, i.e. 
$
(1/N) \sum_{i=1}^{N}{\mathbf{1}}_{\{M_{\tau}^{i}\geq1\}}\leq T^{2}$. 

 Since $T\leq\sqrt{\varepsilon_{0}}/2\leq1/2$, we have $T^{2} <T$. 
This yields a contradiction since,  by  definition of
$\tau$ and by right-continuity, 
$(1/N) \sum_{i=1}^N {\mathbf 1}_{\{M^i_{\tau} \geq 1\}} \geq T$. 
In other words $A\cap A'\cap\{\tau\leq T\}=\emptyset$, so
that $\{\tau\leq T\}\subset(A\cap A')^{\complement}$. Hence 
\begin{equation}
\P(\tau\leq T)\leq\P\bigl(A^{\complement}\bigr)+\P\bigl((A')^{\complement}\bigr).\label{eq:particle:42}
\end{equation}
 By Markov's inequality, 
$\P((A')^{\complement}) = \P( (1/N) \sum_{i=1}^{N}{\mathbf{1}}_{A^{i}} > T^2 ) %\leq 
%\{C\int_{0}^{T}(1+\sup_{0\leq r\leq s}\vert Z_{r}^{i}\vert)ds+\alpha T^{1/4}+\sup_{0\leq s\leq T}\vert W_{s}^{i}\vert\geq\varepsilon_{0}/2\}}\geq T^{2}\biggr\}\\
 \leq (1/NT^{2})\sum_{i=1}^{N} \P(A^{i}).$ 
 %\P\biggl\{ C\int_{0}^{T}\bigl(1+\sup_{0\leq r\leq s}\vert Z_{r}^{i}\vert\bigr)ds+\alpha T^{1/4}+\sup_{0\leq s\leq T}\vert W_{s}^{i}\vert\geq\frac{\varepsilon_{0}}{2}\biggr\}.
%\end{split}
%\]
Thus, by \eqref{eq:events} and using the fact that $T\leq1$, 
\begin{equation}
\begin{split}\P\bigl((A')^{\complement}\bigr)
&\leq\frac{1}{NT^{2}}\sum_{i=1}^{N}
\Bigl[
\P\Bigl((\alpha+C)T^{1/4}+\sup_{s \in [0,T]}\vert W_{s}^{i}\vert\geq\frac{\varepsilon_{0}}{4}\Bigr)
+
\P\Bigl(\sup_{s \in [0,T]}\vert Z_{s}^{i}\vert\geq\frac{\varepsilon_{0}}{4CT}\Bigr) \Bigr].
\end{split}
\label{eq:particle:30}
\end{equation}
 By Lemma \ref{lem:particle:5} (with $p=3$) and Markov's inequality
again, we see that (since $T\leq1$),
\begin{equation}
\label{eq:particle:31}
\P\Bigl(
\sup_{s \in [0,T]}\vert Z_{s}^{i}\vert\geq\frac{\varepsilon_{0}}{4CT}\Bigr)
\leq4^{3}C^{3}C_{1}^{(3)}\varepsilon_{0}^{-3}T^{3}
\leq C'T^{3},
\end{equation}
for another constant $C'$ depending upon $\varepsilon_{0}$. Under the additional assumption that 
$(\alpha+C)T^{1/4}\leq\varepsilon_{0}/8$,
%Then firstly
%there exists a constant $C'$ such that 
%\begin{equation}
%\P\biggl\{\sup_{0\leq s\leq T}\vert Z_{s}^{i}\vert\geq\frac{\varepsilon_{0}}{4CT}\biggr\}\leq ,
%\label{eq:particle:31}
%\end{equation}
%and secondly we also have that 
%Moreover, 
the first term in the right-hand
side of \eqref{eq:particle:30} can be bounded by 
\[
\begin{split}\frac{1}{NT^{2}}\sum_{i=1}^{N}\P\biggl(
(\alpha+C)T^{1/4}+\sup_{s \in [0,T]}\vert W_{s}^{i}\vert\geq\frac{\varepsilon_{0}}{4}\biggr) & 
\leq\frac{1}{NT^{2}}\sum_{i=1}^{N}\P\biggl(\sup_{s \in [0,T]}\vert W_{s}^{i}\vert\geq\frac{\varepsilon_{0}}{8}\biggr)
\\
 & \leq cT^{-2}\exp\bigl(-c^{-1}T^{-1}\bigr),
\end{split}
\]
 for some constant $c>0$, independent of $N$ and $T$ (but depending 
 upon $\varepsilon_{0}$). Here we have
used the reflection principle and an elementary bound on the Gaussian
distribution function.

In the end, by \eqref{eq:particle:30}, \eqref{eq:particle:31} and
the above inequality, we obtain 
\[
\P\bigl((A')^{\complement}\bigr)\leq C'T+cT^{-2}\exp\bigl(-c^{-1}T^{-1}\bigr)
\leq C'T,
\]
 for some \ensuremath{C'},
 independent of \ensuremath{N}
 and \ensuremath{T} and the value of which is allowed to increase from one inequality to another. In a similar way to the proof of \eqref{eq:particle:31}, we also
have that $\P(A^{\complement})\leq C'T$. Therefore, \eqref{eq:particle:42} yields 
\[
\P(\tau\leq T)\leq C'T,%+cT^{-7/4}\exp\bigl(-c^{-1}T^{-1/2}\bigr),
\]
for $T^{2}\leq\varepsilon_{0}/4$
and $(\alpha+C)T^{1/4}\leq\varepsilon_{0}/8$. The point is that this probability is small in $T$. Finally, by
\eqref{eq:particle:40} and by Lemma \ref{lem:particle:5} again,
\[
\begin{split}\P\bigl( \bar{e}^{N}(T)\geq T^{1/4}\bigr) & \leq\P\bigl( \bar{e}^{N}(T)\geq T^{1/4},T<\tau\bigr) 
+\P\bigl( \tau\leq T\bigr)
\\
 & \leq\P\biggl( \frac{1}{N}\sum_{i=1}^{N}(M_{T}^{i})^{2}\geq T^{-1/2}\biggr) +
 \P\bigl( \tau\leq T\bigr) 
 \\
 & \leq\P( A^{\complement}) 
+\P\bigl( \tau\leq T\bigr)   \leq C'T.%+cT^{-7/4}\exp\bigl(-c^{-1}T^{-1/2}\bigr).
\end{split}
\]
% for new values of the constants $C'$, $c$.

Choose now $T=T_{k}$ and $T_{k}=\lambda^{k}$ with $\lambda<1$ such
that $(\alpha+C)\lambda^{1/4}\leq\varepsilon_{0}/8$ and $\lambda^{2}\leq\varepsilon_{0}/4$.
Then by above, 
\[
\P\bigl( \bar{e}^{N}(T_{k})\geq T_{k}^{1/4}\bigr) \leq C'\lambda^{k},%+c\lambda^{-7k/4}\exp\bigl(-c^{-1}\lambda^{-k/2}\bigr),
\]
 so that, for any $k_{0}\geq1$, 
\begin{align*}
%\P\bigl\{\exists k\geq k_{0}:\bar{e}^{N}(T_{k})\geq T_{k}^{1/4}\bigr\} & =
\P\biggl(\bigcup_{k\geq k_{0}}\bigl\{\bar{e}^{N}(T_{k})\geq T_{k}^{1/4}\bigr\}\biggr)
 & \leq\sum_{k\geq k_{0}}
 C'\lambda^{k} =:\eta(k_{0}),
\end{align*}
 where $\eta$ is finite since the sum converges, is independent of
$N$ and satisfies $\lim_{x\rightarrow+\infty}\eta(x)=0$. Observe
now that, for any $t\in(T_{k+1},T_{k}]$, $M_{t}^{i}\leq M_{T_{k}}^{i}$
and $\lambda^{-1/4}t^{1/4}\geq T_{k}^{1/4}$,
so that 
 $\bar{e}^N(t) \geq 
\lambda^{-1/4}t^{1/4}$ implies $\bar{e}^N(T_{k}) \geq T_{k}^{1/4}$
(recall that $\bar{e}^N$ is non-decreasing). 
 Therefore, 
\[
\P\bigl( \exists t\in[0,T_{k_{0}}]:\bar{e}^{N}(t)\geq\lambda^{-1/4}t^{1/4}\bigr) \leq\eta(k_{0}),
\]
 thus completing the proof. \end{proof}

The next proposition shows that there is a very small chance of observing
a macroscopic proportion of particles spiking twice or more in a small
interval and extends Proposition~\ref{prop:16:2:14:1} to intervals
of non-zero length.

\begin{proposition} \label{prop:20:02:14:1} For a given $T>0$,
consider $0\leq t<t+h\leq T$, $h\in(0, 1)$. Then, we can find $C \geq 0$,
independent of $h$, and an integer $N_{0}:=N_{0}(h)$, such that, for
$N\geq N_{0}$, 
\begin{equation*}
\P\bigl(\bar{e}^{N}(t+h)-\bar{e}^{N}(t-)>1+Ch^{1/16}\bigr)\leq Ch
\end{equation*}
 and 
\begin{equation*}
\P\biggl(\forall \lambda \leq 
\bigl( \bar{e}^{N}(t+h)-\bar{e}^{N}(t-)-Ch^{1/16} \bigr)_{+}, 
\ \frac{1}{N}\sum_{i=1}^{N}{\mathbf{1}}_{\{X_{t-}^{i}\geq1-\alpha\lambda-Ch^{1/16}\}}\geq \lambda \biggr)\geq1-Ch.
\end{equation*}
 \end{proposition}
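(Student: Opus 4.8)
The plan is to establish the second inequality first and then deduce the first one from it. Fix $0\le t<t+h\le T$. Since $t$ is deterministic and the spiking times of the particle system have atomless laws, we may assume that no particle spikes exactly at $t$, so $X^i_{t-}=X^i_t$ for every $i$ and $\bar{e}^N$ is continuous at $t$; write $\lambda^\star:=\bar{e}^N(t+h)-\bar{e}^N(t-)$. The first ingredient is a control of the ``purely continuous'' displacement of each particle. For $i\in\{1,\dots,N\}$ set $\delta_i:=\sup_{t\le s\le t+h}\bigl|\int_t^s b(X^i_u)\,du+W^i_s-W^i_t\bigr|$, i.e. the part of $X^i_s-X^i_{t-}$ coming neither from the common kick $\alpha\bar{e}^N$ nor from the resets. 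Using $|b(x)|\le C(1+|x|)$, the moment bounds of Lemma~\ref{lem:particle:5} and the reflection principle, one gets $\P(\delta_i>h^{1/16})\le Ch^{p}$ for every $p\ge1$ (uniformly in $i,N$) and $\mathbb{E}[\delta_i^4]\le Ch^2$. Writing $B:=\{i:\delta_i>h^{1/16}\}$, Markov's inequality together with a Cauchy--Schwarz bound against Lemma~\ref{lem:particle:5} gives, for $N\ge N_0(h)$, an event $E_0$ with $\P(E_0)\ge1-Ch$ on which $|B|\le Nh^{1/16}$, $\sum_{i\in B}(M^i_{t+h}-M^i_t)\le Nh^{1/16}$ and $\tfrac1N\sum_i\delta_i\le h^{1/16}$. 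Averaging the identity $M^i_{t+h}-M^i_t=\alpha\lambda^\star+(X^i_{t-}-X^i_{t+h})+\int_t^{t+h}b(X^i_s)\,ds+(W^i_{t+h}-W^i_t)$ over $i$, and using $X^i_{t-}<1$, $X^i_{t+h}\ge-\delta_i$ when $i$ spiked on $[t,t+h]$ (and $X^i_{t-}-X^i_{t+h}\le\delta_i$ otherwise), one obtains on $E_0$ the crude a priori bound $\lambda^\star\le(1+2h^{1/16})/(1-\alpha)=:K$.

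Next comes a bookkeeping of the cascades. Order all spiking events occurring in $(t-,t+h]$ lexicographically by (spiking time, depth in the cascade of Lemma~\ref{lem: existence and uniqueness particle system}), and let $(\tau_{(m)},i_m,\ell_m)$ be the time, particle and layer of the $m$-th event, $1\le m\le N\lambda^\star$. By the definition of the cascade and Proposition~\ref{prop:16:2:14:1}, $X^{i_m}_{\tau_{(m)}-}+\alpha|\Gamma_{\le\ell_m-1}(\tau_{(m)})|/N\ge1$, where $\Gamma_{\le k}:=\Gamma_0\cup\cdots\cup\Gamma_k$; moreover the level $L_m:=(\bar{e}^N(\tau_{(m)}-)-\bar{e}^N(t-))+|\Gamma_{\le\ell_m-1}(\tau_{(m)})|/N$ counts, over $N$, the spiking events strictly preceding layer $\ell_m$ at time $\tau_{(m)}$, hence $L_m\le(m-1)/N$. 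Expressing $X^{i_m}_{\tau_{(m)}-}$ through the dynamics of $X^{i_m}$ between $t$ and $\tau_{(m)}$ and dropping the nonnegative count of earlier spikes of $i_m$, this yields
\[
X^{i_m}_{t-}\ \ge\ 1-\alpha L_m-\delta_{i_m}\ \ge\ 1-\alpha\,\frac{m-1}{N}-\delta_{i_m}.
\]

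The last and most delicate ingredient is that few particles spike twice. If $i\notin B$ spikes twice on $[t,t+h]$, then between its two spikes it must climb from its reset value — which lies in $[0,\alpha]$ since $X^i_{\tau-}\le1$ and $|\Gamma(\tau)|\le N$ — back up to $1$; as the continuous part contributes at most $\delta_i\le h^{1/16}$, $\bar{e}^N$ must increase by at least $\beta_0:=(1-\alpha-h^{1/16})/\alpha$ between two consecutive spikes of $i$, so each good particle spikes at most $1+K/\beta_0$ times. To upgrade this to the bound $R\le Nh^{1/16}$ on an event of probability $\ge1-Ch$, where $R$ is the number of spiking events that are not the first spike of their particle, one separates such a second spike into two rare alternatives: either one of the cascades felt by $i$ beforehand had size $\ge c(1-\alpha)$, and there are at most $CK/(1-\alpha)$ such macroscopic cascades because each consumes $\ge cN(1-\alpha)/\alpha$ of the at most $NK$ available spiking events; or the drift-plus-Brownian path of $i$ oscillates by $\ge c(1-\alpha)$ over some subinterval of $[t,t+h]$, an event of probability $\le Ch^{p}$ by the reflection principle, so the expected number of such particles is $\le Ch^{p}N$ and Markov's inequality concludes. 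We enlarge $E_0$ to also carry $R\le Nh^{1/16}$.

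Finally we combine these on $E_0$. Fix $\lambda\le(\lambda^\star-Ch^{1/16})_+$ with $C$ a large constant to be chosen; we may assume $\lambda>0$ and set $\lambda':=\lambda+(C-1)h^{1/16}\le\lambda^\star$. Among the first $\lfloor N\lambda'\rfloor$ spiking events (which exist since $\lambda'\le\lambda^\star$), the previous display shows that every event fired by a good particle comes from a particle with $X^i_{t-}\ge1-\alpha\lambda'-h^{1/16}\ge1-\alpha\lambda-Ch^{1/16}$; the number of \emph{distinct} such particles is at least $\lfloor N\lambda'\rfloor$ minus the number of bad-particle events ($\le Nh^{1/16}$) minus $R$ ($\le Nh^{1/16}$), which is $\ge N\lambda$ once $C$ is large enough and $N\ge N_0(h)\ge h^{-1/16}$. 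This is the second inequality. The first inequality follows: on the event where the second inequality holds one cannot have $\lambda^\star>1+Ch^{1/16}$, for then $(\lambda^\star-Ch^{1/16})_+>1$ and the second inequality applied to some $\lambda>1$ would force $\tfrac1N\sum_i\mathbf{1}_{\{X^i_{t-}\ge1-\alpha\lambda-Ch^{1/16}\}}\ge\lambda>1$, which is impossible; hence $\P(\bar{e}^N(t+h)-\bar{e}^N(t-)>1+Ch^{1/16})\le\P(E_0^{c})\le Ch$. The crux is the third paragraph: disentangling, with constants independent of $N$ and $h$, the scenarios in which a macroscopic fraction of particles spikes twice within a time lapse of order $h$.
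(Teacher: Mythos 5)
Your first, second and fourth paragraphs follow essentially the same mechanics as the paper (control of the drift-plus-Brownian displacement $\delta_i$, lexicographic ordering of the cascade via Proposition~\ref{prop:16:2:14:1} to obtain $X^{i_m}_{t-}\geq 1-\alpha(m-1)/N-\delta_{i_m}$, and a counting argument to conclude). The difficulty is in the third paragraph, where you yourself locate the crux, and there the argument has a genuine gap. When a good particle $i$ re-fires at $s'$ after a previous spike at $s$, what you actually obtain is that the \emph{cumulative} mean-field kick plus the continuous part over $(s,s']$ exceeds $1-\alpha$, i.e.\ $\alpha\bigl(\bar e^N(s')-\bar e^N(s)\bigr)+\delta_i\geq 1-\alpha$. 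Your dichotomy allows only two mechanisms for this --- one single cascade of size $\geq c(1-\alpha)N$, or a large oscillation of $W^i+\int b$ --- and omits the third, most dangerous one: a long string of \emph{small} cascades whose cumulative contribution to $\bar e^N(s')-\bar e^N(s)$ reaches $(1-\alpha)/(2\alpha)$. Nothing in your setup excludes a scenario where, on $(s,s']$, particles re-arrange and trigger many sub-macroscopic cascades which cumulatively re-fire a macroscopic fraction of particles; this is exactly the mechanism behind blow-up, and the Proposition is supposed to rule it out on a short interval, not assume it away. A secondary issue is that, even granting the dichotomy, the observation that there are $O(1)$ macroscopic cascades does not bound $R$: a single macroscopic cascade of size $\approx N$ can re-fire up to $N$ particles at once, so ``few macroscopic cascades'' does not give ``few re-firings''.

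The paper's proof handles this point differently, by introducing the stopping time
$\tau(\beta)=\inf\bigl\{s\in[t,t+h]:\tfrac1N\sum_{i}\mathbf 1_{\{M^i_s-M^i_{t-}\geq 2\}}\geq\beta\bigr\}$ with $\beta=h^{1/4}$, and estimating $\P(\tau(\beta)\leq t+h)$ directly. The key inequality (the ``Claim'' in the paper) applies to each of the first $\approx\beta N$ particles to re-fire, at the moment it re-fires: the mean-field kick it has felt up to that moment splits as $\alpha$ (from particles that have so far spiked exactly once) plus a term bounded by $\beta^{1/2}\bigl(\tfrac1N\sum_j(M^j_{t+h})^2\bigr)^{1/2}$ via Cauchy--Schwarz --- crucially small because \emph{at that moment fewer than $\beta N$ particles have re-fired}. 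Averaging over those particles and using Lemma~\ref{lem:particle:5} with Markov's inequality gives $\P(\tau(\beta)\le t+h)\le C_ph^p$, uniformly in $N$. The accumulation of small cascades is thus handled implicitly, because the bookkeeping is done at the first time a macroscopic proportion has re-fired, which is precisely when the feedback loop would have to begin. Without this (or a comparable) stopping-time device, the bound $R\leq Nh^{1/16}$ with probability $\geq 1-Ch$ does not follow from the two alternatives you list.
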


\begin{proof} The first step of the proof is to show that the proportion
of particles that spike twice in a small interval tends to $0$ with
the length of the interval, uniformly in $N\geq1$. More precisely,
given an interval $[t,t+h]$ and $\beta\in(0,1)$, define 
\[
\tau(\beta)=\inf\Bigl\{ s\in[t,t+h]:\frac{1}{N}\sum_{i=1}^{N}{\mathbf{1}}_{\{M_{s}^{i}-M_{t-}^{i}\geq2\}}\geq\beta\Bigr\},\quad\inf\emptyset=+\infty.
\]
 Then we want to show that, for
 $\beta=h^{1/4}$, $N > h^{-1/2}$
and $p\geq1$, there exists a constant $C_{p}$ (independent of $h$), such that 
\begin{equation}
\P\bigl(\tau(\beta)\leq t+h\bigr)\leq C_{p}h^{p}.\label{eq:19:02:14:4b}
\end{equation}

In order to do this, we will have to enter into the spike cascade,
which will require the {\it cascade time axis} defined on page
\pageref{page cascade}. Indeed, we will say that particle $i\in\{1,\dots,N\}$
spikes \textit{twice before} $j$ if \ensuremath{\inf\{s\geq t,M_{s}^{i}\geq M_{t-}^{i}+2\}<\inf\{s\geq t,M_{s}^{j}\geq M_{t-}^{j}+2\}}
, or \ensuremath{\inf\{s\geq t,M_{s}^{i}\geq M_{t-}^{i}+2\}=\inf\{s\geq t,M_{s}^{j}\geq M_{t-}^{j}+2\}=:\rho}
 and \ensuremath{X_{\rho-}^{i}>X_{\rho-}^{j}}. This precisely means that particle
$i$ will spike twice before $j$ either in (usual) time, or before $j$
along the {\it cascade time axis}.

Define the set $I=\{i\in\{1,\dots,N\}:M_{t+h}^{i}-M_{t-}^{i}\geq2\}$
of particles that have spiked at least twice in the interval $[t-,t+h]$.
We prove the following claim:

\vspace{0.3cm}
 \textit{Claim:} Suppose $\tau(\beta)\leq t+h$. Then there exists
a set $I(\beta)\subset\{1,\dots,N\}$, such that $\beta N\leq|I(\beta)|\leq\beta N+1$
and, for all $i\in I(\beta)$, it holds that 
\begin{equation}
1\leq\alpha+\beta^{1/2}\biggl(\frac{1}{N}\sum_{j=1}^{N}
\bigl(M_{t+h}^{j}\bigr)^{2}\biggr)^{1/2}
+Ch\bigl(1+\sup_{s \in [0,t+h]}
\vert Z_{s}^{i}\vert\bigr)
+\sup_{s \in [t,t+h]}\bigl\vert W_{s}^{i}-W_{t}^{i}\bigr\vert.
\label{eq:16:2:14:1}
\end{equation}

 To prove 
\eqref{eq:16:2:14:1}, suppose $\tau(\beta)\leq t+h$. {By right-continuity
 of each $(M^i_{s})_{s \geq 0}$, 
$\vert I \vert \geq N \beta$}. For $i_{0}\in I$, let $I^{(i_{0})}$ be the set of particles
that have spiked twice before $i_{0}$ in the above sense. Whenever
$\vert I^{(i_{0})}\vert<\beta N$, the sum of the kicks received by particle
$i_{0}$ due to the effect of the particles in $I^{(i_{0})}$ spiking before
it (again in the previous sense) is bounded by 
\begin{equation}
\begin{split}\alpha\Bigl[1+\frac{1}{N}\sum_{i\in I^{(i_{0})}}\bigl(M_{t+h}^{i}-M_{t-}^{i}\bigr)\Bigr] & \leq\alpha\Bigl[1+\frac{1}{N}\sum_{i\in I^{(i_{0})}}M_{t+h}^{i}\Bigr]
 \leq\alpha+\beta^{1/2}\left(\frac{1}{N}\sum_{i=1}^{N}\bigl(M_{t+h}^{i}\bigr)^{2}\right)^{1/2}.
\end{split}
\label{eq:16:2:14:2}
\end{equation}
 The first $\alpha$ stands for the kick generated by particles that
have spiked once only. The other part corresponds to the particles
that have spiked twice or more. At the time when the particle $i_{0}$
spikes for the second time, \ensuremath{X^{i_{0}}}
 has to cross $1$, or equivalently, the $Z$-particle $i_{0}$ {crosses}
a new integer. Since it is its second spike in the interval $[t,t+h]$,
the $Z$-particle $i_{0}$ has run more than $1$ since $t-$, i.e.
{$1 \leq \sup_{t \leq s \leq t+h} \vert Z_{s} - Z_{t-}\vert$}, so that 
\[
1 \leq\int_{t}^{t+h}\vert b(X_{s}^{i_{0}})\vert ds
+\alpha+\beta^{1/2}
\biggl(\frac{1}{N}\sum_{j=1}^{N}\bigl(M_{t+h}^{j}\bigr)^{2}\biggr)^{1/2}
+\sup_{s \in [t,t+h]}\bigl\vert W_{s}^{i_{0}}-W_{t}^{i_{0}}\bigr\vert.
\]
Using the bound for the growth of $b$, 
\[
\begin{split}1 & \leq \alpha + \beta^{1/2}\biggl(\frac{1}{N}\sum_{j=1}^{N}\bigl(M_{t+h}^{j}\bigr)^{2}\biggr)^{1/2}+Ch\bigl(1+\sup_{s \in [0,t+h]}\vert Z_{s}^{i_{0}}\vert\bigr)+\sup_{s \in [t,t+h]}\bigl\vert W_{s}^{i_{0}}-W_{t}^{i_{0}}\bigr\vert.\end{split}
%\label{eq:16:2:14:1b}
\]
Iterating the argument up until the number of particles that have
spiked more than twice is greater than $N\beta$, we can find an index \(i_1\) such that
$I^{(i_1)}\subset\{1,\dots,N\}$ and $\beta N\leq\vert I^{(i_1)}\vert\leq\beta N+1$.  This proves the claim.

%\textcolor{red}{For all $i\in I(\beta)$}, \eqref{eq:16:2:14:1} holds true (with
%$i_{0}$ replaced by $i$). %\begin{equation*}
%\begin{split}
%&Z_{t-}^i
%+1 \leq \sup_{t \leq s \leq \tau^2} Z_{s}^i 
%\\
%&\hspace{5pt} \leq Z_{t-}^i + \alpha
%+ \beta^{1/2} \biggl( \frac1N \sum_{j=1}^N \bigl(M_{t+h}^j - M_{t-}^j \bigr) \biggr)
% + C h \bigl( 1+ \sup_{t \leq s \leq \tau} \vert Z_{s}^i \vert \bigr) 
%+ C \sup_{t \leq s \leq \tau}
%\bigl( W_{s}^i - W_{t}^i \bigr).
%\end{split} 
%\end{equation*}
%Without any loss of generality, we can assume that $\vert I \vert = \lceil \beta N \rceil$
%(for instance, we can remove the particles in $I$ with the largest indices). 
To proceed, we can take the mean in \eqref{eq:16:2:14:1} over the particles
in $I(\beta)$. Using the bound $\vert I(\beta)\vert \leq 
N \beta +1$ and the Cauchy-Schwarz inequality, we see that
\[
\begin{split}
\beta &\leq \alpha \beta +   \beta^{1/2}\bigl(\beta+\frac{1}{N}\bigr)\biggl(\frac{1}{N}\sum_{j=1}^{N}\bigl(M_{t+h}^{j}\bigr)^{2}\biggr)^{1/2}
\\
 &\hspace{5pt} +\bigl(\beta+\frac{1}{N}\bigr)^{1/2}
\biggl[
Ch \biggl(1+\frac{1}{N}\sum_{j=1}^{N}\sup_{s \in [0,t+h]}\vert Z_{s}^{j}\vert^{2}\biggr)^{1/2}
+
 \biggl(\frac{1}{N}\sum_{j=1}^{N}\sup_{s \in [t,t+h]}\vert W_{s}^{j}-W_{t}^{j}\vert^{2}\biggr)^{1/2}
 \biggr].
\end{split}
\]
Since $\beta = h^{1/4} \geq1/\sqrt{N}$, we have  $1/(\beta N)\leq1/\sqrt{N}$.
Dividing both sides of the above inequality by $\beta$, we deduce that  $\tau(\beta) \leq t+h$ implies 
\[
\begin{split}1 &\leq\alpha+2\beta^{1/2}\biggl(\frac{1}{N}\sum_{j=1}^{N}\bigl(M_{t+h}^{j}\bigr)^{2}\biggr)^{1/2}\\
 & \hspace{5pt}+Ch\beta^{-1/2}\biggl(1+\frac{1}{N}\sum_{j=1}^{N}\sup_{s \in [0,t+h]}\vert Z_{s}^{j}\vert^{2}\biggr)^{1/2}+2\beta^{-1/2}\biggl(\frac{1}{N}\sum_{j=1}^{N}\sup_{s \in [t,t+h]}\vert W_{s}^{j}-W_{t}^{j}\vert^{2}\biggr)^{1/2}.
\end{split}
\]
We can now apply Markov's inequality with any exponent $p\geq1$.
By Lemma \ref{lem:particle:5}, we get that there exists a constant
$C_{p}$ such that 
\begin{equation}
\label{eq:19:02:14:4}
\P\bigl(\tau(\beta)\leq t+h\bigr)\leq C_{p}\bigl(\beta^{p/2}+h^{p}\beta^{-p/2}+h^{p/2}\beta^{-p/2}\bigr)
= C_{p} \bigl( h^{p/8} + h^{7p/8} + h^{3p/8} \bigr). 
\end{equation}
 On the event $\{\tau(\beta)>t+h\}\cap\{N^{-1}\sum_{i=1}^{N}(M_{t+h}^{i})^{2}\leq h^{-1/8}\}$,
we have, as in \eqref{eq:16:2:14:2}, 
\begin{equation}
\bar{e}^{N}(t+h)-\bar{e}^{N}(t-)\leq1+\beta^{1/2}\biggl(\frac{1}{N}\sum_{i=1}^{N}\bigl(M_{t+h}^{i}\bigr)^{2}\biggr)^{1/2}\leq1+\beta^{1/2}h^{-1/16}.
\label{eq:19:02:14:5}
\end{equation}
Since $\beta=h^{1/4}$, we deduce from Lemma \ref{lem:particle:5}
and \eqref{eq:19:02:14:4} that the first bound in the statement holds
with $N$ large enough.

Now, we can focus on the particles that spike no more than once between
$t$ and $t+h$. The set of such particles coincides with $I^{\complement}$.
In $I^{\complement}$, there are two kind of particles: The set $I^{\complement,0}$
denotes the set of particles that do not spike and the set $I^{\complement,1}$
the set of particles that spike once. In order to characterize the
sets $I^{\complement,0}$ and $I^{\complement,1}$, we can make use
of the ordering of the spikes again, as defined on
page \pageref{page cascade}. 

A particle $i_{1}\in I^{\complement,1}$ is to spike at some time $s \in [t,t+h]$, 
if, at some moment along the \emph{time cascade axis} at $s$, 
the kick it receives from the particles that spike before in the cascade is larger than  $1-X_{s-}^{i_{1}}$. 
Now, as $i_{1}$ doesn't spike between $t$ and $s-$, $1-X_{s-}^{i_{1}}$ is equal to 
\begin{equation*}
1- X_{s-}^{i_{1}} = 1-X_{t-}^{i_{1}} - \int_{t}^s b(X_{r}^{i_{1}}) dr - (W_{s}^{i_{1}}- W_{t}^{i_{1}})
- \alpha \bigl( \bar{e}^N(s-)- \bar{e}^N(t-) \bigr). 
\end{equation*}
We observe that $\alpha ( \bar{e}^N(s-)- \bar{e}^N(t-) )$ represents the kick $i_{1}$ receives from the other neurons between 
$t-$ and $s-$. Therefore, $i_{1}\in I^{\complement,1}$ is to spike at some time $s \in [t,t+h]$, if
the kick it receives between $t-$ and $s-$ plus the kick it receives along the \textit{time axis cascade} at $s$
before it spikes is greater than 
\begin{equation*}
1-X_{t-}^{i_{1}} - \int_{t}^s b(X_{r}^{i_{1}}) dr - (W_{s}^{i_{1}}- W_{t}^{i_{1}}). 
\end{equation*}
The sum of the two kicks is called the \emph{kick received by $i_{1}$ before it spikes}. 
Following \eqref{eq:16:2:14:2}, it can be bounded by 
\[
\alpha\frac{k}{N}+\biggl(\frac{\vert I\vert}{N}\times\frac{1}{N}\sum_{j=1}^{N}\bigl(M_{t+h}^{j}\bigr)^{2}\biggr)^{1/2},
\]
\color{black}
 where $k$ stands for the number of particles in $I^{\complement}$
that spike once before $i_{1}$. Therefore, for $i_{1}$ to
be in $I^{\complement,1}$, it must hold that
\begin{equation}
\label{eq:nixon}
\begin{split} & X_{t-}^{i_{1}}+\alpha\frac{k}{N}+\biggl(\frac{\vert I\vert}{N^2}\sum_{j=1}^{N}\bigl(M_{t+h}^{j}\bigr)^{2}\biggr)^{1/2}
+Ch\bigl(1+\sup_{s \in [0,t+h]}\vert Z_{s}^{i_{1}}\vert\bigr)+\sup_{s \in [t,t+h]}\vert W_{s}^{i_{1}}-W_{t}^{i_{1}}\vert\geq1,
\end{split}
\end{equation}
the two last terms in the left-hand side standing for bounds on the drift and
Brownian parts in the dynamics of $X^{i_{1}}$. Obviously, the number
of particles for which the above inequality holds must be larger than
$k+1$ (it must be true for the $k$ particles that spiked before
$i_{1}$ and for $i_{1}$ as well). On the model of Proposition \ref{prop:16:2:14:1},
this must be true for any $k<\vert I^{\complement,1}\vert$.

%Consider now some particle $i_{0} \in {\mathcal I}^{\complement,0}$. We know that, 
%at any time $r \in [t,t+h]$ after all the particles in ${\mathcal I}^{\complement,1}$ have spiked, 
%\begin{equation*}
%X_{r-}^{i_{0}} \geq X_{t-}^{i_{0}} - C h (1+ \sup_{0 \leq s \leq t+h} \vert Z_{s}^{i_{0}} \vert) 
%+ \alpha 
%\frac{\vert I^{\complement,1} \vert}{N}  - \sup_{t \leq s \leq t+h} \vert W_{s}^{i_{0}} - W_{t}^{i_{0}}\vert.  
%\end{equation*}
%Since $i_{0}$ doesn't spike, it must hold
%\begin{equation*}
%1 > X_{t-}^{i_{0}} - C h (1+ \sup_{0 \leq s \leq t+h} \vert Z_{s}^{i_{0}} \vert) 
%+ \alpha 
%\frac{\vert I^{\complement,1} \vert}{N}  - \sup_{t \leq s \leq t+h} \vert W_{s}^{i_{0}} - W_{t}^{i_{0}}\vert.  
%\end{equation*}
Now, following \eqref{eq:19:02:14:5} for estimating the overall kick on $[t,t+h]$, we deduce 
\[
\frac{\vert I^{\complement,1}\vert}{N}\leq\bar{e}^{N}(t+h)-\bar{e}^{N}(t-)\leq\frac{\vert I^{\complement,1}\vert}{N}+\biggl(\frac{\vert I\vert}{N^2}\sum_{j=1}^{N}\bigl(M_{t+h}^{j}\bigr)^{2}\biggr)^{1/2}.
\]
 Therefore, for an integer $0 \leq k<N(\bar{e}^{N}(t+h)-\bar{e}^{N}(t-))-[\vert I\vert\sum_{j=1}^{N}(M_{t+h}^{j})^{2}]^{1/2}$,
 we have $k < \vert I^{\complement,1} \vert$. By \eqref{eq:nixon}, we can deduce that
\begin{equation}
\label{eq:nixon2}
\begin{split} 
\frac{1}{N}\sum_{i=1}^{N}{\mathbf{1}}_{B^{i,1}(k)}\geq\frac{k}{N},
\quad \textrm{with} \
 & B^{i,1}(k)=\biggl\{ X_{t-}^{i}+\alpha\frac{k}{N}+\biggl(\frac{\vert I\vert}{N^2}\sum_{j=1}^{N}\bigl(M_{t+h}^{j}\bigr)^{2}\biggr)^{1/2}
 \\
 & \hspace{5pt}+Ch\bigl(1+\sup_{s \in [0,t+h]}\vert Z_{s}^{i}\vert\bigr)+\sup_{s \in [t,t+h]}\vert W_{s}^{i}-W_{t}^{i}\vert\geq1\biggr\}.
\end{split}
\end{equation}
 %and
%\begin{equation*}
%\begin{split}
%&\frac{1}{N} \sum_{i=1}^N {\mathbf 1}_{B^{i,0}} \leq \bar\Delta^N+ \frac{\vert I \vert}{N},
%\\
%&B^{i,0} = \biggl\{ X_{t-}^{i} + \alpha \bar\Delta^N -  
%\biggl( \frac{\vert I \vert}N  \times \frac{1}{N} \sum_{j=1}^N \bigl( M_{t+h}^j \bigr)^2 \biggr)^{1/2}
%\\
%&\hspace{30pt}
%- C h \bigl(1+ \sup_{0 \leq s \leq t+h} \vert Z_{s}^{i} \vert\bigr)
%- \sup_{t \leq s \leq t+h} \vert W_{s}^{i} - W_{t}^{i}\vert
%\geq 1\biggr\},
%\end{split}
%\end{equation*}
%Therefore, for all $k \leq \vert J$, it must hold
%\begin{equation*}
%\frac{1}{N} \sum_{j=1}^N 
%{\mathbf 1}_{\{ 
%C h (1+ \sup_{0 \leq s \leq t} \vert Z_{s}^j \vert) + \alpha 
%\frac{\vert J^1 \vert -1}{N} + \frac{\alpha}{N} \sum_{i \in I} \bigl( M_{t+h}^i - M_{t-}^i
%\bigr)  + \sup_{t \leq s \leq t+h} \vert W_{s}^j - W_{t}^j\vert \geq 1 - Z_{t-}^j
%\}} \geq k - \frac{\vert I \vert}N.
%\end{equation*}
%Consider the even
%\begin{equation*}
%\frac{1}{N} \sum_{j=1}^N 
%{\mathbf 1}_{\{ 
%C h (1+ \sup_{0 \leq s \leq t} \vert Z_{s}^j \vert) + \alpha 
% + \frac{\alpha}{N} \sum_{i \in I} \bigl( M_{t+h}^i - M_{t-}^i
%\bigr)  + \sup_{t \leq s \leq t+h} \vert W_{s}^j - W_{t}^j\vert \geq h^{\epsilon}
%\}} \leq h^{\epsilon}.
%\end{equation*}
%Then,
%\begin{equation*}
%\frac{1}{N} \sum_{j=1}^N 
%{\mathbf 1}_{\{ \alpha 
%\frac{k -1}{N} \geq 1 - X_{t-}^j - h^{\epsilon}
%\}} \geq \frac{k}{N} - \frac{\vert I \vert}N - h^{\epsilon}.
%\end{equation*}
Define now the events 
\[
\begin{split}
&B^{0}=\{\tau(\beta)>t+h\}\cap\biggl\{\frac{1}{N}\sum_{j=1}^{N}\bigl(M_{t+h}^{j}\bigr)^{2}\leq h^{-1/8}\biggr\}
\\
&B^{2}=\biggl\{\frac{1}{N}\sum_{i=1}^{N}{\mathbf{1}}_{B^{i,2}}\leq h\biggr\}, \quad  B^{i,2}=\Bigl\{ Ch\bigl(1+\sup_{s \in [0,t+h]}\vert Z_{s}^{i}\vert\bigr)+\sup_{s \in [t,t+h]}\vert W_{s}^{i}-W_{t}^{i}\vert\geq h^{1/16}\Bigr\}.
\end{split}
\]
On $B^0$, the term 
$((\vert I\vert/N^2)\sum_{j=1}^{N}(M_{t+h}^{j})^{2})^{1/2}$ is less than 
$h^{1/16}$.
Therefore, on $B^0$, 
for any $\lambda \leq\bar{e}^{N}(t+h)-\bar{e}^{N}(t-)- 3h^{1/16}$,
we have 
\[ 
\lfloor \lambda N + 2N h^{1/16}\rfloor  \leq
N \bigl(\bar{e}^{N}(t+h)-\bar{e}^{N}(t-)\bigr)-\biggl[\vert I\vert\sum_{j=1}^{N}(M_{t+h}^{j})^{2}\biggr]^{1/2}.\] 
For such a $\lambda$, we choose $k=\lfloor \lambda N 
+ 2 N h^{1/16}
\rfloor$, so that 
$k$ satisfies the required condition to apply \eqref{eq:nixon2}.
On $B^0 \cap B^{i,1}(k) \cap (B^{i,2})^{\complement}$, 
$X_{t-}^i + \alpha k/N \geq 1- 2 h^{1/16}$, so that
$$X_{t-}^i + \alpha \lambda \geq 
X_{t-}^i + \alpha \frac{k}N - 2 h^{1/16} \geq
1- 4 h^{1/16}.$$
Therefore, on $B^{0} \cap B^{2}$, 
\[
\frac{1}{N}\sum_{i=1}^{N}{\mathbf{1}}_{\{X_{t-}^{i}\geq1-\alpha\lambda-4h^{1/16}\}}
\geq\frac{1}{N}\sum_{i=1}^{N}{\mathbf{1}}_{B^{i,1}(k)}-\frac{1}{N}\sum_{i=1}^{N}{\mathbf{1}}_{B^{i,2}}\geq\frac{k}{N}- h \geq \lambda,
\]
 the last inequality following from the fact that, since $N > h^{-1/2}$ and $h \leq 1$,
$k/N-h \geq \lambda + 2 h^{1/16} - 1/N - h \geq \lambda$.  
 To complete the proof, notice from \eqref{eq:19:02:14:4} and Lemma 
 \ref{lem:particle:5}
that
\[
\begin{split}\P\Bigl(\bigl(B^{0}\cap B^{2}\bigr)^{\complement}\Bigr) & \leq C'h+\P\Bigl(\bigl(B^{2}\bigr)^{\complement}\Bigr)\leq C'h+h^{-1}\frac{1}{N}\sum_{i=1}^{N}\P\bigl(B^{i,2}\bigr)\leq C'h,\end{split}
\]
 the value of $C'$ being allowed to increase from one inequality
to another. 
 \end{proof}

%%%%%%%%%%%%%%%%%
% Tightness section
%%%%%%%%%%%%%%%%%

\subsection{Tightness properties and convergent subsequences}
\label{sec:tightness}
This second section is now devoted to the proof of the tightness property of the family of measures $(\Pi_N)_{N\geq1}$ defined in Theorem~\ref{weak existence thm}.  It is for this result that the M1 Skorohod topology plays a key role.
Recall that  $\tilde{Z}^{i, N}\in\hat{\mathcal D}([0, T+1], \R)$ satisfies
\begin{equation}
\label{eq:Z-tilde3}
\tilde{Z}^{i, N}_t :=
\begin{cases}
Z^{i, N}_t, &\mbox{if } t \leq  T\\
W_{t}^{i} - W_{T}^{i} + Z^{i, N}_{T}, &\mbox{if } t\in (T, T+1]
\end{cases} 
\end{equation}
for $i\in\{i, \dots, N\}$, where $(Z^{i, N}_t)_{t\in[0, T]}$ is the (physical) solution to the particle system \eqref{particle system Z}, 
\[
\bar{\mu}_N := \frac{1}{N}\sum_{i=1}^N{\textrm{Dirac}(\tilde{Z}^{i, N})},
\]
so that $\bar{\mu}_N$ is a random variable taking values in $\mathcal{P}(\hat{\mathcal D}([0,T+1], \R))$ and $\Pi_N = \mathrm{Law}(\bar{\mu}_N)$.

\begin{lem}
\label{lem:tightness1}
For any $T>0$, the family of laws of $\tilde{Z}^{1, N}$, $N\geq 1$, is tight in $\mathcal{P}(\hat{\mathcal D}([0, T+1], \R))$ endowed with the weak topology inherited from the M1 topology on $\hat{\mathcal D}([0, T+1], \R)$.
\end{lem}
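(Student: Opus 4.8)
The plan is to apply the M1 tightness criterion coming from Theorem~\ref{thm:M1:3} (up to closure, the compact subsets of $\hat{\mathcal D}([0,T+1],\R)$ are those on which $\|\cdot\|$ is bounded and the three functionals $\sup_{t}w_{T+1}(\cdot,t,\delta)$, $v_{T+1}(\cdot,0,\delta)$, $v_{T+1}(\cdot,T+1,\delta)$ become uniformly small as $\delta\to0$), together with Prohorov's theorem, which is licit since $\hat{\mathcal D}([0,T+1],\R)$ endowed with M1 is Polish. Thus it suffices to show that for every $\eta>0$ one can pick a radius $R$ and sequences $\delta_k\downarrow 0$, $\epsilon_k\downarrow 0$ such that, uniformly in $N$, $\tilde Z^{1,N}$ lies with probability at least $1-\eta$ in the set of $f$ satisfying $\|f\|\le R$ and, for all $k$, $\sup_t w_{T+1}(f,t,\delta_k)\le\epsilon_k$ and $v_{T+1}(f,0,\delta_k)\vee v_{T+1}(f,T+1,\delta_k)\le\epsilon_k$.

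The structural point that makes M1 the right topology is a decomposition. On $[0,T]$, write from \eqref{particle system Z} that $\tilde Z^{1,N}_t=A^N_t+B^N_t$ with $A^N_t:=Z^{1,N}_0+\int_0^tb(Z^{1,N}_s-M^{1,N}_s)\,ds+W^1_t$ continuous and $B^N_t:=\alpha\bar e^N(t)$ non-decreasing; extend to $[T,T+1]$ by $A^N_t:=A^N_T+W^1_t-W^1_T$ and $B^N_t:=\alpha\bar e^N(T)$, which is consistent with \eqref{eq:Z-tilde3} and keeps $A^N$ continuous and $B^N$ non-decreasing on all of $[0,T+1]$. Since $B^N$ is monotone, for any $t_1<t_2<t_3$ there is an interpolation weight $\theta^\ast\in[0,1]$ with $\theta^\ast B^N(t_1)+(1-\theta^\ast)B^N(t_3)=B^N(t_2)$; plugging this $\theta^\ast$ into the definition of $\|\cdot-[\cdot,\cdot]\|$ yields $w_{T+1}(\tilde Z^{1,N},t,\delta)\le w(A^N,2\delta)$, where $w(g,\rho):=\sup\{|g(s)-g(s')|:\,|s-s'|\le\rho\}$ is the ordinary modulus of continuity of $g$ on $[0,T+1]$. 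In particular the (possibly macroscopic) jumps of $\bar e^N$ at cascade times cost nothing, which is exactly the feature that fails for J1.

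It then remains to control, uniformly in $N$, the modulus of $A^N$, the sup norm, and the two endpoint functionals. For $A^N$: the linear growth of $b$ and $M^{i,N}\ge0$ give $|b(Z^{1,N}_s-M^{1,N}_s)|\le C(1+\sup_{[0,T]}|Z^{1,N}|)=:L^N$ with $\E[L^N]\le C$ by Lemma~\ref{lem:particle:5}, hence $w(A^N,2\delta)\le 2\delta L^N+w(W^1,2\delta)$ and $\P(\sup_t w_{T+1}(\tilde Z^{1,N},t,\delta)>\epsilon)\le 2\delta C/\epsilon+\P(w(W^1,2\delta)>\epsilon/2)$, which tends to $0$ as $\delta\to0$ uniformly in $N$ (the Brownian modulus has a law independent of $N$). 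The sup-norm bound is immediate from Lemma~\ref{lem:particle:5} and Markov's inequality, $\P(\sup_{[0,T+1]}|\tilde Z^{1,N}|>R)\le C/R$ for all $N$, the extra contribution on $[T,T+1]$ being only the increment of $W^1$. For the endpoint at $T+1$: since $\delta<1$, on $[T+1-\delta,T+1]$ the path $\tilde Z^{1,N}$ is a Brownian path started from $\tilde Z^{1,N}_T$, so $v_{T+1}(\tilde Z^{1,N},T+1,\delta)\le\sup_{T+1-\delta\le s\le s'\le T+1}|W^1_{s'}-W^1_s|\to0$ uniformly in $N$; this is precisely why \eqref{eq:Z-tilde1} extends the process to $[T,T+1]$. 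For the endpoint at $0$: $v_{T+1}(\tilde Z^{1,N},0,\delta)\le w(A^N,\delta)+\alpha\bar e^N(\delta)$, and Lemma~\ref{cor:particle:1} bounds $\bar e^N(\delta)$ by $\lambda(\eta)^{-1}\delta^{1/4}$ on an event of probability $\ge1-\eta$, uniformly in $N$, once $\delta\le\lambda(\eta)$.

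Combining these four estimates through a union bound over a sequence $\delta_k\downarrow0$ produces, for any prescribed $\eta>0$, a closed set $\bar K$ that is compact by Theorem~\ref{thm:M1:3} and with $\P(\tilde Z^{1,N}\in\bar K)\ge1-\eta$ for every $N$; tightness follows by Prohorov. The main obstacle — in fact the heart of the whole M1 strategy — is the monotone-plus-continuous splitting and the resulting inequality $w_{T+1}(\tilde Z^{1,N},t,\delta)\le w(A^N,2\delta)$: it converts the a priori dangerous synchronization jumps of $\bar e^N$ into a cost-free term, leaving only the genuinely continuous piece $A^N$ (handled by Lemma~\ref{lem:particle:5}) and the two endpoint terms (handled by the Brownian extension and by Lemma~\ref{cor:particle:1}).
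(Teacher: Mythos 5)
Your proof is correct and follows essentially the same route as the paper's: both rest on the M1 compactness criterion of Theorem~\ref{thm:M1:3}, the splitting of $\tilde{Z}^{1,N}$ into a continuous part (drift plus Brownian) plus the non-decreasing part $\alpha\bar e^N(\cdot\wedge T)$, the observation that the non-decreasing summand contributes nothing to the oscillation functional $w_{T+1}$, Lemma~\ref{lem:particle:5} for the moment bounds, and Lemma~\ref{cor:particle:1} for the left-endpoint oscillation. The only cosmetic difference is that you explicitly derive the inequality $w_{T+1}(\tilde Z^{1,N},t,\delta)\le w(A^N,2\delta)$ via the interpolation weight $\theta^\ast$, whereas the paper cites the equivalent $w_{T+1}(f+g,t,\delta)\le v_{T+1}(g,t,\delta)$ for $f$ monotone as an ``easily verified fact.''
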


\begin{proof}
By definition of tightness, we must show that for any $\varepsilon \geq0$, there exists $K\subset\hat{\mathcal D}([0, T+1], \R)$ compact for the M1 topology, such
that
$\inf_{N\geq1}\mathbb{P}(\tilde{Z}^{1,N} \in K) \geq 1-\varepsilon$. 
By Theorem \ref{thm:M1:3}, $K$ is compact for M1 if and only if 
$
\lim_{\delta\to0}\sup_{f\in K} u_{T+1}(f, \delta) =0$,
where
\begin{equation}
\label{mod of con}
u_{T+1}(f, \delta) := \Big(\sup_{t\in[0,T+1]}w_{T+1}(f, t, \delta)\Big)\vee v_{T+1}(f, 0, \delta) \vee v_{T+1}(f, T+1, \delta)
\end{equation}
is the modulus of continuity appearing in that result, and the functions $w_{T+1}$ and $v_{T+1}$ are given by \eqref{M1 wT} and in Theorem \ref{thm:M1:3} respectively.  With this in mind, for any $R\geq1$, define the set
\[
K_R := \bigl\{f\in\hat{\mathcal D}([0, T+1], \R): u_{T+1}(f, \delta)\leq R\delta^\frac{1}{4},\ \delta\in(0, 1/R)\bigr\}.
\]
It is thus clear that $K_R$ is compact for every $R\geq1$.  It therefore suffices to show that
% for any $\varepsilon\geq0$ there exists $R\geq1$ such that
%\[
%\sup_{N\geq1}\mathbb{P}\{\tilde{Z}^{1,N} \in K_R\} =\sup_{N\geq1}\mathbb{P}\Big\{\forall \delta\in(0, 1/R),\ u_{T+1}(\tilde{Z}^{1,N}, \delta)\leq R\delta^\frac{1}{4} \Big\}  \geq 1-\varepsilon.
%\]
%or equivalently that
\begin{equation}
\label{M1 tightness to show}
\lim_{R\to\infty}\inf_{N\geq1}\mathbb{P}\Big( \forall  \delta\in(0, 1/R),\ u_{T+1}(\tilde{Z}^{1,N}, \delta)\leq R\delta^\frac{1}{4} \Big) =1.
\end{equation}
Since $u_{T+1}$ is a maximum of three terms, the above certainly holds if it also holds when $u_{T+1}$ is replaced by each of the three terms appearing in the maximum in \eqref{mod of con} individually.  This is what we aim to show now, starting with the first term.

To this end define the new process $(\tilde{U}^N_t)_{t\in[0, T+1]}$ as
\[
\tilde{U}^N_t = \tilde{Z}^{1, N}_t - \frac{1}{N}\sum_{i=1}^N M^{i, N}_{t\wedge T},\qquad t\in [0, T+1].
\] 
Then $\tilde{U}^N$ is the continuous part of $\tilde{Z}^{1, N}$.  We use the easily verified fact that
\[
w_{T+1}(f+g, t, \delta) \leq v_{T+1}(g, t, \delta)\quad t \in[0,T], \delta>0,
\]
whenever $f, g\in \hat{\mathcal D}([0, T+1], \R)$, and $f$ is monotone. 
% Indeed, this is true since for $(t-\delta)\vee 0 \leq t_1<t_2<t_3 \leq (t+\delta)\wedge T$ we have
%\begin{align*}
%&\|(f+g)(t_2) - [(f+g)(t_1), (f+g)(t_3)]\| \\%= \inf_{\theta\in[0, 1]}|\theta(f+g)(t_1) + (1-\theta)(f+g)(t_3) -(f+g)(t_2)|\\
%&\leq  \inf_{\theta\in[0, 1]}|\theta f(t_1) + (1-\theta)f(t_3) -f(t_2)| + \sup_{\theta\in[0,1]}|\theta g(t_1) + (1-\theta)g(t_3) -g(t_2)|\\
%&\leq \sup_{(t-\delta)\vee 0 \leq t_1<t_2 \leq (t+\delta)\wedge T}|g(t_1) -g(t_2)|,
%\end{align*}
%since $f$ is monotone.  
Thus, since $\tilde{Z}^{1, N} - \tilde{U}^N$ is non-decreasing,
\[
\sup_{t\in[0,T+1]}w_{T+1}(\tilde{Z}^{1,N}, t, \delta) \leq \sup_{t\in[0,T+1]}v_{T+1}(\tilde{U}^N, t, \delta), \quad \delta>0,
\]
almost surely. Hence, in order to show that \eqref{M1 tightness to show} holds in the first case, 
it is sufficient to prove that 
%\begin{align*}
%&\sup_{N\geq1}\mathbb{P}\Big\{\forall  \delta\in(0, 1/R),\ \sup_{t\in[0,T+1]}w_{T+1}(\tilde{Z}^{1,N}, t, \delta) \leq R\delta^\frac{1}{4} \Big\} \\
%&\quad \geq 
$\inf_{N\geq1}\mathbb{P}( \forall  \delta\in(0, 1/R),\ \sup_{t\in[0,T+1]}v_{T+1}(\tilde{U}^N, t, \delta)\leq R\delta^{1/4} )$ converges to $1$ as $R\to\infty$. Since
%\end{align*}
$(\tilde{U}^N_t)_{t\in[0, T+1]}$ is a continuous process driven by a Lipschitz drift and 
a Brownian motion, this directly  follows from Lemma \ref{lem:particle:5}.  
%This shows that \eqref{M1 tightness to show} holds in the first case.

To handle the second case, when $u_{T+1}(f, \delta) = v_{T+1}(f, 0, \delta)$, by Lemma \ref{cor:particle:1}, we know that, for any $\eta >0$ there exists {$\lambda(\eta)>0$} independent of $N$ (depending only on the $\varepsilon_0$ appearing in Assumption \ref{assumption 2}) such that
\begin{equation*}
\P \bigl( \forall t \in [0,\lambda(\eta)], \quad \bar{e}^N(t) \geq \bigl(\lambda(\eta)\bigr)^{-1}  t^{1/4} \bigr) \leq \eta.
\end{equation*}
%with $\varepsilon = 1-x_{0}$. 
%Therefore
%\begin{equation*}
%\P \biggl\{ \forall t \in [0,\delta(\eta)], \quad \frac{1}{N} \sum_{i=1}^N M_{t}^{i,N} \leq \bigl(\delta(\eta)\bigr)^{-1}  t^{1/4} \biggr\} \geq 1 - \eta.
%\end{equation*}
In particular, by taking $R=\bigl(\lambda(\eta)\bigr)^{-1}$, it follows that
\begin{equation*}
\lim_{R \rightarrow + \infty} \inf_{N \geq 1}
\P \bigg( \forall \delta \in(0,1/R),\  v_{T+1} \biggl( \frac{1}{N} \sum_{i = 1}^N M^{i,N},0,\delta \biggr) 
\leq R  \delta^{1/4} \biggr) = 1,
\end{equation*}
where we have also used the definition of $v_{T+1}$, given in Theorem \ref{thm:M1:3}.
As the continuous part ($\tilde{U}^N$) of the dynamics of $\tilde{Z}^{1,N}$ can be handled in the standard way, we deduce that 
\begin{equation*}
\lim_{R \rightarrow + \infty }
\inf_{N \geq 1}
\P \bigg( \forall \delta \in (0, 1/R), \  v_{T+1} \bigl( \tilde{Z}^{1,N},0,\delta \bigr) 
\leq R  \delta^{1/4} \biggr) = 1.  
\end{equation*}

For the final term in the maximum in \eqref{mod of con},  by definition, $\tilde{Z}^{1,N}$ behaves as $W^1$ in  neighborhoods of $T+1$, so that (again in the standard way)
\begin{equation*}
\lim_{R \rightarrow + \infty }
\inf_{N \geq 1} {\mathbb P} \biggl(
\forall \delta \in (0,1/R), \ 
v_{T+1} \bigl(\tilde{Z}^{1,N},T+1, \delta \bigr) \leq R \delta^{1/4} \biggr) =1.  
\end{equation*}
This completes the proof.
\end{proof}

By \cite[Proposition 2.2]{sznitman}, we deduce:
\begin{lem}
\label{lem:tightness2}
For any $T>0$ the family $({\Pi}_N)_{N\geq 1} \subset  \mathcal{P}
(\mathcal{P}(\hat{\mathcal D}([0, T+1], \R)))$ is tight, where $\mathcal{P}(\hat{\mathcal D}([0, T+1], \R))$ is endowed with the weak topology deriving from the M1 topology on $\hat{\mathcal D}([0, T+1], \R)$.
\end{lem}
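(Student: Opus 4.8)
The plan is to reduce the statement to Lemma~\ref{lem:tightness1} by combining the exchangeability of the particle system with the abstract tightness criterion of \cite[Proposition 2.2]{sznitman}, and there is essentially nothing further to prove once that reduction is set up.

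First I would check that, for each fixed $N$, the random variables $\tilde{Z}^{1,N},\dots,\tilde{Z}^{N,N}$ are exchangeable, not merely identically distributed. This is because the input data $(X^{i,N}_{0},W^{i})_{i=1,\dots,N}$ are i.i.d., and the construction producing the physical solution is symmetric in the particle labels: in Lemma~\ref{lem: existence and uniqueness particle system} the spike cascade sets $\Gamma_{k}$ from \eqref{eq:gamma:k+1} and the reset rule \eqref{eq:3:2:14:1} are invariant under any permutation of $\{1,\dots,N\}$, and the Brownian extension \eqref{eq:Z-tilde3} on $(T,T+1]$ inherits the same symmetry. Consequently $\bar{\mu}_{N}=\frac1N\sum_{i=1}^{N}\mathrm{Dirac}(\tilde{Z}^{i,N})$ is a well-defined $\mathcal{P}(\hat{\mathcal D}([0,T+1],\R))$-valued random variable, and its mean measure $\mathbb{E}[\bar{\mu}_{N}]$ coincides with the law of $\tilde{Z}^{1,N}$.

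Second, recall from Subsection~\ref{sec:M1top} that $\hat{\mathcal D}([0,T+1],\R)$ endowed with the M1 topology is Polish, hence $\mathcal{P}(\hat{\mathcal D}([0,T+1],\R))$ with the induced weak topology is Polish as well, and \cite[Proposition 2.2]{sznitman} applies: for an exchangeable array with values in a Polish space $E$, tightness in $\mathcal{P}(\mathcal{P}(E))$ of the laws of the empirical measures $\frac1N\sum_{i=1}^{N}\mathrm{Dirac}(\cdot)$ is equivalent to tightness in $\mathcal{P}(E)$ of the law of a single coordinate. Lemma~\ref{lem:tightness1} supplies exactly the latter, namely the tightness of the family of laws of $\tilde{Z}^{1,N}$, $N\geq1$, in $\mathcal{P}(\hat{\mathcal D}([0,T+1],\R))$. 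Applying \cite[Proposition 2.2]{sznitman} therefore yields the tightness of $(\Pi_{N})_{N\geq1}$ in $\mathcal{P}(\mathcal{P}(\hat{\mathcal D}([0,T+1],\R)))$, which is the claim. The only point deserving a word of care is the labelling symmetry underlying exchangeability, since this is what legitimises the use of \cite[Proposition 2.2]{sznitman}; the genuinely substantial work, the uniform M1 modulus-of-continuity bound, has already been carried out in Lemma~\ref{lem:tightness1} (and rests on Lemmas~\ref{lem:particle:5} and~\ref{cor:particle:1}).
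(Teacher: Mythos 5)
Your proof is correct and takes exactly the same route as the paper, which also deduces the statement directly from Lemma \ref{lem:tightness1} via \cite[Proposition 2.2]{sznitman}. Your additional remarks on the exchangeability of $(\tilde{Z}^{i,N})_{i}$ and the Polish structure of $\mathcal{P}(\hat{\mathcal D}([0,T+1],\R))$ simply make explicit the hypotheses the paper leaves implicit when invoking Sznitman's result.
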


%%%%%%%%%%%%%%%%%%%%
% Proof of existence theorem
%%%%%%%%%%%%%%%%%%%%

\subsection{Proof of Theorem \ref{weak existence thm}}
We now give the proof of Theorem \ref{weak existence thm}.  As we will see, the proof will rely on some key convergence results that will be proved afterwards.
Throughout the proof, as in the statement of the result, $(z_t)_{t\in[0, T+1]}$ will denote the canonical process on $\hat{\mathcal D}([0, T+1], \R)$ and $m_t = \lfloor(\sup_{s \in [0,t]} z_s)_+\rfloor$.

The first part of the theorem is contained in Lemma \ref{lem:tightness2}, namely that $({\Pi}_N)_{N\geq 1} $ is tight. We can therefore extract a convergent subsequence {as $N$ tends to $+ \infty$}, which we denote in the same way.  We set $\Pi_\infty$ to be the limit point of such a sequence.

Consider the function
\begin{equation*}
[0,T+1] \ni t \mapsto \int \mu \bigl\{ z_{t-} = z_{t} \bigr\} d\Pi_{\infty}(\mu).
\end{equation*}
Since the application $A \in {\mathcal B}(\hat{\mathcal D}([0,T+1],\R)) \mapsto \int \mu(A) d\Pi_{\infty}(\mu)$ 
defines a probability measure on $\hat{\mathcal D}([0,T+1],\R)$, the function matches $1$ for any $t$ in $[0,T+1]$ but in some countable subset $J\subset [0, T+1]$ (see Lemma 7.7, p. 131, chap. 3  in Ethier and Kurtz \cite{ethier:kurtz}). Therefore, for $t \not \in J$, for 
a.e. $\mu$ under the probability $\Pi_{\infty}$, $\mu \{z_{t-}=z_{t}\}=1$. 
Similarly, the function 
\begin{equation*}
[0,T+1] \ni t \mapsto \int \langle \mu,m_{t} \rangle d\Pi_{\infty}(\mu)
\end{equation*}
is non-decreasing and has at most a countable number of jumps. Up to a modification of $J$, we can assume that the jumps are all included in $J$. Then, for any $t \not \in J$, 
\begin{equation*}
\int \langle \mu,m_{t} \rangle d\Pi_{\infty}(\mu) = \int \langle \mu,m_{t-} \rangle d\Pi_{\infty}(\mu).
\end{equation*}
Therefore, for $t \not \in J$, for a.e. $\mu$ under the probability measure 
$\Pi_{\infty}$, $\langle \mu , m_{t-} \rangle =
\langle \mu,m_{t}\rangle$. 
%{\color{red}Without any loss of generality, we can even assume that, for $t \not \in J$,
%the solution of \eqref{nonlinear eq} (which is assumed to be unique)
%satisfies $e(t)={\mathbb E}(M_{t})={\mathbb E}(M_{t-})=e(t-)$. }

For $p \geq 1$, $S_{1},\dots,S_{p} \not \in J$, $0 =S_{0} \leq S_{1} < \dots <S_{p} < T$
and $f_{0},\dots,f_{p}$ bounded and uniformly continuous functions from $\R$ into itself, put 
\begin{equation}
\label{eq:20:02:14:5}
F(z) = \prod_{i=0}^p f_{i}\bigl(z_{S_{i}}\bigr), \quad z \in \hat{\mathcal D}([0,T+1],\R). 
\end{equation}
For another bounded and uniformly continuous function $G$ from $\R$ into itself, consider 
\begin{equation*}
Q_{N} : = {\mathbb E} \biggl[ G\biggl( \biggl\langle \bar{\mu}_N, F 
\biggl(z - z_{0} - \int_{0}^{\cdot} b(z_{s}-m_{s}) ds -  \alpha
\langle \bar{\mu}_N,m_{\cdot} \rangle
 \biggr) \biggr\rangle \biggr) \biggr].
 \end{equation*}
The point is that we may write
\[
\tilde{Z}^{i, N}_t = Z^{i, N}_0 +\int_0^{t\wedge T} b(\tilde{Z}^{i, N}_s - M^{i, N}_s)ds + \alpha\langle\bar{\mu}_N, m_{t\wedge(T-)}\rangle + W^i_t
\] 
for all $t\in[0, T+1]$ and $i\in\{1, \dots, N\}$.  It then follows that
$Q_{N} = 
{\mathbb E} [ G ( N^{-1} \sum_{i=1}^N F
(W^i ))]$.
 By the law of large numbers, we deduce that
\begin{equation}
\label{eq:convergence:11}
\lim_{N \rightarrow + \infty}Q_{N}
 = G \Bigl( {\mathbb E} \bigl( F(W) \bigr) \Bigr).
 \end{equation}
%Moreover, by boundedness and uniform continuity of the $f_{i}$'s, $i=1,\dots,N$, and of $G$, we can find 
%a bounded function $w$ from $\R$ into itself matching $0$ at $0$ and continuous at $0$ such that 
%\begin{equation*}
%\vert Q_{N} - Q_{N}' \vert \leq {\mathbb E}\biggl[ w \biggl(
%\Bigl\langle \bar{\mu}_N,w \bigl( N^{-1} \sup_{0 \leq s \leq T} \langle \bar{\mu}_N,m_{s} \rangle
%+ N^{-1}\sup_{0 \leq s \leq T} m_{s} \bigr) \Bigr\rangle \biggr)
%\biggr],
%\end{equation*}
%where 
%\begin{equation*}
%Q_{N}' : = {\mathbb E} \biggl[ G\biggl( \biggl\langle \bar{\mu}_N, F \biggl(z - z_{0} - 
%\int_{0}^{\cdot} b(z_{s}-m_{s}) ds - \alpha 
%\langle \bar{\mu}_N,m_{\cdot} \rangle
% \biggr) \biggr\rangle \biggr) \biggr].
%\end{equation*}
%By Lemma \ref{lem:particle:5}, it is plain to deduce that 
%\begin{equation*}
%\lim_{N \rightarrow + \infty} \vert Q_{N} - Q_{N}' \vert = 0.
%\end{equation*}

The key result in order to proceed is contained in Lemma \ref{lem:continuity} below, where it is shown that under $\Pi_{\infty}$, the functional 
\begin{equation}
\label{eq:functional}
\begin{split}
&\mu \in {\mathcal P}\bigl(\hat{\mathcal D}([0,T+1],\R) \bigr)
 \mapsto  \biggl\langle {\mu}, F \biggl(z - z_{0} - \int_{0}^{\cdot} 
 b(z_{s}-m_{s}) ds - \alpha \langle \mu, m_{\cdot} 
 \rangle
  \biggr) \biggr\rangle,
\end{split}
\end{equation}
is a.e. continuous. Indeed, with this in hand, by the continuous mapping theorem, {we have}
\begin{equation*}
\lim_{N \rightarrow + \infty} Q_{N}= 
\int  G\biggl( \biggl\langle {\mu}, F \biggl(z - z_{0} -  \int_{0}^{\cdot} 
b(z_{s}-m_{s}) ds - \alpha \langle \mu, m_{\cdot}
\rangle
 \biggr) \biggr\rangle \biggr) d \Pi_{\infty}(\mu),
 \end{equation*}
 so that, from \eqref{eq:convergence:11},
 \begin{equation*}
\int  G\biggl( \biggl\langle {\mu}, F \biggl(z - z_{0} - \int_{0}^{\cdot} 
b(z_{s}-m_{s}) ds - \alpha \langle \mu, m_{\cdot}
\rangle
 \biggr) \biggr\rangle \biggr) d \Pi_{\infty}(\mu)
 = G \Bigl( {\mathbb E} \bigl( F(W) \bigr) \Bigr).
 \end{equation*}
Applying the above equality with $
\tilde{G}( \cdot) = [ G (\cdot) - G( {\mathbb E} ( F(W) )) ]^2$
instead of $G$ itself, we deduce that, $\Pi_{\infty}$ a.s., 
\begin{equation*}
G\biggl( \biggl\langle {\mu}, F \biggl(z - z_{0} -  \int_{0}^{\cdot} b(z_{s}-m_{s}) ds - \alpha 
\langle \mu, 
m_{\cdot} \rangle
 \biggr) \biggr\rangle \biggr) = G \Bigl( {\mathbb E} \bigl( F(W) \bigr) \Bigr),
\end{equation*}
so that, for a.a. probability measures $\mu$ under $\Pi_{\infty}$, under $\mu$, the process 
\begin{equation*}
\biggl(\Upsilon_{t} = z_{t} -  z_{0} - \int_{0}^{t} b(z_{s}-m_{s}) ds - \alpha \langle \mu,m \rangle_{t}
 \biggr)_{t \in [0,T+1]}
 \end{equation*}
 has the same finite-dimensional distributions as a Brownian motion at any points $0 \leq S_{1} < S_{2} < \dots < S_{p} <T$ 
 which are not in $J$. Since $(\Upsilon_{t})_{t \in [0,T)}$ has right-continuous paths under $\mu$ (and $[0,T) \cap 
 J^{\complement}$ is dense in $[0,T)$), it has the same
 finite-dimensional distributions as a Brownian motion. Moreover, since 
{the Borel $\sigma$-field generated by M1
 is also generated by the evaluation mappings}, we deduce that 
the distribution of $(\hat{\Upsilon}_{t})_{t \in [0,T]}$ 
on $\hat{\mathcal D}([0,T],\R)$  
is the Wiener distribution, where $\hat{\Upsilon}_{t}= \Upsilon_{t}$
if $t < T$ and $\hat{\Upsilon}_{T} = \Upsilon_{T-}$.  
%(Notice that $S$ is a point of continuity of $\Gamma$ under $\mu$ as $S \not \in J$.)
This says that, for $\Pi_{\infty}$ a.e. $\mu$, the 
canonical process solves the reformulated equation \eqref{nonlinear eq Z} up until time $T$,
with $z_{0}$ as initial condition, which proves (2) in Theorem 
\ref{weak existence thm}. 

We now check that the law of $z_{0}$ under $\mu$ is the distribution of 
$X_{0}$ under $\P$. To this end, let $\pi_{0} : \hat{\mathcal D}([0,T+1],\R) \to \R$ be given by $\pi_0(z) = z_0$, and $\pi_{0} \sharp \mu$ be the push-forward measure of $\mu$ by $\pi_{0}$.  Then the mapping 
${\mathcal P}(\hat{\mathcal D}([0,T+1],\R)) \ni \mu \mapsto 
 \pi_{0} \sharp \mu$ is continuous (see Theorem \ref{thm:M1:1}).
Using the Skorohod representation theorem,
this allows the joint application of Theorem
\ref{thm:M1:2} at $t=0$. By the law of large numbers, 
$\pi_{0} \sharp \bar{\mu}_N$ converges towards the law of $X_{0}$. Therefore, for a.e. $\mu$ under $\Pi_{\infty}$, 
$\pi_{0} \sharp \mu$  matches the law of $X_{0}$, which proves (1) in Theorem 
\ref{weak existence thm}.

We finally prove that, for almost all $\mu$ under $\Pi_\infty$, the canonical process under $\mu$ satisfies the required conditions for
 defining a physical condition up until $T$.  This requires showing the conditions (2) and (4) of Definition \ref{def:nonlineareq} are satisfied for the canonical process under $\mu$. We make use of Proposition \ref{prop:20:02:14:1}, which says that, for $0 \leq t<t+h < T$ and for $N$ large enough,   
\begin{equation*}
\begin{split}
&\P \bigl( \bar{e}^N(t+h) - \bar{e}^N(t-)  > 1 + C h^{1/16} \bigr) \leq C h, 
\\
&\P \Bigl( \forall \lambda \leq  \bar{e}^N(t+h) - \bar{e}^N(t-) -C  h^{1/16},
\bar{\mu}_N \bigl( z_{t-} - m_{t-} 
\geq 1 - \alpha \lambda - C h^{1/16}\bigr)  \geq \lambda \Bigr) \geq 1- C h, 
%\\
%&\P \biggl(  \bar{\mu}_N \bigl( x_{t-} > 1 - \alpha \bar\Delta^N + h^{\epsilon} \bigr) \leq \Delta^N + h^{\epsilon} \biggr) \geq 1- h^{\varepsilon}.
\end{split}
\end{equation*}
where $ \bar{e}^N(t+h) - \bar{e}^N(t-) =  \langle \bar{\mu}_N,m_{t+h} - m_{t-} \rangle$.  

By the Skorohod representation theorem, we can assume that 
$\bar{\mu}_N$ converges almost surely to $\mu$. 
Choosing $t$ and $t+h$ in $J^\complement$,   
we make use of Lemma \ref{lem:2} below. It says that 
$\lim_{N \rightarrow + \infty}
\langle \bar{\mu}_N,m_{t+h} - m_{t-} \rangle
= \langle \mu,m_{t+h} - m_{t} \rangle$ (a.e. under $\Pi_{\infty}$).
Moreover, as $t \not \in J$, Theorem
\ref{thm:M1:2} says that the law of $z_{t-}-m_{t-}$ under $\bar{\mu}_N$
converges to the law of $z_{t-} - m_{t-}$ under $\mu$ (for almost all $\mu$ under $\Pi_{\infty}$). 
Therefore, following the proof of the Portmanteau theorem
and modifying the constant $C$ if necessary, we get
\begin{equation}
\label{eq:18:02:14:1}
\begin{split}
&\Pi_{\infty} \bigl(  \langle \mu,m_{t+h} - m_{t} \rangle > 1 + Ch^{1/16} \bigr) \leq C h,
\\
&\Pi_{\infty} \biggl( \forall \lambda \leq \langle \mu,m_{t+h} - m_{t} \rangle - Ch^{1/16}, \  
\mu \bigl( z_{t-} - m_{t-} \geq 1- \alpha \lambda -C h^{1/16} \bigr)
\geq \lambda \biggr) \geq 1 - Ch.
%\\
%&\Xi_{\infty} \biggl(  
%\mu \bigl( x_{t-} > 1- \alpha \Delta + 2 h^{\varepsilon} \bigr)
%\leq \Delta +  h^{\epsilon} \biggr) \geq 1 - h^{\epsilon}.
\end{split}
\end{equation}
The above inequalities are true for any $t,t+h$ that are not in $J$. Assume now that 
$t$ is some point in $[0,T) \cap J$. Then, we can find 
sequences $(t_{p})_{p \geq 1}$ and $(h_{p})_{p \geq 1}$ such that 
$0 \leq t_{p} < t < t_{p} + h_{p} <T$, $t_{p}$ and $t_{p}+h_{p}$ $\not\in J$, and $t_p\uparrow t$, $h_p\downarrow 0$ . 
Then, applying \eqref{eq:18:02:14:1} to any $(t_{p},t_{p}+h_{p})$
and letting $p$ tend to $+\infty$, we deduce that 
\begin{equation}
\label{eq:18:02:14:1b}
\begin{split}
&\Pi_{\infty} \bigl( \langle \mu,m_{t}-m_{t-}\rangle >1 \bigr) =0,
\\
&\Pi_{\infty} \biggl( \forall \lambda < \langle \mu,m_{t}-m_{t-}\rangle, \quad 
\mu \bigl( z_{t-}  - m_{t-}\geq 1- \alpha \lambda \bigr)
\geq \lambda \biggr) =1.
%\\
%&\Xi_{\infty} \biggl(  
%\mu \bigl( x_{t-} > 1- \alpha \Delta  \bigr)
%\leq \Delta  \biggr) =1.
\end{split}
\end{equation}
The first equality shows that under $\mu$ the canonical process satisfies condition (2) of Definition \ref{def:nonlineareq} (a.e. under $\Pi_\infty$). Moreover, since $J$ is countable, we deduce that 
\begin{equation*}
%\begin{split}
\Pi_{\infty} \biggl(\forall t \in [0,T) \cap J, \quad \forall \lambda < \langle \mu,m_{t}-m_{t-} \rangle, \quad 
\mu \bigl( z_{t-} - m_{t-} \geq 1- \alpha \lambda \bigr)
\geq \lambda \biggr) =1,
%\\
%&\Xi_{\infty} \biggl(  r
%\mu \bigl( x_{t-} > 1- \alpha \Delta  \bigr)
%\leq \Delta  \biggr) =1.
%\end{split}
\end{equation*}
which is enough to conclude that condition (4) of Definition \ref{def:nonlineareq} is also satisfied (a.e. under $\Pi_\infty$),  by invoking Proposition \ref{prop:assumption:physical:criterion}.

%%%%%%%%%%%%%%%%%%%%
% Proof of convergence theorem
%%%%%%%%%%%%%%%%%%%%

\subsection{Proof of Theorem \ref{convergence thm}}
Assume now that \eqref{nonlinear eq Z} admits a unique solution 
 $(Z_{t})_{t \in [0,T]}$ on $[0,T]$.
By identification, we deduce from the proof of Theorem \ref{weak existence thm} that, for
$\Pi_{\infty}$
a.e. $\mu$, the pair $(z_{t},m_{t})_{t \in [0,T)}$ has the same law, under 
$\mu$, as the pair 
 $(Z_{t},M_{t})_{t \in [0,T)}$ under $\P$. In particular, for 
 some fixed $S \in (0,T)$ such that ${\mathbb E}(M_{S})={\mathbb E}(M_{S-})$, 
 we have $\langle \mu,m_{S}\rangle = \langle \mu,m_{S-} \rangle$
 and $z_{S}=z_{S-}$
 for $\Pi_{\infty}$ a.e. $\mu$. 
 Define the map 
 \begin{equation}
 \label{eq:gammaS}
 \begin{split}
 \gamma_{S} :  \hat{\mathcal D}([0,T+1],\R) &\rightarrow  \hat{\mathcal D}([0,S],\R), \quad \gamma_S(z) =\hat{z},
 \end{split}
 \end{equation}
 where $\hat{z}$ is the element of $\hat{\mathcal D}([0,S],\R)$ given by ${z}_t$ if $t<S$ and $\hat{z}_S = z_{S-}$.
 The map $\gamma_{S}$ is measurable for the 
$\sigma$-fields
 on $\hat{\mathcal D}([0,T+1])$ and $\hat{\mathcal D}([0,S])$ generated by the evaluation mappings and thus 
for the Borel $\sigma$-fields generated by M1.  Moreover, the push forward of $\mu$ by $\gamma_{S}$, denoted by $\gamma_{S} 
\sharp \mu$, 
coincides, for $\Pi_{\infty}$ a.e. $\mu$, with the law of $(Z_{t})_{t \in [0,S]}$ under $\P$.
%(pay attention that the specific definition of $\tilde{z}$ at $S$ ensures that $\tilde{z}$ is indeed in $\hat{\mathcal D}([0,S],\R)$).
%Recalling that, for a.e. $\mu$ under $\Pi_{\infty}$, the canonical process is almost surely continuous at time $S$ under $\mu$,
%we deduce that $\hat{z}$ coincides with the restriction of $z$ to the interval $[0,S]$.

Thus, defining  $\Gamma_{S} (\mu) =\gamma_{S} \sharp \mu$ for an arbitrary $\mu \in {\mathcal P}(\hat{\mathcal D}([0,T+1],\R) )$, we have that $\Gamma_{S} \sharp \Pi_{\infty} = \textrm{Dirac}(\mu_{S})$, 
where $\mu_{S}:=\textrm{Law}((Z_{s})_{t \in [0,S]})$. Notice that $\Gamma_{S}$ is indeed measurable when both ${\mathcal P}(\hat{\mathcal D}([0,T+1],\R))$ and 
${\mathcal P}(\hat{\mathcal D}([0,S],\R))$ are equipped with the Borel $\sigma$-fields generated by the topology of weak convergence.
%For a given Polish space $X$, the Borel $\sigma$-field on ${\mathcal P}(X)$ coincides with the $\sigma$-field generated by the mappings ${\mathcal P}(X) \ni \mu \mapsto \mu(A)$, $A \in {\mathcal B}(X)$ (where ${\mathcal B}(X)$ stands for the Borel $\sigma$-field on $X$). 

Moreover, as a consequence of Theorem \ref{thm:M1:2}, 
$\gamma_{S}$ is continuous at any $z \in \hat{\mathcal D}([0,T+1],\R)$ such that $z_{S-}=z_{S}$. 
In particular, if $\mu$ is a probability measure on 
$\hat{\mathcal D}([0,T+1])$ such that $\mu\{z_{S-}=z_{S}\}=1$, then $\mu$ is a point of continuity of $\Gamma_{S}$. 
Therefore, under the probability $\Pi_{\infty}$, 
a.e. $\mu$ is a continuity point of $\Gamma_{S}$. Since $(\Pi_{N})_{N \geq 1}$ converges towards $\Pi_{\infty}$ in the weak sense, we deduce that 
 \begin{equation*}
 \lim_{N \rightarrow + \infty} \Gamma_{S} \sharp \Pi_{N} = \Gamma_{S} \sharp \Pi_{\infty} = \textrm{Dirac}(\mu_{S}). 
 \end{equation*}
Since (with $\hat{Z}^{i,N} = \gamma_S({Z}^{i,N})$ given by  \eqref{eq:gammaS})
 \begin{equation*}
 \Gamma_{S} \sharp \Pi_{N} = 
 \P_{\gamma_{S} \sharp (N^{-1}
 \sum_{i=1}^N \textrm{Dirac}({\tilde{Z}^{i,N}}))}
 = \P_{N^{-1}\sum_{i=1}^N \textrm{Dirac}(\hat{Z}^{i,N})},
 \end{equation*}
% where $(\hat{Z}_{s}^{i,N})_{0 \leq s \leq S}$ is defined as 
%  \begin{equation}
% \label{eq:hat:1}
% \hat{Z}_{s}^{i,N} = 
% \left\{
% \begin{array}{ll}
%  Z_{s}^{i,N} &{\rm if} \ s <S,
%  \\
%  Z_{S-}^{i,N} &{\rm if} \ s=S,
% \end{array}
% \right.
%\end{equation} 
 we  obtain that,
 %any subsequence of $(\Gamma_{S} \sharp \Pi_{N})_{N \geq 1}$ admits a subsequence 
 %with $\textrm{Dirac}(\mu_{S})$ as limit. Therefore, $(\Gamma_{S} \sharp \Pi_{N})_{N \geq 1}$ converges towards $\textrm{Dirac}(\mu_{S})$ in the weak sense, that is
 in the weak sense,
 \begin{equation}
 \label{eq:convergence:51}
 \lim_{N \rightarrow + \infty}
 \P_{N^{-1}\sum_{i=1}^N \textrm{Dirac}({\hat{Z}^{i,N}})} = \textrm{Dirac}(\mu_{S}),
 \end{equation}
 where 
 $N^{-1}\sum_{i=1}^N \textrm{Dirac}(\hat{Z}^{i,N})$ and
 $\mu_{S}$ are seen as probability measures on $\hat{\mathcal D}([0,S],\R)$ equipped with M1. 
Since the law of the $N$-tuple $(\hat{Z}^{1,N},\dots,\hat{Z}^{N,N})$ is invariant by permutation, we
 deduce from \cite[Proposition 2.2]{sznitman} that the family $(\hat{Z}^{i,N})_{i = 1,\dots,N}$
 is chaotic on $\hat{\mathcal D}([0,S],\R)$ endowed with M1, that is, for any integer $k \geq 1$,
 \begin{equation}
 \label{eq:convergence:50}
 \bigl( \hat{Z}^{1,N},\dots,\hat{Z}^{k,N} \bigr) \Rightarrow \bigl( \mu_{S} \bigr)^{\otimes k} 
 = \P_{(Z_{s})_{s \in [0,S]}}^{\otimes k}\quad \textrm{as} \ N \rightarrow + \infty,
 \end{equation}
in the weak sense, on $[\hat{\mathcal D}([0,S],\R)]^{k}$ equipped with the product topology induced by M1. 
%Since $\mu_{S}$ is supported by continuous paths only, we can use the Skorohod representation theorem to represent the convergence %in the almost sure sense and then apply Theorem \ref{thm:M1:2}. It says that the representation sequence converges a.s. for the %uniform topology on $[\hat{\mathcal D}([0,S],\R)]^{k}$. This proves that 
 %\eqref{eq:convergence:50} holds in the weak sense on $[\hat{\mathcal D}([0,S],\R)]^{k}$ equipped with the topology of uniform %convergence. 
 By Lemma \ref{lem:13:7:2} below, this also proves that  
 \begin{equation}
 \label{eq:convergence:50:b}
 \bigl( (\hat{Z}^{1,N},\hat{M}^{1,N}),\dots,(\hat{Z}^{k,N},\hat{M}^{k,N}) \bigr) \Rightarrow \P_{({Z}_{s},\hat{M}_{s})_{s \in [0,S]}}^{\otimes k} \quad \textrm{as} \ N \rightarrow + \infty,
 \end{equation}
on $[\hat{\mathcal D}([0,S],\R) \times \hat{\mathcal D}([0,S],\R)]^{k}$ equipped with the product 
topology induced by M1. Indeed, assuming without loss of generality that the sequence representing the convergence in \eqref{eq:convergence:50} in the almost-sure sense is $( \hat{Z}^{1,N},\dots,\hat{Z}^{k,N}$) itself and denoting the a.s. limit  by $(Z^{1,\infty},\dots,Z^{k,\infty})$,
Lemma \ref{lem:13:7:2} says that, for any $\ell = 1,\dots,k$, a.s., for $t$ in a dense subset of $[0,S]$
\begin{equation}
\label{eq:limit:1}
\lfloor \bigl( \sup_{s \in [0,t] } \hat{Z}^{\ell,N}_{s} \bigr)_{+} \rfloor 
\rightarrow 
\lfloor \bigl( \sup_{s \in [0,t]} Z_{s}^{\ell,\infty} \bigr)_{+} \rfloor .
\end{equation}
Obviously, \eqref{eq:limit:1} holds at $t=0$ since both sides are zero. At $t=S$, we know that $\E(M_{S}^{\ell})
= \E(M_{S-}^{\ell})$ since $t \mapsto \E(M_{t})$ is continuous at $t=S$, so that $\P(M_{S}=M_{S-}=\hat{M}_{S})=1$.
Therefore, by Lemma \ref{lem:13:7:2}, \eqref{eq:limit:1} holds at $t=S$.
% and in particular 
%$\P( ( \sup_{s \in [0,S]} Z_{s}^{\ell,\infty})_{+} \in {\mathbb N} \setminus \{0\} )=0$. Therefore, 
%Lemma \ref{lem:13:7:2} says that, a.s., $S$ is in the dense subset on which \eqref{eq:limit:1} holds. 
By Theorem \ref{thm:M1:2}, we deduce that, for any $\ell=1,\dots,k$, $((\hat{M}^{\ell,N}_{s})_{s \in [0,S]})_{N \geq 1}$ converges a.s. 
to $(M_{s}^{\ell,\infty})_{s \in [0,S]}$ in $\hat{\mathcal D}([0,S],\R)$, where 
$(M_{s}^{\ell,\infty})_{s \in [0,S]}$ is the counting process associated with 
$(Z_{s}^{\ell,\infty})_{s \in [0,S]}$. Actually, the Skorohod representation theorem says that the a.s. convergence holds for a representation sequence only, but, in any case, \eqref{eq:convergence:50:b} holds.

To complete the proof of Theorem \ref{convergence thm}, we use the Skorohod representation theorem once again.  In fact we can assume without loss of generality that the convergence in \eqref{eq:convergence:51} holds almost-surely, namely 
$N^{-1}\sum_{i=1}^N \textrm{Dirac}(\hat{Z}^{i,N})$ converges to $\mu_{S}$ almost surely. 
% Since $\mu^1_{S}$ is supported by continuous paths and satisfies 
Lemma \ref{lem:2} below 
then guarantees that a.s.
\begin{equation*}
 \lim_{N \rightarrow + \infty} \frac{1}{N} \sum_{i=1}^N M_{s}^{i,N} = \E(M_{s}).
\end{equation*}
for all $s \in [0,S)$, except at any points of discontinuity of $\cdot \mapsto \E(M_{\cdot})$ (of which there are countably many).
Actually, the Skorohod representation theorem says that convergence holds almost-surely for a representation 
sequence only. Anyhow, the convergence always holds in probability (using for instance the fact that M1 
convergence is metrizable). 
Since $S$ can be chosen as close as needed to $T$, we can apply the above result to some $S' \in (S,T)$, where $S'$ is a continuity point of $\cdot \mapsto \E(M_{\cdot})$. This says that
the limit holds for all continuity points $s \in [0,S]$ since $S \in [0,S')$, and $S$ is also a continuity point. 
By Theorem \ref{thm:M1:2}, we deduce that the mapping 
$[0,S] \ni s \mapsto N^{-1} \sum_{i=1}^N \hat{M}_{s}^{i,N}$ converges 
to the mapping $[0,S] \ni s \mapsto \E(M_{s})$ in probability on 
$\hat{\mathcal D}([0,S],\R)$ equipped with the M1 topology, where 
$\hat{M}_{s}^{i,N} = M_{s}^{i,N}$ for $s \in [0,S)$
and $\hat{M}_{S}^{i,N} = M_{S-}^{i,N}$.

%%%%%%%%%%%%%%%%%%%%%
% Continuity lemmas
%%%%%%%%%%%%%%%%%%%%%

\subsection{Continuity of related mappings}

The aim of this section is to complete the proof of Theorems \ref{weak existence thm} and \ref{convergence thm} by providing the technical results needed therein.  In particular, we prove the key continuity result used in the proof of Theorem \ref{weak existence thm} i.e. the continuity of the functional given in \eqref{eq:functional} (see Lemma \ref{lem:continuity}). 

In the whole section, $S$ is a given positive real, and we make use of the notation and definitions of Section \ref{sec:M1top} for the M1 topology. Moreover, as above, $(z_{t})_{t \in [0,S]}$ will be the canonical process on $\hat{\mathcal D}([0,S],\R)$ %(endowed with the Kolmogorov $\sigma$-field), 
and $m_{t} := \bigl\lfloor \bigl( \sup_{s \in [0,t]} z_{s} \bigr)_{+} \bigr\rfloor, \quad t \in [0,S]$.

We begin with the following continuity property.

\begin{lem}
\label{lem:13:7:2}
Consider a sequence $(z^n)_{n \geq 1}$ of functions in $\hat{\mathcal D}([0,S],\R)$, converging towards some 
$z \in \hat{\mathcal D}([0,S],\R)$ for {M1}. Assume that $z$ has the following 
\emph{crossing property}:
\begin{equation}
\label{eq:15:7:6}
\forall k \in {\mathbb N}^*, \ \forall h >0, \quad \tau^k < S \Rightarrow \sup_{t \in [\tau^k,\min(S,\tau^k+ h)]}
\bigl[ z_{t} - z_{\tau^k} \bigr] >0,
\end{equation}
where $\tau^k = \inf \{ t \in [0,S] : z_{t} \geq k \}$ ($\inf \emptyset =S$). 
Then, there exists an at most countable subset $J \subset [0,S]$, such that
\begin{equation*}
\forall t \in [0,S] \setminus J, \quad \lim_{n \rightarrow + \infty }
m^n_{t} = m_{t},
\end{equation*}
where
$m_{t}^n = 
\lfloor ( 
\sup_{s \in [0,t]} z_{s}^n )_{+} \rfloor$, $m_{t} =  
\lfloor ( 
\sup_{s \in [0,t]} z_{s} )_{+} \rfloor$.
The set $[0,S] \setminus J$ contains all the points $t$ of continuity of $z$ such that $(\sup_{s \in [0,t]} z_{s})_{+}$ is not in 
${\mathbb N} \setminus \{0\}$.  
\end{lem}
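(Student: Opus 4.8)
The plan is to push the problem onto the running supremum — which is well-behaved for the M1 topology — and then to post-compose with the elementary map $x \mapsto \lfloor (x)_+\rfloor$. First I would set $\zeta^n_t := \sup_{s\in[0,t]} z^n_s$ and $\zeta_t := \sup_{s\in[0,t]} z_s$; these are non-decreasing elements of $\hat{\mathcal D}([0,S],\R)$ (the running supremum of a c\`adl\`ag function is c\`adl\`ag, and it inherits left-continuity at $S$), and one has $m^n_t = \lfloor(\zeta^n_t)_+\rfloor$, $m_t = \lfloor(\zeta_t)_+\rfloor$. The structural input is that the running-supremum functional is continuous for the M1 topology (a classical property of the M1 topology; see Whitt \cite{whitt:monograph}), so that $\zeta^n \to \zeta$ in M1. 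Since $\zeta$ and the $\zeta^n$ are monotone, Theorem \ref{thm:M1:2} yields $\zeta^n(t) \to \zeta(t)$ at every continuity point $t$ of $\zeta$.

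Next I would record two elementary facts. (a) The map $\R\ni x\mapsto\lfloor(x)_+\rfloor$ is continuous at every $x\notin\mathbb{N}^*$ (it only jumps, by $1$, at the positive integers). (b) If $z$ is continuous at $t$ then $\zeta$ is continuous at $t$: indeed $\zeta$ is right-continuous everywhere, and left-continuity of $z$ at $t$ gives $z_t = z_{t-} \leq \sup_{s<t} z_s = \zeta_{t-}$, hence $\zeta_t = \max(\zeta_{t-}, z_t) = \zeta_{t-}$. Combining these with the first paragraph: at every $t$ where $\zeta$ is continuous and $\zeta_t\notin\mathbb{N}^*$ — in particular at every continuity point $t$ of $z$ with $(\sup_{s\leq t} z_s)_+\notin\mathbb{N}^*$ — one obtains $m^n_t = \lfloor(\zeta^n_t)_+\rfloor \to \lfloor(\zeta_t)_+\rfloor = m_t$.

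It then remains to produce a countable exceptional set. I would take $J := \{t \in [0,S] : \zeta \text{ is not continuous at } t\}\cup\{t \in [0,S] : \zeta_t\in\mathbb{N}^*\}$; then $[0,S]\setminus J$ is exactly the set handled in the previous paragraph, which also gives the last assertion of the lemma via fact (b). The first set is at most countable since $\zeta$ is c\`adl\`ag. For the second, I would show that for each $k\in\mathbb{N}^*$ the level set $\{t : \zeta_t = k\}$ is empty or a single point; being an interval ($\zeta$ is non-decreasing), it suffices to check it has empty interior. If $\tau^k < S$, then $z_{\tau^k}\geq k$ (the infimum defining $\tau^k$ is attained: picking $t_n\downarrow\tau^k$ with $z_{t_n}\geq k$ and using right-continuity gives $z_{\tau^k} = \lim_n z_{t_n}\geq k$), hence $\zeta_{\tau^k}\geq k$; for $t > \tau^k$ the crossing property \eqref{eq:15:7:6} applied with $h = t-\tau^k$ gives $\sup_{s\in[\tau^k,t]} z_s > z_{\tau^k}\geq k$, hence $\zeta_t > k$; and for $t < \tau^k$ one has $z_s < k$ for all $s\leq t$, so $\zeta_t\leq k$, with the strict inequality $\zeta_t < k$ — hence emptiness of the level set to the left of $\tau^k$ — coming from the fact that in the situation where the lemma is applied $z$ has only non-negative jumps, so $\zeta$ cannot reach level $k$ through a left limit before the threshold is actually attained. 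Altogether $\{t : \zeta_t = k\}\subseteq\{\tau^k\}$, so $\{t : \zeta_t\in\mathbb{N}^*\} = \bigcup_{k\geq1}\{t : \zeta_t = k\}$ is at most countable, $J$ is at most countable, and the proof is complete.

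The step I expect to be the main obstacle is exactly this last one: preventing $\zeta$ from remaining constant at an integer value on a time interval of positive length. The crossing property \eqref{eq:15:7:6} is precisely what takes care of this to the right of each hitting time $\tau^k$, while the non-negative-jump structure of the processes $z = Z = X + M$ to which the lemma is applied (whereby $z_{t-} = k$ forces $z_t\geq k$, so that $\tau^k < S$ as soon as level $k$ is ever met) takes care of it to the left. Everything else reduces to the two cited M1 facts together with the standard continuity of the running-supremum functional.
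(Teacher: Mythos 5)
Your proof is correct and structurally parallels the paper's, but with a more modular treatment of the key analytic step. Where the paper establishes $\sup_{s\leq t}z^n_s\to\sup_{s\leq t}z_s$ at continuity points of $z$ by hand --- working with parametric representations $(u^n,r^n)$ of the completed graphs and squeezing $\sup_{s<r^n_t}z^n_s\leq\sup_{s\leq t}u^n_s\leq\sup_{s\leq r^n_t}z^n_s$ before passing to the limit --- you instead invoke the M1-continuity of the running-supremum functional (a known result in \cite{whitt:monograph}) together with Theorem~\ref{thm:M1:2} applied to the monotone functions $(\zeta^n)_{n\geq1}$. This saves a page of computation at the cost of appealing to a result the paper chooses to re-derive in essence. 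The treatment of the exceptional positive-integer levels is equivalent: the paper states that the crossing property forces any continuity point $t$ with $\sup_{s\leq t}z_s=k\in\mathbb{N}^*$ to equal $\tau^k$, hence finitely many such $t$; your version makes this precise as $\{t:\zeta_t=k\}\subset\{\tau^k\}$, with the crossing property giving strictness to the right of $\tau^k$. The one place you are more careful than the paper's proof is the left side of $\tau^k$: as you flag, the crossing property alone does not preclude $\zeta$ from reaching level $k$ through a left limit of $z$ (a downward jump) and staying flat there on an interval of positive length with $\tau^k=S$, so the argument implicitly also needs the $z$'s in play to have only non-negative jumps. That property is indeed available in every application of the lemma (it follows, e.g., from the inequality $z_t-z_s\geq w_t-w_s-c(1+\sup_v|z_v|)(t-s)$ established in the proof of Lemma~\ref{lem:2}), so the paper's conclusion is sound in context; your write-up simply makes explicit a hypothesis the paper's one-line assertion --- ``the crossing property says that $t$ must be the first time when this integer is crossed'' --- leaves silent.
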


\begin{proof} Since $z^n \rightarrow z$ for the M1 topology, we can find 
a sequence of parametric representations $((u^n,r^n))_{n \geq 1}$ of $(z^n)_{n \geq 1}$ that converges towards
a parametric representation $(u,r)$ of $z$ uniformly on $[0,S]$ 
(see Theorem 12.5.1 in \cite{whitt:monograph}). 
For any $t \in [0,S]$, we thus have
\begin{equation}
\label{eq:15:7:1}
\lim_{n \rightarrow + \infty} \sup_{s \leq t} u^n_{s} = \sup_{s \leq t} u_{s}.
\end{equation}
Fix $t\in[0, S]$. Since ${\mathcal G}_{z^n} \subset (u^n,r^n)([0,S])$, we know that, for every $s \in[0,S]$, there exists 
$s' \in [0,S]$ such that 
$(u^n_{s'},r^n_{s'}) = (z_{s}^n,s)$. 
If $s< r_{t}^n$, it therefore must hold that $s' < t$, as $s' \geq t$ would imply $s=r^n_{s'} \geq r_{t}^n$ ($r^n$ is non-decreasing). 
Therefore, $\sup_{s \leq t} u^n_{s} \geq \sup_{s < r^n_{t}} z^n_{s}$.

Moreover, for any $s \in[0, S]$, $u^n_{s} \in [z^n_{r^n_{s}-},z^n_{r^n_{s}}]$, so that 
$u^n_{s} \leq \max(z^n_{r^n_{s}-},z^n_{r^n_{s}})$. Therefore, for $s\leq t$,
\begin{equation*}
u^n_{s} \leq \max \bigl( \sup_{s' \leq r^n_{t}} z^n_{s'-}, \sup_{s' \leq r^n_{t}} z^n_{s'} \bigr)
= \sup_{s' \leq r^n_{t}} z^n_{s'}.
\end{equation*}
In the end we see that
$\sup_{s < r^n_{t}} z^n_{s} \leq  
\sup_{s \leq t} u^n_{s} \leq \sup_{s \leq r^n_{t}} z^n_{s}$.  
Similarly, we have
$\sup_{s < r_{t}} z_{s} \leq  
\sup_{s \leq t} u_{s}\leq \sup_{s \leq r_{t}} z_{s}$. Thus, by \eqref{eq:15:7:1},
\begin{equation}
\label{eq:15:7:2}
\liminf_{n \rightarrow + \infty} \sup_{s \leq r^n_{t}} z^n_{s} \geq \sup_{s < r_{t}} z_{s},
\quad \limsup_{n \rightarrow + \infty} \sup_{s < r^n_{t}} z^n_{s} \leq \sup_{s \leq r_{t}} z_{s}. 
\end{equation}
If $r_{t}$ is a point of continuity of $z$, the two right-hand sides above coincide, and moreover, 
by Theorem \ref{thm:M1:2},
\begin{equation*}
\lim_{\delta \rightarrow 0}
\limsup_{n \rightarrow + \infty}
\sup_{\max(0,r_t-\delta) \leq s \leq \min(S,r_{t}+\delta)}
\vert z^n_{s} - z_{r_t} \vert = 0.
\end{equation*}
That is, for any $\varepsilon >0$, we can find $\delta >0$, such that, for $n$ large enough, 
\begin{equation}
\label{eq:15:7:4}
\sup_{\max(0,r_t-\delta) \leq s \leq \min(S,r_{t}+\delta)}
\vert z^n_{s} - z_{r_{t}} \vert \leq \varepsilon.
\end{equation}
In particular, for $n$ large enough that $\vert r^n_{t} - r_{t} \vert < \delta$,
\begin{equation}
\label{eq:15:7:5}
\begin{split}
\sup_{s \leq r_{t}} z^n_{s} &= \max \Bigl( \sup_{s \leq \max(0,r_{t}-\delta)} z^n_{s},
\sup_{\max(0,r_{t}- \delta) \leq s \leq r_{t}} z^n_{s} \Bigr)
\\
&\leq \max \Bigl( \sup_{s \leq \max(0,r_{t}-\delta)} z^n_{s},
z_{r_{t}} \Bigr) + \varepsilon
\leq \max \Bigl( \sup_{s < r_{t}^n} z^n_{s},
z_{r_{t}} \Bigr) + \varepsilon.
\end{split}
\end{equation}
Therefore, by \eqref{eq:15:7:2},
\begin{equation}
\label{eq:15:7:3}
\limsup_{n \rightarrow + \infty} \sup_{s \leq r_{t}} z^n_{s} 
\leq \sup_{s \leq r_{t}} z_{s}.
\end{equation}
Similarly, for $n$ large enough,
we have %as in \eqref{eq:15:7:5}:
\begin{equation*}
\begin{split}
\sup_{s \leq r_{t}} z^n_{s} &\geq \max \bigl( \sup_{s \leq \max(0,r_{t}-\delta)} z^n_{s},z_{r_{t}} \bigr) - \varepsilon
\\
&\geq \max \bigl( \sup_{s \leq \max(0,r_{t}-\delta)} z^n_{s},
\sup_{\max(0,r_{t}-\delta) \leq s \leq r^n_{t}}
z_{s}^n \bigr) - 2\varepsilon
\geq \sup_{s \leq r_{t}^n} z^n_{s} - 2 \varepsilon. 
\end{split}
\end{equation*}
By \eqref{eq:15:7:2} again, 
$\liminf_{n \rightarrow + \infty} \sup_{s \leq r_{t}} z^n_{s} 
\geq \sup_{s < r_{t}} z_{s}$.
By \eqref{eq:15:7:3}, we deduce that 
\begin{equation*}
\lim_{n \rightarrow + \infty} \sup_{s \leq r_{t}} z^n_{s} 
=
\sup_{s \leq r_{t}} z_{s},
\end{equation*}
whenever $r_{t}$ is a continuity point of $z$. Since $r$ maps $[0,S]$ onto itself, we deduce that 
for any continuity point $t \in [0,S]$ of $z$, 
\begin{equation*}
\lim_{n \rightarrow + \infty} \sup_{s \leq t} z^n_{s} 
=
\sup_{s \leq t} z_{s}.
\end{equation*}

Now, if $(\sup_{s \leq t} z_{s})_{+}$ is not an integer, we deduce that 
\begin{equation*}
\lim_{n \rightarrow + \infty} \bigl\lfloor \bigl( \sup_{s \leq t} z^n_{s}\bigr)_{+} \bigr \rfloor = \bigl \lfloor 
\bigl( \sup_{s \leq t} z_{s} \bigr)_{+} \bigr \rfloor. 
\end{equation*}
If $(\sup_{s \leq t} z_{s})_{+}$ is an integer, there are two cases. If the integer is zero, it means that 
$\sup_{s \leq t} z_{s} \leq 0$, so that 
$\lim_{n \rightarrow + \infty} \sup_{s \leq t} z^n_{s} \leq 0$ and thus
$$\lim_{n \rightarrow + \infty} (\sup_{s \leq t} z^n_{s})_{+} = 0.$$ In this case, 
$\lim_{n \rightarrow + \infty} \lfloor (\sup_{s \leq t} z^n_{s})_{+} \rfloor = 0$. 
If the integer is not zero and $t < S$, the crossing property \eqref{eq:15:7:6} says that $t$ must be the first time when this integer is crossed (rather than just touched),
so that the number of points of continuity of $z$ for which the convergence of the integer parts can fail is finite. 
\end{proof}

The following {corollary} is simply a result of Lebesgue's dominated convergence theorem.
\begin{cor}
\label{corol:13:7:1}
Under the assumptions of Lemma \ref{lem:13:7:2}, the functions
$[0,S] \ni t \mapsto \int_{0}^{t} b \bigl(z_{s}^n - m_{s}^n) ds$
converge in ${\mathcal C}([0,S],\R)$ towards
$[0,S] \ni t \mapsto \int_{0}^{t} b \bigl(z_{s} - m_{s}) ds$.
\end{cor}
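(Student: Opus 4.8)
The plan is to reduce the statement to an application of Lebesgue's dominated convergence theorem to the integrands, combined with a crude Lipschitz bound that turns pointwise convergence of the integrals into uniform convergence. Write $g^{n}(t) := \int_{0}^{t} b(z^{n}_{s} - m^{n}_{s})\,ds$ and $g(t) := \int_{0}^{t} b(z_{s} - m_{s})\,ds$ for $t \in [0,S]$. Note first that these are well defined: for every $t$ one has $m^{n}_{t} = \lfloor (\sup_{s \le t} z^{n}_{s})_{+} \rfloor > (\sup_{s \le t} z^{n}_{s})_{+} - 1 \ge z^{n}_{t} - 1$, so that $z^{n}_{t} - m^{n}_{t} < 1$, and the same computation applies to $z$; hence the arguments of $b$ stay in $(-\infty,1]$. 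Each $g^{n}$, as well as $g$, is then Lipschitz on $[0,S]$, so in particular belongs to $\mathcal{C}([0,S],\R)$.

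First I would establish a uniform bound on the integrands. Since $(z^{n})_{n \ge 1}$ converges in M1, the set $\{z^{n} : n \ge 1\} \cup \{z\}$ is relatively compact for M1, so Theorem \ref{thm:M1:3} gives $R := \sup_{n \ge 1} \|z^{n}\| \vee \|z\| < \infty$. Then $|m^{n}_{s}| \le \sup_{r \le S} |z^{n}_{r}| \le R$, hence $|z^{n}_{s} - m^{n}_{s}| \le 2R$, and by Assumption \ref{assumption 1} there is a constant $C'$, depending only on $R$ and the Lipschitz constant of $b$, with $|b(z^{n}_{s} - m^{n}_{s})| \le C'$ and $|b(z_{s} - m_{s})| \le C'$ for all $n$ and all $s \in [0,S]$.

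Next I would obtain almost-everywhere convergence of the integrands. Lemma \ref{lem:13:7:2} provides an at most countable set $J \subset [0,S]$ with $m^{n}_{s} \to m_{s}$ for every $s \notin J$. On the other hand, the set $D$ of discontinuity points of $z$ is at most countable, hence Lebesgue-null, and by Theorem \ref{thm:M1:2} the M1 convergence $z^{n} \to z$ forces $z^{n}_{s} \to z_{s}$ at every $s \notin D$. Thus for every $s$ in the full-measure set $[0,S] \setminus (J \cup D)$ we have $z^{n}_{s} - m^{n}_{s} \to z_{s} - m_{s}$, and continuity of $b$ yields $b(z^{n}_{s} - m^{n}_{s}) \to b(z_{s} - m_{s})$ for almost every $s \in [0,S]$.

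Finally I would combine the two ingredients: by dominated convergence with dominating constant $2C'$,
\[
\varepsilon_{n} := \int_{0}^{S} \bigl| b(z^{n}_{s} - m^{n}_{s}) - b(z_{s} - m_{s}) \bigr|\,ds \longrightarrow 0 \quad (n \to \infty),
\]
and since $\sup_{t \in [0,S]} |g^{n}(t) - g(t)| \le \varepsilon_{n}$, this is precisely convergence of $g^{n}$ to $g$ in $\mathcal{C}([0,S],\R)$. The only slightly delicate point is the bookkeeping of the two distinct negligible sets — $J$ coming from the convergence of the counting processes in Lemma \ref{lem:13:7:2}, and $D$ from the M1 convergence — and the verification that the argument of $b$ never leaves its domain $(-\infty,1]$; once those are in place the rest is routine.
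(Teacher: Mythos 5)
Your proof is correct and follows exactly the route the paper intends: the paper dispatches this corollary in a single sentence, saying it ``is simply a result of Lebesgue's dominated convergence theorem,'' and your argument supplies the supporting details (uniform $L^\infty$ bound on the integrands via M1 relative compactness and Theorem \ref{thm:M1:3}, a.e.\ pointwise convergence combining Lemma \ref{lem:13:7:2} with Theorem \ref{thm:M1:2}, then DCT and the trivial estimate $\sup_t |g^n(t)-g(t)|\le\int_0^S|b(z^n_s-m^n_s)-b(z_s-m_s)|\,ds$). Nothing is missing and nothing is superfluous; this is the natural fleshing-out of the paper's remark.
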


The next result guarantees the convergence of the expectation of $m_t$  whenever $\mu^n\to\mu$ and the canonical process $z$ satisfies the crossing property of Lemma \ref{lem:13:7:2} under $\mu$, under appropriate assumptions.
 
\begin{prop}
\label{prop:13:7:1}
Assume that $(\mu^n)_{n\geq1}$ is a sequence of probability measures on the space $\hat{\mathcal D}([0,S],\R)$, converging towards some $\mu$ for the weak topology deriving from M1. If
$
\sup_{n \geq 1} \langle \mu^n,\sup_{t \in [0,S]} \vert z_{t} \vert^2 \rangle < + \infty$,
then, at any point of continuity of the mapping $(0,S) \ni t \mapsto \langle \mu,m_{t} \rangle$, it holds that
\begin{equation*}
\lim_{n \rightarrow + \infty}
\langle \mu^n,m_{t} \rangle = \langle \mu,m_{t} \rangle,
\end{equation*}
provided that, for any integer $k \geq 1$ and for 
$\tau^k := \inf\{ t \in [0,S] : z_{t} \geq k\}$,
\begin{equation}
\label{eq:crossing}
\forall h >0, \quad \mu \Bigl\{ \tau^k < S, \sup_{s \in [\tau^k,\min(S,\tau^k +h)]} (z_{s} - z_{\tau^k}) = 0
\Bigr\} =0,
\end{equation}
\end{prop}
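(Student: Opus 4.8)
The plan is to reduce the statement to the pathwise result of Lemma \ref{lem:13:7:2} by means of the Skorohod representation theorem, and then to pass the pointwise convergence of the integer-valued processes $m_t$ through the expectation via uniform integrability. First I would invoke the Skorohod representation theorem (legitimate since $\hat{\mathcal D}([0,S],\R)$ equipped with M1 is Polish, as recalled after Theorem \ref{thm:M1:3}): construct, on some probability space, random variables $\zeta^n$ with law $\mu^n$ and $\zeta$ with law $\mu$ such that $\zeta^n \to \zeta$ almost surely for the M1 topology. The crucial point is to check that, almost surely, the limiting path $\zeta$ satisfies the crossing property \eqref{eq:15:7:6} of Lemma \ref{lem:13:7:2}. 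This is exactly what hypothesis \eqref{eq:crossing} delivers: for each fixed integer $k\geq1$ and each rational $h>0$, the event $\{\tau^k < S,\ \sup_{s\in[\tau^k,\min(S,\tau^k+h)]}(z_s - z_{\tau^k}) = 0\}$ has $\mu$-probability zero, and taking a countable union over $k\in\mathbb{N}^*$ and $h\in\mathbb{Q}_{>0}$ (and noting that the defining condition in \eqref{eq:15:7:6} is monotone in $h$, so rational $h$ suffice) shows that $\zeta$ has the crossing property almost surely.

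Next I would apply Lemma \ref{lem:13:7:2} pathwise: for $\omega$ outside a null set, there is an at most countable set $J(\omega)\subset[0,S]$ with $m^n_t(\zeta^n(\omega)) \to m_t(\zeta(\omega))$ for every $t\notin J(\omega)$, where $m_t^n := \lfloor(\sup_{s\leq t}\zeta^n_s)_+\rfloor$ and similarly for $m_t$. To obtain convergence of $\langle\mu^n,m_t\rangle = \mathbb{E}[m_t^n]$ at a \emph{deterministic} time $t$, I need the pointwise convergence to hold almost surely at that fixed $t$, not just outside the random set $J(\omega)$. Here is where the assumption that $t$ is a continuity point of $(0,S)\ni s\mapsto\langle\mu,m_s\rangle$ enters: the function $s\mapsto\langle\mu,m_s\rangle$ is non-decreasing, so its continuity at $t$ together with the fact that $\mu\{s\mapsto z_s \text{ continuous at } t,\ (\sup_{r\leq t}z_r)_+\notin\mathbb{N}\setminus\{0\}\}$ — which, by the last sentence of Lemma \ref{lem:13:7:2}, forces convergence — must be argued to have full measure. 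Concretely, $\mathbb{E}[m_t] - \mathbb{E}[m_{t-}] = \langle\mu,m_t\rangle - \langle\mu,m_{t-}\rangle = 0$ by continuity, and since $m_t \geq m_{t-}$ pointwise this gives $m_t = m_{t-}$ $\mu$-a.s., i.e. $\mu$-a.s. $t$ is not a jump time of the step process $s\mapsto m_s$; combined with the crossing property this places $t$ in the good set of Lemma \ref{lem:13:7:2} for $\mu$-a.e. path, so $m^n_t \to m_t$ almost surely under the Skorohod coupling.

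Finally I would upgrade almost sure convergence to $L^1$ convergence by uniform integrability. From $0 \leq m_t^n \leq (\sup_{s\in[0,S]}|\zeta^n_s|)_+ \leq \sup_{s\in[0,S]}|\zeta^n_s|$ and the hypothesis $\sup_n\langle\mu^n,\sup_{t\in[0,S]}|z_t|^2\rangle < +\infty$, the family $(m_t^n)_n$ is bounded in $L^2$, hence uniformly integrable; therefore $\mathbb{E}[m_t^n]\to\mathbb{E}[m_t]$, which is precisely $\langle\mu^n,m_t\rangle\to\langle\mu,m_t\rangle$. The main obstacle, and the step deserving the most care, is the second one: translating the continuity hypothesis on $t\mapsto\langle\mu,m_t\rangle$ into the statement that $\mu$-almost every path lies in the ``good'' subset identified in Lemma \ref{lem:13:7:2}, i.e. ruling out that $t$ is simultaneously a discontinuity time of a positive-probability set of paths — this is where one must be careful that a continuity point of the \emph{averaged} firing function need not a priori be a continuity point of every individual path, and the monotonicity argument above is exactly what closes that gap.
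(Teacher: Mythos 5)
There is a genuine gap in the middle step, and it is the one you flag yourself but do not in fact close. You argue that continuity of $t \mapsto \langle\mu,m_t\rangle$ at the fixed time $t$ gives $m_t = m_{t-}$ $\mu$-a.s., and conclude that this ``places $t$ in the good set of Lemma~\ref{lem:13:7:2}.'' It does not. Lemma~\ref{lem:13:7:2} delivers $m^n_t \to m_t$ at $t \in [0,S]\setminus J(\zeta)$, and the bad set $J$ constructed in its proof consists of (a) the discontinuity points of $\zeta$ and (b) finitely many continuity points where $(\sup_{s\leq t}\zeta_s)_+$ is a nonzero integer. Your observation that $m_t = m_{t-}$ $\mu$-a.s.\ rules out the type-(b) points (since $\tau^k(\zeta)=t<S$ would force $m_t > m_{t-}$), but it says nothing about type (a): the path $\zeta$ can perfectly well jump at a deterministic time $t$ (e.g.\ a downward jump, or an upward jump strictly below its running maximum) without any effect on $m$, so $\mu\{\zeta\text{ discontinuous at }t\} > 0$ is not excluded by $\mu\{m_t=m_{t-}\}=1$. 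Continuity of $\zeta$ at $t$ is exactly what the proof of Lemma~\ref{lem:13:7:2} uses, via Theorem~\ref{thm:M1:2}, to obtain local uniform convergence $z^n \to z$ near $r_t$ and hence $\sup_{s\leq t}\zeta^n_s \to \sup_{s\leq t}\zeta_s$; at a jump of $\zeta$ that convergence can fail. So you have not shown $\mathbb{P}(t\in J(\zeta))=0$, and the passage to $\mathbb{E}[m^n_t]\to\mathbb{E}[m_t]$ is unjustified. (The UI step and the reduction of the crossing property to rational $h$ are fine.)

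The paper circumvents this entirely with a different mechanism. Rather than trying to prove a.s.\ convergence of $m^n_t$ at the fixed $t$, it first tests in time: it shows via Skorohod representation, Lemma~\ref{lem:13:7:2} and dominated convergence that $\int_0^S f_t\,\langle\mu^n,m_t\rangle\,dt \to \int_0^S f_t\,\langle\mu,m_t\rangle\,dt$ for every bounded $f$ supported in $(\delta,S-\delta)$ --- here the random countable exceptional set from Lemma~\ref{lem:13:7:2} is harmless because it is Lebesgue-null. It then notes that the truncated deterministic functions $\bar m^n$, being non-decreasing and uniformly bounded and constant near the endpoints, form a relatively compact family in M1 (Theorem~\ref{thm:M1:3}), identifies the M1 limit as $\langle\mu,m_\cdot\rangle$ on $(\delta,S-\delta)$ from the integrated convergence, and finally applies Theorem~\ref{thm:M1:2} to this \emph{deterministic} sequence to obtain pointwise convergence at every continuity point of the limit. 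The key device you are missing is precisely this M1-compactness argument for the averaged monotone functions, which is what upgrades ``convergence at Lebesgue-a.e.\ time'' to ``convergence at every continuity point of $\langle\mu,m_\cdot\rangle$'' without ever needing the individual paths to be continuous at $t$.
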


\begin{proof} Let $\delta >0$. Then, for every $n\geq1$, the function
\begin{equation*}
[0,S] \ni t \mapsto 
\bar{m}^n_{t} = 
\left\{
\begin{array}{l}
\langle \mu^n, m_{\delta} \rangle, \quad t \in [0,\delta],
\\
\langle \mu^n, m_{t} \rangle, \quad t \in [\delta,S- \delta],
\\
\langle \mu^n, m_{S-\delta} \rangle, \quad t \in [S - \delta,S]
\end{array}
\right.
\end{equation*}
is (deterministic) non-decreasing nonnegative and c\`adl\`ag such that
$\sup_{n \geq 1} \bar{m}^n_{S} < + \infty$.

Since each $\bar{m}^n$ is constant on both $[0,\delta]$ and $[S-\delta,S]$,
the sequence $(\bar{m}^n)_{n \geq 1}$ is relatively compact for the M1 topology by 
Theorem \ref{thm:M1:3}. 
Extracting a convergent subsequence, still indexed by $n$, and denoting the limit 
by $\bar{m}$, we have, for any bounded and measurable function
$f$ from $[0,S]$ into $\R$ vanishing outside $(\delta,S-\delta)$, that
\begin{equation*}
\lim_{n \rightarrow + \infty} \int_{0}^{S} f_{t} \bar{m}^n_{t} dt = \int_{0}^{S} f_{t} \bar{m}_{t} dt.
\end{equation*}
This follows from the dominated convergence theorem and the fact that
$(\bar{m}^n)_{n \geq 1}$ converges pointwise towards $\bar{m}$ on a subset of $[0,S]$ of full Lebesgue measure, see
 Theorem \ref{thm:M1:1}. Since $f$ vanishes outside $(\delta,S-\delta)$, we can forget the definition of 
$\bar{m}^n$ outside $(\delta,S-\delta)$. Therefore, by the Skorohod representation theorem 
%(for simplicity, the underlying probability space used for the representation is still denoted by $(\Omega,{\mathcal A},\P)$), 
\begin{equation*}
\int_{0}^{S} f_{t} \bar{m}^n_{t} dt = \E \int_{0}^{S} f_{t} \bigl\lfloor \bigl( 
\sup_{s \in [0,t]} \zeta_{s}^n \bigr)_{+} \rfloor dt, 
\end{equation*}
where $(\zeta^n)_{n \geq 1}$ is a sequence of processes distributed according to $(\mu^n)_{n \geq 1}$ and converging a.s. 
for {M1} towards some process $\zeta$ with {law} $\mu$. 
By assumption, we know that, a.s., 
\begin{equation*}
\forall k \geq 1, \quad \forall h >0, \qquad
\tau^k < S \Rightarrow
\sup_{s \in [\tau^k,\min(S, \tau^k +h)]} (\zeta_{s} - \zeta_{\tau^k}) >0, 
\end{equation*} 
so that almost all paths of $\zeta$ satisfy the assumption of Lemma \ref{lem:13:7:2}. 
We deduce that, a.s., the functions
$[0,S] \ni t \mapsto \lfloor ( 
\sup_{s \in [0,t]} \zeta^n_{s} )_{+} \rfloor$
converge pointwise towards 
$[0,S] \ni t \mapsto \lfloor ( 
\sup_{s \in [0,t]} \zeta_{s} )_{+} \rfloor$
on the complement of an at most countable set. Therefore, a.s. 
\begin{equation*}
\lim_{n \rightarrow + \infty} \int_{0}^{S} f_{t} 
\bigl\lfloor \bigl( 
\sup_{s \in [0,t]} \zeta^n_{s} \bigr)_{+} \bigr\rfloor dt = 
\int_{0}^{S} f_{t} 
\bigl\lfloor \bigl( 
\sup_{s \in [0,t]} \zeta_{s} \bigr)_{+} \bigr\rfloor dt,
\end{equation*}
by the Lebesgue dominated convergence theorem.  Using this result again, we deduce that 
\begin{equation*}
\begin{split}
\lim_{n \rightarrow + \infty} \E \left[\int_{0}^{S} f_{t} 
\bigl\lfloor \bigl( 
\sup_{s \in [0,t]} \zeta^n_{s} \bigr)_{+} \bigr\rfloor dt\right]
&= 
\E \left[\int_{0}^{S} f_{t} 
\bigl\lfloor \bigl( 
\sup_{s \in [0,t]} \zeta_{s} \bigr)_{+} \bigr\rfloor dt\right]
= \int_{0}^{S} f_{t} \langle \mu,m_{t} \rangle dt, 
\end{split}
\end{equation*}
the last equality following from the fact that the law of 
$(\zeta_{s})_{0 \leq s \leq S}$ is $\mu$. 
By right-continuity, this proves that $\bar{m}_{t} = \langle \mu,m_{t} \rangle$ for any $t \in (\delta,S-\delta)$. Therefore,
at any point of continuity of $(\delta,S-\delta) \ni t \mapsto \langle \mu,m_{t} \rangle$, Theorem \ref{thm:M1:2} says that
\begin{equation*}
\lim_{n \rightarrow + \infty} \langle \mu^n,m_{t} \rangle = \langle \mu,m_{t} \rangle.
\end{equation*}
Letting $\delta$ tend to $0$, we complete the proof, as the set of discontinuity points of $[0,S] \ni t \mapsto \langle \mu,m_{t} \rangle$ is at most countable. 
\end{proof}

The previous result is general.  In order to be able to apply it,
%, and hence to arrive at our goal which is the proof of Lemma \ref{lem:continuity}, 
we need to check {that} whenever $\Pi_\infty$ is a limit point of the family $(\Pi_N)_{N\geq1}$ (see
Theorem \ref{weak existence thm}), any measure $\mu$ in the support of $\Pi_\infty$ must satisfy the crossing property.

\begin{lem}
\label{lem:2}
For a.e. $\mu$ under ${\Pi}_{\infty}$, for any integer $k \geq 1$ and any real $h>0$,
\begin{equation*}
\mu \Bigl\{\tau^k <T+1,  \sup_{s \in [\tau^k,\min(T+1,\tau^k +h)]} (z_{s} - z_{\tau^k}) =0
\Bigr\} =0, \quad 
{\rm with} \ \tau^k = \inf\{ t \geq 0 : z_{t} \geq k\}.
\end{equation*}
In particular, 
if $(\mu^n)_{n\geq1}$ is a sequence of probability measures on the space 
$\hat{\mathcal D}([0,S],\R)$, converging towards $\mu$ for the weak topology deriving from M1,
then, at any point of continuity of the mapping $(0,S) \ni t \mapsto \langle \mu,m_{t} \rangle$, it holds that
%\begin{equation*}
$\lim_{n \rightarrow + \infty}
\langle \mu^n,m_{t} \rangle = \langle \mu,m_{t} \rangle$.
%\end{equation*}
\end{lem}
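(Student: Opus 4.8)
\emph{Structure of the proof.} The plan is to establish the first (crossing) assertion ``one integer level at a time'', and then to deduce the ``in particular'' part directly from Proposition \ref{prop:13:7:1}. For the first part, fix $k\in\mathbb{N}^*$ and a rational $h>0$. For any path with $\tau^k<T+1$ the negation of the crossing relation \eqref{eq:15:7:6} at level $k$ --- namely that $\sup_{s\in[\tau^k,\min(T+1,\tau^k+h)]}(z_s-z_{\tau^k})=0$ for some $h>0$ --- is, by monotonicity in $h$, equivalent to the same statement for some \emph{rational} $h\le T+1-\tau^k$; since there are only countably many pairs $(k,h)$, it suffices to show
\[
\lambda_\infty\bigl(\mathrm{Bad}_{k,h}\bigr)=0,\qquad \mathrm{Bad}_{k,h}:=\Bigl\{z:\ \tau^k(z)\le T+1-h,\ \sup_{s\in[\tau^k,\tau^k+h]}\bigl(z_s-z_{\tau^k}\bigr)=0\Bigr\},
\]
where $\tau^k(z)=\inf\{t\ge0:z_t\ge k\}$ and $\lambda_\infty:=\int\mu\,d\Pi_\infty(\mu)$ is the mean measure of $\Pi_\infty$ on $\hat{\mathcal D}([0,T+1],\R)$. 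Writing $\mathrm{Bad}_{k,h,\delta}$ for the set obtained by relaxing ``$\sup(\cdot)=0$'' to ``$\sup(\cdot)\le\delta$'', one has $\mathrm{Bad}_{k,h}=\bigcap_{\delta>0}\mathrm{Bad}_{k,h,\delta}$, so it is enough to prove $\lim_{\delta\downarrow0}\lambda_\infty(\mathrm{Bad}_{k,h,\delta})=0$.

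\emph{The prelimit estimate.} By exchangeability of the particle system, $\E[\bar\mu_N]=\P_{\tilde Z^{1,N}}$, so the mean measure of $\Pi_N$ is $\lambda_N:=\P_{\tilde Z^{1,N}}$; and $\Pi_N\Rightarrow\Pi_\infty$ forces $\lambda_N\Rightarrow\lambda_\infty$ for M1. Fix $L>0$. On $\{\sup_{[0,T]}|Z^{1,N}|\le L\}$ the drift $b(X^{1,N}_\cdot)=b(Z^{1,N}_\cdot-M^{1,N}_\cdot)$ of $\tilde Z^{1,N}$ is bounded in absolute value by a constant $C_L$ (linear growth of $b$ from Assumption \ref{assumption 1}, and $0\le M^{1,N}_\cdot\le\sup_{[0,T]}|Z^{1,N}|$), while \emph{every} jump of $\tilde Z^{1,N}$ is nonnegative (at a spike of the system $Z^{1,N}$ increases by $(\alpha/N)|\Gamma|\ge0$, whatever the jump of $M^{1,N}$). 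Hence, for every stopping time $\sigma\le T+1$,
\[
\sup_{s\in[\sigma,\sigma+h]}\bigl(\tilde Z^{1,N}_s-\tilde Z^{1,N}_\sigma\bigr)\ \ge\ \sup_{s\in[\sigma,\sigma+h]}\Bigl((W^1_s-W^1_\sigma)-C_L(s-\sigma)\Bigr)\quad\text{on }\{\sup_{[0,T]}|Z^{1,N}|\le L\}.
\]
Taking $\sigma=\tau^k(\tilde Z^{1,N})$ (a stopping time, $\tilde Z^{1,N}$ being adapted) and using the strong Markov property of the Brownian motion $W^1$, we get, uniformly in $N$,
\[
\P\bigl(\tilde Z^{1,N}\in\mathrm{Bad}_{k,h,\delta},\ \sup_{[0,T]}|Z^{1,N}|\le L\bigr)\ \le\ \P\Bigl(\sup_{s\in[0,h]}(W_s-C_L s)\le\delta\Bigr)=:q(\delta,L),
\]
with $q(\delta,L)\downarrow0$ as $\delta\downarrow0$ since $\sup_{s\in[0,h]}(W_s-C_L s)>0$ a.s. Combined with $\P(\sup_{[0,T]}|Z^{1,N}|>L)\le C_T^{(2)}L^{-2}$ (Markov and Lemma \ref{lem:particle:5}), this yields $\lambda_N(\mathrm{Bad}_{k,h,\delta})\le q(\delta,L)+C_T^{(2)}L^{-2}$ for all $N$.

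\emph{Passing to the limit, and conclusion.} The delicate point is that $\mathrm{Bad}_{k,h,\delta}$ is neither M1-open nor M1-closed, because $z\mapsto\tau^k(z)$ is only upper semicontinuous and the coordinate maps are not M1-continuous, so Portmanteau does not apply directly. To circumvent this I would use the M1-continuity of the running-maximum functional $z\mapsto(\sup_{s\le\cdot}z_s)_{[0,T+1]}$ --- a byproduct of the computations in the proof of Lemma \ref{lem:13:7:2}, the running maximum being monotone so that Theorem \ref{thm:M1:2} applies --- to reexpress $\mathrm{Bad}_{k,h,\delta}$ through $\bar z:=\sup_{s\le\cdot}z_s$, and then to trap it between open supersets $G_\epsilon$ obtained by relaxing the defining (in)equalities to strict ones and the level $k$ to $k-\epsilon$. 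The strong Markov estimate of the previous paragraph, applied now at the hitting time of level $k-\epsilon$, still controls $\lambda_N(G_\epsilon)$ by $\tilde q(\delta,\epsilon,L)+C_T^{(2)}L^{-2}$ with $\tilde q(\delta,\epsilon,L)\to0$ when first $\epsilon\downarrow0$ and then $\delta\downarrow0$. Portmanteau for the open set $G_\epsilon$ gives $\lambda_\infty(\mathrm{Bad}_{k,h,\delta})\le\lambda_\infty(G_\epsilon)\le\liminf_N\lambda_N(G_\epsilon)\le\tilde q(\delta,\epsilon,L)+C_T^{(2)}L^{-2}$; letting successively $\epsilon\downarrow0$, $\delta\downarrow0$ and $L\uparrow\infty$ yields $\lambda_\infty(\mathrm{Bad}_{k,h})=0$, and the union over all $(k,h)$ proves the first assertion. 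This passage to the limit is the main obstacle of the proof.

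\emph{The ``in particular'' part.} With the crossing property established, this follows at once from Proposition \ref{prop:13:7:1}: if $(\mu^n)_{n\ge1}$ converges to $\mu$ for the weak topology deriving from M1, then $\mu$ --- which in every use of this lemma is the restriction to $[0,S]$ of a measure in the support of $\Pi_\infty$ and therefore still satisfies the crossing condition \eqref{eq:crossing} by the first part --- together with the uniform second moment bound $\sup_n\langle\mu^n,\sup_t|z_t|^2\rangle<\infty$ (supplied by Lemma \ref{lem:particle:5} in our applications) meets the hypotheses of Proposition \ref{prop:13:7:1}, which gives $\langle\mu^n,m_t\rangle\to\langle\mu,m_t\rangle$ at every continuity point of $t\mapsto\langle\mu,m_t\rangle$. (In the application to the proof of Theorem \ref{weak existence thm}, one invokes this at the two points $t$ and $t+h$, both outside the exceptional countable set $J$, using also $\langle\mu,m_{t-}\rangle=\langle\mu,m_t\rangle$ there, to obtain $\langle\mu^n,m_{t+h}-m_{t-}\rangle\to\langle\mu,m_{t+h}-m_t\rangle$.)
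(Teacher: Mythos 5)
Your overall strategy is reasonable, and your prelimit estimate is essentially the same key inequality that the paper uses: you control the left-hand side by the drift bound and the nonnegativity of the jumps of $\tilde Z^{1,N}$, and then apply the strong Markov property of the Brownian motion $W^1$ at the stopping time $\tau^k$. The reduction to showing $\lambda_\infty(\mathrm{Bad}_{k,h})=0$ for the mean measure $\lambda_\infty = \int \mu \, d\Pi_\infty(\mu)$, and the reduction to rational $h$, are also fine.

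However, the step you yourself flag as ``the main obstacle'' is where the argument genuinely breaks down. You propose to transfer the prelimit bound $\lambda_N(\mathrm{Bad}_{k,h,\delta}) \le q(\delta,L) + C L^{-2}$ to $\lambda_\infty$ by trapping $\mathrm{Bad}_{k,h,\delta}$ between M1-open supersets and invoking Portmanteau. But $\mathrm{Bad}_{k,h,\delta}$ is defined through both the hitting time $\tau^k(z)$ and the value $z_{\tau^k}$, neither of which is a continuous --- or even semicontinuous in the right direction --- functional for the M1 topology. Relaxing the level $k$ to $k-\epsilon$ and the inequalities to strict ones does not cure this, because one still needs to localize the supremum $\sup_{s\in[\tau^k,\tau^k+h]}(z_s-z_{\tau^k})$ near a path-dependent random time, and the M1 topology gives no handle on such localized quantities (Lemma~\ref{lem:13:7:2} only gives convergence of $\sup_{s\le t}z^n_s$ at \emph{continuity points of the limit}, which is unavailable here without precisely the crossing property you are trying to prove). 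You identify the obstacle correctly but do not give a construction of the sets $G_\epsilon$ that makes the Portmanteau argument work, and I do not see how to do it.

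The paper takes a different route that sidesteps this entirely: it enlarges the state by introducing the auxiliary empirical measure $\bar\theta_N := \frac1N\sum_{i=1}^N\mathrm{Dirac}(\tilde Z^{i,N},W^i)$ on $\hat{\mathcal D}([0,T+1],\R)\times \mathcal{C}([0,T+1],\R)$, extracts a weak limit $\Xi_\infty$, and shows that the \emph{pathwise} inequality
$z_t - z_s \ge w_t - w_s - c\bigl(1+\sup_{v\le T+1}|z_v|\bigr)(t-s)$
survives the M1$\,\times\,$uniform limit (this is a closed relation once transferred to parametric representations). It then proves that $w$ is a Brownian motion for the filtration generated by $(z,w)$ under a.e.\ limit measure $\theta$, and applies the strong Markov property of $w$ \emph{directly in the limit} at the stopping time $\tau^k$. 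Because the inequality and the Markov property live entirely in the limit, no Portmanteau argument on badly-behaved event sets is ever needed. That auxiliary-measure device, carrying the Brownian motion along in the limit, is the idea your proposal is missing. The ``in particular'' part of your write-up, invoking Proposition~\ref{prop:13:7:1}, is correct.
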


\begin{proof} 
In order to prove this result, we actually will introduce a second empirical measure
\begin{equation*}
\bar{\theta}_{N} := \frac{1}{N} \sum_{i=1}^N \textrm{Dirac}(\tZ^{i,N},W^i).
\end{equation*}
%For a given $T>0$, 
We will consider $\bar{\theta}_{N}$ as a random probability measure on $\hat{\mathcal D}([0,T+1],\R) \times {\mathcal C}([0,T+1],\R)$, endowed with the product of the M1 topology and of the standard topology of uniform convergence. 
Note that the marginal distribution of $\bar{\theta}_{N}$ on the first coordinate is $\bar{\mu}_{N}$.  We also define $\Xi_N := \mathrm{Law}(\bar{\theta}_N)$.
Following Lemma \ref{lem:tightness2}, we have that the family $({\Xi}_{N})_{N \geq 1}$ is tight on 
${\mathcal P}(\hat{\mathcal D}([0,T+1],\R) \times {\mathcal C}([0,T+1],\R))$ (endowed with the topology of weak convergence), so that we can extract a convergent subsequence of $({\Xi}_{N})_{N \geq 1}$, still indexed by $N$. We denote its limit 
by ${\Xi}_{\infty}$. 

Returning to the particle system, we first claim that there exists a constant $c \geq 0$ such that, for any $N \geq 1$ and $i=1,\dots,N$,
\begin{equation*}
\tilde{Z}^{i,N}_{t} - \tZ^{i,N}_{s} \geq W^i_{t} - W^i_{s} - c \bigl( 1+ \sup_{v \in [0,T]} \vert Z_{v}^{i,N} \vert \bigr) (t-s),
\end{equation*}
for $s,t \in [0,T+1]$ with $s \leq t$.  Indeed this follows from the fact that $b$ is Lipschitz and 
that $M^{i,N}_t\leq \sup_{v \in [0,T]} \vert Z_{v}^{i,N} \vert$, for all $t\in[0, T]$ and every $i= 1, \dots, N$.  

Denoting by $(z_{t},w_{t})_{t \in [0,T+1]}$ the canonical process on 
$\hat{\mathcal D}([0,T+1],\R) \times {\mathcal C}([0,T+1],\R)$, we get that, $\P$-a.s., 
\begin{equation}
\label{eq:8:6:1}
z_{t} - z_{s} \geq w_{t} - w_{s} - c \bigl( 1+ \sup_{v \in [0,T+1]} \vert z_{v} \vert \bigr) (t-s), \quad 0\leq s \leq t \leq T+1.
\end{equation}
holds under the empirical measure $\bar{\theta}_{N}$, or that
\eqref{eq:8:6:1} holds a.s. under a.e. probability measure $\theta$ under $\Xi_{N}$. Since $(\Xi_{N})_{N \geq 1}$ converges towards $\Xi_{\infty}$ in the weak sense, we know from the Skorohod representation theorem that, on some probability space (still denoted by $(\Omega,{\mathcal A},\P)$), there exists a sequence of random probability measures 
$(\vartheta_{N})_{N \geq 1}$ on the space $\hat{\mathcal D}([0,T+1],\R) \times {\mathcal C}([0,T+1],\R)$, such that 
$(\vartheta_{N})_{N \geq 1}$ converges towards some random probability measure $\vartheta_{\infty}$ a.s., with 
$\vartheta_{N}$ being distributed according to $\Xi_{N}$ and $\vartheta_{\infty}$ according to $\Xi_{\infty}$. 
Hence, a.s., under $\vartheta_{N}$ the canonical process $(z, w)$ has the property \eqref{eq:8:6:1}. 

\vspace{0.3cm}
\noindent\textit{Step 1:} The first step is to show that under $\vartheta_\infty$ the canonical process $(z, w)$ also satisfies \eqref{eq:8:6:1}, simply using the facts that it is true under $\vartheta_{N}$ for each $N$, and that $\vartheta_N$ converges to $\vartheta_\infty$ a.s.
%It is thus sufficient to prove that, for an arbitrary sequence of probability measures $(\vartheta_{N})_{N \geq 1}$ on $\hat{\mathcal D}([0,T+1],\R) \times {\mathcal C}([0,T+1],\R)$ converging almost surely to some $\vartheta_{\infty}$, if \eqref{eq:8:6:1} holds a.s. under
%$\vartheta_N$ for any $N \geq 1$then \eqref{eq:8:6:1} holds a.s. under
%$\vartheta$. 
Again, we can use the Skorohod representation theorem (still with
$(\Omega,{\mathcal A},\P)$ as the underlying probability space). Indeed, we can find a sequence of processes $(\zeta^N,\xi^N)$ with law $\vartheta_N$ under $\P$, converging a.s. towards some process $(\zeta,\xi)$ distributed according to $\vartheta_\infty$. Since \eqref{eq:8:6:1} holds under $\vartheta_{N}$ for each $N$, we have that $\P$ a.s., for any $0 \leq s \leq t\leq T+1$,
\begin{equation}
\label{under limit measure 1}
\zeta^N_{t} - \zeta^N_{s} \geq \xi^N_{t} - \xi^N_{s} - c \bigl( 1+ \sup_{v \in [0,T+1]} 
\vert \zeta^N_{v} \vert
\bigr) (t-s).
\end{equation}
We want to prove that, $\P$ a.s., for any $s\leq t$,
\begin{equation}
\label{under limit measure 2}
\zeta_{t} - \zeta_{s} \geq \xi_{t} - \xi_{s} - c \bigl( 1 + \sup_{v \in [0,T+1]}
\vert \zeta_{v} \vert
\bigr) (t-s).
\end{equation}
It is sufficient to prove that, for an arbitrary sequence $(\zeta^N,\xi^N)_{N \geq 1}$ satisfying \eqref{under limit measure 1} and converging towards $(\zeta,\xi)$ in $\hat{\mathcal D}([0,T+1],\R) \times {\mathcal C}([0,T+1],\R)$ equipped with the product topology derived from the M1 and uniform topologies, the limit $(\zeta,\xi)$ satisfies \eqref{under limit measure 2}. 

To this end, if $\zeta^N \to \zeta$ in $\hat{\mathcal D}([0,T+1],\R)$ with respect to the M1 topology, then by \cite[Theorem 12.5.1]{whitt:monograph}, there exist parametric representations $(u^N, r^N)$ of $\zeta^N$ and $(u, r)$ of $\zeta$ (see Section \ref{sec:M1top} for definition) such that $\|u^N - u\|\vee\|r^N - r\| \to 0$, where
\begin{equation*}
\forall t \in [0,T+1], \quad u^N_{t} \in \bigl[\zeta^N_{r^N_{t}-},\zeta^N_{r^N_{t}}\bigr], \quad 
u_{t} \in \bigl[\zeta_{r_{t}-},\zeta_{r_{t}}\bigr]. 
\end{equation*}
Noting that $\sup_{0 \leq v \leq T+1} \vert \zeta^N_{v} \vert 
= \sup_{0 \leq v \leq T+1} \vert u^N_{v} \vert$, we have by \eqref{under limit measure 1}, for any $s\leq t$,
\begin{equation*}
\max\bigl(\zeta^N_{r^N_{t}-},\zeta^N_{r^N_{t}}\bigr) - 
\min\bigl(
\zeta^N_{r^N_{s}-},\zeta^N_{r^N_{s}}\bigr) \geq \xi_{r^N_{t}}^N - \xi_{r^N_{s}}^N - c 
\bigl( 1+ \sup_{v \in [0,T+1]}
\vert u^N_{v} \vert \bigr)
\bigl(r^N_t-r^N_s\bigr),
\end{equation*}
since the right-hand side is continuous in the subscript parameter.
%and
%\begin{equation*}
%\zeta^N_{r^n_{t}-} - \zeta^N_{r^N_{s}} \geq \xi_{r^n_{t}}^N - \xi_{r^n_{s}}^N - c
%\bigl( 1+ \sup_{0 \leq v \leq T+1}
%\vert u^N_{v} \vert \bigr)
%\bigl(r^N_{t}-r^N_{s}\bigr),
%\end{equation*}
Thus
%\begin{equation*}
%u^N_{t} - \zeta^N_{r^N_{s}}\geq  \xi_{r^N_{t}}^N - \xi_{r^N_{s}}^N - c 
%\bigl( 1+ \sup_{0 \leq v \leq T+1}
%\vert u^N_{v} \vert \bigr)
%\bigl(r^N_t-r^N_s\bigr),
%\end{equation*}
%from which it follows that (again using the continuity of the right-hand side)
%\begin{equation*}
%u^N_{t} - \zeta^N_{r^N_{s}-}\geq \xi_{r^N_{t}}^N - \xi_{r^N_{s}}^N - c 
%\bigl( 1+ \sup_{0 \leq v \leq T+1}
%\vert u^N_{v} \vert \bigr)
%\bigl(r^N_t-r^N_s\bigr).
%\end{equation*}
%We can then conclude that
\begin{equation*}
u^N_{t} - u^N_{s} \geq\xi_{r^N_{t}}^N - \xi_{r^N_{s}}^N - c 
\bigl( 1+ \sup_{v \in [0,T+1]}
\vert u^N_{v} \vert \bigr)
\bigl(r^N_t-r^N_s\bigr).
\end{equation*}
Passing to the limit as $N\to\infty$ and using the continuity of $\xi$, we obtain, for any $s \leq t$, 
\begin{equation*}
u_{t} - u_{s} \geq \xi_{r_{t}} - \xi_{r_{s}} - c
\bigl( 1+ \sup_{v \in [0,T+1]}
\vert u_{v} \vert \bigr)
(r_t-r_{s}). 
\end{equation*}
For any $t',s' \in [0,T+1]$, $s' \leq t'$, we can find $t,s \in [0,T+1]$ with $s \leq t$ such that $\zeta_{t'}=u_{t}, \zeta_{s'}=u_{s}$ and $t'=r_{t},s'=r_{s}$. This proves that
\begin{equation*}
\zeta_{t'} - \zeta_{s'} \geq \xi_{t'} - \xi_{s'} - c \bigl( 1+ \sup_{v \in [0,T+1]}
\vert \zeta_{v} \vert \bigr) (t'-s'),
\end{equation*}
and thus that $(\zeta, \xi)$ satisfies \eqref{under limit measure 2}.

\vspace{0.3cm}
\noindent\textit{Step 2:}  We now use the previous step to prove the lemma.  Indeed, by Step 1 we have that \eqref{eq:8:6:1} holds a.s. under $\theta$, for almost all $\theta$ under $\Xi_\infty$.  Thus for almost all $\theta$ under $\Xi_\infty$, we have for any real $R$, integer $k$ and $h>0$
\begin{equation*}
\begin{split}
&\theta \bigl\{\tau^k  < T+1,  \sup_{s \in [\tau^k,\min(T,\tau^k +h)]} (z_{s} - z_{\tau^k}) =0
\bigr\} \leq  \theta \bigl\{ \sup_{v \in [0,T+1]} \vert z_{v} \vert \geq R 
\bigr\}
\\
&\hspace{30pt} + \theta \bigl\{\tau^k < T+1,  \sup_{s \in [\tau^k ,\min(T,\tau^k +h)]} [w_{s} - w_{\tau^k} - c (1+ R) (s-\tau^k)] =0
\bigr\}. 
\end{split}
\end{equation*}
Clearly, $\tau^k$ is a stopping time for the filtration generated by $(z,w)$. Assume that $w$ is a Brownian motion w.r.t. 
this filtration under $\theta$ (this is proved below). Then the
strong Markov property says that the second term in the right-hand side of the above is zero
for any $h>0$.  Letting $R$ tend to $+ \infty$, we get
\begin{equation}
\label{eq:17.04.1}
\theta \bigl\{\tau^k  < T+1,  \sup_{s \in [\tau^k,\min(T,\tau^k +h)]} (z_{s} - z_{\tau^k}) =0
\bigr\} =0 \quad 
\textrm{for a.e. $\theta$ under $\Xi_\infty$}. 
\end{equation}

We finally claim that \eqref{eq:17.04.1} then also holds for all $\mu$ under $\Pi_\infty$.  For $\theta\in \mathcal{P}(\hat{\mathcal D}([0,T+1],\R)\times{\mathcal C}([0,T+1],\R))$ let $\Psi(\theta)\in\mathcal{P}(\hat{\mathcal D}([0,T+1],\R))$ be the marginal of $\theta$ on the first coordinate.  Since
 $\theta\mapsto \Psi(\theta)$ is obviously continuous and $\Pi_N = \Psi\sharp \Xi_N$, where $\sharp$ indicates the push forward map, we have $\Pi_{\infty} = \Psi \sharp \Xi_{\infty}$. 
 Then, for any Borel subset $A \subset 
 \hat{\mathcal D}([0,T+1],\R)$, 
 $\int \theta \{(z_{t})_{t \in [0,T+1]} \in A\} d \Xi_{\infty}(\theta) =
 \int \mu \{(z_{t})_{t \in [0,T+1]} \in A\} d \Pi_{\infty}(\mu)$. 
 Choosing $A = 
 \{\tau^k  < T+1,  \sup_{s \in [\tau^k,\min(T,\tau^k +h)]} (z_{s} - z_{\tau^k}) =0\}$, 
we complete the proof.  
%  Let $\mu \in \mathrm{supp}(\Pi_\infty)$, so that by continuity of $\Gamma$, $\mu\in \mathrm{supp}(\Gamma\sharp \Xi_\infty)$.  Hence there exists $\theta\in \mathrm{supp}(\Xi_{\infty})$ such that $\Gamma(\theta) = \mu$.  Since the set in \eqref{eq:17.04.1} is a subset of $\hat{\mathcal D}([0,T+1],\R)$ only, we can in fact replace $\theta$ by $\Gamma(\theta)$, which proves the claim.

It thus remains to prove that $w$ is a Brownian motion under the filtration generated by $(z,w)$, for which it is sufficient to prove that 
\begin{equation*}
\begin{split}
&\int f(w_{t}-w_{s}) \prod_{i=1}^n g_{i}(z_{s_{i}},w_{s_{i}}) d\theta(z,w)
= \int f(w_{t}-w_{s}) d\theta(z,w)  \int \prod_{i=1}^n g_{i}(z_{s_{i}},w_{s_{i}}) d\theta(z,w),
\end{split}
\end{equation*}
for bounded and continuous functions $g_{i}$ and $f$, and for $0 \leq s_{1} < \dots < s_{n} \leq s < t$ i.e. $w$ is a continuous martingale. This follows from the convergence of the finite-dimensional distributions up to a countable subset under the weak convergence for the M1 Skorokhod topology (see \cite[Theorem 3.2.2]{skorohod}) and from the right-continuity of the paths. 
\end{proof}

We then finally arrive at the required continuity lemma:
\begin{lem}
\label{lem:continuity}
Under $\Pi_\infty$, the functional \eqref{eq:functional}
is a.e. continuous.
\end{lem}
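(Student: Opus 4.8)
The plan is to establish continuity of the functional at $\Pi_\infty$-almost every $\mu$, reducing the matter to the pathwise stability results of Lemma~\ref{lem:13:7:2}, Corollary~\ref{corol:13:7:1} and Lemma~\ref{lem:2}. First I would fix $\mu$ in the support of $\Pi_\infty$ satisfying: (i) $\mu$-a.e.\ path has the crossing property \eqref{eq:15:7:6} on $[0,T+1]$; (ii) the deterministic map $[0,T+1]\ni t\mapsto\langle\mu,m_{t}\rangle$ is continuous at each of the finitely many sampling times $S_{0},\dots,S_{p}$ appearing in \eqref{eq:20:02:14:5}; (iii) $\mu\{z_{S_{i}-}=z_{S_{i}}\}=1$ for every $i$. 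By Lemma~\ref{lem:2}, (i) holds for $\Pi_\infty$-a.e.\ $\mu$; as noted in the proof of Theorem~\ref{weak existence thm}, the set $J$ was enlarged so that it contains every $t$ at which $\mu\mapsto\int\langle\mu,m_{t}\rangle\,d\Pi_\infty(\mu)$ jumps and every $t$ with $\int\mu\{z_{t-}\neq z_{t}\}\,d\Pi_\infty(\mu)>0$, and since each $S_{i}\notin J$, conditions (ii) and (iii) also fail only on a $\Pi_\infty$-null set of $\mu$.

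Now let $(\mu^{n})_{n\geq1}$ converge weakly to such a $\mu$ for the M1 topology; the goal is the convergence of the corresponding values of \eqref{eq:functional}. I would handle the \emph{inner} occurrence of $\mu$ first: by Lemma~\ref{lem:2} (whose hypothesis is exactly (i)), $\langle\mu^{n},m_{S_{i}}\rangle\to\langle\mu,m_{S_{i}}\rangle$ for each $i$, using continuity (ii) of $t\mapsto\langle\mu,m_{t}\rangle$ at $S_{i}$ — the uniform second-moment control needed to apply Proposition~\ref{prop:13:7:1} here is inherited from the uniform estimate of Lemma~\ref{lem:particle:5} (or, failing that, produced by a routine truncation of $b$ and of $\sup|z|$ whose error is bounded by the same moments). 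For the \emph{outer} occurrence I would use the Skorohod representation theorem on the Polish space $\hat{\mathcal D}([0,T+1],\R)$ to realise $\mu^{n},\mu$ as laws of processes $\zeta^{n},\zeta$ with $\zeta^{n}\to\zeta$ almost surely for M1. Almost surely $\zeta$ has the crossing property by (i), so Corollary~\ref{corol:13:7:1} gives that $t\mapsto\int_{0}^{t}b(\zeta^{n}_{s}-m_{s}(\zeta^{n}))\,ds$ converges to $t\mapsto\int_{0}^{t}b(\zeta_{s}-m_{s}(\zeta))\,ds$ uniformly, hence at each $S_{i}$; moreover $\zeta^{n}_{0}\to\zeta_{0}$ since $0$ is an endpoint of the time interval, and by (iii) together with Theorem~\ref{thm:M1:2}, $\zeta^{n}_{S_{i}}\to\zeta_{S_{i}}$ since $\zeta$ is continuous at $S_{i}$. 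Combining these with the convergence of $\langle\mu^{n},m_{S_{i}}\rangle$, the argument of $f_{i}$ converges almost surely for every $i$; since each $f_{i}$ is bounded and uniformly continuous, $F(\cdots)$ converges a.s.\ and is bounded, so dominated convergence yields
\[
\Bigl\langle\mu^{n},F\Bigl(z-z_{0}-\int_{0}^{\cdot}b(z_{s}-m_{s})\,ds-\alpha\langle\mu^{n},m_{\cdot}\rangle\Bigr)\Bigr\rangle\;\longrightarrow\;\Bigl\langle\mu,F\Bigl(z-z_{0}-\int_{0}^{\cdot}b(z_{s}-m_{s})\,ds-\alpha\langle\mu,m_{\cdot}\rangle\Bigr)\Bigr\rangle,
\]
which is the claimed continuity.

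The main obstacle has in fact already been removed in the preceding lemmas: the counting functional $z\mapsto(m_{t}(z))_{t}$ is genuinely discontinuous for M1 (a path that merely touches an integer level is unstable under perturbation), so neither the drift term nor the mean-field term can be dealt with by a naive continuous-mapping argument. It is precisely the crossing property — valid $\mu$-a.s.\ for the relevant $\mu$ thanks to Lemma~\ref{lem:2}, which itself rests on $w$ being a Brownian motion under the limiting randomness and on the strong Markov property — that forces $m_{\cdot}(\zeta^{n})\to m_{\cdot}(\zeta)$ off a countable set, which after integration is enough for Corollary~\ref{corol:13:7:1} and Proposition~\ref{prop:13:7:1}. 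The only remaining points requiring care are bookkeeping: sampling only at times $S_{i}\notin J$, so that both $\zeta$ and $t\mapsto\langle\mu,m_{t}\rangle$ are continuous there, and keeping second moments uniformly bounded along the sequences that actually arise.
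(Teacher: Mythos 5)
Your proposal follows essentially the same route as the paper's proof: Skorohod representation to pass to a.s.\ convergent $\zeta^n\to\zeta$, Corollary~\ref{corol:13:7:1} for the integrated drift, Proposition~\ref{prop:13:7:1}/Lemma~\ref{lem:2} (resting on the crossing property, valid $\Pi_\infty$-a.e.\ by Lemma~\ref{lem:2}) for $\langle\mu^n,m_{S_i}\rangle\to\langle\mu,m_{S_i}\rangle$, Theorem~\ref{thm:M1:2} for $\zeta^n_{S_i}\to\zeta_{S_i}$ at the continuity times $S_i\notin J$, and bounded convergence to finish. The one point you flag (the uniform second-moment hypothesis of Proposition~\ref{prop:13:7:1} for an arbitrary sequence $\mu^n\to\mu$) is glossed over in the paper too — its Lemma~\ref{lem:2} omits the moment hypothesis — so your awareness of the issue and your suggested truncation fallback are, if anything, slightly more careful than the original.
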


\begin{proof} 
Let $\mu$ be in the support of $\Pi_\infty$, and let $(\mu^n)_{n\geq1}$ be a sequence of measures on $\hat{\mathcal D}([0,T+1],\R)$ converging towards $\mu$ (in the weak topology induced by the M1 topology).
By choice of $S_{1}<\dots<S_{p}=S$ in the definition 
{\eqref{eq:20:02:14:5}}
of $F$, both the canonical process and $\langle \mu,m\rangle$ are a.s. continuous at $S_{1},\dots,S_{p}$. We want to prove that  
\begin{equation*}
\begin{split}
&\lim_{n \rightarrow + \infty}
\biggl\langle \mu^n,F\biggl( z_{\cdot} - z_{0} - \int_{0}^{\cdot} b(z_{s} - m_{s}) ds - \alpha \langle \mu^n,m_{\cdot} \rangle \biggr) \biggr\rangle
\\
&= \biggl\langle \mu,F\biggl( z_{\cdot}  - z_{0}- \int_{0}^{\cdot} b(z_{s} - m_{s}) ds - \alpha \langle \mu,m_{\cdot} \rangle \biggr) \biggr\rangle.
\end{split}
\end{equation*}
Again, we make use of the Skorohod representation theorem. We consider a sequence of processes $(\zeta^n)_{n \geq 1}$, 
with $(\mu^n)_{n \geq 1}$ as distributions on $\hat{\mathcal D}([0,T+1],\R)$, converging a.s. towards 
some process $\zeta$, admitting $\mu$ as distribution. We set, for any $t \in [0,T]$, 
\begin{equation*}
\eta^n_{t} = \bigl\lfloor \bigl( \sup_{s \in [0,t]} \zeta^n_{s} \bigr)_{+} \bigr\rfloor, \quad 
\eta_{t} = \bigl\lfloor \bigl(\sup_{s \in [0,t]} \zeta_{s} \bigr)_{+} \bigr\rfloor.  
\end{equation*}
Since, a.s., $S$ is a point of continuity of $\zeta$, we deduce 
from Theorems \ref{thm:M1:1} and \ref{thm:M1:2}
that, a.s.,  
$(\hat{\zeta}^n_{s})_{s \in [0,S]}$ converges towards 
$(\zeta_{s})_{s \in [0,S]}$ in $\hat{\mathcal D}([0,S],\R)$, where $\hat{\zeta}^n_s = \zeta^n_s$ if $s<S$ and $\hat{\zeta}^n_S = \zeta^n_{S-}$ as usual.

By Corollary \ref{corol:13:7:1}, a.s.,
\begin{equation*}
\lim_{n \rightarrow + \infty}
\sup_{v \in [0,S]} \biggl\vert \int_{0}^v b(\zeta^n_{s}-\eta^n_{s}) ds 
- \int_{0}^v b(\zeta_{s}-\eta_{s}) ds \biggr\vert = 0. 
\end{equation*}
Moreover, since $S_{0},S_{1},\dots,S_{p}$ are almost-surely points of continuity of $\zeta$, we deduce from Theorem \ref{thm:M1:2}
that $\zeta^n_{S_{i}} \rightarrow \zeta_{S_{i}}$ as $n \rightarrow + \infty$, for any $i \in \{0,\dots,p\}$. 

Similarly, since $S_{1},\dots,S_{p}$ are points of continuity of $[0,T] \ni t \mapsto \langle  \mu,m_{t} \rangle$ (which coincides with the expectation of 
$\eta_{t}$ under $\mu$), we deduce from Proposition \ref{prop:13:7:1} that 
$\langle \mu^n,m_{S_{i}} \rangle \rightarrow
\langle \mu, m_{S_{i}} \rangle$ as $n \rightarrow + \infty$, 
for any $i \in \{1,\dots,p\}$. This proves the lemma.

%Recalling the shape of $F$ in \eqref{eq:20:02:14:5}, continuity of the functional in \eqref{eq:functional} easily follows. 
\end{proof}

\subsection{Proof of 
Theorems \ref{weak existence thm:delay}
and \ref{convergence thm:delay}}

Theorems \ref{weak existence thm:delay}
and \ref{convergence thm:delay} are proved in a completely similar way to Theorems \ref{weak existence thm}
and \ref{convergence thm}. 

We first discuss the proof 
of Theorem \ref{weak existence thm:delay}, using the same notation as in the statement. 
Following the proof Lemma \ref{lem:particle:5}, we can prove that 
$\sup_{\delta \in (0, 1)}
{\mathbb E} [ \sup_{t \in [0,T]} \vert Z_{t}^{\delta} \vert^p + 
( M_{T}^{\delta})^p ]$ is finite for any $p \geq 1$. Following the proof of 
\cite[Lemma 5.2]{DIRT}, we know that 
$e_{\delta}(t) \leq C t^{1/2}$, for a constant $C$ independent of 
$\delta$. 
Following the proof of  Lemma \ref{lem:tightness1}, we deduce that 
the {laws} of $(\mu^{\delta})_{\delta \in (0,1)}$ are tight in 
$\hat{\mathcal D}([0,T+1],\R)$, where $\mu^{\delta}$ 
is defined in the statement of Theorem \ref{weak existence thm:delay}. 
In order to pass to the limit along convergent subsequences, 
we consider, for a given weak limit $\mu$, the countable set of points $J$ in $[0,T+1]$
at which the function
\begin{equation*}
[0,T+1] \ni t \mapsto \mu \bigl\{ z_{t-} = z_{t} \bigr\}
\end{equation*}
differs from one. Checking that $\mu$ satisfies the `crossing' property in 
Lemma \ref{lem:2}, it is then quite straightforward to pass to the limit 
in the identity
\begin{equation*}
Z_{S_{i+1}}^{\delta} = Z_{S_{i}}^{\delta} + \int_{S_{i}}^{S_{i+1}}
b\bigl(Z_{s}^{\delta} - M_{s}^{\delta}) ds + \alpha \langle \mu^{\delta},m_{S_{i+1}}
- m_{S_{i}} \rangle + W_{S_{i+1}} - W_{S_{i}},
\end{equation*}
for points $0=S_{0} < S_{1} < \dots < S_{p} < T$ that are not in $J$. 
This permits to prove that, under $\mu$, 
the canonical process $(z_{t})_{t \in [0,T]}$ {
satisfies (2) in Theorem
\ref{weak existence thm}.}

The most difficult point is to check (3).  It follows from Lemma
\ref{lem:last one} {below}, which is the counterpart of Proposition 
\ref{prop:20:02:14:1} for the particle system. 
The end of the proof is then similar. 
The proof of Theorem \ref{convergence thm:delay} works on the same model as the one 
of Theorem \ref{convergence thm}. 

\begin{lemma}
\label{lem:last one}
For a given $T>0$, consider $0 \leq t<t+h \leq T$, $h\in(0, 1)$. Then, we can find 
$C \geq 0$, independent of $h$, such that, for any $\delta \in (0,1)$, 
\begin{equation*}
\begin{split}
&e_{\delta}(t+h) -e_{\delta}(t-)  = e_{\delta}(t+h) -e_{\delta}(t)  \leq 1 + C h^{{1/8}}, 
\\
& \forall \lambda \leq e_{\delta}(t+h) - e_{\delta}(t), \quad \P \bigl(
\ X_{t-}^{\delta}
\geq 1 - \alpha \lambda -C  h^{{1/8}}  \bigr) \geq {\lambda}
+ e_{\delta}(t) - e_{\delta}(t+\delta) - C h^{{1/8}}. 
%\\
%&\P \biggl(  \bar{\mu}^N \bigl( x_{t-} > 1 - \alpha \bar\Delta^N + h^{\epsilon} \bigr) \leq \Delta^N + h^{\epsilon} \biggr) \geq 1- h^{\varepsilon}.
\end{split}
\end{equation*}
\end{lemma}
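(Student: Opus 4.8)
The plan is to exploit that, by Proposition \ref{prop: delayed existence}, $e_{\delta}$ is continuously differentiable, hence has no jumps: $e_{\delta}(t-)=e_{\delta}(t)$ (which gives the first equality in the statement for free), at every spike time $\tau$ of $X^{\delta}$ one has $X^{\delta}_{\tau-}=1$ and so $X^{\delta}_{\tau}=0$, and at any fixed $t$, $\E[\Delta M^{\delta}_{t}]=\Delta(e_{\delta}(\cdot+\delta))(t)=0$, so $X^{\delta}_{t-}=X^{\delta}_{t}<1$ almost surely. I will freely use the uniform (in $\delta$) bounds $\sup_{\delta}\E[\sup_{[0,T]}|X^{\delta}|^{p}+(M^{\delta}_{T})^{p}]<+\infty$ and $e_{\delta}(r)\leq Cr^{1/2}$, both recalled in the proof of Theorem \ref{weak existence thm:delay}, and abbreviate $\psi(b,h):=\E[M^{\delta}_{b+h}-M^{\delta}_{b}]$, noting that $e_{\delta}(t+h)-e_{\delta}(t)\leq\psi((t-\delta)_{+},h)$. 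The elementary input is that if $N_{b}:=M^{\delta}_{b+h}-M^{\delta}_{b}\geq1$ with spike times $\tau_{1}<\dots<\tau_{N_{b}}$ in $(b,b+h]$, then telescoping \eqref{delayed equation} across the resets (between consecutive spikes $X^{\delta}$ rises by $1$) gives the identity
\[
N_{b}-1=\int_{\tau_{1}}^{\tau_{N_{b}}}b(X^{\delta}_{u})\,du+\alpha\bigl(e_{\delta}(\tau_{N_{b}})-e_{\delta}(\tau_{1})\bigr)+\bigl(W_{\tau_{N_{b}}}-W_{\tau_{1}}\bigr);
\]
this replaces the cascade estimates \eqref{eq:16:2:14:2}--\eqref{eq:19:02:14:5}, no cascade being needed since consecutive spikes of the delayed system occur at distinct times. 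Bounding the right-hand side by $Ch(1+\sup_{[0,T]}|X^{\delta}|)+\alpha(e_{\delta}(b+h)-e_{\delta}(b))+\operatorname{osc}_{[b,b+h]}(W)$ on $\{N_{b}\geq2\}$ and taking expectations with Cauchy--Schwarz, the moment bounds, $\E[\operatorname{osc}_{[b,b+h]}(W)^{2}]=Ch$ and $\P(N_{b}\geq2)\leq a_{b}:=\E[(N_{b}-1)_{+}]$, one obtains $a_{b}\bigl(1-\alpha(e_{\delta}(b+h)-e_{\delta}(b))\bigr)\leq C\sqrt{h}\,\sqrt{a_{b}}$ together with $\psi(b,h)=\E[N_{b}]\leq1+2a_{b}$.

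For the first inequality I then bootstrap. Since $\alpha<1$, as long as $e_{\delta}(b+h)-e_{\delta}(b)\leq(1+\alpha)/(2\alpha)$ the previous bound forces $a_{b}\leq C_{1}h$, hence $\psi(b,h)\leq1+2C_{1}h$. Using $e_{\delta}(b+h)-e_{\delta}(b)\leq\psi((b-\delta)_{+},h)$ I argue by induction over the blocks $b\in(n\delta,(n+1)\delta]$: the base block $n=0$ is controlled because on $[0,\delta]$ equation \eqref{delayed equation} is the one with $\alpha=0$, so the gap $X_{0}\leq1-\varepsilon_{0}$ in Assumption \ref{assumption 2} together with the a priori bound $e_{\delta}(r)\leq Cr^{1/2}$ gives $\psi(b,h)\leq1+Ch$ there for $h$ small; the inductive step uses $\psi((b-\delta)_{+},h)\leq\sup_{b'\leq n\delta}\psi(b',h)\leq1+2C_{1}h\leq(1+\alpha)/(2\alpha)$ once $h$ is below a threshold $h_{0}$ depending only on $\alpha$ and the structural constants. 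Hence $\psi(b,h)\leq1+2C_{1}h$ for all admissible $b$ when $h\leq h_{0}$, while for $h\geq h_{0}$ the crude bound $\psi(b,h)\leq\E[M^{\delta}_{T}]\leq C_{T}$ suffices; in all cases $e_{\delta}(t+h)-e_{\delta}(t)\leq1+Ch^{1/8}$ (the exponent we obtain is in fact better, but $1/8$ as in the statement is all that is needed).

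For the second inequality, given $\lambda\leq e_{\delta}(t+h)-e_{\delta}(t)$, use continuity of $e_{\delta}$ to pick $h'\in[0,h]$ with $e_{\delta}(t+h')-e_{\delta}(t)=\lambda$, and set $N':=M^{\delta}_{t+h'}-M^{\delta}_{t}$, $\mu':=\E[N']=e_{\delta}(t+h'+\delta)-e_{\delta}(t+\delta)$. First, a Paley--Zygmund bound gives $\P(N'\geq1)\geq(\mu')^{2}/\E[N'^{2}]$; splitting $N'^{2}=N'^{2}\ind_{\{N'\leq1\}}+N'^{2}\ind_{\{N'\geq2\}}$ and using the first part with $\E[N'^{2}\ind_{\{N'\geq2\}}]\leq\E[N'^{4}]^{1/2}\P(N'\geq2)^{1/2}\leq C\sqrt{h}$ yields $\E[N'^{2}]\leq\mu'+C\sqrt{h}$, hence $\P(N'\geq1)\geq\mu'-Ch^{1/8}$. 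Secondly, on $\{N'\geq1\}$ let $s\in(t,t+h']$ be the first spike; since $X^{\delta}$ has no spike on $(t,s)$, it is continuous there and $X^{\delta}_{s-}=1$, so $X^{\delta}_{t-}=X^{\delta}_{t}=1-\int_{t}^{s}b(X^{\delta}_{u})\,du-\alpha(e_{\delta}(s)-e_{\delta}(t))-(W_{s}-W_{t})\geq1-\alpha\lambda-Ch(1+\sup_{[0,T]}|X^{\delta}|)-\operatorname{osc}_{[t,t+h]}(W)$, using $0\leq e_{\delta}(s)-e_{\delta}(t)\leq\lambda$. Intersecting with $\{\sup_{[0,T]}|X^{\delta}|\leq h^{-7/8}\}$ and $\{\operatorname{osc}_{[t,t+h]}(W)\leq h^{1/8}/2\}$, whose complements have probability $\leq Ch^{1/8}$ by the moment bound and Gaussian tails, we get $X^{\delta}_{t-}\geq1-\alpha\lambda-Ch^{1/8}$ on an event of probability $\geq\P(N'\geq1)-Ch^{1/8}\geq\mu'-Ch^{1/8}$. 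Since $e_{\delta}$ is non-decreasing, $\mu'=e_{\delta}(t+h'+\delta)-e_{\delta}(t+\delta)\geq e_{\delta}(t+h')-e_{\delta}(t+\delta)=\lambda+e_{\delta}(t)-e_{\delta}(t+\delta)$, which is the asserted lower bound; when this quantity is negative (in particular whenever $h<\delta$) or $h\geq h_{0}$, the statement is trivial.

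The only genuinely delicate point is the first inequality. A naive iteration of the telescoping identity only produces $e_{\delta}(t+h)-e_{\delta}(t)\leq(1+C\sqrt{h})/(1-\alpha)$, which exceeds $1$ as soon as $\alpha\geq1/2$; what makes $1+o(1)$ achievable uniformly in $\delta$ is precisely the delay, which makes the increment of $e_{\delta}$ over the short window $[\tau_{1},\tau_{N_{b}}]$ equal to $\psi$ evaluated at the $\delta$-shifted window, so the recursion runs ``one generation back in time'' and can be closed by the block induction above, anchored by the $\varepsilon_{0}$-gap in the initial law. Everything else is a routine combination of Cauchy--Schwarz, the uniform moment estimates and Gaussian tail bounds, exactly in the spirit of the proof of Proposition \ref{prop:20:02:14:1}.
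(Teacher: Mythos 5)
Your proof is correct and reaches the same conclusion, and I would classify it as the same overall strategy (distinguish the first spike from additional spikes in the window, exploit the uniform moment and Gaussian oscillation bounds, and run a bootstrap over blocks of length $\delta$, anchored by the initial block where the delay kills the self-interaction), but with a genuinely cleaner technical core. The paper works with the probability $\P(\tau^{\delta}_{t}\leq t+h)$ of a double spike, obtains the self-referential inequality
$1\leq\alpha+C\sqrt{\P(\tau^{\delta}_{t-\delta}\leq t+h-\delta)}+Ch^{1/2}\P(\tau^{\delta}_{t}\leq t+h)^{-1/2}$,
and closes the bootstrap at $t=0$ by a contradiction argument that needs the differentiability of $h\mapsto\P(\tau^{\delta}_{0}\leq h)$ (borrowed from \cite{DIRT}) to locate a crossing point $h^{*}$; it ends up with $\P(\tau^{\delta})\leq h^{1/4}$, hence $e_{\delta}(t+h)-e_{\delta}(t)\leq 1+Ch^{1/8}$. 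You instead start from the telescoping identity $N_{b}-1=\int_{\tau_1}^{\tau_{N_{b}}}b+\alpha\bigl(e_{\delta}(\tau_{N_{b}})-e_{\delta}(\tau_{1})\bigr)+W_{\tau_{N_{b}}}-W_{\tau_{1}}$ (valid precisely because $e_{\delta}$ is continuous, so spikes are separated in time and no cascade decomposition is needed), control the quantity $a_{b}=\E[(N_{b}-1)_{+}]$ through $a_{b}\bigl(1-\alpha(e_{\delta}(b+h)-e_{\delta}(b))\bigr)\leq C\sqrt{h}\sqrt{a_{b}}$, and anchor with the a priori estimate $e_{\delta}(r)\leq C\sqrt{r}$ rather than the CDF-continuity trick. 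This buys you two things: the bootstrap is a deterministic block induction with no continuity argument, and you get the sharper (and uniform in $\delta$) bound $e_{\delta}(t+h)-e_{\delta}(t)\leq 1+C_{1}h$, which of course dominates the stated $1+Ch^{1/8}$. For the second inequality, the paper and you both look at the first spike $\sigma^{\delta}_{t}$ (your $N'\geq1$ event) and undo the dynamics on $(t,\sigma^{\delta}_{t})$; the paper lower-bounds $\P(\sigma^{\delta}_{t}\leq s)$ by $\E[N']-\E[(N'-1)_{+}]$ and uses Cauchy--Schwarz on the second term, while you use Paley--Zygmund on $N'$ combined with a second-moment bound $\E[N'^{2}]\leq\mu'+C\sqrt{h}$. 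Both are correct and quantitatively comparable; in fact the paper's $\P(N'\geq1)\geq\E[N']-a_{t,h'}$ decomposition, combined with your $a\leq C_{1}h$, is even simpler than the Paley--Zygmund route and would let you drop one more Cauchy--Schwarz. One small bookkeeping point: you write $\psi(b,h)\leq1+2a_{b}$, but the clean identity is $\E[N_{b}]=\P(N_{b}\geq1)+\E[(N_{b}-1)_{+}]\leq1+a_{b}$; this only affects a constant.
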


Notice that the second statement in Lemma 
\ref{lem:last one}
slightly differs from the second statement in Proposition \ref{prop:20:02:14:1}. However, the application 
for passing to the limit is the same. 
With the same notation as in 
\eqref{eq:18:02:14:1}, it says that, for $t,t+h,t+\delta' \in J^{\complement}$ and 
$\lambda < \langle \mu,m_{t+h}-m_{t}\rangle$, 
it holds $\mu( z_{t-} - m_{t-} \geq 1 - \alpha \lambda - C h^{1/8}) \geq 
\lambda + \langle \mu,m_{t} - m_{t+\delta'}\rangle - Ch^{1/8}$. Letting $\delta' \downarrow 0$, it permits to 
recover \eqref{eq:18:02:14:1}. 

\begin{proof}
Given some $\delta >0$ and $0 \leq t < t+h \leq T$, consider 
${\sigma^{\delta}_{t}} := \inf\{s \geq t : M^{\delta}_{s} - M^{\delta}_{t-} = 1\}$ 
and
{$\tau^{\delta}_{t} := \inf\{s > t : M^\delta_{s} - M^\delta_{t-}=2\}$}. As $e_{\delta}$ is continuously differentiable, 
we know from \cite[Lemma 4.2]{DIRT} that  
$\sigma^{\delta}_{t}$ and $\tau^{\delta}_{t}$ have differentiable cumulative distribution functions. 
Recalling the definition of \eqref{delayed equation} and setting $Z^\delta = X^\delta + M^\delta$ as usual, we see that
\begin{equation*}
\sup_{s \in [t,t+h]} \vert Z_{s}^{\delta} - Z_{t}^{\delta} \vert \leq \alpha \bigl( e_{\delta}(t+h) - e_{\delta}(t) \bigr) + 
\sup_{s \in [t,t+h]} \vert W_{s}^{\delta} - W_{t}^{\delta} \vert+ C h \bigl( 1 + \sup_{s \in [0,T]} \vert Z_{s}^{\delta} \vert\bigr). 
\end{equation*}
On $\{\tau^{\delta}_{t} \leq t +h\}$, 
$(Z^{\delta}_{s})_{s \in [t,t+h]}$ crosses at least two new integers, i.e.
$\sup_{s \in [t,t+h]} \vert Z_{s}^{\delta} - Z_{t}^{\delta} \vert \geq 1$: 
\begin{equation}
\label{eq:clinton}
1 \leq \alpha \bigl( e_{\delta}(t+h) - e_{\delta}(t) \bigr) + 
  \sup_{s \in [t,t+h]} \vert W_{s}^{\delta} - W_{t}^{\delta} \vert +
C h \bigl( 1 + \sup_{s \in [0,T]} \vert Z_{s}^{\delta} \vert\bigr),
\end{equation}
%\color{black}
where (with the convention that $M^{\delta}_{r}=M_{0}^{\delta}$, 
$\sigma^{\delta}_{r}=\sigma_{0}^{\delta}$
 and $\tau^{\delta}_{r}=\tau_{0}^{\delta}$
if $r\leq0$),

\begin{equation}
\label{eq:clinton2}
\begin{split}
e_{\delta}(t+h) - e_{\delta}(t) &\leq \P \bigl(M_{t+h-\delta}^{\delta} - M_{t-\delta}^{\delta}\geq 1 \bigr) + {\mathbb E}\bigl[ \bigl(M_{t+h-\delta}^{\delta} - M_{t-\delta}^{\delta} \bigr) 
{\mathbf 1}_{\{\tau^{\delta}_{t-\delta} \leq t+h - \delta\}}
\bigr] 
\\
&\leq \P \bigl(\sigma_{t-\delta}^{\delta}\leq t+h - \delta \bigr) 
+ C \bigl( {\mathbb P}(\tau^{\delta}_{t-\delta} \leq t + h - \delta ) \bigr)^{1/2}
\\
&\leq 1+ C \bigl( {\mathbb P}(\tau^{\delta}_{t-\delta} \leq t + h - \delta) \bigr)^{1/2}. 
\end{split}
\end{equation}
%\color{black} 
Therefore, taking the expectation 
in \eqref{eq:clinton}
on the event $\{\tau_{t}^{\delta} \leq t +h\}$, applying the Cauchy-Schwarz inequality
and noticing that ${\mathbb P}(\tau^{\delta}_{t} \leq t + h) \not =0$, we deduce:
\begin{equation*}
1 \leq \alpha  + C \bigl( {\mathbb P}(\tau^{\delta}_{t-\delta}   \leq t - \delta + h) \bigr)^{1/2} + 
C h^{1/2} \bigl[{\mathbb P}(\tau^{\delta}_{t} \leq t + h) \bigr]^{-1/2}. 
\end{equation*}

For $t=0$, this says that 
\begin{equation*}
1 \leq \alpha  + C \bigl( {\mathbb P}(\tau^{\delta}_{0}   \leq h) \bigr)^{1/2} + 
C h^{1/2} \bigl[{\mathbb P}(\tau^{\delta}_{0} \leq  h) \bigr]^{-1/2}. 
\end{equation*}
We claim that (with $C$ the constant found in the above equation)
\begin{equation}
\label{eq:clinton5}
{\mathbb P}\bigl(\tau^{\delta}_{0}   \leq h\bigr)  \leq h^{1/4},
\quad \textrm{for} \ h \in [0,h_{0}), \quad \textrm{where} \  h_0=\frac{1-\alpha}{2C} \ \text{.}
\end{equation}
We argue by contradiction. 
If there exists $h^* \in (0,h_{0})$  such that 
${\mathbb P}(\tau^{\delta}_{0} \leq h^*) > (h^*)^{1/4}$,
we can take, by differentiability of the cumulative distribution function of $\tau_{0}^{\delta}$, 
$h^*$ so that in fact equality is satisfied i.e. ${\mathbb P}(\tau^{\delta}_{0} \leq h^*) = (h^*)^{1/4}$.
We deduce that 
$1 \leq \alpha + 2C (h^*)^{1/8} < \alpha + 2 C h_{0}^{1/8}$, which is a contradiction.

For $t \in [0,\delta]$, we deduce that 
${\mathbb P}(\tau^{\delta}_{t-\delta}   \leq t - \delta + h) =
{\mathbb P}(\tau^{\delta}_{0}   \leq h)  
 \leq h^{1/4}$. 
Assume then that, for some integer $1 \leq k \leq \lceil T/\delta \rceil$ and for all $t \in [0,k \delta]$, 
we have 
${\mathbb P}(\tau^{\delta}_{t-\delta}   \leq t - \delta + h)  \leq h^{1/4}$,
for $h \in [0,h_{0})$.
We then claim 
that ${\mathbb P}(\tau^{\delta}_{t}   \leq t  + h)  \leq h^{1/4}$ 
for all $t \in [0, (k \delta) \wedge T]$ and $h \in [0,h_{0})$. This can be proved by contradiction
again by considering $h^*$ such that ${\mathbb P}(\tau^{\delta}_{t} \leq t + h^*) = (h^*)^{1/4}$.
We finally deduce that
\begin{equation}
\label{eq:clinton3}
{\mathbb P}(\tau^{\delta}_{t} \leq t + h) \leq C h^{1/4}
\quad \textrm{so that} \quad  
e_{\delta}(t+h) - e_{\delta}(t) \leq 1 + Ch^{1/8},
\end{equation}
the second claim following from \eqref{eq:clinton2} and proving the first statement. 

For the second statement, 
notice that, on the event $\{\sigma^{\delta}_{t} \leq s\}$,
for $t \leq s \leq t+h$,% \textcolor{red}{we deduce from the second line in \eqref{eq:clinton2}}:
\begin{equation*}
\begin{split}
X^{\delta}_{\sigma_{t}^{\delta}-}- X_{t-}^{\delta} 
\leq \alpha \bigl( e_{\delta}(s) - e_{\delta}(t) \bigr)
+ \sup_{s \in [t,t+h]} \vert W_{s} - W_{t} \vert
+ C' h \bigl( 1 + \sup_{s \in [t,t+h]} \vert Z_{s}^{\delta}\vert \bigr).
\end{split}
\end{equation*}
Since $X^{\delta}_{\sigma_{t}^{\delta}-}=1$ ($e_{\delta}$ being continuous, see 
\eqref{e_delta}), we have 
%from \eqref{eq:clinton3} that, on $\{\sigma_{t}^{\delta}\leq s\}$,
%\color{black} 
%\begin{equation*}
%X_{t-}^{\delta} \geq 
%1- \alpha  \bigl( e_{\delta}(s) - e_{\delta}(t) \bigr)
%- \sup_{s \in [t,t+h]} \vert W_{s} - W_{t} \vert
%- C' h \bigl( 1 + \sup_{s \in [t,t+h]} \vert Z_{s}^{\delta} \vert \bigr). 
%\end{equation*}
for $ e_{\delta}(s) - e_{\delta}(t) \leq \lambda$:
\begin{equation*}
\begin{split}
&\P \bigl( X_{t-}^{\delta} \geq 1 - \alpha \lambda - C'h^{1/8}
\bigr) 
\\
&\geq \P \bigl( \sigma_{t}^{\delta} \leq s \bigr) 
- 
\P \bigl( \sup_{s \in [t,t+h]} \vert W_{s} - W_{t} \vert
+ C' h \bigl( 1 + \sup_{s \in [t,t+h]} \vert Z_{s}^{\delta} \vert\bigr)
\geq C'h^{1/8} \bigr)
\\
&\geq \P \bigl( \sigma_{t}^{\delta} \leq s \bigr) - C' h
\geq  e_{\delta}(s+\delta) -e_{\delta}(t+\delta) - 2C'h^{1/8},
\end{split}
\end{equation*}
the last inequality following from the second line in \eqref{eq:clinton2} and from \eqref{eq:clinton3}.  
Now, for $\lambda \leq e_{\delta}(t+h) -e_{\delta}(t) $, 
we can find $s^* \in [t,t+h]$ such that
$\lambda = e_{\delta}(s^*) -e_{\delta}(t)$, which completes the proof
since
$e_{\delta}(s^*+\delta) -e_{\delta}(t+\delta) \geq \lambda + e_{\delta}(t)  
-e_{\delta}(t+\delta)$.
\end{proof}

%On the event $\delta < \langle \mu,m_{t+h} - m_{t} \rangle - h^{\epsilon/4}$, we have for $N$ large enough, 
%$\lim_{N \rightarrow + \infty}
%\bigl( \langle \bar{\mu}^N,m_{t+h} \rangle - \langle \bar{\mu}^N,m_{t-} \rangle \bigr)
%> \delta$. If addition, the proportion of particles more than twice is less than 
%$h^{\epsilon}$
%\begin{equation*}
%\begin{split}
%\frac{\vert J\vert}{N} \leq
%\langle \bar{\mu}^N,m_{t+h} -m_{t-} \rangle \leq \frac{\vert J \vert}{N} + 
%h^{\epsilon/2} \frac1N \sum_{i=1}^N \bigl( M_{t+h}^i \bigr)^2. 
%\end{split}
%\end{equation*}
%Therefore, with large probability, 
%\begin{equation*}
%\bar{\mu}^N \bigl( z_{t-}- m_{t-} \leq 1 - \alpha \delta - h^{\epsilon} \bigr) \geq \delta - h^{\epsilon}.
%\end{equation*}
%Since $t$ is a continuity point under $\mu$, we deduce that 
%\begin{equation*}
%\mu \bigl( z_{t-}- m_{t-} \leq 1 - \alpha \delta - h^{\epsilon} \bigr) \geq \delta - h^{\epsilon}.
%\end{equation*}
%
%\begin{equation*}
%Z_{t}^j \leq C h (1+ \sup_{0 \leq s \leq t} \vert Z_{s}^j \vert) + \alpha 
%\frac{\vert J^1 \vert -1}{N} + \frac{\alpha}{N} \sum_{i \in I} \bigl( M_{t+h}^i - M_{t-}^i
%\bigr)  + \sup_{t \leq s \leq t+h} \vert W_{s}^j - W_{t}^j\vert, 
%\end{equation*}

\bibliographystyle{siam}
\bibliography{NeuroMcKV}

\end{document}